\def\felixleftmark{}
\def\felixrightmark{}
\newcommand\updatemark{\markleft{\felixleftmark~\ifdefempty{\felixrightmark}{}{--- \felixrightmark}}}
\let\origsection\section
\renewcommand{\section}[1]{\origsection{#1}\sectionmark{#1}}
\renewcommand\sectionmark[1]{\def\temp{#1}\def\felixrightmark{}\edef\x{\noexpand\def\noexpand\felixleftmark{Section \thesection: \expandonce{\temp}}}\x\updatemark}
\let\origsubsection\subsection
\renewcommand{\subsection}[1]{\origsubsection{#1}\subsectionmark{#1}}
\renewcommand\subsectionmark[1]{\def\secondtemp{#1}\edef\y{\noexpand\def\noexpand\felixrightmark{Subsection \thesubsection: \expandonce{\secondtemp}}}\y\updatemark}
\theoremstyle{plain}
\newtheorem{maintheorem}{Theorem}
\newtheorem{theorem}{Theorem}[section]
\newtheorem{proposition}[theorem]{Proposition}
\newtheorem{lemma}[theorem]{Lemma}
\newtheorem{corollary}[theorem]{Corollary}
\theoremstyle{definition}
\newtheorem{definition}[theorem]{Definition}
\theoremstyle{remark}
\newtheorem{remark}[theorem]{Remark}
\theoremstyle{remark}
\newtheorem*{rem*}{Remark}
\numberwithin{equation}{section}
\newcommand{\bd}{\mathbf}
\newcommand{\LtRd}{\mathbf L^2(\mathbb{R}^d)}
\newcommand{\LiRd}{\mathbf L^1(\mathbb{R}^d)}
\newcommand{\Ltw}{\bd L^2_{\sqrt{w}}}
\newcommand{\RR}{\mathbb{R}}
\newcommand{\R}{\mathbb{R}}
\newcommand{\ZZ}{\mathbb{Z}}
\newcommand{\Z}{\mathbb{Z}}
\newcommand{\NN}{\mathbb{N}}
\newcommand{\N}{\mathbb{N}}
\newcommand{\CC}{\mathbb{C}}
\def\esssup{\mathop{\operatorname{ess~sup}}}
\def\supp{\mathop{\operatorname{supp}}}
\def\sgn{\mathop{\operatorname{sgn}}}
\newcommand{\inner}[2]{\langle #1,#2 \rangle}
\newcommand{\LeftEqNo}{\let\veqno\@@leqno}
\DeclareFontFamily{U}{mathx}{\hyphenchar\font45}
\DeclareFontShape{U}{mathx}{m}{n}{
      <5> <6> <7> <8> <9> <10>
      <10.95> <12> <14.4> <17.28> <20.74> <24.88>
      mathx10
      }{}
\DeclareSymbolFont{mathx}{U}{mathx}{m}{n}
\DeclareMathAccent{\widecheck}{0}{mathx}{"71}
\DeclareMathAccent{\wideparen}{0}{mathx}{"75}
\newcommand{\LtDF}{\bd L^{2,\mathcal F}(D)}
\newcommand{\Indicator}{{\mathds{1}}}
\newcommand{\modulation}{\bd{M}}
\newcommand{\translation}{\bd{T}}
\newcommand{\lebesgue}{\bd{L}}
\newcommand{\Fourier}{\mathcal{F}}
\newcommand{\DecompSp}{\mathcal{D}}
\newcommand{\CalQ}{\mathcal{Q}}
\newcommand{\CalS}{\mathcal{S}}
\newcommand{\CalP}{\mathcal{P}}
\newcommand{\CalO}{\mathcal{O}}
\newcommand{\CalB}{\mathcal{B}}
\newcommand{\CalC}{\mathcal{C}}
\newcommand{\GL}{\mathrm{GL}}
\newcommand{\DistributionSpace}{\mathcal{D}'}
\newcommand{\TestFunctionSpace}{\mathcal{D}}
\newcommand{\Schwartz}{\mathcal{S}}
\newcommand{\Co}{\operatorname{Co}}
\newcommand{\identity}{\mathrm{id}}
\newcommand{\smallVSpace}{\rule{0pt}{2.5mm}}
\newcommand{\eps}{\varepsilon}
\newcommand{\with}{\,:\,}
\newcommand{\PhVar}{\lambda}
\newcommand{\PhVarA}{\rho}
\newcommand{\PhVarC}{{\nu}}
\newcommand{\PhSpace}{\Lambda}
\newcommand{\GoodVectors}{\mathcal{H}_v^1}
\newcommand{\Reservoir}{(\GoodVectors)^\urcorner}
\newcommand{\vrho}{\varrho}
\newcommand{\vrhoinv}{\vrho_{\ast}}
\newcommand{\vsig}{\varsigma}
\newcommand{\vsiginv}{\vsig_{\ast}}
\renewcommand\upsilon{{\scaleobj{0.65}{\Upsilon}}}
\let\emptyset\varnothing
\begin{document}

\author{Nicki Holighaus\texorpdfstring{$^\dag$ $^\ddag$}{}}
 \address{$^\dag$ Acoustics Research Institute Austrian Academy of Sciences, Wohllebengasse 12--14, A-1040 Vienna, Austria}
 \thanks{This work was supported by the Austrian Science Fund (FWF): I\,3067--N30.
FV acknowledges support by the German Science Foundation (DFG)
in the context of the Emmy Noether junior research group VO 2594/1--1.
NH is grateful for the hospitality and support of the Katholische Universität Eichstätt-Ingolstadt during his visit.
FV would like to thank the Acoustics Research Institute for the hospitality during several visits, which were partially
supported by the Austrian Science Fund (FWF): 31225--N32.}
 \email{nicki.holighaus@oeaw.ac.at}
 \author{Felix Voigtlaender\texorpdfstring{$^*$ $^\ddag$}{}}
 \address{$^*$ Mathematical Institute for Machine Learning and Data Science, Katholische Universität Eichstätt-Ingolstadt, Auf der Schanz 49, 85049 Ingolstadt, Germany}
 \email{felix.voigtlaender@ku.de}

 \title[Smoothness spaces for warped time-frequency representations]
       {Smoothness spaces for warped time-frequency representations---Decomposition spaces and embedding relations}

 \subjclass[2020]{42B35, 46E35, 46F05, 46F12}

 \keywords{time-frequency representations,
           integral transforms,
           coorbit spaces,
           decomposition spaces,
           smoothness spaces,
           mixed-norm spaces,
           frequency-warping,
           embeddings}

 \begin{abstract}
In a recent paper,
 we have shown that warped time-frequency representations provide a 
rich framework for the construction and study of smoothness spaces matched to 
very general phase space geometries obtained by diffeomorphic deformations of $\RR^d$. 
Here, we study these spaces, obtained through the application of general coorbit 
theory, using the framework of decomposition spaces. This allows us to derive embedding 
relations between coorbit spaces associated to different warping functions, and relate them 
to established, important smothness spaces. In particular, we show that we obtain $\alpha$-modulation spaces 
and spaces of dominating mixed smoothness as special cases and, in contrast, that this is only 
possible for Besov spaces if $d=1$. 
 \end{abstract}

  \maketitle

  \renewcommand{\thefootnote}{\fnsymbol{footnote}} 
  \footnotetext[3]{Both authors contributed equally to this work.}
  \renewcommand*{\thefootnote}{\arabic{footnote}}

  \markright{}

  \section{Introduction}

    \emph{Warped time-frequency systems} were introduced in~\cite{howi14,bahowi15,VoHoPreprint} as a means for the construction of time-frequency representations adapted to rather general \emph{frequency scales}. In these works, it was demonstrated that such systems yield continuous, tight Parseval
    frames that admit the construction of general coorbit spaces under mild conditions. As translation-invariant spaces inducing a certain frequency scale, it seems natural to suspect a link of these \emph{warped coorbit spaces}  to decomposition spaces~\cite{fegr85,boni07}.    
    Whereas coorbit theory~\cite{fegr89,fegr89-1,gr91}, and its generalization to abstract continuous 
    frames~\cite{fora05,rauhut2011generalized,kempka2015general,bahowi15}, classifes a function $f$ 
    according to the size of its voice transform $V_\Psi f$ with respect to some frame $\Psi$, decomposition spaces $\mathcal{D}(\mathcal{Q}, \lebesgue^p, \ell_u^q)$ directly classify 
    functions with respect to their frequency localization, as determined by a rather general covering $\mathcal{Q}$ of the 
    frequency domain. The definition of these decomposition spaces generalizes classical Besov spaces, replacing the usual dyadic covering by a more general one: Given a suitable covering $\mathcal{Q}=\left(Q_{i}\right)_{i\in I}$
    and a suitable partition of unity $\left(\varphi_{i}\right)_{i\in I}$
    subordinate to $\mathcal{Q}$, the norm on the decomposition space
    $\mathcal{D}\left(\mathcal{Q},\lebesgue^{p},\ell_{u}^{q}\right)$
    is given by
    \[
        \left\Vert g\right\Vert _{\mathcal{D}\left(\mathcal{Q},\lebesgue^{p},\ell_{u}^{q}\right)}
        = \left\Vert
                \left(
                    u_{i}
                    \cdot \left\Vert
                            \mathcal{F}^{-1}\left(\varphi_{i}\cdot\smash{\widehat{g}}\right)
                        \right\Vert_{\lebesgue^{p}}
                \right)_{i\in I}
            \right\Vert_{\ell^{q}},
    \]
    where $u=\left(u_{i}\right)_{i\in I}$ is a suitable weight. Thus, the norm
    is obtained by a weighted $\ell^{q}$-norm of the $\lebesgue^{p}$-norms
    of the frequency-localized pieces $\mathcal{F}^{-1}\left(\varphi_{i}\cdot\widehat{g}\right)$
    of $g$, where the precise frequency localization is determined by
    the covering $\mathcal{Q}$.
    
    Relations between coorbit spaces associated to translation-invariant 
    time-frequency representations and decomposition spaces occur naturally, as previously observed for 
    ($\alpha$-)modulation spaces, see \cite{BorupNielsenAlphaModulationSpaces,fegr85,gr92-2,daforastte08,speckbacher2016alpha}, and 
    wavelet coorbit spaces, see \cite{fuhr2015wavelet,fuhr2021coorbit}. It is intuitively plausible that if the frame $\Psi$ has the same (or very similar) frequency localization as
    described by $\mathcal{Q}$, then the coorbit space $\mathrm{Co}(\Psi,\lebesgue_\kappa^p)$ should coincide with a suitable
    decomposition space $\mathcal{D}(\mathcal{Q}, \lebesgue^p, \ell_{\tilde{u}}^p)$.
    In fact, several recent works~\cite{boni07,CompactlySupportedFramesForDecompositionSpaces, ni14,StructuredBanachFrames,StructuredBanachFrames2} have considered certain sequence norms of frame coefficients, e.g., with respect to a suitable generalized shift-invariant system, for the characterization of decomposition spaces, in varying generality. 

    A rather powerful embedding theory~\cite{voigtlaender2015embedding,voigtlaender2016embeddings}
    for decomposition spaces was recently developed by one of the authors. Using this theory, once a given function space $X$
    is identified as a decomposition space $X = \mathcal{D}(\mathcal{Q}, \lebesgue^p, \ell_u^q)$,
    the existence of embeddings between $X$ and any other decomposition space
    $\mathcal{D}(\mathcal{P}, \lebesgue^{p_2}, \ell_v^{q_2})$ can in many cases
    be decided by comparing certain geometric and combinatorial properties of the coverings $\mathcal{Q}, \mathcal{P}$.
    Embeddings into classical Sobolev spaces have also been considered \cite{DecompositionIntoSobolev}.
    
    In the present paper, we specifically consider the coorbit spaces associated to \emph{warped time-frequency systems}, introduced in \cite{howi14,bahowi15,VoHoPreprint}. 
    Given a subset $D\subset \RR^d$, a diffeomorphism $\Phi\colon D\rightarrow \RR^d$, and a prototype function $\theta\colon \RR^d \rightarrow \CC$, 
    the warped time-frequency system $\mathcal G(\theta,\Phi) = \{g_{y,\omega}\}_{y\in\RR^d,\omega\in D}$ is defined by 
    \begin{equation}
      g_{y,\omega} := \translation_y \left( \mathcal F^{-1} g_\omega \right)\quad \text{with}\quad g_\omega = |\det{\mathrm{D}\Phi}(\omega)|^{1/2}\cdot (\translation_{\Phi(\omega)}\theta )\circ \Phi,\quad \text{for all } (y,\omega)\in\RR^d\times D.
    \end{equation}
    By virtue of composition with the diffeomorphism $\Phi$, referred to as \emph{warping function}, the family $\{g_\omega\}_{\omega\in D}$ covers the frequency domain $D$ uniformly 
    \emph{in warped coordinates},  thereby defining a frequency scale $\Phi^{-1}$ to which $\mathcal G(\theta,\Phi)$ is naturally adapted. 
    
    Based on results in \cite{VoHoModules}, the authors have provided in \cite{VoHoPreprint} straightforward conditions for $\mathcal G(\theta,\Phi)\subset \mathcal{T}$ to generate Banach coorbit spaces 
    \[\Co(\Phi,Y) = \{ f\in \mathcal{R} : \|(\langle f,g_{y,\omega}\rangle)_{(y,\omega)\in \RR^d\times D}\|_Y < \infty\}\]
    and to obtain atomic decompositions in $\Co(\Phi,Y)$ by sampling $\mathcal G(\theta,\Phi)$. Here, $\mathcal{T}$ is a suitable Banach space of test functions, the reservoir 
    $\mathcal{R}$ is the space of continuous, conjugate-linear functionals on $\mathcal{T}$, and $Y$ is a rich, solid Banach space. In particular, the coorbit spaces $\Co(\Phi,\lebesgue^{p,q}_\kappa)$, with respect to the weighted mixed-norm spaces $\lebesgue^{p,q}_v$, $p,q\geq 1$ are well-defined Banach spaces under certain conditions, see also Section \ref{subsec:warpedTF}. In the present work, we combine these results with the embedding theory for decomposition spaces~\cite{voigtlaender2016embeddings}.
   
\subsection{Main results}\label{subsec:mainresults}
   
   Our central result identifies the coorbit spaces $\Co(\Phi,\lebesgue^{p.q}_{\kappa})$ associated with warped time-frequency representations, hereafter also referred to by the conveniently shorter, if slightly ambiguous, term \emph{warped coorbit spaces}, with certain decomposition spaces. Crucially, we require a suitable covering $\CalQ$, induced by $\Phi$. In fact, by simply pulling back the uniform covering 
$\left(\delta \cdot B_{r} (\ell)\right)_{\ell \in \ZZ^d}$ (for suitable $r>0$)  of the ``warped frequency space'' $\RR^d$ using $\Phi^{-1}$, one obtains a the desired covering.

 \begin{definition}\label{def:frequency_space_covering}
     Let $\Phi : D \to \RR^d$ be a diffeomorphism.
     For $\delta, r > 0$, the family
     \[
       \CalQ_{\Phi}^{(\delta, r)}
       := \big(\, Q_{\Phi,k}^{(\delta, r)} \,\big)_{k \in \ZZ^d},\quad  \text{with } Q_{\Phi,k}^{(\delta, r)} := \Phi^{-1}(\delta \cdot B_r (k)).
     \]
     is called the \emph{$(\delta, r)$-fine frequency covering induced by $\Phi$}.
 \end{definition}

  The $(\delta, r)$-fine frequency covering $\CalQ_{\Phi}^{(\delta, r)}$ induced by $\Phi$ is studied more detail in Section \ref{sec:coverings}. With this family of coverings, we obtain the following identification result, proven in Section \ref{sec:CoorbitAsDecomposition}. 
   
   \begin{maintheorem}\label{thm:main1}
     Let $D\subset \RR^d$ open, $\Phi\colon D\rightarrow \RR^d$ be a $\mathcal C^\infty$-diffeomorphism and a $(d+1)$-admissible warping function with control weight $v_0\colon \RR^d \rightarrow [1,\infty)$, and let the weight function $\kappa: D\rightarrow \RR^+$ be such that $\kappa\circ\Phi^{-1}$ is $\tilde{\kappa}$-moderate, for some submultiplicative, radially increasing weight $\tilde{\kappa}\colon \RR^d\rightarrow \RR^+$. Finally, fix $\delta > 0$ and $r > \sqrt{d} / 2$ and consider the $(\delta, r)$-fine frequency covering $\CalQ_{\Phi}^{(\delta, r)}$ induced by $\Phi$.

    If we set $u = (u_k)_{k \in \ZZ^d}$ with $u_k = \kappa (\Phi^{-1}(\delta k)) \cdot [\det(\mathrm{D}\Phi^{-1}(\delta k))]^{\frac{1}{q} - \frac{1}{2}}$, then 
    \[
       \Co (\Phi, \lebesgue_\kappa^{p,q}) = \DecompSp (\CalQ_\Phi^{(\delta, r)}, \lebesgue^p, \ell_u^q).
    \]
 \end{maintheorem}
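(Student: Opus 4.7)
The plan is to pass from the continuous coorbit norm to a discrete $\ell^q$-sum indexed by $k \in \ZZ^d$ via a sampling argument in warped frequency coordinates, and then to identify the discrete samples with the corresponding decomposition-space coefficients; this follows in spirit the standard template already used for the identification of $\alpha$-modulation coorbit and decomposition spaces. As a first step, I would unwind the voice transform to obtain
\[
V_g f(y,\omega) = \langle f, g_{y,\omega}\rangle = \Fourier^{-1}(\overline{g_\omega}\,\widehat{f})(y),
\]
so that
\[
\|f\|_{\Co(\Phi,\lebesgue^{p,q}_\kappa)} = \Big(\int_D \kappa(\omega)^q \bigl\|\Fourier^{-1}(\overline{g_\omega}\,\widehat{f})\bigr\|_{\lebesgue^p}^q\,d\omega\Big)^{1/q}.
\]
Substituting $\omega = \Phi^{-1}(\eta)$ introduces the Jacobian $|\det \mathrm{D}\Phi^{-1}(\eta)|$, which together with the built-in prefactor $|\det \mathrm{D}\Phi(\omega)|^{1/2}$ inside $g_\omega$ produces exactly the exponent combination appearing in $u_k$. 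On the decomposition-space side, choose a smooth partition of unity $(\varphi_k)_{k\in\ZZ^d}$ subordinate to $\CalQ_{\Phi}^{(\delta, r)}$ (the admissibility of this covering is provided by Section \ref{sec:coverings}); by construction, each $\varphi_k$ is supported in essentially the same warped ball $\Phi^{-1}(B_{\delta r}(\delta k))$ as the effective support of $g_{\omega_k}$ for $\omega_k := \Phi^{-1}(\delta k)$, up to the decay of $\theta$.

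The next step is to discretize the $\eta$-integral. The map $\eta \mapsto \bigl\|\Fourier^{-1}\!\bigl(\overline{g_{\Phi^{-1}(\eta)}}\widehat f\bigr)\bigr\|_{\lebesgue^p}$ should vary slowly on the scale of $\delta$: moving $\eta$ by a bounded amount merely translates $\theta(\Phi(\cdot)-\eta)$ in warped coordinates, and the $(d+1)$-admissibility of $\Phi$ together with the $\tilde{\kappa}$-moderateness of $\kappa\circ\Phi^{-1}$ yields a Plancherel--Polya / Wiener-amalgam type control comparing the continuous $\lebesgue^q_\eta$-norm of this quantity to the weighted $\ell^q_k$-sum over the lattice $\delta\ZZ^d$, with the Jacobian $|\det \mathrm{D}\Phi^{-1}(\delta k)|$ absorbed into the discretization weight. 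To close the loop I would then prove the pointwise equivalence, uniform in $k$,
\[
  \bigl\|\Fourier^{-1}(\overline{g_{\omega_k}}\widehat f)\bigr\|_{\lebesgue^p}
  \;\asymp\;
  |\det \mathrm{D}\Phi(\omega_k)|^{1/2}\,\bigl\|\Fourier^{-1}(\varphi_k \widehat f)\bigr\|_{\lebesgue^p}.
\]
Expanding $\widehat f = \sum_j \varphi_j \widehat f$ and using the admissibility of $\CalQ_{\Phi}^{(\delta,r)}$, only $O(1)$ neighbours $\varphi_j$ interact with $\overline{g_{\omega_k}}$, so the equivalence reduces to uniform $\lebesgue^p$-multiplier bounds for the products $\overline{g_{\omega_k}}\varphi_j$, and symmetrically for the reverse direction. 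Substituting this identification back yields precisely the factor $u_k = \kappa(\omega_k)\,[\det \mathrm{D}\Phi^{-1}(\delta k)]^{1/q - 1/2}$, proving both inclusions.

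The hardest part is the uniform $\lebesgue^p$-multiplier control underlying the pointwise equivalence above, together with the oscillation estimate used to replace the continuous $\eta$-integral by its $\ell^q$-sampling. Both reduce to Mikhlin-type bounds applied in warped coordinates, where the relevant derivative estimates for $g_{\omega_k}\circ\Phi^{-1}$ come precisely from the $(d+1)$-admissibility of $\Phi$ with its control weight $v_0$. The subtle point is that these derivative bounds must produce $\lebesgue^p$-multiplier norms that are uniform in $k$ and valid across the full range $p,q\in[1,\infty]$; this is where the structural properties of $\CalQ_{\Phi}^{(\delta,r)}$ developed in Section \ref{sec:coverings} become essential.
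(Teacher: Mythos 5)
Your high-level strategy (unwind the voice transform to $\Fourier^{-1}(\overline{g_\omega}\widehat{f})$, pass to warped coordinates, discretize, and compare with a BAPU) is close in spirit to the paper's Section~\ref{sec:CoorbitAsDecomposition}, but the key technical step you propose is false, and a second, equally essential component of the proof is missing.

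The central problem is your claimed per-$k$ pointwise equivalence
\[
  \bigl\|\Fourier^{-1}(\overline{g_{\omega_k}}\widehat f)\bigr\|_{\lebesgue^p}
  \;\asymp\;
  |\det \mathrm{D}\Phi(\omega_k)|^{1/2}\,\bigl\|\Fourier^{-1}(\varphi_k \widehat f)\bigr\|_{\lebesgue^p},
  \quad\text{uniformly in }k.
\]
The $\lesssim$ direction follows from Young's inequality with only the $\|\Fourier^{-1}\varphi_j\|_{\lebesgue^1}$ control, but the $\gtrsim$ direction cannot hold. If $\theta$ is chosen so that $\supp\theta\subset\delta B_\vartheta(0)$ with $\vartheta<r-\sqrt{d}/2$ (the natural choice that makes $g_{\omega_k}$ adapted to the cell $Q_{\Phi,k}^{(\delta,r)}$, used throughout the paper), then $\supp g_{\omega_k}=\Phi^{-1}(\delta k+\delta B_\vartheta(0))$ is a \emph{proper subset} of $\supp\varphi_k\subset Q_{\Phi,k}^{(\delta,r)}=\Phi^{-1}(\delta B_r(k))$. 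Taking $\widehat{f}$ supported in $\supp\varphi_k\setminus\supp g_{\omega_k}$ kills the left-hand side while leaving the right-hand side nonzero, so no constant can make the $\gtrsim$ inequality hold. Even arranging the supports to coincide exactly does not help: $g_{\omega_k}$ vanishes smoothly at its boundary, so the multiplier $\varphi_k/g_{\omega_k}$ blows up, and there is no Mikhlin-type bound to invoke. Your suggestion to expand $\widehat f=\sum_j\varphi_j\widehat f$ and treat the products $\overline{g_{\omega_k}}\varphi_j$ as multipliers only buys the $\lesssim$ direction; it gives you nothing in reverse because the $g_{\omega_\ell}$ are not a partition of unity and there is no canonical way to reconstruct $\varphi_k$ from them. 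The paper sidesteps this entirely: it does \emph{not} compare $\|\Fourier^{-1}(\overline{g_{\omega_k}}\widehat f)\|_{\lebesgue^p}$ to $\|\Fourier^{-1}(\varphi_k\widehat f)\|_{\lebesgue^p}$, but instead constructs the BAPU as a cell-averaged family $\varphi_k=\int_{M_k}\varphi^{(\xi)}\,d\xi$ with $\varphi^{(\xi)}=[w(\Phi(\xi))]^{-1/2}g_\xi$, so that Lemma~\ref{lem:CoorbitDecompositionConnection} delivers the \emph{exact identity}
\[
  [\Fourier^{-1}(\varphi_k\cdot\Fourier\psi_f)](y)
  =\int_{M_k}[w(\Phi(\omega))]^{-1/2}(V_{\theta,\Phi}f)(y,\omega)\,d\omega.
\]
One direction then follows from Minkowski's and Jensen's inequalities, the other from Young's inequality plus boundedness of the clustering map—no lower multiplier bound is ever needed.

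Separately, you nowhere address the fact that $\Co(\Phi,\lebesgue^{p,q}_\kappa)$ lives in the anti-dual reservoir $\Reservoir$ while $\DecompSp(\CalQ_\Phi^{(\delta,r)},\lebesgue^p,\ell^q_u)$ lives in $Z'(D)$. Making precise the ``canonical identification'' asserted in the statement is roughly half the work: one must build an injective continuous map $\Psi:\Reservoir\to Z'(D)$ (Lemma~\ref{lem:DecompSpDualReservoirEmbedding}, Corollary~\ref{cor:DecompSpReservoirEmbedding}), prove that the voice transform extends consistently to $Z'(D)$ with the reproducing property (Lemma~\ref{lem:VoiceTransformOnLargerReservoirProperties}, requiring a nontrivial Bochner-integral argument), and invoke a coorbit correspondence principle to show surjectivity of $\Psi$ (Corollary~\ref{cor:CoorbitIsomorphicToDecomposition}). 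Your proposal treats both sides as if they were norms on a common classical function space, which is not a priori the case.

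Finally, the ``Plancherel--Polya / Wiener-amalgam'' step you invoke to pass from the continuous $\omega$-integral to an $\ell^q$-sum over $\delta\ZZ^d$ is stated at a level of generality that is not actually available: $\omega\mapsto\|V_{\theta,\Phi}f(\cdot,\omega)\|_{\lebesgue^p}$ has no built-in bandlimitation in $\omega$, so no classical sampling inequality applies. The paper replaces this by the cell decomposition $D=\biguplus_k M_k$ and the exact identity above, avoiding the oscillation estimate altogether.

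So the overall architecture you sketch is a recognizable alternative template (sample-then-compare), but the central equivalence is wrong as stated, the fix is nontrivial and essentially forces you back toward the averaged-BAPU construction, and the reservoir identification is missing.
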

 
 To ensure that the warped coorbit space $\Co (\Phi, \lebesgue_\kappa^{p,q})$ is well-defined, the assumption that $\Phi$ is $(d+1)$-admissible (see Definition \ref{assume:DiffeomorphismAssumptions} for details) was required in \cite[e.g., Theorem 6.1]{VoHoPreprint}, as was the $\tilde{\kappa}$-moderateness of $\kappa\circ\Phi^{-1}$ (note that we restrict here to weights $\kappa$ that are independent of the time/space variable). Hence, the only additional requirement on $\Phi$ is that it is $\mathcal C^\infty$, which can always be achieved with arbitrarily small changes to $\Phi$, e.g., via convolution with a mollifier. 
 
 As consequences of Theorem \ref{thm:main1}, we further obtain embedding relations, or even equality, between warped coorbit spaces and important families of (semi-)classical smoothness spaces. In particular, we show that certain warped coorbit spaces coincide with $\alpha$-modulation spaces~spaces~\cite{gr92-2,BorupNielsenAlphaModulationSpaces} and Besov-type spaces of dominating mixed smoothness~\cite{nikol1962boundary,nikol1963boundary,vybiral2006function}. For inhomogeneous Besov spaces, we show that in dimension $d=1$, they coincide with certain warped coorbit spaces as well. For $d>1$, we show a certain closeness relation and that equality cannot be achieved. Specifically, we obtain the following results, proven in Sections \ref{sec:WarpedCoorbitAndBesov}--\ref{sec:DominatingMixedSmoothness} and using the notion of \emph{radial warping functions}. The radial warping function $\Phi_{\vrho}\colon\RR^d \rightarrow \RR^d$ induced by a smooth, antisymmetric diffeomorphism $\vrho\colon \RR\rightarrow \RR$ is given by $\Phi_{\vrho}(\xi) = \tfrac{\xi}{|\xi|}\cdot \vrho(|\xi|)$, see Section \ref{sec:radial} and \cite[Section 8]{VoHoPreprint} for details. The result uses a \emph{slow start construction} that linearizes $\vrho$ in a small neighborhood of the origin, but leaves it otherwise unchanged. This construction is introduced in Section \ref{sec:radial}, Theorem \ref{thm:SlowStartFullCriterion}, 
 
 \begin{maintheorem}\label{thm:main2}
     For $\alpha\in (-\infty,1)$, let $\vrho_\alpha$ be an arbitrary \emph{slow start version} of $\vsig_\alpha\colon [0,\infty)\rightarrow [0,\infty),\ \xi\mapsto (1+|\xi|)^{1-\alpha}-1$ and $\vrho_1$ a slow start version of $\vsig_1\colon [0,\infty)\rightarrow [0,\infty),\ \xi\mapsto \ln(1+|\xi|)$. Then, the following hold for all $1\leq p,q \leq \infty$:
     \begin{enumerate}
      \item For $\alpha\in (-\infty,1)$ and $s\in\RR$, we have
      \[
        \Co(\Phi_{\vrho_{\alpha}},\lebesgue^{p,q}_{\kappa}) = M_{p,q}^{s,\alpha}(\RR^d) \, ,\quad \text{with}\quad \kappa := \kappa^{(\alpha,s,d,q)}\,\colon\, \RR^d\rightarrow [0,\infty),\ \xi \mapsto (1 + |\xi|)^{s-d\alpha(q^{-1}-2^{-1})}, 
     \]
     up to canonical identifications. Here, $M_{p,q}^{s,\alpha}(\RR^d)$ is the $\alpha$-modulation space with exponents $p,q$ and weight parameter $s$.
     \item If $\Phi_{\vrho_{1}}\colon \RR \rightarrow \RR$ is the $1$-dimensional radial warping function associated with $\vrho_{1}$ and 
     \[
        \Phi: \RR^d \rightarrow \RR^d,\ \xi\mapsto (\Phi_{\vrho_{1}}(\xi_1),\ldots,\Phi_{\vrho_{1}}(\xi_d)),
     \]
     then we have, for all $s\in\RR$, 
    \[
        S^s_{p,q} B (\RR^d)
        = \Co (\Phi, \lebesgue^{p,q}_\kappa)\, ,\quad \text{with}\quad \kappa \colon \RR^d \to \RR^+, \xi = (\xi_1,\dots,\xi_d) \mapsto \prod_{\ell=1}^d (1 + |\xi_\ell|)^{s_\ell + 2^{-1} - q^{-1}} \, ,
    \]  
    up to canonical identifications. Here, $S^s_{p,q} B (\RR^d)$ is the Besov-type space of dominating mixed smoothness with exponents $p,q$ and weight parameter $s$.
    \item Let $s\in\RR$, $d\in\NN$, and $\Phi_{\vrho_{1}}\colon \RR^d \rightarrow \RR^d$ be the $d$-dimensional radial warping function associated with $\vrho_{1}$. We have, for all $\eps>0$, and with $\kappa^{(s,d,q)} \colon \RR^d \rightarrow \RR^+,\ \xi \mapsto (1+|\xi|)^{s+d(2^{-1}-q^{-1})}$, 
    \[
      \Co (\Phi_{\vrho_{1}}, \lebesgue^{p,q}_{\kappa^{(s+\eps,d,q)}})
    \hookrightarrow
    B_s^{p,q} (\RR^d)
    \hookrightarrow
    \Co (\Phi_{\vrho_{1}}, \lebesgue^{p,q}_{\kappa^{(s-\eps,d,q)}}),
    \]
    up to canonical identifications. Here, $B_s^{p,q} (\RR^d)$ is the inhomogeneous Besov space with exponents $p,q$ and weight parameter $s$. If and only if $d=1$, or $p=q=2$, we have the equality
    \[
      B_s^{p,q} (\RR^d) = \Co ( \Phi_{\vrho_{1}}, \lebesgue^{p,q}_{\kappa^{(s,q)}}).
    \]   
    \end{enumerate}
    \end{maintheorem}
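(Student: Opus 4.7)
The plan is to reduce all three parts to comparisons of decomposition spaces via Theorem \ref{thm:main1}, once the classical spaces on the right-hand side have been recognized as decomposition spaces attached to well-known coverings. First I would verify that each of the warping functions appearing in (1)--(3) fits the hypotheses of Theorem \ref{thm:main1}: the slow start construction from Section \ref{sec:radial} ensures $C^\infty$-smoothness at the origin, while $(d{+}1)$-admissibility and moderateness of the stated weights $\kappa$ follow from the radial criteria established in Theorem \ref{thm:SlowStartFullCriterion} (and, for Part (2), by a tensor-product argument that applies the one-dimensional criterion coordinate-wise, since the product warping has block-diagonal Jacobian $\mathrm{D}\Phi(\xi)=\mathrm{diag}(\vrho_1'(\xi_\ell))_{\ell=1}^d$). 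After this verification, Theorem \ref{thm:main1} rewrites every warped coorbit space in (1)--(3) as a concrete decomposition space $\DecompSp(\CalQ^{(\delta,r)}_\Phi,\lebesgue^p,\ell_u^q)$ with the explicit weight $u_k=\kappa(\Phi^{-1}(\delta k))\,|\det \mathrm{D}\Phi^{-1}(\delta k)|^{1/q-1/2}$.

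Next, I would match the right-hand side to a known decomposition space representation of the classical target. For Part (1), the $\alpha$-modulation space is by definition (see \cite{BorupNielsenAlphaModulationSpaces}) a decomposition space for an $\alpha$-covering whose cells have diameter $\sim (1+|\xi|)^\alpha$; for Part (2), $S^s_{p,q}B$ is a decomposition space for the tensor product of the standard one-dimensional dyadic covering in each coordinate; for Part (3), $B^{p,q}_s$ is a decomposition space for the dyadic annular covering. The $(\delta,r)$-fine frequency covering induced by $\Phi_{\vrho_\alpha}$ agrees with an $\alpha$-covering up to the equivalence of coverings used in \cite{voigtlaender2016embeddings}: a straightforward Jacobian computation using $\vrho_\alpha(t)\sim t^{1-\alpha}$ shows that $\Phi^{-1}_{\vrho_\alpha}(\delta B_r(k))$ has diameter $\sim (1+|\delta k|)^\alpha$ with bounded radial/angular distortion. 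The same computation, coordinate-wise, shows the product warping pulls back the uniform covering to a product of essentially dyadic coverings of the axes, identifying Part (2) with the mixed-smoothness decomposition space. Matching the weights in each case is a direct substitution into $u_k$, using that $|\det \mathrm{D}\Phi^{-1}|$ behaves like an appropriate power of $1+|\cdot|$. By the equivalence theorem for decomposition spaces on equivalent coverings with compatible weights, Parts (1) and (2) then follow.

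Part (3) is the most delicate and is where I expect the main obstacle. The positive direction, the two $\eps$-loss embeddings, is handled as follows: the pullback covering $\CalQ^{(\delta,r)}_{\Phi_{\vrho_1}}$ is \emph{subordinate} to the dyadic annular covering in the sense that each Besov annulus $A_n$ decomposes into $\sim 2^{n(d-1)}$ warped cells spread over the sphere, while each warped cell fits inside $O(1)$ consecutive annuli; the embedding theorem of \cite{voigtlaender2016embeddings} for decomposition spaces whose coverings satisfy this kind of refinement relation, together with the polynomial weight mismatch $\kappa^{(s\pm\eps,d,q)}$, yields the chain of embeddings. For the ``only if'' part, after identifying both sides with decomposition spaces I plan to invoke the sharp characterization of equality between decomposition spaces from \cite{voigtlaender2016embeddings} -- equality forces the two coverings to be \emph{almost equivalent}, and the summation orders to be compatible. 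When $d=1$ the radial covering and the dyadic covering are almost equivalent, giving equality; when $p=q=2$ Plancherel reduces both sides to a weighted $L^2$-space, so equality is automatic for the right choice of $\kappa$. To exclude all remaining cases, I would exhibit a sequence of test functions $f_n$ whose Fourier transform is supported in a single warped cell within $A_n$: the Besov norm of $f_n$ sees only one cell per annulus and scales like $2^{ns}\|\Fourier^{-1}(\varphi_n \widehat{f_n})\|_{\lebesgue^p}$, whereas the warped coorbit norm aggregates over the $\sim 2^{n(d-1)}$ angular cells via $\ell^q$. Comparing these asymptotics for the original $f_n$ and for sums of translates across angular cells produces divergent norm ratios as $n\to\infty$ whenever $d\geq 2$ and $p\neq q$, and an analogous construction with $\ell^p$- versus $\ell^q$-type sums distinguishes the cases $p=q\neq 2$. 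Carrying out this construction rigorously -- in particular ensuring the test functions are adapted to both partitions of unity simultaneously -- is the technical heart of the proof.
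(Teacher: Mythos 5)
Your overall strategy---rewriting each coorbit space as a decomposition space via Theorem \ref{thm:main1} and then comparing coverings and weights---is exactly the paper's, and for Parts (1) and (2) you follow essentially the same route (a Jacobian computation showing $\CalQ^{(\delta,r)}_{\Phi_{\vrho_\alpha}}$ is an $\alpha$-covering; a tensor-product reduction of Part (2) to the one-dimensional equivalence of $\CalQ^{(\delta,r)}_{\Phi_{\vrho_1}}$ with the dyadic covering). Two steps you gloss over are genuine: for $\alpha<0$ the independence of $M^{s,\alpha}_{p,q}$ from the chosen $\alpha$-covering is not contained in \cite{BorupNielsenAlphaModulationSpaces} (which treats $\alpha\in[0,1]$) and must be proved, and the classically defined $S^s_{p,q}B(\RR^d)\subset\Schwartz'(\RR^d)$ must be identified with the decomposition space defined over the reservoir $Z'(\RR^d)$.

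The real problems are in Part (3). First, a quantitative error: for the warping built from $\vsig_1=\ln(1+\cdot)$, a cell $Q^{(\delta,r)}_{\Phi_{\vrho_1},k}$ meeting the annulus $B_n$ has radial extent $\asymp 2^n$ but angular aperture $\asymp 1/|k|\asymp 1/n$, so the number of warped cells per dyadic annulus is $\asymp n^{d-1}$, not $\sim 2^{n(d-1)}$ as you claim. This is not cosmetic: with your count, the mismatch between the $\ell^q$-aggregation over angular cells and the Besov norm would be of exponential size $2^{cn(d-1)}$ and could not be absorbed by the weight perturbation $(1+|\xi|)^{\pm\eps}\asymp 2^{\pm n\eps}$ for small $\eps$, so the $\eps$-loss embeddings would fail as you argue them. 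It is precisely the polynomial bound $n^{d-1}\lesssim 2^{n\eps}$ that makes them hold for every $\eps>0$.

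Second, the ``only if'' direction for $d\geq 2$. You propose to exhibit test functions adapted to single warped cells and to sums over angular cells and to show divergent norm ratios; you yourself flag this as the unexecuted ``technical heart''. The paper avoids this construction entirely: it shows by a purely combinatorial argument (the iterated neighbor sets satisfy $|k^{n\ast}|\geq(1+2n)^d$ for $\CalQ^{(\delta,r)}_{\Phi}$ but $|j^{n\ast}|\leq 1+2n$ for $\CalB$) that the two coverings are never weakly equivalent when $d\geq2$, and then cites the sharpness theorem for decomposition spaces over non-equivalent coverings to force $p=q=2$; the remaining one-sided embeddings follow from the explicit finiteness criteria $K_t,K_{\tilde t}$. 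Your test-function route can in principle be made rigorous---it is how such sharpness results are proved---but as written it is a sketch of a substantial argument (a Khintchine-type randomization is typically needed to separate the $p=q\neq2$ cases), and it currently rests on the incorrect cell count above, so it does not yet constitute a proof.
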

    
    The formal definition of the spaces $M_{p,q}^{s,\alpha}(\RR^d)$, $S^s_{p,q} B (\RR^d)$ and $B_s^{p,q} (\RR^d)$ is given in Sections \ref{sec:WarpedCoorbitAndBesov}, \ref{sec:alpha} and \ref{sec:DominatingMixedSmoothness}. Finally, we obtain conditions for equality of warped coorbit spaces and embeddings between warped coorbit spaces with respect to radial warping functions, in Section \ref{sec:WarpedEmbeddings}. 
    
    \begin{maintheorem}\label{thm:main3}
    Let $\Phi_1,\Phi_2 \colon D\rightarrow \RR^d$ satisfy the conditions of Theorem \ref{thm:main1} and let $\kappa : \RR^d \rightarrow \RR^+$ satisfy the conditions of Theorem \ref{thm:main1} with $\Phi = \Phi_2$. Then the following hold:
      \begin{enumerate}
        \item Suppose that there is a constant $C>0$, such that 
       \[
            \| \mathrm{D}\Phi_{3-j} (\Phi_{j}^{-1} (\xi)) \cdot \mathrm{D}\Phi_j^{-1}(\xi) \| \leq C,\quad \text{for all } \xi\in \RR^d \text{ and } j\in\{1,2\}.
       \]
      Then we have, for all $p,q\in [1,\infty]$, the equality $\Co(\Phi_{1},\lebesgue_{\kappa}^{p,q}) = \Co(\Phi_{2},\lebesgue_\kappa^{p,q})$.
      \item Suppose that $\Phi_1= \Phi_{\vrho_1} ,\Phi_2 = \Phi_{\vrho_2}$ are radial, and that there are a constant $C>0$ and a function $C_{\vrho_1} : [1,\infty)\rightarrow [1,\infty)$, such that
      \[
            \vrho_2'(\xi) \leq C \vrho_1'(\xi),\quad \text{and}\quad C_{\vrho_1}(\alpha)^{-1}\vrho_1'(\xi)\leq \vrho_1'(\alpha\xi) \leq C_{\vrho_1}(\alpha)\vrho_1'(\xi),\quad \text{for all } \alpha\in [1,\infty),\ \xi\in\RR^+,
      \]
     then we have, for all $1\leq p,q\leq \infty$, the embeddings
     \[
        \Co(\Phi_{\vrho_1},\lebesgue_{\kappa_{\vrho_1,\vrho_2,-\tilde{t}}}^{p,q})\hookrightarrow \Co(\Phi_{\vrho_2},\lebesgue_\kappa^{p,q}) \hookrightarrow \Co(\Phi_{\vrho_1},\lebesgue_{\kappa_{\vrho_1,\vrho_2,t}}^{p,q}),
     \]
    with $\tilde{t} = \max(0,\max(p^{-1},1-p^{-1})-q^{-1})\leq 1$, and $t = \max(0,q^{-1}-\min(p^{-1},1-p^{-1}))\leq 1$. Here, for $t\in\RR$, $\kappa_{\vrho_1,\vrho_2,t} = \kappa \cdot \left[\frac{w_2\circ\Phi_{\vrho_2}}{w_1\circ \Phi_{\vrho_1}}\right]^{\frac{1}{q}-\frac{1}{2}-t} \quad\text{and}\quad w_j = \det(\mathrm{D}\Phi^{-1}_{\vrho_j}),\quad j\in\{1,2\}$.
     \end{enumerate}
    \end{maintheorem}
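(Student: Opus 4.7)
The overall plan is to transfer both parts to decomposition-space statements via Theorem~\ref{thm:main1} and then invoke the embedding theory from~\cite{voigtlaender2016embeddings}. By Theorem~\ref{thm:main1}, for $j\in\{1,2\}$ we have
\[
\Co(\Phi_j,\lebesgue^{p,q}_\kappa) = \DecompSp\bigl(\CalQ_{\Phi_j}^{(\delta,r)}, \lebesgue^p, \ell_{u^{(j)}}^q\bigr), \qquad u^{(j)}_k = \kappa\bigl(\Phi_j^{-1}(\delta k)\bigr) \cdot |\det \mathrm{D}\Phi_j^{-1}(\delta k)|^{1/q-1/2},
\]
so the claim reduces to geometric and combinatorial comparison of the two coverings and their associated weights.

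For part~(1), the hypothesis rewrites as $\|\mathrm{D}(\Phi_{3-j}\circ\Phi_j^{-1})(\xi)\|\leq C$, so both transition maps $\Phi_1\circ\Phi_2^{-1}$ and $\Phi_2\circ\Phi_1^{-1}$ are globally $C$-Lipschitz. Consequently each image $\Phi_{3-j}(Q_{\Phi_j,k}^{(\delta,r)})$ has uniformly bounded diameter and is covered by a uniformly bounded number of balls $\delta B_r(\ell)$; this is exactly the statement that $\CalQ_{\Phi_1}^{(\delta,r)}$ and $\CalQ_{\Phi_2}^{(\delta,r)}$ are weakly equivalent coverings. The same Lipschitz estimates, combined with the inverse function theorem and the $\tilde\kappa$-moderateness of $\kappa\circ\Phi_j^{-1}$, give that the weights $u^{(1)}$ and $u^{(2)}$ are equivalent under any consistent matching of indices. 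The standard change-of-covering result for decomposition spaces then yields the claimed equality for all $p,q\in[1,\infty]$.

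For part~(2), the radial structure together with $\vrho_2'\leq C\vrho_1'$ forces $\vrho_2$ to grow radially more slowly than $\vrho_1$, so the cells of $\CalQ_{\Phi_{\vrho_2}}^{(\delta,r)}$ are radially larger than those of $\CalQ_{\Phi_{\vrho_1}}^{(\delta,r)}$; the scaling hypothesis $\vrho_1'(\alpha\xi)\asymp \vrho_1'(\xi)$ makes this uniform along each ray. From this one deduces that $\CalQ_{\Phi_{\vrho_1}}^{(\delta,r)}$ is almost subordinate to $\CalQ_{\Phi_{\vrho_2}}^{(\delta,r)}$, after which the abstract embedding characterization of \cite{voigtlaender2016embeddings} reduces both desired embeddings to an $\ell^\infty$-boundedness of an explicit quotient of weights. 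Comparing this quotient against $u^{(1)}_k/u^{(2)}_k$ produces exactly the factor $[w_2\circ\Phi_{\vrho_2}/w_1\circ\Phi_{\vrho_1}]^{1/q-1/2-t}$ appearing in $\kappa_{\vrho_1,\vrho_2,t}$, and the exponents $t,\tilde t$ arise from the Young-type estimates in the embedding theorem: the factor $\max(p^{-1},1-p^{-1})$ is the Young exponent governing $\|\FF^{-1}(\varphi\cdot\hat g)\|_{\lebesgue^p}$ across covering elements, while subtracting $q^{-1}$ converts the resulting pointwise bound into $\ell^q$-control.

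The main obstacle will be the weight bookkeeping in part~(2). The weights $u^{(j)}$ themselves depend on $p,q$ through the Jacobian exponent $1/q-1/2$, and the embedding theorem of \cite{voigtlaender2016embeddings} introduces further $p$-dependent factors via its Young/convolution estimates. Reproducing exactly the stated $t=\max(0,q^{-1}-\min(p^{-1},1-p^{-1}))$ and $\tilde t=\max(0,\max(p^{-1},1-p^{-1})-q^{-1})$ requires splitting into the cases $p\leq 2$ and $p\geq 2$ and, in each case, comparing $q^{-1}$ against $p^{-1}$ and $1-p^{-1}$, so that the $\max(0,\cdot)$ truncation reflects precisely which side of Young's inequality is being invoked and on which exponent the embedding is actually loss-free.
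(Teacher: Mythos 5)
Your proposal follows essentially the same route as the paper: both parts are translated to decomposition-space statements via Theorem~\ref{thm:main1}, part~(1) is reduced to weak equivalence of the two induced coverings together with equivalence of the associated weights, and part~(2) is obtained from the embedding criterion of~\cite{voigtlaender2016embeddings} after establishing almost subordinateness and relative moderateness of $\CalQ_{\Phi_{\vrho_1}}^{(\delta,r)}$ (and of the weight) with respect to $\CalQ_{\Phi_{\vrho_2}}^{(\delta,r)}$. The only notable micro-difference is in part~(1): where you gesture at the inverse function theorem to equate the Jacobian factors $w_1(\delta k)$ and $w_2(\delta\ell)$, the paper instead compares Lebesgue measures of nested covering elements (via $\mu(Q_{\Phi_j,k}^{(\delta,r)})\asymp w_j(\delta k)$); your determinant-based shortcut $\det \mathrm{D}(\Phi_{3-j}\circ\Phi_j^{-1})\asymp 1$ combined with moderateness of $w_2$ is a valid alternative, but you should state it rather than only allude to it.
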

    
    We now proceed to introduce fundamental notation used in the paper and, in Section \ref{sec:prelims}, recall essential results on warped time-frequency systems, warping functions, and their associated coorbit spaces, as well as fundamental notions and results from the theory of decomposition spaces. Note that some results of potential interest, but not crucial to our main contributions, are deferred to the appendices. 
           
    \subsection{Notation and fundamental definitions}
\label{subsec:notation}
We write $\R^+ = (0,\infty)$ for the set of positive real numbers. 
For the composition of functions $f$ and $g$ we use the notation $f \circ g = f(g(\bullet))$.
For a subset $M \subset X$ of a fixed base set $X$
(which is usually understood from the context), we use the indicator function
$\Indicator_{M}$ of the set $M$, where $\Indicator_M (x) = 1$ if $x \in M$
and $\Indicator_M (x) = 0$ otherwise.

The (topological) dual space of a (complex) topological vector space $X$
(i.e., the space of all continuous linear functionals $\varphi : X \to \CC$)
is denoted by $X'$, while the (topological) \emph{anti}-dual of a Banach space $X$
(i.e., the space of all \emph{anti-linear} continuous functionals on $X$)
is denoted by $X^\urcorner$.

We use the convenient short-hand notations $\lesssim$ and $\asymp$,
where $A \lesssim B$ means $A \leq C \cdot B$, for some constant $C > 0$ that depends
on quantities that are either explicitly mentioned or clear from the context.
$A \asymp B$ means $A \lesssim B$ and $B \lesssim A$.

\subsubsection{Norms and related notation}

We write $|x|$ for the Euclidean norm of a vector $x \in \RR^d$,
and we denote the operator norm of a linear operator $T : X \to Y$
by $\| T \|_{X \to Y}$, or by $\| T \|$, if $X,Y$ are clear from the context.
In the expression $\| A \|$, a matrix $A \in \RR^{n \times d}$ is interpreted
as a linear map $(\RR^d,|\bullet|) \to (\RR^n,|\bullet|)$.
The open (Euclidean) ball around $x \in \R^d$ of radius $r > 0$ is denoted by $B_r (x)$. 

\subsubsection{Fourier-analytic notation and function spaces}

The Lebesgue measure of a (measurable) subset $M \subset \RR^d$ is denoted by $\mu(M)$.
The Fourier transform is given by
$\widehat{f}(\xi) = \Fourier f(\xi) = \int_{\RR^d} f(x) \, e^{-2 \pi i \langle x, \xi\rangle} \, dx$,
for all $f \in \LiRd$.
It is well-known that $\Fourier$ extends to a unitary automorphism of $\LtRd$.
The inverse Fourier transform is denoted by $\widecheck{f} := \Fourier^{-1}f$.
We write $\LtDF := \mathcal F^{-1}(\lebesgue^2(D))$ for the space of
square-integrable functions whose Fourier transform vanishes (a.e.) outside of $D \subset \R^d$.
In addition to the Fourier transform, the \emph{modulation} and 
\emph{translation operators} 
$\bd{M}_\omega f = f\cdot e^{2\pi i\langle\omega, \bullet\rangle}$ and 
$\bd T_y f = f(\bullet - y)$,
will be used frequently.

We write $\TestFunctionSpace(D) = \mathcal C_c^\infty (D)$ for the space of test functions on an open set $D\subset\RR^d$, i.e., the space of infinitely
differentiable functions with compact support in $D$. We denote by $\TestFunctionSpace'(D)$ the space of distributions on $D$,
i.e., the topological dual space of $\TestFunctionSpace(D)$. For details regarding the topology on $\TestFunctionSpace(D)$, we refer to
\cite[Chapter 6]{RudinFA}. We also consider the space $\mathcal{S}(\RR^d)$ of Schwartz functions and its topological dual $\mathcal{S}'(\RR^d)$, the space of tempered distributions. As is well-known (see e.g. \cite[Corollary 8.28]{FollandRA}),
the Fourier transform extends to a linear homeomorphism $\Fourier : \mathcal{S} (\RR^d) \to \mathcal{S} (\RR^d)$,
and by duality also to a homeomorphism $\Fourier : \mathcal{S}' (\RR^d) \to \mathcal{S} ' (\RR^d)$, where by definition
$\langle \Fourier \varphi, f \rangle_{\mathcal{S}', \mathcal{S}}= \langle \varphi, \Fourier f \rangle_{\mathcal{S}', \mathcal{S}}$, cf.~\cite[Page 295]{FollandRA}.

We consider a fixed open set $D\subset \RR^d$ and consistently write $\PhSpace = \RR^d\times D$. The weighted,
\emph{mixed} Lebesgue spaces $\bd L^{p,q}_\kappa(\PhSpace)$,
for $1\leq p,q\leq \infty$, consist of all measurable functions $F: \PhSpace \to \CC$
for which
\begin{equation}\label{eq:mixedLebesgue}
  \|F \|_{\bd L_\kappa^{p,q}}
  := \left\|
          \vphantom{\sum}
          \PhVar_2 \mapsto \left\|
                              (\kappa\cdot F)(\bullet, \PhVar_2)
                            \right\|_{\bd L^p (\RR^d)}
     \right\|_{\bd L^q (D)} <  \infty.
\end{equation}
Here, $\kappa : \PhSpace \to \RR^+$ is a (measurable) weight function. Likewise, for a discrete set $I$ and a weight $u=(u_i)_{i\in I}$, $\ell^q(I)$ denotes the space of all sequences $(c_i)_{i\in I}$, such that the norm $\|(c_i)_{i\in I}\|_{\ell^q_u}:=\|(u_i\cdot c_i)_{i\in I}\|_{\ell^q}$ is finite.

\subsubsection{Matrix notation}

For matrix-valued functions $A : U \to \RR^{d \times d}$, the notation
$A(x)\langle y\rangle := A(x)\cdot y$ denotes the multiplication of the matrix $A(x)$, $x \in U$,
with the vector $y \in \R^d$ in the usual sense.
Likewise, for a set $M\subset \RR^d$, we write
\[
  A(x)\langle M\rangle := \big\{ A(x)\langle y\rangle~:~ y\in M \big\}.
\]
Here, as in the remainder of the paper, the notation $A^T$
denotes the transpose of a matrix $A$. Further, for matrices $A_i \in \RR^{d_i\times d_i}$ with $d_i\in\NN$, for 
$i = 1,\ldots,n$, we denote by 
\[
 \mathrm{diag}(A_1,\ldots,A_n) \in \RR^{d \times d},\quad \text{with}\quad d = d_1 + \cdots + d_n,
\]
the block-diagonal matrix with blocks $A_1,\ldots,A_n$, in the indicated order. 

\subsubsection{Convention for variables}

Throughout this article, $x,y,z \in \R^d$ will be used to denote variables in time/position space,
$\xi,\omega,\eta \in D$ in frequency space, $\PhVar,\PhVarA,\PhVarC \in \PhSpace = \R^d \times D\subset \RR^{2d}$ in phase space, 
and finally $\sigma, \tau,\upsilon \in \R^d$ denote variables in warped frequency space.
Unless otherwise stated, this also holds for subscript-indexed variants;
precisely, subscript indices (i.e., $x_i$) are used to denote the $i$-th element
of a vector $x \in \RR^d$.
In some cases, we also use subscripts to enumerate multiple vectors,
e.g., $x_1,\dots,x_n \in \R^d$.
In this case, we denote the components of $x_i$ by $(x_i)_j$.

  \section{Preliminaries}
  \label{sec:prelims}
  
  In this section we collect definitions and results from the literature that will be subsequently used. We begin by recalling the cornerstones of the theory of warped time-frequency systems in \cite{VoHoPreprint}.
  
  \subsection{Warped time-frequency systems}\label{subsec:warpedTF}
  
  \begin{definition}\label{def:warpfundamental}
  Let $D \subset \RR^d$ be open.
  A $C^1$-diffeomorphism $\Phi: D \rightarrow \RR^d$ is called
  a \emph{warping function}, if $\det(\mathrm{D}\Phi^{-1}(\tau))>0$
  for all $\tau \in \RR^d$ and if and the associated weight function
  \begin{equation}
    w : \quad
    \RR^d \to \RR^+, \quad
    w(\tau) = \det(\mathrm{D}\Phi^{-1}(\tau)),
    \label{eq:wDefinition}
  \end{equation}
  is $w_0$-moderate for some submultiplicative weight $w_0 : \RR^d \to \R^+$, i.e., $w(\tau+\upsilon) 
  \leq w_0(\tau) \cdot w(\upsilon)$ and $w_0(\tau+\upsilon) 
  \leq w_0(\tau) \cdot w_0(\upsilon)$, for all $\tau,\upsilon\in\RR^d$.
  
  If $0\neq \theta \in \TestFunctionSpace(\RR^d)$, then the
   \emph{(continuous) warped time-frequency system} generated by $\theta$
  and $\Phi$ is the collection of functions
  $\mathcal G(\theta,\Phi):=(g_{y,\omega})_{(y,\omega)\in\Lambda}$, where
  \begin{equation}
    g_{y,\omega} := \bd T_{y} \widecheck{g_\omega},
    \quad \text{ with } \quad
    g_\omega := w(\Phi(\omega))^{-1/2} \cdot (\bd T_{\Phi(\omega)} \theta)\circ \Phi
    \text{ for all } y\in \RR^d,\ \omega\in D.
    \label{eq:WarpedSystemDefinition}
  \end{equation}
  Here, the function $g_\omega : D \to \CC$ is extended by zero to a
  function on all of $\RR^d$, so that $\widecheck{g_\omega}$ is well-defined. The associated 
  \emph{voice transform} is defined by 
   \begin{equation}\label{eq:warpedtransform}
    V_{\theta, \Phi} f: \quad
    \RR^d  \times D \to \CC, \quad
    (y,\omega) \mapsto \inner{f}{g_{y,\omega}}, \text{ for all } f\in\LtDF.
 \end{equation}
\end{definition}

It was established in \cite[Theorem 3.5]{VoHoPreprint} that $\mathcal G(\theta,\Phi)$ is a 
continuous tight frame, with the frame bound given by $\|\theta\|_{\lebesgue^2}^2<\infty$ and that the map $(y,\omega) \mapsto g_{y,\omega}$ is continuous. Note that we introduce the restriction to $\theta \in \TestFunctionSpace(\RR^d)$ for technical reasons, while in \cite{VoHoPreprint}, warped time-frequency systems were defined under the weaker assumption $\theta \in \Ltw(\RR^d)\supset \TestFunctionSpace(\RR^d)$. Finally, it was noted in \cite[Section 3]{VoHoPreprint}, that
\begin{equation}\label{eq:PhiJacobianDetExpressedInW}
   w(\Phi(\xi)) = [\det(D\Phi(\xi)]^{-1},\quad \text{for all } \xi\in D.
\end{equation}

 For warping functions that satisfy a mild regularity condition, it was shown that $\Phi$ induces a family of coorbit spaces that is independent of the particular choice of $\theta\in\TestFunctionSpace(\RR^d)\subset \Ltw(\RR^d)$. We briefly recall these results in the detail required for this work.

\begin{definition}\label{assume:DiffeomorphismAssumptions}
  Let $\emptyset \neq D \subset \RR^d$ be an open set and fix an integer $k\in\NN_0$.
  A map $\Phi:D\to\mathbb{R}^{d}$ is a \emph{$k$-admissible warping function}
  with \emph{control weight} $v_0:\RR^d\rightarrow [1,\infty)$,
  if $v_0$ is continuous, submultiplicative and radially increasing
  and $\Phi$ satisfies the following assumptions:
  \begin{itemize}
   \item $\Phi$ is a $C^{k+1}$-diffeomorphism and $A := \mathrm{D}\Phi^{-1}$ has positive determinant.
   \item With
          \begin{equation}
              \phi_{\tau}\left(\upsilon\right)
              :=\left(A^{-1}(\tau) A(\upsilon+\tau)\right)^T
              = A^T (\upsilon+\tau) \cdot A^{-T}(\tau),
              \label{eq:PhiDefinition}
          \end{equation}
          we have
          \begin{equation}\label{eq:PhiHigherDerivativeEstimate}
            \left\Vert
              \partial^{\alpha}\phi_{\tau}\left(\upsilon\right)
            \right\Vert
            \leq v_0 (\upsilon) 
            \qquad \text{ for all } \tau, \upsilon \in \mathbb{R}^{d}
                   \text{ and all multiindices }
                   \alpha \in \mathbb{N}_{0}^{d}, ~ \left|\alpha\right|\leq k.  
          \end{equation}
  \end{itemize}
\end{definition}

 \begin{remark}\label{rem:SimpleWeightRemark}
 We say that a
 function $v:\RR^d\rightarrow \RR$ is \emph{radially increasing}, if
     \[
       |x|\leq |y|\quad \text{ implies }\quad v(x)\leq v(y), \quad \text{ for all } x,y\in\Lambda.
     \]
 In particular, $v$ is radial, i.e., $v(x) = v(y)$ if $|x|=|y|$. 
 The requirement that the control weight $v_0$ be radially increasing 
 leads to significant simplifications and simple computations show that this is no restriction. 
 \end{remark}
 
 Here, we are specifically interested in coorbit spaces with respect to weighted, mixed Lebesgue spaces, i.e., $\Co(\mathcal G(\theta,\Phi),\lebesgue^{p,q}_\kappa)$. Unless the weight $\kappa\colon\PhSpace \to \RR^+$ is of no consequence for the desired result, we will from now on restrict to warping functions $\Phi\colon D\rightarrow \RR^d$ and weights $\kappa$ that are compatible in the sense of the following definition. In particular $\kappa(\PhVar)$ and $v_{\Phi,\kappa}(\PhVar)$ defined below are independent of the first component of $\PhVar = (x,\xi)\in\PhSpace$, and we will commit the convenient abuse of notation of interpreting $\kappa$ and $v_{\Phi,\kappa}$ as
functions on $D$, i.e., $\kappa : D \to \RR^+$. 
 
 \begin{definition}\label{def:StandingDecompositionAssumptions}
     We say that the tuple 
     $(\Phi,\kappa)$, with $\Phi\colon D \to \RR^d$ and $\kappa\colon D \to \RR^+$ forms a \emph{compatible pair}, or that $\Phi$ and $\kappa$ are \emph{compatible}, if the following hold:
     \begin{itemize}
         \item The set $D\subset\RR^d$ is open and $\Phi\colon D \to \RR^d$ is a $\mathcal C^\infty$-diffeomorphism and a $(d+1)$-admissible warping function, with control weight $v_0\colon\RR^d\rightarrow [1,\infty)$.
         \item The \emph{transplanted weight} $\kappa_{\Phi} := \kappa \circ \Phi^{-1}$ is $\tilde{\kappa}$-moderate for some continuous, submultiplicative and radially increasing weight $\tilde{\kappa} : \RR^d \to [1,\infty)$.
     \end{itemize}
     If $\Phi$ and $\kappa$ are compatible, and $v_0$ is a control weight for $\Phi$, then we define the weight $v_{\Phi,\kappa}\colon \PhSpace \rightarrow [1,\infty)$ by
     \begin{equation}\label{eq:SpecialGoodVectorsChoice}
        v_{\Phi,\kappa}(x,\xi) = \tilde{v}(\Phi(\xi)),\quad \text{for all } (x,\xi)\in \PhSpace,\quad \text{and with } \tilde{v} := \tilde{\kappa}\cdot v_0^d.
\end{equation}
 \end{definition}

The following theorem summarizes \cite[Theorem 2.13, Lemma 4.9, Theorem 7.1]{VoHoPreprint} in the form required here.

\begin{theorem}[\cite{VoHoPreprint}]\label{cor:warped_disc_frames}
Assume that $\Phi\colon D \to \RR^d$ and $\kappa\colon D \to \RR^+$ are compatible and let $v_0\colon\RR^d\rightarrow [1,\infty)$ be a control weight for $\Phi$ and $v := v_{\Phi,\kappa}\colon \PhSpace \rightarrow [1,\infty)$ given by \eqref{eq:SpecialGoodVectorsChoice}. For arbitrary  $0\neq\theta\in\TestFunctionSpace(\RR^d)$, the following hold:
\begin{enumerate}
\item  The moderating weight $w_0$ in Definition \ref{def:warpfundamental} can be chosen as $w_0 = v_0^d$.
\item The space
\begin{equation}\label{eq:DefOfGoodVectors}
    \GoodVectors := \{ f\in \LtDF \colon \|f\|_{\GoodVectors} < \infty \}, \text{ with } \|\bullet\|_{\GoodVectors} \colon f \mapsto \|V_{\theta,\Phi} f\|_{\lebesgue^1(\Lambda)}
\end{equation}
is a well-defined Banach space satisfying $\mathcal G(\theta,\Phi)\subset \GoodVectors$. 
It is independent of $\theta$ in the sense that different choices yield equivalent norms.
\item The space  
\begin{equation}\label{eq:DefOfCoorbits}
    \Co(\mathcal G(\theta,\Phi),\lebesgue^{p,q}_\kappa) := \{ f\in \Reservoir \colon \|f\|_{\Co(\Phi,\lebesgue^{p,q}_\kappa)} < \infty \}, \text{ with } \|\bullet\|_{\Co(\Phi,\lebesgue^{p,q}_\kappa)} \colon f\mapsto \|V_{\theta,\Phi} f\|_{\lebesgue^{p,q}_\kappa(\Lambda)}
    \end{equation}
    is a well-defined Banach function space for all $1\leq p,q\leq \infty$. For any two nonzero $\theta_1,\theta_2\in\TestFunctionSpace(\RR^d)$, we have $\Co(\mathcal G(\theta_1,\Phi),\lebesgue^{p,q}_\kappa) = \Co(\mathcal G(\theta_2,\Phi),\lebesgue^{p,q}_\kappa)$, justifying the notation $\Co(\Phi,\lebesgue^{p,q}_\kappa):=\Co(\mathcal G(\theta,\Phi),\lebesgue^{p,q}_\kappa)$.
\end{enumerate}
\end{theorem}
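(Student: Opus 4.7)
My plan is to treat this theorem as a consolidation of the cited results from \cite{VoHoPreprint}, so the work is essentially to verify that the hypotheses encoded in the notion of a \emph{compatible pair} $(\Phi, \kappa)$, together with the specific choice of $v = v_{\Phi, \kappa}$ in \eqref{eq:SpecialGoodVectorsChoice}, meet the standing assumptions of \cite[Theorem 2.13, Lemma 4.9, Theorem 7.1]{VoHoPreprint}. Each of the three assertions then follows as an immediate corollary. For part (1), I start with the $|\alpha|=0$ case of \eqref{eq:PhiHigherDerivativeEstimate}, which reads
\[
    \| A^T(\upsilon+\tau)\, A^{-T}(\tau) \| \leq v_0(\upsilon), \qquad \text{for all } \tau, \upsilon \in \RR^d.
\]
Applying the elementary inequality $|\det M| \leq \|M\|^d$ to the left-hand side, together with $\det A = \det A^T > 0$, yields
\[
    \frac{w(\tau+\upsilon)}{w(\tau)}
    = \det\!\bigl(A^T(\upsilon+\tau)\, A^{-T}(\tau)\bigr)
    \leq v_0(\upsilon)^d.
\]
Since $v_0$ is submultiplicative by hypothesis, so is $v_0^d$, and hence $w_0 := v_0^d$ is a valid moderating weight for $w$ in the sense of Definition \ref{def:warpfundamental}.

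For part (2), I check that $\tilde v = \tilde\kappa \cdot v_0^d$ (so that $v_{\Phi,\kappa}(x,\xi) = \tilde v(\Phi(\xi))$) is continuous, submultiplicative and radially increasing on $\RR^d$, and dominates $w_0$ from (1); all these properties are inherited from $\tilde\kappa$ and $v_0$ under pointwise products and positive powers. Combined with the $(d+1)$-admissibility and $\mathcal C^\infty$-smoothness of $\Phi$, this matches the standing hypotheses of \cite[Theorem 2.13]{VoHoPreprint}, so that $\GoodVectors$ is a well-defined Banach space and $\mathcal G(\theta, \Phi) \subset \GoodVectors$ for any $0 \neq \theta \in \TestFunctionSpace(\RR^d) \subset \lebesgue^2_{\sqrt{w}}$, the latter inclusion being immediate since test functions have compact support and $w$ is locally bounded. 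Independence from $\theta$ up to equivalent norms is then precisely \cite[Lemma 4.9]{VoHoPreprint}.

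For part (3), I invoke \cite[Theorem 7.1]{VoHoPreprint}, whose only extra hypothesis beyond those of (2) is that the target Banach function space be solid and admit a translation-type action dominated by a weight bounded by $v_{\Phi,\kappa}$. Solidity of $\lebesgue^{p,q}_\kappa$ is immediate; the $\lebesgue^p$-factor is unaffected by translation in the time variable, and the required estimate in the $\xi$-factor reduces, via the change of variables $\tau = \Phi(\omega)$, to the $\tilde\kappa$-moderateness of $\kappa \circ \Phi^{-1}$ built into Definition \ref{def:StandingDecompositionAssumptions}. Well-definedness of $\Co(\Phi, \lebesgue^{p,q}_\kappa)$ and its independence of $\theta$ then follow by combining the cited theorem with the norm equivalence from (2), which allows the analyzing window to be exchanged freely within $\TestFunctionSpace(\RR^d)$.

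The main obstacle is not a deep argument but careful bookkeeping: one must trace the weight conventions of \cite{VoHoPreprint} precisely enough to confirm that $v_0^d$—rather than any other power of $v_0$—is exactly what makes the Jacobian-determinant moderateness of (1) and the translation estimates underlying (2) and (3) mutually consistent. The power $d$ is forced by the change of variables between $\omega$ and $\tau = \Phi(\omega)$, which brings in the determinant $w$, and verifying that this same power suffices uniformly for the mixed-norm space $\lebesgue^{p,q}_\kappa$ is the only nontrivial step.
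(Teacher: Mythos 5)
Your proposal is correct and follows essentially the same route as the paper: the theorem is stated as a summary of \cite[Theorem 2.13, Lemma 4.9, Theorem 7.1]{VoHoPreprint}, and the paper's justification consists precisely of checking that the compatibility assumptions on $(\Phi,\kappa)$ and the choice $v=v_{\Phi,\kappa}$ cover the hypotheses of those cited results, which is what you do (your explicit derivation of $w_0=v_0^d$ via $|\det M|\leq\|M\|^d$ matches the fact the paper imports for any $(d+1)$-admissible warping function).
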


Note that Item (1) holds for any $(d+1)$-admissible warping function, and \cite[Theorem 2.13]{VoHoPreprint} can be applied since all its assumptions are covered by the conditions of Theorem \ref{cor:warped_disc_frames}.

\begin{remark}\label{rem:SpecialGoodVectorsWeight}
  A closer inspection of the proof of \cite[Theorem 7.1]{VoHoPreprint} shows that any weight $v \gtrsim v_{\Phi,\kappa}$ can be used in place of $v_{\Phi,\kappa}$ in Theorem \ref{cor:warped_disc_frames}, so long as $\sup_{x\in\RR^d} v(x,\Phi^{-1}(\bullet))$ is moderate with respect to some submultiplicative weight. This is clearly the case for $v_{\Phi,\kappa}$ itself, since $\tilde{v}$ in \eqref{eq:SpecialGoodVectorsChoice} is submultiplicative. In practice, $v$ is usually chosen such that $\Reservoir$ is large enough to cover all coorbit spaces of interest. As long as that is the case, coorbit spaces $\Co(\Phi,\lebesgue^{p,q}_\kappa)$ are independent of the specific weight $v$ used to define the reservoir $\Reservoir$, see \cite[Lemma 2.26]{kempka2015general}.
\end{remark}
%

 \subsection{Decomposition spaces and coverings}\label{sub:DecompositionSpaces}

 In a nutshell, decomposition spaces are a generalization of Besov and modulation spaces.
 The main difference is that instead of the usual dyadic covering used for Besov spaces, or the uniform covering used for modulation spaces, one can pick any covering that satisfies mild regularity conditions. 
 Such coverings will be called \emph{decomposition coverings}, see Definition \ref{def:SemiStructuredCoverings}. In particular, every decomposition 
 covering $\CalQ = (Q_i)_{i \in I}$ of $\CalO \subset \RR^d$ can be equipped with 
 a \emph{bounded admissible partition of unity (BAPU)} $\Phi = (\varphi_i)_{i \in I}$ 
 subordinate to the covering. For in-depth discussions of decomposition spaces, 
 we recommend the papers \cite{fegr85, boni07, voigtlaender2016embeddings, DecompositionIntoSobolev}.
 
\begin{definition}\label{def:SemiStructuredCoverings}(cf.\@ \cite[Definition 2.5]{voigtlaender2016embeddings}, 
  inspired by \cite[Definition 2.2]{fe82} and \cite[Definitions 2 and 7]{boni07})

   Let $\emptyset \neq \CalO \subset \RR^d$ be an open set.
   \begin{enumerate}
     \item\label{item:admissibility} A family $\CalQ = (Q_i)_{i \in I}$ of subsets of $\CalO$
    is called an \emph{admissible covering} of $\CalO$,
    if we have $\CalO = \bigcup_{i \in I} Q_i$ and if $\sup_{i \in I} |i^\ast| < \infty$,
    where the set $i^\ast$ of \emph{neighbors} of $i \in I$ is given by
    $i^\ast := \{j \in I \,:\, Q_j \cap Q_i \neq \emptyset \}$ for $i \in I$.
     \item An admissible covering $\CalQ = (Q_i)_{i \in I}$ of $\CalO$ is called
           \emph{semi-structured} if there is, for each $i \in I$, some invertible
           linear map $T_i \in \GL(\RR^d)$, a translation $b_i \in \RR^d$ and an
           open subset $Q_i ' \subset \RR^d$ such that
           the following properties hold:
          \begin{itemize}
            \item We have $Q_i = T_i Q_i ' + b_i$ for all $i \in I$,
            \item There is some $R>0$ satisfying
                  $Q_i ' \subset \overline{B_R (0)}$ for all $i \in I$.
            \item There is some $C>0$ satisfying
                  \[
                      \| T_i^{-1} T_\ell \| \leq C
                      \qquad \forall i \in I \text{ and all } \ell \in i^\ast.
                  \]
          \end{itemize}
%

      \item An admissible covering $\CalQ = (Q_i)_{i \in I}$ of $\CalO$
            is called a \emph{decomposition covering}
            if there exists a 
            \emph{bounded admissible partition of unity for $\CalQ$ ($\CalQ$-BAPU)}.
            A family $(\varphi_i)_{i \in I} \subset \TestFunctionSpace(\CalO)$ is called a $\CalQ$-BAPU,
            if it satisfies the following properties:
            \begin{itemize}
              \item $\supp \varphi_i \subset Q_i$ for all $i \in I$,

              \item $\sum_{i \in I} \varphi_i \equiv 1$ on $\CalO$, and 

              \item $\exists\ C>0 \colon \| \Fourier^{-1} \varphi_i \|_{\lebesgue^1}
                      \leq C 
                      \qquad \forall \, i\in I$.
            \end{itemize}

     \item A semi-structured covering
           $\CalQ = (Q_i)_{i \in I} = (T_i Q_i ' + b_i)_{i \in I}$ is called
           \emph{tight}, if there is some $\eps > 0$ and, for each $i \in I$, some
           $c_i \in \RR^d$ satisfying $B_{\eps} (c_i) \subset Q_i '$.

     \item A weight $u = (u_i)_{i \in I}$ on the index set $I$ of
           $\CalQ = (Q_i)_{i \in I}$ is called \emph{$\CalQ$-moderate},
           if there is a constant $C>0$ satisfying
           \begin{equation}\label{eq:Qmoderateness}
               u_i \leq C \cdot u_\ell
               \qquad \forall i \in I \text{ and all } \ell \in i^\ast.
           \end{equation}
   \end{enumerate}
 \end{definition}
%
 
 \begin{remark}\label{rem:higher_cluster_sets}
  For later use, we also introduce the following generalization
  of the sets $i^\ast$:
  For $L \subset I$, we set $L^\ast := \bigcup_{i \in L} i^\ast$.
  Then, we inductively define the sets $L^{n\ast}$ by $L^{1\ast} := L^\ast$
  and $L^{(n+1)\ast} := (L^{n\ast})^{\ast}$, in particular, we let $i^{n\ast} := \{i\}^{n\ast}$.
  Furthermore, we also set $Q_{L} := \bigcup_{\ell \in L} Q_\ell$, for any sequence of sets $(Q_i)_{i \in I}$ in $\CalO$.
\end{remark}
 
 The definition of decomposition spaces relies on a special reservoir space
 $Z'(\CalO)$, the topological dual of the space of functions whose Fourier transforms 
 are test functions. 
 
 \begin{definition}\label{def:SpecialReservoir}
   For an open subset $\CalO \subset \RR^d$, let
   \[
     Z(\CalO)
     := \Fourier (\TestFunctionSpace (\CalO))
     = \left\{
         \widehat{f} \,:\, f \in \TestFunctionSpace(\CalO)
       \right\}
     \subset \Schwartz(\RR^d)
   \]
   and equip this space with the unique topology which makes
   $\Fourier : \TestFunctionSpace(\CalO) \to Z(\CalO)$ a homeomorphism.
   We denote the topological dual space of $Z(\CalO)$ by $Z'(\CalO)$,
   and equip it with the weak-$\ast$-topology,
   that is, with the topology of pointwise convergence on $Z(\CalO)$.

   Finally, we extend the Fourier transform by duality to $Z'(\CalO)$, that is,
   we define
   \[
     \Fourier :
     Z'(\CalO) \to \DistributionSpace(\CalO), f \mapsto f \circ \Fourier
   \]
   and write as usual $\widehat{f} := \Fourier f$ for $f \in Z'(\CalO)$.
   It is clear $\Fourier$ is an isomorphism
   $\Fourier : Z'(\CalO) \to \DistributionSpace(\CalO)$, such that
   $Z'(\CalO) = \Fourier^{-1}(\DistributionSpace(\CalO))$.
 \end{definition}
 
 \begin{definition}\label{def:DecompositionSpaces}
   Given a decomposition covering $\CalQ = (Q_i)_{i\in I}$ of the open set $\CalO \subset \RR^d$,
   a $\CalQ$-moderate weight ${u = (u_i)_{i \in I}}$, and integrability exponents
   $p,q \in [1,\infty]$, we define the decomposition space
   \[
     \DecompSp(\CalQ, \lebesgue^p, \ell_u^q)
     := \left\{
          f \in Z'(\CalO)
          \,:\,
           \| f \|_{\DecompSp(\CalQ, \lebesgue^p, \ell_u^q)} < \infty
        \right\},
   \]
   with 
    \[
       \|f\|_{\DecompSp(\CalQ, \lebesgue^p, \ell_u^q)}
       := \left\|
              \left(
                  \| \Fourier^{-1}(\varphi_i \cdot \hat{f})\|_{\lebesgue^p}
              \right)_{i \in I}
          \right\|_{\ell_u^q}
          \in [0,\infty]
       \qquad \text{ for } f \in Z'(\CalO).
     \]
 \end{definition}

Consider that $\hat{f}\in\DistributionSpace(\CalO)$, such that $\varphi_i \cdot \hat{f}$ is a distribution with compact support. Hence, 
 $\Fourier^{-1}(\varphi_i \cdot \hat{f})$ is a smooth function by the Paley Wiener theorem, 
 see \cite[Theorem 7.23]{RudinFA}, and the
 expression $\|\Fourier^{-1} (\varphi_i \cdot \hat{f}) \|_{\lebesgue^p}\in [0,\infty]$
 makes sense. One can show (see for instance
 \cite[Corollary 3.18, Lemma 4.13 and Theorem 3.21]{voigtlaender2016embeddings}) 
 that the resulting space is independent of the chosen $\CalQ$-BAPU and
 complete, i.e., a Banach space.

 We further require criteria for comparing coverings. For this purpose, Feichtinger and
 Gr\"obner \cite[Definition 3.3]{fegr85} introduced the notions of \emph{subordinate}, \emph{equivalent}, and \emph{moderate} coverings; see also Proposition 3.5 in the same
 paper for equivalent reformulations of the notions introduced here.
 
 \begin{definition}\label{def:subordinateness}
   Let $\CalQ = (Q_i)_{i \in I}$ and $\CalP = (P_j)_{j \in J}$ be two admissible coverings of a set
   $\CalO \subset \RR^d$.
   \begin{itemize}
     \item We say that $\CalQ$ is \emph{almost subordinate} to $\CalP$,
           if there is some $N \in \NN$ and, for each $i \in I$, some
           $j = j_i \in J$ with $Q_i \subset P_{{j}^{N\ast}}$, where the set
           $P_{{j}^{N\ast}}$ is as in Remark \ref{rem:higher_cluster_sets}.

     \item We say that $\CalQ$ is \emph{weakly subordinate} to $\CalP$, if we have
           \[
             \sup_{i \in I}
               |\{ j \in J \,:\, P_j \cap Q_i \neq \emptyset\}|
             < \infty.
           \]

     \item We say that $\CalQ$ and $\CalP$ are \emph{equivalent} if $\CalQ$
           is almost subordinate to $\CalP$ and if also $\CalP$ is almost
           subordinate to $\CalQ$. Finally, $\CalQ$ and $\CalP$ are called
           \emph{weakly equivalent} if $\CalQ$ is weakly subordinate to $\CalP$
           and also $\CalP$ is weakly subordinate to $\CalQ$.
   \end{itemize}
 \end{definition}
 \begin{remark}\label{rem:AlmostSubImpliesWeakSub}
   Since $\CalP$ and $\CalQ$ are admissible, weak subordinateness is clearly implied by almost subordinateness. 
   Furthermore, on the set of admissible coverings, the relation of being almost subordinate
   is transitive, and the same is true of weak subordinateness,
   cf.~\cite[Proposition 3.5]{fegr85}.
   In particular, the relation defined by (weak) equivalence of admissible
   coverings is an equivalence relation on the set of admissible coverings of
   each fixed set $\CalO$.
 \end{remark}
 
 The above notion of weak equivalence yields a condition for equality of decomposition spaces.
 This result is essentially taken from \cite[Theorem 3.7]{fegr85};
 a different proof is given in \cite[Lemma 6.11]{voigtlaender2016embeddings}.
 It turns out that it is in fact quite sharp,
 see \cite[Theorem 6.9]{voigtlaender2016embeddings}.
 \begin{theorem}\label{thm:decomposition_space_coincidence}
   Let $\CalQ = (Q_i)_{i \in I}$ and $\CalP = (P_j)_{j \in J}$ be two
   decomposition coverings of the open set $\CalO \subset \RR^d$,
   and let $u = (u_i)_{i \in I}$ and $v = (v_j)_{j \in J}$ be $\CalQ$-moderate
   and $\CalP$-moderate, respectively.

   If $\CalQ$ and $\CalP$ are weakly equivalent and if there is $C>0$
   with $C^{-1} \cdot u_i \leq v_j \leq C \cdot u_i$ for all $i \in I$ and
   $j \in J$ with $Q_i \cap P_j \neq \emptyset$, then we have
   $\DecompSp(\CalQ, \lebesgue^p, \ell_u^q)
    = \DecompSp(\CalP, \lebesgue^p, \ell_v^q)$
   with equivalent norms, for all $p,q \in [1,\infty]$.
 \end{theorem}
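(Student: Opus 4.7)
The plan is to prove the two inclusions $\DecompSp(\CalQ,\lebesgue^p,\ell_u^q)\hookrightarrow\DecompSp(\CalP,\lebesgue^p,\ell_v^q)$ and vice versa; by symmetry of the hypotheses only one needs to be done explicitly. First, fix a $\CalQ$-BAPU $(\varphi_i)_{i\in I}$ and a $\CalP$-BAPU $(\psi_j)_{j\in J}$, and for $f \in Z'(\CalO)$ use that $\sum_{j}\psi_j \equiv 1$ on $\CalO$ together with the locally finite support property to write, for each $i\in I$,
\[
   \varphi_i \cdot \hat f \;=\; \sum_{j \in J_i} \varphi_i \psi_j \cdot \hat f,
   \qquad J_i := \{j \in J \with P_j \cap Q_i \neq \emptyset\}.
\]
By the weak subordinateness of $\CalQ$ to $\CalP$, $\sup_i |J_i| < \infty$. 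Taking the inverse Fourier transform turns the product $\varphi_i \psi_j \hat f$ into a convolution, and then Young's inequality together with the uniform bound $\|\Fourier^{-1}\varphi_i\|_{\lebesgue^1} \le C$ (BAPU property) gives
\[
   \| \Fourier^{-1}(\varphi_i \hat f) \|_{\lebesgue^p}
   \;\lesssim\; \sum_{j \in J_i} \| \Fourier^{-1}(\psi_j \hat f) \|_{\lebesgue^p}.
\]

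Next I would bring in the weights. By the compatibility assumption, $u_i \asymp v_j$ whenever $j \in J_i$, so multiplying the above by $u_i$ yields $u_i \| \Fourier^{-1}(\varphi_i \hat f) \|_{\lebesgue^p} \lesssim \sum_{j \in J_i} v_j \| \Fourier^{-1}(\psi_j \hat f) \|_{\lebesgue^p}$. For $q<\infty$, since $|J_i|$ is uniformly bounded, the elementary estimate $\bigl(\sum_{j\in J_i} a_j\bigr)^q \lesssim \sum_{j \in J_i} a_j^q$ applies, so that
\[
   \sum_{i\in I} \bigl(u_i \| \Fourier^{-1}(\varphi_i \hat f) \|_{\lebesgue^p}\bigr)^q
   \;\lesssim\; \sum_{i\in I}\sum_{j\in J_i}
      \bigl(v_j \| \Fourier^{-1}(\psi_j \hat f) \|_{\lebesgue^p}\bigr)^q.
\]
Now I would swap the order of summation: the pair $(i,j)$ with $j \in J_i$ is equivalent to $i \in I_j := \{i \in I \with Q_i \cap P_j \neq \emptyset\}$, and the weak subordinateness of $\CalP$ to $\CalQ$ gives $\sup_j |I_j| < \infty$. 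Hence the right-hand side is bounded by a constant times $\|f\|_{\DecompSp(\CalP,\lebesgue^p,\ell_v^q)}^q$. The case $q = \infty$ is analogous but simpler, taking a supremum throughout.

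The argument is symmetric in the roles of $\CalQ$ and $\CalP$ (with weights $u$ and $v$ interchanged, the compatibility of the weights being inherently symmetric), which yields the reverse embedding and hence the claimed equality of spaces with equivalent norms. The only mildly subtle step is the double application of weak subordinateness: once to bound the inner sum over $J_i$ pointwise in $i$, and once after swapping the order of summation to absorb the multiplicity $|I_j|$; the main technical care is in tracking that the relevant constants depend only on the covering data (the BAPU bound, the admissibility constants, and the weak subordinateness constants) and on the constant in the weight compatibility assumption, but not on the particular $f$. Once this is in place the proof reduces to a clean book-keeping exercise.
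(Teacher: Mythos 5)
Your argument is correct. The paper does not actually prove this theorem itself — it invokes \cite[Theorem 3.7]{fegr85} and \cite[Lemma 6.11]{voigtlaender2016embeddings} as the source — but your proof is precisely the standard direct argument used in those references: decompose $\varphi_i\hat f=\sum_{j\in J_i}\varphi_i\psi_j\hat f$ using the finite overlap from weak subordinateness of $\CalQ$ to $\CalP$, apply the convolution theorem and Young's inequality with the BAPU $\lebesgue^1$-bound, use the weight comparability $u_i\asymp v_j$ on $J_i$, invoke the uniform bound on $|J_i|$ to pass from a power of a sum to a sum of powers, and finally swap the order of summation, absorbing the multiplicity $\sup_j|I_j|<\infty$ coming from weak subordinateness of $\CalP$ to $\CalQ$. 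The symmetry observation for the reverse embedding is also sound, since weak equivalence and the weight condition are symmetric in $(\CalQ,u)$ and $(\CalP,v)$.

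One small point worth noting explicitly when writing this up: the convolution identity $\Fourier^{-1}(\varphi_i\psi_j\hat f)=\Fourier^{-1}\varphi_i*\Fourier^{-1}(\psi_j\hat f)$ and the application of Young's inequality are justified because $\psi_j\hat f$ is a compactly supported distribution, hence $\Fourier^{-1}(\psi_j\hat f)$ is a smooth, slowly increasing function by Paley--Wiener; if its $\lebesgue^p$-norm is infinite the inequality is vacuous, and otherwise Young applies. Also, the implicit constant in $\bigl(\sum_{j\in J_i}a_j\bigr)^q\lesssim\sum_{j\in J_i}a_j^q$ is $|J_i|^{q-1}\le(\sup_i|J_i|)^{q-1}$, which is where the weak-subordinateness constant enters quantitatively. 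These are exactly the details you flag at the end, so the proof is complete modulo writing them out.
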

 
 It is often difficult to verify equivalence and almost subordinateness of two
 coverings directly, even when their \emph{weak} variants are easily shown. 
 Therefore, the following result, originally \cite[Proposition 3.6]{fegr85}, is rather useful.

 \begin{lemma}\label{lem:almost_subordinateness_connected}
   Let $\CalQ = (Q_i)_{i \in I}$ and $\CalP = (P_j)_{j \in J}$ be two admissible
   coverings of a connected subset $\CalO \subset \RR^d$ such that each $Q_i$
   is path-connected and such that each $P_j$ is open.
   Then $\CalQ$ is almost subordinate to $\CalP$ if and only if $\CalQ$ is
   weakly subordinate to $\CalP$.
 \end{lemma}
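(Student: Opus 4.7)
The forward implication is already recorded in Remark~\ref{rem:AlmostSubImpliesWeakSub}, so the plan is to focus on the reverse direction: assuming that $\CalQ$ is weakly subordinate to $\CalP$, I need to produce a single integer $N$ and, for each $i$, some $j_i \in J$ with $Q_i \subset P_{j_i^{N\ast}}$. Set $N_0 := \sup_{i \in I} |\{j \in J \with P_j \cap Q_i \neq \emptyset\}| < \infty$. For fixed $i \in I$ I let $J_i := \{j \in J \with P_j \cap Q_i \neq \emptyset\}$, so that $|J_i| \leq N_0$ uniformly in $i$, and since $\CalP$ covers $\CalO \supset Q_i$ I automatically have $Q_i \subset \bigcup_{j \in J_i} P_j$.

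The heart of the argument is to introduce the finite graph $G_i$ on the vertex set $J_i$ with an edge $\{j, j'\}$ whenever $P_j \cap P_{j'} \neq \emptyset$, and then to show that $G_i$ is connected. The key observation is that if $G_i$ split into disjoint nonempty parts $J_i^{(1)}, J_i^{(2)}$ with no edges between them, then the two sets
\[
U_k := Q_i \cap \bigcup_{j \in J_i^{(k)}} P_j, \qquad k \in \{1,2\},
\]
would each be open in the subspace topology of $Q_i$ (because each $P_j$ is open in $\RR^d$), nonempty (by definition of $J_i$), disjoint (because $P_j \cap P_{j'} = \emptyset$ for all $j \in J_i^{(1)}$, $j' \in J_i^{(2)}$ by assumption), and would together cover $Q_i$. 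This would contradict the connectedness of $Q_i$, so $G_i$ must be connected.

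Once $G_i$ is known to be connected on at most $N_0$ vertices, its graph-theoretic diameter is at most $N_0 - 1$ by the standard BFS bound. Choosing any $j_i \in J_i$, every $j \in J_i$ then lies in the $N$-step $\CalP$-neighborhood $j_i^{N\ast}$ for $N := N_0 - 1$, since an edge of $G_i$ corresponds precisely to the neighbor relation of Definition~\ref{def:SemiStructuredCoverings} applied to $\CalP$. Consequently
\[
Q_i \subset \bigcup_{j \in J_i} P_j \subset \bigcup_{j \in j_i^{N\ast}} P_j = P_{j_i^{N\ast}},
\]
with $N$ independent of $i$, which is exactly the definition of $\CalQ$ being almost subordinate to $\CalP$.

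The main obstacle is really the connectedness step for $G_i$: this is the only place where the topological hypotheses on $\CalO$, $Q_i$, and $P_j$ enter, and the rest is just finite combinatorial bookkeeping made possible by the uniform bound $|J_i| \leq N_0$. I note that only connectedness of $Q_i$ (rather than path-connectedness) is actually used in the disconnection argument; the stronger hypothesis in the statement is presumably retained because it is easier to verify in the concrete applications of the lemma to the frequency coverings $\CalQ_\Phi^{(\delta,r)}$ of Definition~\ref{def:frequency_space_covering}.
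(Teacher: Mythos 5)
Your proof is correct. Note that the paper itself does not prove this lemma---it is stated as a citation to \cite[Proposition 3.6]{fegr85}, together with a remark that the reference proves the claim with the roles of $\CalQ$ and $\CalP$ interchanged---so there is no in-paper proof to compare against; the graph-connectivity argument you give is the standard one for results of this type. The key step, that a disconnection of the intersection graph $G_i$ on $J_i$ would split the connected set $Q_i$ into two nonempty, disjoint, relatively open pieces (using openness of the $P_j$), is exactly where the topological hypotheses are consumed, and your remark that only connectedness, not path-connectedness, of $Q_i$ is actually used is also correct.

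One small bookkeeping point: you set $N := N_0 - 1$, but when $N_0 = 1$ this yields $N = 0$, and $j_i^{0\ast}$ is not defined---the recursion in Remark~\ref{rem:higher_cluster_sets} begins at $L^{1\ast}$, and in Definition~\ref{def:subordinateness} the almost-subordinateness parameter $N$ lies in $\NN$. Since $j_i^{m\ast} \subset j_i^{(m+1)\ast}$ whenever $P_{j_i}$ is nonempty (which holds automatically for $j_i \in J_i$), taking $N := \max\{1,\, N_0 - 1\}$, or simply $N := N_0$, fixes this. For completeness one should also dispatch the degenerate cases: if $Q_i = \emptyset$ the required inclusion holds vacuously for any $j_i \in J$, and $J \neq \emptyset$ since $\CalP$ covers the nonempty $\CalO$; conversely $J_i \neq \emptyset$ whenever $Q_i \neq \emptyset$ because $\CalP$ covers $\CalO \supset Q_i$. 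Neither of these is substantive, but a careful write-up should mention them.
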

 \begin{rem*}
   In \cite[Proposition 3.6]{fegr85}, it is claimed (under the same assumptions)
   that $\CalP$ is almost subordinate to $\CalQ$ if and only if $\CalP$ is
   weakly subordinate to $\CalQ$. The proof, however, establishes the claim with
   interchanged roles of $\CalQ,\CalP$, that is, as stated above.
 \end{rem*}

  \section{The frequency-space coverings induced by a warping function \texorpdfstring{$\Phi$}{Φ}}\label{sec:coverings}
In this section, we study the $(\delta, r)$-fine frequency covering induced by $\Phi$. Recall that 
\[
  \CalQ_{\Phi}^{(\delta, r)} = \big(\, Q_{\Phi,k}^{(\delta, r)} \,\big)_{k \in \ZZ^d},\quad \text{with}\quad Q_{\Phi,k}^{(\delta, r)} := \Phi^{-1}(\delta \cdot B_r (k)),
\]
for $\delta, r > 0$. It is immediate that the parameter $\delta > 0$ determines the ``sampling density'' (in frequency) of the covering, while $r$ only influences the size of the individual elements of the covering and thus is a kind of ``redundancy'' parameter. We will now show that $\CalQ_{\Phi}^{(\delta, r)}$ indeed covers $D$, if $r$ is large enough. 
 
 \begin{lemma}\label{lem:WhenIsFrequencyCoveringCovering}
   $\CalQ_{\Phi}^{(\delta, r)}$ is a covering of $D$ if and only if $r > \sqrt{d}/2$.
 \end{lemma}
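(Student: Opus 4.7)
The plan is to reduce the question to an elementary statement about covering $\RR^d$ by open balls centered at the integer lattice. Since $\Phi \colon D \to \RR^d$ is a bijection, we have
\[
    D = \bigcup_{k\in\ZZ^d} \Phi^{-1}(\delta \cdot B_r(k))
    \quad\Longleftrightarrow\quad
    \RR^d = \bigcup_{k\in\ZZ^d} \delta \cdot B_r(k).
\]
A simple rescaling argument (substituting $z = y/\delta$, and using $\delta \cdot B_r(k) = B_{\delta r}(\delta k)$) further shows that the right-hand condition is equivalent to
\[
    \RR^d = \bigcup_{k \in \ZZ^d} B_r(k).
\]
Thus the $\delta$-dependence disappears, and the claim reduces to: this last equality holds if and only if $r > \sqrt{d}/2$.

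For the \emph{sufficiency} direction, I would fix an arbitrary $z \in \RR^d$ and pick $k \in \ZZ^d$ with $k_j \in \{\lfloor z_j\rfloor, \lceil z_j\rceil\}$ chosen so that $|z_j - k_j| \leq 1/2$ for every component. Then $|z-k| \leq \sqrt{d}/2 < r$, so $z \in B_r(k)$. For the \emph{necessity} direction, I would exhibit the explicit obstruction $z^\ast := (1/2, \dots, 1/2) \in \RR^d$: for every $k \in \ZZ^d$ one has $|z^\ast_j - k_j| \geq 1/2$ in each coordinate, hence $|z^\ast - k| \geq \sqrt{d}/2$. Because the balls $B_r(k)$ are open, this means $z^\ast \notin B_r(k)$ for any $k$ as soon as $r \leq \sqrt{d}/2$, so the family fails to cover $\RR^d$ in that case.

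I do not expect any genuine obstacle here; the only point that requires a moment of care is the strict versus non-strict inequality, which is precisely why $r = \sqrt{d}/2$ is excluded (the critical point $z^\ast$ lies on the boundary of every nearest ball but inside none of the \emph{open} balls). Putting these two directions together yields the stated equivalence.
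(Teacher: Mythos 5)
Your proof is correct and follows essentially the same approach as the paper: reduce to covering $\RR^d$ by lattice balls, rescale to $\delta = 1$, and observe that $\max_{\tau\in\RR^d}\min_{k\in\ZZ^d}|\tau-k| = \sqrt{d}/2$ is attained at $\tau = (1/2,\dots,1/2)$. You simply unpack this max-min observation into the two explicit directions (nearest-lattice-point for sufficiency, the corner point as the obstruction for necessity), which the paper leaves compressed into a single line.
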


 \begin{proof}
   Since $\Phi : D \to \RR^d$ is bijective, the claim is equivalent to showing
   that $\RR^d = \bigcup_{k \in \ZZ^d} \left(\delta \cdot B_r (k)\right)$ if and only if
   $r > \sqrt{d} / 2$.
   Since $\RR^d \to \RR^d, \tau \mapsto \delta \tau$ is bijective,
   we can furthermore assume $\delta = 1$.
   The proof is completed by observing that
   $\max_{\tau\in\RR^d}\min_{k\in\ZZ^d} |\tau-k| = \sqrt{d}/2$.
   Indeed, the maximum is attained for $\tau = \frac{1}{2} \cdot (1,\dots,1)$.
 \end{proof}
 
 The next lemma shows that $\CalQ_{\Phi}^{(\delta, r)}$, $\delta>0$, $r > \sqrt{d}/2$, is a  semi-structured covering and all $(\delta, r)$-fine frequency coverings $\CalQ_{\Phi}^{(\delta, r)}$ are equivalent, provided that $\Phi$ is at least $0$-admissible.
 
 \begin{lemma}\label{lem:DecompInducedCoveringIsNice}
    Let $\Phi : D \to \RR^d$ be a $0$-admissible warping function, and let
   $\delta > 0$ and $r > \sqrt{d} / 2$ be arbitrary.
   For $k \in \ZZ^d$, we have
   \begin{equation}\label{eq:covering_semi_linearization}
     Q^{(\delta,r)}_{\Phi,k} = T_k Q_k ' + b_k,
     \quad \text{with } \quad
     T_k := A(\delta k),
     \quad b_k := \Phi^{-1}(\delta k)
     \quad \text{ and } \quad
     Q_k ' := T_k^{-1} \langle Q^{(\delta,r)}_{\Phi,k} - b_k \rangle.
   \end{equation}
   The family
   $\CalQ_{\Phi}^{(\delta, r)} = (Q^{(\delta,r)}_{\Phi,k})_{k \in \ZZ^d}$
   is a semi-structured admissible covering of $D \subset \RR^d$, in particular, it is admissible. Specifically, $Q_k' \subset \overline{B_R(0)}$, for all $k\in\ZZ^d$, with $R = \delta r\cdot v_0(\delta r\cdot e_1)$. 
   Furthermore, for arbitrary $r,\rho > \sqrt{d} / 2$ and $\eps, \delta > 0$,
   the two coverings $\CalQ_{\Phi}^{(\delta, r)}$ and
   $\CalQ_{\Phi}^{(\eps, \rho)}$ are equivalent in the sense of
   Definition \ref{def:subordinateness}.
   
    If $v':\RR^d \to \RR^+$ is a $v$-moderate weight, for a submultiplicative weight $v : \RR^d \to \RR^+$ then the weight $\tilde{v} := v'\circ \Phi$ has the following properties:
   \begin{enumerate}
    \item There are constants $C_0, C_1>0$, such that $C_0 \cdot \tilde{v}(\xi)\leq \tilde{v}(\omega) \leq C_1 \cdot \tilde{v}(\xi)$
          for all $\xi \in Q^{(\delta,r)}_{\Phi,k}, \omega\in Q^{(\delta,r)}_{\Phi,\ell}$, for all $\ell \in k^\ast$ and all $k\in \ZZ^d$.
    \item The sequence $(\tilde{v}_k)_{k\in \ZZ^d}$, $\tilde{v}_k:=\tilde{v}(y_k)$, for some arbitrary $y_k\in Q^{(\delta,r)}_{\Phi,k}$, is $\CalQ^{(\delta,r)}_{\Phi}$-moderate.
    \end{enumerate}
 \end{lemma}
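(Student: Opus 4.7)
The identity $Q^{(\delta,r)}_{\Phi,k}=T_kQ_k'+b_k$ is immediate from the defining formula for $Q_k'$, so the real content is to bound $Q_k'$ uniformly, verify the transition estimate $\|T_k^{-1}T_\ell\|\le C$ for neighbours $\ell\in k^\ast$, prove that different parameter choices give equivalent coverings, and finally transfer moderateness from the warped frequency variable back to $D$. My plan is to reduce essentially everything to the $|\alpha|=0$ case of the admissibility bound \eqref{eq:PhiHigherDerivativeEstimate}, which under $0$-admissibility gives $\|A(\tau)^{-1}A(\tau+\upsilon)\|=\|\phi_\tau(\upsilon)\|\le v_0(\upsilon)$ for all $\tau,\upsilon\in\RR^d$, since the Euclidean operator norm is invariant under transposition. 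The covering property itself is supplied directly by Lemma \ref{lem:WhenIsFrequencyCoveringCovering}.

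For the uniform bound on $Q_k'$, I would write $\tau=\delta k+\upsilon\in\delta B_r(k)$ with $|\upsilon|<\delta r$ and apply the fundamental theorem of calculus to $t\mapsto\Phi^{-1}(\delta k+t\upsilon)$ to obtain
\[
T_k^{-1}\bigl(\Phi^{-1}(\tau)-b_k\bigr)
= \int_0^1 A(\delta k)^{-1}A(\delta k+t\upsilon)\langle\upsilon\rangle\,dt.
\]
Admissibility bounds the operator norm of the integrand by $v_0(t\upsilon)\le v_0(\delta r\cdot e_1)$, the latter because $v_0$ is radially increasing; integrating yields $Q_k'\subset\overline{B_R(0)}$ with $R=\delta r\cdot v_0(\delta r\cdot e_1)$, as claimed. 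For the transition estimate, two neighbours must satisfy $\delta B_r(k)\cap\delta B_r(\ell)\ne\emptyset$, hence $|k-\ell|<2r$; this single observation simultaneously bounds $|k^\ast|$ by the number of lattice points in $B_{2r}(0)$, giving admissibility, and, applied to the displacement $\upsilon=\delta(\ell-k)$, yields $\|T_k^{-1}T_\ell\|\le v_0(2\delta r\cdot e_1)$.

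To prove equivalence of $\CalQ_\Phi^{(\delta,r)}$ and $\CalQ_\Phi^{(\eps,\rho)}$ I plan to invoke Lemma \ref{lem:almost_subordinateness_connected}, whose hypotheses are met because $D=\Phi^{-1}(\RR^d)$ is connected, each $Q^{(\delta,r)}_{\Phi,k}$ is path-connected as the $\Phi^{-1}$-image of a convex ball, and each $Q^{(\eps,\rho)}_{\Phi,\ell}$ is open. It therefore suffices to establish mutual weak subordinateness, which reduces to a lattice-counting exercise: non-emptiness of $Q^{(\delta,r)}_{\Phi,k}\cap Q^{(\eps,\rho)}_{\Phi,\ell}$ forces $|\eps\ell-\delta k|<\delta r+\eps\rho$, and the number of such $\ell$ (respectively $k$) is uniformly bounded by a ball volume independent of the other index.

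For the weight claims, if $\xi\in Q^{(\delta,r)}_{\Phi,k}$ and $\omega\in Q^{(\delta,r)}_{\Phi,\ell}$ with $\ell\in k^\ast$, combining the geometric observations above gives $|\Phi(\xi)-\Phi(\omega)|<4\delta r$, and $v$-moderateness of $v'$ together with local boundedness of the submultiplicative weight $v$ on $\overline{B_{4\delta r}(0)}$ yields $\tilde v(\xi)\asymp\tilde v(\omega)$ with explicit constants; the $\CalQ_\Phi^{(\delta,r)}$-moderateness of $(\tilde v_k)_k$ in assertion (2) is then the specialisation $\xi=y_k$, $\omega=y_\ell$. The only real obstacle I anticipate is the notational bookkeeping around the transpose in the definition of $\phi_\tau$, which is purely cosmetic because $\|A^T\|=\|A\|$ in the Euclidean operator norm; beyond that, the whole lemma is a package of short reductions, and the main discipline required is to keep the constants expressed in terms of the given data $\delta,r,v_0$.
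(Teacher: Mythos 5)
Your proposal is correct and follows essentially the same route as the paper's proof: covering from Lemma~\ref{lem:WhenIsFrequencyCoveringCovering}, the semi-structured bounds from the $|\alpha|=0$ case of~\eqref{eq:PhiHigherDerivativeEstimate} applied to the integrand of the fundamental-theorem-of-calculus representation of $\Phi^{-1}(\tau)-b_k$ (yielding exactly $R=\delta r\cdot v_0(\delta r\cdot e_1)$), admissibility and mutual weak subordinateness from the lattice count $|\eps\ell-\delta k|<\delta r+\eps\rho$, upgrade to equivalence via Lemma~\ref{lem:almost_subordinateness_connected}, and the weight estimates from $|\Phi(\xi)-\Phi(\omega)|<4\delta r$ combined with $v$-moderateness. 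The only implicit assumption (shared with the paper, which also passes from $\sup$ to $\max$ over $\overline{B_{4\delta r}(0)}$) is that the submultiplicative weight $v$ is locally bounded; this holds for all weights used in the paper since they are taken to be continuous, so it is not a genuine gap.
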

 \begin{proof}
    By Lemma \ref{lem:WhenIsFrequencyCoveringCovering}, $\CalQ_{\Phi}^{(\delta, r)}$ is a covering of $D$ for all $r > \sqrt{d} / 2$.
    Note that $Q_{\Phi,k}^{(\delta, r)}$ is path-connected and open for all $\delta,r>0$, such that, by Lemma \ref{lem:almost_subordinateness_connected}, weak equivalence of of $\CalQ_{\Phi}^{(\delta, r)}$ and $\CalQ_{\Phi}^{(\eps, \rho)}$ implies equivalence. Moreover, admissibility of $\CalQ_{\Phi}^{(\delta, r)}$ is the same as weak equivalence to itself.  By symmetry, it suffices to show weak subordinateness of $\CalQ_{\Phi}^{(\delta, r)}$ to $\CalQ_{\Phi}^{(\eps, \rho)}$.
   
   Let $k \in \ZZ^d$ be arbitrary. Then, each $\ell \in \ZZ^d$ with
   $Q_{\Phi,\ell}^{(\eps, \rho)} \cap Q_{\Phi,k}^{(\delta, r)} \neq \emptyset$
   satisfies
   $\emptyset \neq [\eps \cdot B_\rho (\ell)] \cap [\delta \cdot B_r (k)]$,
   and thus
   \[
     \emptyset
     \neq B_\rho (\ell) \cap \left(\eps^{-1}\delta \cdot B_r (k)\right)
     =    B_\rho (\ell) \cap B_{\delta r / \eps} (\delta k / \eps),
   \]
   so that $|\ell - \delta k / \eps| < \rho + \frac{\delta r}{\eps} =: \tilde{r}$.
   Thus, if we fix some $\ell_0=\ell_0(k) \in \ZZ^d$ with
   $Q_{\Phi,\ell_0}^{(\eps, \rho)} \cap Q_{\Phi,k}^{(\delta, r)} \neq \emptyset$,
   then every $\ell$ as above satisfies $|\ell - \ell_0| < 2\tilde{r}$,
   and thus $\ell \in \ell_0 + \{-R, \dots, R\}^d$ for $R := \lceil 2\tilde{r} \rceil$.
   We have thus shown
   \[
     \left|
       \left\{
         \ell \in \ZZ^d
         \,:\,
         Q_{\Phi,\ell}^{(\eps, \rho)} \cap Q_{\Phi,k}^{(\delta, r)}
         \neq \emptyset
       \right\}
     \right|
     \leq |
     \left\{ -R, \dots, R \right\}^d |
     = (1 + 2R)^d
   \]
   for all $k \in \ZZ^d$. Hence, $\CalQ_{\Phi}^{(\delta, r)}$
   is weakly subordinate to $\CalQ_{\Phi}^{(\eps, \rho)}$.
   
   The representation of $Q^{(\delta,r)}_{\Phi,k}$ as per Equation \eqref{eq:covering_semi_linearization} is obvious. For $\CalQ^{(\delta,r)}_\Phi$ to be semi-structured, it remains to show that $\|A(\delta k)^{-1}A(\delta \ell)\|\leq C$, for all $k\in\ZZ^d$, $\ell \in k^\ast$ and some $C>0$, as well as $Q_k' \subset \overline{B_R(0)}$, for all $k\in\ZZ^d$ and some $R>0$.
   
   Since $\Phi$ is $0$-admissible, Equation \eqref{eq:PhiHigherDerivativeEstimate} implies $\|A(\delta k)^{-1}A(\delta \ell)\|\leq v_0(\delta(k-\ell))\leq v_0(2\delta r\cdot e_1)$, for all $k,\ell\in\ZZ^d$. Here we used $|\ell-k|<2r$ for all $\ell\in k^{\ast}$ similar to above. Finally, if we express the directional derivative of $\Phi^{-1}$ through its Jacobian, we obtain, for all $\tau,\upsilon\in \RR^d$
   \begin{equation}\label{eq:basicIntegralEstimate}
      \begin{split}
      \left|[(D\Phi^{-1})(\upsilon)]^{-1}\langle \Phi^{-1}(\tau)-\Phi^{-1}(\upsilon) \rangle \right|
       & = \left|[(D\Phi^{-1})(\upsilon)]^{-1} \int_{0}^{1} (D\Phi^{-1})(\upsilon+s(\tau-\upsilon))\left\langle \tau-\upsilon \right\rangle~ds\right|\\
       & \leq |\tau-\upsilon| \cdot \max_{s\in[0,1]} \left\|[(D\Phi^{-1})(\upsilon)]^{-1}(D\Phi^{-1})(\upsilon+s(\tau-\upsilon))\right\|\\
       & = |\tau-\upsilon|\cdot \max_{s\in[0,1]} \left\| \phi_\upsilon(\upsilon+s(\tau-\upsilon))\right\|.
       \end{split}
   \end{equation}
   If $\upsilon=\delta k$ and $\tau\in \delta\cdot B_r(k)$, then we conclude that 
   \[
        |[(D\Phi^{-1})(\upsilon)]^{-1}\langle \Phi^{-1}(\tau)-\Phi^{-1}(\upsilon) \rangle|
        < \delta r \cdot v_0(\delta r\cdot e_1).
   \]
   Hence, $Q_k' = A(\delta k)^{-1} \langle \Phi^{-1}(\delta\cdot B_r(k)) - \Phi^{-1}(\delta k) \rangle \subset B_R(0)  \subset \overline{B_R(0)}$, with $R = \delta r \cdot v_0(\delta r\cdot e_1)$, independent of $k\in\ZZ^d$, i.e., $\CalQ_{\Phi}^{(\delta, r)}$ is semi-structured.
   
   Finally, we prove the claims regarding the weight $\tilde{v}$: To prove (1), observe that 
   $\tau\in Q^{(\delta,r)}_{\Phi,k}$ and $\upsilon\in Q^{(\delta,r)}_{\Phi,\ell}$ with $\ell\in k^\ast$
   implies $|\Phi(\tau)-\Phi(\upsilon)| < 4\delta r$,
   so that $v$-moderateness of $v'$ yields
   \[
     \tilde{v}(\upsilon)
     = v'(\Phi(\upsilon))
     \leq v'(\Phi(\tau))v(\Phi(\upsilon)-\Phi(\tau))
     \leq \left[ \sup_{\tau_0\in{B_{4\delta r}(0)}} v(\tau_0) \right] \cdot \tilde{v}(\tau) = \left[ \max_{\tau_0\in\overline{B_{4\delta r}(0)}} v(\tau_0) \right] \cdot \tilde{v}(\tau).
   \]

   Interchange the roles of $\tau$ and $\upsilon$ and set $C_0^{-1} := C_1 := \max_{\tau_0\in\overline{B_{4\delta r}(0)}} v(\tau_0)$ to complete the proof of Item (1). Item (2) is a direct consequence of (1); more specifically,
   the constant $C$ in \eqref{eq:Qmoderateness} can be chosen as $C_1$.
 \end{proof}

 One might expect that the existence of $r>0$ such that $\Phi^{-1}({B_r(x_k)}) \subset Q^{(\delta,r)}_{\Phi,k}$, for all $k\in\ZZ^d$, implies tightness of $\CalQ^{(\delta,r)}_{\Phi}$. This is indeed the case, but we require some auxiliary results to prove the statement. In particular, we need a fundamental relation between the sizes of $\Phi^{-1}(\Phi(\xi) + B_\vartheta (0))$ and 
 $D\Phi^{-1}(\Phi(\xi)) \left\langle B_\vartheta (0)\right\rangle$.
 
\begin{lemma}\label{lem:PhiDerivativeComparison}
   Let $\Phi : D \to \RR^d$ be a $1$-admissible warping function with
   control weight $v_0$.
   Then we have
   \[
     \|\identity_{\RR^d} - A^{-1}(\tau_0) \, A(\tau)\|
     = \|\identity_{\RR^d} - A^T (\tau) \, A^{-T}(\tau_0)\|
     \leq d
          \cdot v_0 (|\tau - \tau_0|\cdot e_1)
          \cdot \|\tau - \tau_0\|_{\infty} \, ,
   \]
  for all $\tau,\tau_0 \in \RR^d$.
 \end{lemma}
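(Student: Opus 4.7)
The plan is to reduce the estimate to a first-order Taylor bound on the matrix-valued function $\phi_{\tau_0}$ defined in \eqref{eq:PhiDefinition}, which is precisely where the $1$-admissibility of $\Phi$ enters.

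First I would dispatch the equality of the two norms: since the operator norm (with respect to the Euclidean norm) is invariant under transposition, and since
\[
   \bigl(\identity_{\RR^d} - A^{-1}(\tau_0)\, A(\tau)\bigr)^T
   = \identity_{\RR^d} - A^T(\tau)\, A^{-T}(\tau_0),
\]
the two norms coincide. Second, I would rewrite the expression of interest in terms of $\phi_{\tau_0}$. By \eqref{eq:PhiDefinition}, one has $\phi_{\tau_0}(\upsilon)^T = A^{-1}(\tau_0)\,A(\upsilon + \tau_0)$, and in particular $\phi_{\tau_0}(0) = \identity_{\RR^d}$. Setting $\upsilon := \tau-\tau_0$ gives
\[
   \identity_{\RR^d} - A^{-1}(\tau_0)\,A(\tau)
   = \bigl(\phi_{\tau_0}(0) - \phi_{\tau_0}(\tau - \tau_0)\bigr)^T,
\]
so it suffices to bound $\|\phi_{\tau_0}(\tau-\tau_0) - \phi_{\tau_0}(0)\|$.

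Third, I would apply the fundamental theorem of calculus to the smooth, matrix-valued map $s \mapsto \phi_{\tau_0}(s(\tau-\tau_0))$ on $[0,1]$. This yields
\[
   \phi_{\tau_0}(\tau - \tau_0) - \phi_{\tau_0}(0)
   = \int_0^1 \sum_{k=1}^d (\tau-\tau_0)_k \cdot (\partial_k \phi_{\tau_0})\bigl(s(\tau-\tau_0)\bigr)\, ds.
\]
Taking operator norms, pulling the sum outside via the triangle inequality, and bounding each $|(\tau-\tau_0)_k| \leq \|\tau-\tau_0\|_\infty$, the task reduces to estimating $\|(\partial_k \phi_{\tau_0})(s(\tau-\tau_0))\|$ for $s\in [0,1]$. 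This is exactly where the $1$-admissibility assumption \eqref{eq:PhiHigherDerivativeEstimate} kicks in: it gives $\|(\partial_k \phi_{\tau_0})(\upsilon)\| \leq v_0(\upsilon)$ uniformly in $\tau_0$. Together with the sum over $k=1,\dots,d$, this produces the factor $d$.

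Finally, I would invoke the fact that $v_0$ is radially increasing (Remark \ref{rem:SimpleWeightRemark}) to conclude $v_0(s(\tau-\tau_0)) \leq v_0(\tau-\tau_0) = v_0(|\tau-\tau_0|\cdot e_1)$ for all $s \in [0,1]$, yielding the stated bound. I do not anticipate a substantial obstacle: the proof is essentially a first-order Taylor estimate, and all ingredients, especially the derivative bound on $\phi_{\tau_0}$ and the radial monotonicity of $v_0$, are immediately available from the definition of admissibility. The only mild care needed is to make sure the constant $d$ arises cleanly from the coordinate expansion of the gradient, which is why I express the differential through the componentwise derivatives $\partial_k \phi_{\tau_0}$ rather than the total derivative.
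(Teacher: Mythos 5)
Your proposal is correct and follows essentially the same route as the paper's proof: express the difference via $\phi_{\tau_0}$, apply the fundamental theorem of calculus to the matrix-valued curve $s\mapsto\phi_{\tau_0}(s(\tau-\tau_0))$, bound the coordinate partials using \eqref{eq:PhiHigherDerivativeEstimate}, and close with the radial monotonicity of $v_0$. The only cosmetic difference is that you work with the transpose $(\phi_{\tau_0}(0)-\phi_{\tau_0}(\tau-\tau_0))^T$ whereas the paper bounds $\|\phi_{\tau_0}(\tau-\tau_0)-\phi_{\tau_0}(0)\|$ directly, which is the same estimate after the initial transposition identity.
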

 \begin{proof}
   The first identity simply follows from $\|B^T\| = \|B\|$ for
   $B \in \R^{d \times d}$.
   Next, by definition of $\phi_{\tau_0}$ (see Equation~\eqref{eq:PhiDefinition}),
   we have
   $\phi_{\tau_0}(\tau - \tau_0)
    = A^{T} \big( (\tau - \tau_0) + \tau_0\big) \, A^{-T}(\tau_0)
    = A^{T} (\tau) \, A^{-T}(\tau_0)$,
   and $\phi_{\tau_0} (0) = \identity_{\R^d}$.
   Therefore,
   \begin{align*}
     \|\identity_{\R^d} - A^{T} (\tau) \, A^{-T} (\tau_0)\|
     = \|\phi_{\tau_0} (\tau - \tau_0) - \phi_{\tau_0} (0)\|
     & = \left\|
           \int_0^1
             \frac{d}{ds} \Big|_{s=t}
             \Big[
               \phi_{\tau_0} \big( s (\tau - \tau_0) \big)
             \Big]
           \, dt
         \right\| \\
     & = \left\|
           \int_0^1
             \sum_{\ell=1}^d
               (\partial_\ell \, \phi_{\tau_0})
                 \big( t (\tau - \tau_0) \big)
               \cdot (\tau - \tau_0)_{\ell}
           \,\, dt
         \right\| \\
     & \leq \sum_{\ell=1}^d
            \left[
              |(\tau - \tau_0)_{\ell}|
              \cdot \int_0^1
                      \big\|
                        (\partial_\ell \, \phi_{\tau_0})
                          \big(t (\tau - \tau_0)\big)
                      \big\|
                    \, d t
            \right] \\
     & \leq d
            \cdot v_0 (|\tau - \tau_0|\cdot e_1)
            \cdot \|\tau - \tau_0\|_{\infty} \, .
   \end{align*}
   Here, we used in the last step that $v_0$ is radially increasing, so that
   $v_0 \big( t (\tau - \tau_0) \big) \leq v_0 (|\tau - \tau_0|\cdot e_1)$,
   and furthermore that $\|\partial_\ell \, \phi_{\tau_0} (\upsilon) \|
   \leq v_0 (t (\tau - \tau_0))$ for all $t \in [0,1]$,
   as a consequence of \eqref{eq:PhiHigherDerivativeEstimate}.
 \end{proof} 

 We are now ready to compare $\Phi^{-1}(B_{\vartheta} (\Phi(\xi)))$ to its 
 ``linearization''
 $\xi + D\Phi^{-1}(\Phi(\xi)) \langle B_{\vartheta} (\Phi(\xi)) - \Phi(\xi) \rangle$.

 \begin{lemma}\label{lem:CoveringLinearization}
   Let $\Phi : D \to \RR^d$ be a $1$-admissible warping function with control weight $v_0$.
   Then, for every $0 < \vartheta \leq \vartheta_0:= (2d\cdot v_0(1))^{-1}$ and every $\xi \in D$, we have
   \begin{equation}
     \xi + D\Phi^{-1}(\Phi(\xi)) \langle \overline{B_{\vartheta / 4}(0)}\rangle
     \subset \Phi^{-1}\big(\Phi(\xi) + B_\vartheta (0)\big).
     \label{eq:covering_linearization}
   \end{equation}
   In particular,
   $\xi + D\Phi^{-1}(\Phi(\xi)) \left\langle B_{\vartheta / 4} (0) \right\rangle
    \subset D$
   for all $\xi \in D$. 
 \end{lemma}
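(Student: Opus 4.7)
\smallskip

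\noindent\textbf{Proof proposal.} Fix $\xi \in D$ and set $\tau_0 := \Phi(\xi)$, $A := D\Phi^{-1}$, so $A(\tau_0) = (D\Phi(\xi))^{-1}$. Given $z \in \overline{B_{\vartheta/4}(0)}$, we must exhibit $\eta \in B_\vartheta(0)$ with $\xi + A(\tau_0) z = \Phi^{-1}(\tau_0 + \eta)$, which (provided $\xi + A(\tau_0) z \in D$) amounts to the bound $|\Phi(\xi + A(\tau_0) z) - \tau_0| < \vartheta$. The plan is to control both the ``staying inside $D$'' issue and the quantitative bound simultaneously by a continuity argument along the line segment $\gamma(t) := \xi + t \, A(\tau_0) z$, $t \in [0,1]$.

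Since $\Phi$ is a homeomorphism, the set $K := \Phi^{-1}(\overline{B_{\vartheta/2}(\tau_0)})$ is compact in $\RR^d$, while $\Phi^{-1}(B_{\vartheta/2}(\tau_0)) \subset K$ is open in $\RR^d$ and contains $\xi$. Let $T$ be the supremum of all $t \in [0,1]$ with $\gamma([0,t]) \subset K$; by continuity $T > 0$ and $\gamma(T) \in K \subset D$. For $s \in [0,T]$ I would then write $\tilde{\tau}(s) := \Phi(\gamma(s))$ and compute, via the chain rule and $D\Phi(\gamma(t)) = A^{-1}(\tilde{\tau}(t))$,
\[
 \tilde{\tau}(s) - \tau_0
 = \int_0^s A^{-1}(\tilde{\tau}(t)) \, A(\tau_0) \, z \, dt
 = s z + \int_0^s \big[ A^{-1}(\tilde{\tau}(t)) A(\tau_0) - I \big] z \, dt.
\]

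The key input is Lemma \ref{lem:PhiDerivativeComparison} (applied with the roles of the two arguments exchanged, or equivalently using the transpose identity stated there), which yields
\[
 \| A^{-1}(\tilde{\tau}(t)) A(\tau_0) - I \|
 \leq d \cdot v_0(|\tilde{\tau}(t) - \tau_0| \cdot e_1) \cdot \|\tilde{\tau}(t) - \tau_0\|_\infty .
\]
For $t \in [0,T]$ we have $|\tilde{\tau}(t) - \tau_0| \leq \vartheta/2 \leq \vartheta_0/2 < 1$, and since $v_0$ is radially increasing this gives $v_0(|\tilde{\tau}(t) - \tau_0| \cdot e_1) \leq v_0(e_1)$; combined with $|z| \leq \vartheta/4$ and $\vartheta \leq (2d v_0(1))^{-1}$ the integral estimate becomes
\[
 |\tilde{\tau}(s) - \tau_0|
 \leq s \cdot \tfrac{\vartheta}{4} + \tfrac{1}{8} \int_0^s |\tilde{\tau}(t) - \tau_0| \, dt
 \leq \tfrac{\vartheta}{4} + \tfrac{1}{8} \cdot \tfrac{\vartheta}{2} = \tfrac{5\vartheta}{16} < \tfrac{\vartheta}{2},
\]
where I have used $|\tilde{\tau}(t) - \tau_0| \leq \vartheta/2$ on $[0,T]$ to close the bootstrap without invoking Gronwall.

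Since the strict inequality $|\tilde{\tau}(T) - \tau_0| < \vartheta/2$ places $\gamma(T)$ in the \emph{open} set $\Phi^{-1}(B_{\vartheta/2}(\tau_0))$, continuity of $\gamma$ forces $T = 1$; otherwise $\gamma(s) \in K$ would extend past $T$, contradicting maximality. Hence $\xi + A(\tau_0) z = \gamma(1) \in D$ with $|\Phi(\gamma(1)) - \tau_0| \leq 5\vartheta/16 < \vartheta$, proving the inclusion~\eqref{eq:covering_linearization}; the ``in particular'' clause follows immediately since $\Phi^{-1}(\Phi(\xi) + B_\vartheta(0)) \subset D$. The main obstacle is the circularity that the derivative estimate from Lemma \ref{lem:PhiDerivativeComparison} degrades with $|\tilde{\tau}(t) - \tau_0|$, which is precisely the quantity to be controlled; the continuity/maximality argument combined with the deliberate slack between the ``working bound'' $\vartheta/2$ and the final bound $5\vartheta/16$ is what resolves this.
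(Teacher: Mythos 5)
Your proof is correct, and it takes a genuinely different route from the paper's. The paper proves the inclusion by an inverse-function-theorem-style Banach fixed point argument: it introduces $g(\upsilon) = \upsilon - A^{-1}(\Phi(\xi))\langle\Phi^{-1}(\Phi(\xi)+\upsilon)\rangle$, uses Lemma \ref{lem:PhiDerivativeComparison} to bound $\|Dg\|\le 1/2$ on $\overline{B_{\vartheta_0}(0)}$, and then, for each target $\tau\in\overline{B_{\vartheta/2}(0)}$, shows that $h_\tau(\upsilon)=\tau-g(0)+g(\upsilon)$ is a contractive self-map of $\overline{B_\vartheta(0)}$ whose fixed point exhibits the desired preimage; a final halving of $\vartheta$ converts $\vartheta/2$ on the left to $\vartheta/4$. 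You instead run a continuation (shooting) argument: you parametrize the straight segment $\gamma(t)=\xi + t\,A(\tau_0)z$, pull it through $\Phi$ to $\tilde\tau=\Phi\circ\gamma$, and use the ODE representation $\tilde\tau'(t)=A^{-1}(\tilde\tau(t))A(\tau_0)z$ together with the same key Lemma \ref{lem:PhiDerivativeComparison} (legitimately applied with swapped arguments, since its bound depends only on $|\tau-\tau_0|$) to close a bootstrap inside the compact set $K=\Phi^{-1}(\overline{B_{\vartheta/2}(\tau_0)})$. The maximality/openness argument forcing $T=1$ is correct, the arithmetic ($|z|\,d\,v_0(1)\le 1/8$, yielding $5\vartheta/16<\vartheta/2$) checks out, and the ``in particular'' clause follows as you say. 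What each approach buys: the paper's is a clean transplant of the standard contraction-mapping proof of the inverse function theorem and constructs the preimage for every $\tau$ in a ball in one shot; yours stays at the level of a single segment, avoids introducing the auxiliary maps $g,h_\tau$, and makes the ``why does the segment stay in $D$'' issue and the quantitative control into the same continuity argument, with a slightly sharper constant ($5\vartheta/16$ vs.\ $\vartheta/2$) as a byproduct. Both hinge on exactly the same derivative-comparison lemma.
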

 \begin{proof}

     First of all, we define for fixed, but arbitrary $\xi \in D$ the map
     \[
       g\colon 
       \RR^d \to \RR^d ,\ 
       \upsilon \mapsto \upsilon - A^{-1}(\Phi(\xi))
                                    \left\langle
                                      \Phi^{-1}(\Phi(\xi) + \upsilon)
                                    \right\rangle.
     \]
     The Jacobian of $g$ is given by
     \begin{equation}
         Dg (\upsilon)
       = \mathrm{id} - A^{-1}(\Phi(\xi)) \cdot D\Phi^{-1}(\Phi(\xi) + \upsilon)
       = \mathrm{id} - A^{-1}(\Phi(\xi)) \cdot A(\Phi(\xi) + \upsilon) \, .
       \label{eq:covering_linearization_g_derivative}
     \end{equation}
     In particular, $Dg(0) = 0$.
     For the moment, we assume that there is some $\vartheta_0 > 0$, such that
     \begin{equation}\label{eq:Dg_estimate}
       \sup_{\upsilon\in \overline{B_{\vartheta_0} (0)}}
         \|Dg (\upsilon)\|\leq 1/2.
     \end{equation}
     With this estimate, proving Equation~\eqref{eq:covering_linearization}
     essentially amounts to repeating the usual proof of the inverse mapping
     theorem
     (see for instance \cite[Chapter XIV, Theorem 1.2]{LangRealFunctional}).

     Indeed, let us fix $0 < \vartheta \leq \vartheta_0$.
     We define
     \[
       h_\tau\colon \overline{B_\vartheta (0)} \to     \RR^d,
                \upsilon                   \mapsto \tau - g(0) + g(\upsilon),\ 
       \qquad \text{for arbitrary} \qquad
       \tau \in \overline{B_{\vartheta/2}(0)}.
     \]
     Note that $h_\tau (0) = \tau$ and
     $\| D h_\tau (\upsilon) \| = \| D g (\upsilon) \| \leq 1/2$, by
     Equation~\eqref{eq:Dg_estimate}.
     The latter yields, using the fundamental theorem of calculus, that
     $| h_\tau (\upsilon) - h_\tau (\upsilon') |
      \leq \frac{1}{2} | \upsilon - \upsilon' |$
     for all $\upsilon, \upsilon' \in \overline{B_\vartheta (0)}$, that is,
     $h_\tau$ is a contraction.
     Moreover,
     \[
       |h_\tau (\upsilon)| \leq |h_\tau (\upsilon) - h_\tau (0)| + |h_\tau (0)|
       \leq \frac{1}{2} |\upsilon - 0| + |\tau| \leq \vartheta,
       \quad \text{ for all } \upsilon \in \overline{B_\vartheta(0)},
     \]
     so that $h_\tau : \overline{B_\vartheta (0)} \to\overline{B_\vartheta (0)}$
     is a self-map.

     As an application of Banach's fixed point theorem
     (see for instance \cite[Chapter XIV, Lemma 1.1]{LangRealFunctional}),
     there is thus a (unique)
     $\upsilon_\tau \in \overline{B_\vartheta}(0)$ satisfying
     $h_\tau (\upsilon_\tau) = \upsilon_\tau$.

     Noting that $(A^{-1}(\Phi(\xi)))^{-1} = D\Phi^{-1}(\Phi(\xi))$, we obtain  
     \begin{alignat*}{3}
       &&   \upsilon_\tau
          = h_\tau (\upsilon_\tau)
          = \tau - g(0) + g(\upsilon_\tau)
       &  = \tau + \upsilon_\tau - \left(
                                A^{-1}(\Phi(\xi))
                                  \langle
                                      \Phi^{-1}(\Phi(\xi) + \upsilon_\tau)
                                  \rangle - A^{-1}(\Phi(\xi))
                                  \langle
                                      \xi
                                  \rangle
                              \right) \\
            \Longleftrightarrow
       &&   A^{-1}(\Phi(\xi)) \langle \Phi^{-1}(\Phi(\xi) + \upsilon_\tau) \rangle
       &  = \tau 
            + A^{-1}(\Phi(\xi)) \langle \xi \rangle \\
            \Longleftrightarrow
       &&   \Phi^{-1}(\Phi(\xi) + \upsilon_\tau)
       &  = \xi + D\Phi^{-1}(\Phi(\xi)) \left\langle \tau \right\rangle.
     \end{alignat*}
     Consequently, since $\tau \in \overline{B_{\vartheta/2}(0)}$ was arbitrary 
     and $\upsilon_\tau \in \overline{B_{\vartheta} (0)}$,
     we see
     \[
               \xi + D\Phi^{-1}(\Phi(\xi))
                       \langle \overline{B_{\vartheta/2}(0)} \rangle
       \subset \Phi^{-1} \big(\, \Phi(\xi) + \overline{B_\vartheta(0)} \, \big),
     \]
     for all $0 < \vartheta \leq \vartheta_0$.
     Applying the above with $\vartheta / 2 < \vartheta \leq \vartheta_0$ instead of
     $\vartheta$ itself yields the desired inclusion in Equation~\eqref{eq:covering_linearization}:
     \[
       \xi + D\Phi^{-1}(\Phi(\xi))
               \left\langle \overline{B_{\vartheta / 4} (0)}\right\rangle
       \subset \Phi^{-1}
                 \big( \, \Phi(\xi) + \overline{B_{\vartheta / 2}(0)} \, \big)
       \subset \Phi^{-1} \big(\Phi(\xi) + B_{\vartheta}(0) \big).
     \]
     
     \medskip{}

     It remains to show that \eqref{eq:Dg_estimate} indeed holds for $\vartheta_0 := (2d \cdot v_0 (1))^{-1}$. But Lemma \ref{lem:PhiDerivativeComparison} implies that 
     \[
       \|\identity_{\R^d} - A^{-1}(\tau_0) \, A(\tau)\|
       \leq d \cdot v_0 (|\tau - \tau_0|\cdot e_1) \cdot |\tau - \tau_0|
       \quad \text{for all}\quad \tau, \tau_0 \in \R^d \, .
     \]
     Further note that $v_0$ is radially increasing and thus $\vartheta_0 \leq 1$, since 
     $1 \leq v_0(0) \leq v_0(1)$. Recalling the formula \eqref{eq:covering_linearization_g_derivative}
     for the derivative of $g$, we see for $\upsilon \in \overline{B_{\vartheta_0}} (0)$ that
     \[
       \|D g (\upsilon)\|
       = \|\identity
       - A^{-1}(\Phi(\xi)) \, A(\Phi(\xi) + \upsilon)\|
       \leq d \cdot v_0 (|\upsilon|\cdot e_1) \cdot |\upsilon|
       \leq d \cdot v_0 (e_1) \cdot |\upsilon|
       \leq \frac{1}{2} \, ,
     \]
     as desired. Here, we used again that $v_0$ is radially increasing, implying  
     $v_0 (|\upsilon|)\leq v_0(1)$.\qedhere
 \end{proof}
 
 As the first consequence of Lemma \ref{lem:CoveringLinearization},
 we can now show that the family $\CalQ_{\Phi}^{(\delta, r)}$
 is indeed a tight, semi-structured admissible covering of $D$,
 if $r > \sqrt{d}/2$.

 \begin{lemma}\label{lem:DecompInducedCoveringIsTight}
   Let $\Phi : D \to \RR^d$ be a $1$-admissible warping function, and let
   $\delta > 0$ and $r > \sqrt{d} / 2$ be arbitrary. The semi-structured covering 
   $\CalQ_{\Phi}^{(\delta, r)} = (Q^{(\delta,r)}_{\Phi,k})_{k \in \ZZ^d}$ is tight.
 \end{lemma}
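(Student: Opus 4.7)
The plan is to apply Lemma~\ref{lem:CoveringLinearization} at the center points $\xi_k := \Phi^{-1}(\delta k)$ of each covering element, and to observe that its conclusion is exactly what is needed after left-multiplying by $T_k^{-1} = A(\delta k)^{-1}$. Recall from \eqref{eq:covering_semi_linearization} that
\[
  Q_k' = A(\delta k)^{-1} \bigl\langle \Phi^{-1}(\delta \cdot B_r(k)) - \Phi^{-1}(\delta k) \bigr\rangle.
\]
Thus, finding a ball $B_\eps(c_k) \subset Q_k'$ with $\eps$ and (essentially) $c_k$ independent of $k \in \ZZ^d$ is the key task.

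First I would fix $\vartheta_0 := (2d \cdot v_0(1))^{-1}$ and set $\vartheta := \min(\vartheta_0, \delta r) > 0$, so that $\vartheta$ depends only on $\delta$, $r$, $d$, and the control weight $v_0$, and in particular is independent of $k$. Since $\Phi(\xi_k) = \delta k$, Lemma~\ref{lem:CoveringLinearization} applied at $\xi_k$ yields
\[
  \xi_k + A(\delta k) \bigl\langle \overline{B_{\vartheta/4}(0)} \bigr\rangle
  \subset \Phi^{-1}\bigl(\delta k + B_\vartheta(0)\bigr).
\]
Because $\vartheta \leq \delta r$, the right-hand side is contained in $\Phi^{-1}(\delta k + B_{\delta r}(0)) = \Phi^{-1}(\delta \cdot B_r(k)) = Q_{\Phi,k}^{(\delta,r)}$. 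Subtracting $b_k = \Phi^{-1}(\delta k) = \xi_k$ and left-multiplying by $T_k^{-1} = A(\delta k)^{-1}$ therefore gives
\[
  \overline{B_{\vartheta/4}(0)} \subset Q_k' \qquad \text{for all } k \in \ZZ^d.
\]

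Choosing $\eps := \vartheta/4$ and $c_k := 0$ for every $k$ then satisfies the tightness condition $B_\eps(c_k) \subset Q_k'$ uniformly in $k$, which completes the proof since semi-structuredness and admissibility were already established in Lemma~\ref{lem:DecompInducedCoveringIsNice}. The main obstacle here is really encapsulated inside Lemma~\ref{lem:CoveringLinearization}: one needs the diffeomorphism to be sufficiently well-behaved that a small Euclidean ball in warped coordinates pulls back to a set containing the linearized image of a small ball; once that lemma is available, the present statement reduces to unwinding the definitions and checking that the resulting radius $\vartheta/4$ does not depend on $k$.
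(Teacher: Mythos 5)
Your proof is correct and follows essentially the same route as the paper's: fix $\vartheta = \min\{\delta r, \vartheta_0\}$, apply Lemma~\ref{lem:CoveringLinearization} at $\xi_k = \Phi^{-1}(\delta k)$, and unwind the definition of $Q_k'$ to obtain a $k$-independent ball inside it. The only cosmetic difference is that you record the closed ball $\overline{B_{\vartheta/4}(0)} \subset Q_k'$ and then pass to the open ball, whereas the paper states the open inclusion directly; both are immediate from Equation~\eqref{eq:covering_linearization}.
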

 \begin{proof}
   Let $\vartheta_0 > 0$ as in
   Lemma~\ref{lem:CoveringLinearization}, and set
   $\vartheta := \min \{ \delta r , \vartheta_0 \} > 0$.
   Then, we apply Equation \eqref{eq:covering_linearization}
   (with $\xi = \Phi^{-1}(\delta k) = b_k$) to derive
   \begin{align*}
              Q_{\Phi,k}^{(\delta, r)}
      =       \Phi^{-1}(\delta \cdot B_r (k))
     &\supset \Phi^{-1}( \Phi(\xi) + B_{\vartheta} (0) ) \\
     ({\scriptstyle{\text{Equation } \eqref{eq:covering_linearization}}})
     &\supset \xi + D\Phi^{-1}(\Phi(\xi))
                      \left\langle B_{\vartheta / 4}(0)\right\rangle \\
     &=       b_k + T_k \left\langle B_{\vartheta / 4}(0)\right\rangle.
   \end{align*}
   But this implies $Q_{k} ' \supset B_{\vartheta / 4}(0)$.
   Since this holds for all $k \in \ZZ^d$, we see that
   $\CalQ_{\Phi}^{(\delta,r)}$ is tight.
 \end{proof}
 
 \begin{remark}\label{rem:StructuredCovering}
   Although not required in this work, it may sometimes be more convenient to consider a structured covering, in the sense of \cite[Definition 2.5]{voigtlaender2016embeddings}. In Appendix \ref{appendix:StructuredCovering}, we show that this is indeed possible, and there is a family of structured coverings equivalent to $\CalQ^{(\delta,r)}_\Phi$, albeit for restricted ranges of the parameters $\delta,r$. 
 \end{remark}
 
 To ensure that the decomposition spaces $\DecompSp(\CalQ^{(\delta,r)}_\Phi,\lebesgue^p,\ell^q_u)$ are properly defined, we need to construct a $\CalQ^{(\delta,r)}_\Phi$-BAPU, such that $\CalQ^{(\delta,r)}_\Phi$ is a decomposition covering. Such a construction is rather straightforward, but we now construct a
 special BAPU that will be useful for proving equivalence of the
 coorbit spaces generated by $\mathcal G(\theta,\Phi)$ with the decomposition
 spaces $\DecompSp(\CalQ^{(\delta,r)}_\Phi,\lebesgue^p,\ell^q_u)$.

 \begin{definition}\label{def:BAPUgenerators}
   Fix some $\delta > 0$, $r > \sqrt{d} / 2$ and some
   $0 < \vartheta < r - \sqrt{d} / 2$.
   Let $\zeta\in \TestFunctionSpace(\RR^d)$ be such that
   \begin{equation}
     \zeta \geq 0,
     \quad \int_{\RR^d} \zeta (\tau) d\tau = 1,
     \quad \text{ and } \quad
     K_0 := \supp \zeta \subset \delta \cdot B_{\vartheta} (0).
     \label{eq:DecompAnalyzingVectorAssumptions}
   \end{equation}
   For all $\xi\in D$, define the functions
   \begin{equation}
       \zeta^{(\xi)}
       := [w(\Phi(\xi))]^{-1/2} \cdot (\translation_{\Phi(\xi)} \zeta) \circ \Phi
       \quad \text{ and } \quad
       \varphi^{(\xi)} := [w(\Phi(\xi))]^{-1/2} \cdot \zeta^{(\xi)}.
       \label{eq:BAPUBuildingBlocksDefinition}
   \end{equation}
   Furthermore, with
   $M_k := \Phi^{-1} \left(
                        \delta
                        \cdot \left(k + \smash{ \left[ -\frac{1}{2}, \frac{1}{2} \right)^d}
                                \vphantom{ \left[ \frac{1}{2} \right)}\,
                              \right)
                     \right)$,
   $k\in\ZZ^d$, define
   \begin{equation}
     \varphi_k : D    \to     [0,\infty),
                 \eta \mapsto \int_{M_k} \varphi^{(\xi)}(\eta) \, d \xi.
     \label{eq:DecompSpecialBAPUDefinition}
   \end{equation}
 \end{definition}

 The collection $(\varphi_k)_{k \in \ZZ^d}$ will be shown to be the desired BAPU in Proposition \ref{prop:DecompSpecialBAPU}.
 Note that $\varphi^{(\xi)} \in \TestFunctionSpace(D) \subset \TestFunctionSpace(\RR^d)$
 for each $\xi \in D$, provided that $\Phi\in \mathcal{C}^\infty(D)$. 
 The next lemma is a crucial ingredient for proving the estimate
 $\| \Fourier^{-1} \varphi_k \|_{\lebesgue^1} \leq C$
 for our $\CalQ_{\Phi}^{(\delta, r)}$-BAPU.

 \begin{lemma}\label{lem:BAPUBuildingBlocksEstimate}
   Let $\Phi\in \mathcal C^\infty(D)$ be a $(d+1)$-admissible warping function and $\zeta\in \TestFunctionSpace(\RR^d)$ as in Definition \ref{def:BAPUgenerators}. There is a constant $C > 0$, depending on $\zeta$ and the chosen control weight $v_0$ for $\Phi$, satisfying
   \[
     \| \Fourier^{-1} \varphi^{(\xi)} \|_{\lebesgue^1}
     \leq C \cdot[w(\Phi(\xi))]^{-1}\quad \text{for all}\quad \xi \in D.
   \]
 \end{lemma}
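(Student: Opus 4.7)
The plan is to exploit the warping structure via a linear change of variables that normalizes $\varphi^{(\xi)}$, reducing the claim to a uniform $\lebesgue^1$-bound for a function whose support and derivatives are controlled by the $(d+1)$-admissibility of $\Phi$. Writing $\varphi^{(\xi)} = [w(\Phi(\xi))]^{-1}\cdot h_\xi$ with $h_\xi := (\translation_{\Phi(\xi)}\zeta)\circ\Phi$, it suffices to show that $\|\Fourier^{-1} h_\xi\|_{\lebesgue^1} \leq C$ uniformly in $\xi \in D$. Substituting $\eta = \xi + A(\Phi(\xi))\tilde\eta$ in the integral defining $\Fourier^{-1} h_\xi(x)$ (Jacobian $w(\Phi(\xi))$), and then $y := A^T(\Phi(\xi))\,x$ in the outer $\lebesgue^1$-integral, gives
\[
    \|\Fourier^{-1} h_\xi\|_{\lebesgue^1}
    = \|\Fourier^{-1}\tilde h_\xi\|_{\lebesgue^1},
    \quad
    \tilde h_\xi(\tilde\eta) := \zeta(\Psi_\xi(\tilde\eta)),
    \quad
    \Psi_\xi(\tilde\eta) := \Phi(\xi + A(\Phi(\xi))\tilde\eta) - \Phi(\xi).
\]
Note $\Psi_\xi(0) = 0$ and $D\Psi_\xi(0) = \mathrm{id}$, so $\tilde h_\xi$ is a normalized analogue of $h_\xi$ in which the $\xi$-dependence is absorbed into the uniformly controlled map $\Psi_\xi$.

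To bound $\|\Fourier^{-1}\tilde h_\xi\|_{\lebesgue^1}$, I would apply the standard Sobolev-type embedding $\|\Fourier^{-1} g\|_{\lebesgue^1} \leq C_N \sum_{|\alpha|\leq N}\|\partial^\alpha g\|_{\lebesgue^2}$ for any $N > d/2$ (Cauchy--Schwarz with $(1+|x|)^{-N}\in \lebesgue^2$, combined with $\|x^\alpha \Fourier^{-1}g\|_{\lebesgue^2} = (2\pi)^{-|\alpha|}\|\partial^\alpha g\|_{\lebesgue^2}$), fixing $N := \lceil d/2 \rceil + 1 \leq d+1$ so that admissibility applies. For the support of $\tilde h_\xi$, Equation~\eqref{eq:basicIntegralEstimate} with $\upsilon = \Phi(\xi)$ and $\tau = \Phi(\xi) + \Psi_\xi(\tilde\eta)$ gives $\tilde\eta = A^{-1}(\Phi(\xi))\langle\Phi^{-1}(\Phi(\xi) + \Psi_\xi(\tilde\eta)) - \xi\rangle$, hence $|\tilde\eta| \leq |\Psi_\xi(\tilde\eta)| \cdot v_0(\Psi_\xi(\tilde\eta))$; since $\supp\zeta \subset \delta\cdot B_\vartheta(0)$, this yields $\supp\tilde h_\xi \subset \overline{B_{R'}(0)}$ for $R' := \delta\vartheta \cdot v_0(\delta\vartheta\cdot e_1)$, uniformly in $\xi$. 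For the derivatives, the cocycle identity $\phi_{\tau+\upsilon}(-\upsilon) = [\phi_\tau(\upsilon)]^{-1}$ (immediate from Equation~\eqref{eq:PhiDefinition}) yields $D\Psi_\xi(\tilde\eta) = [\phi_{\Phi(\xi)}(\Psi_\xi(\tilde\eta))]^{-T}$; an induction on $|\gamma| \leq N$ then shows that each entry of $\partial^\gamma \Psi_\xi(\tilde\eta)$ is a polynomial in the entries of $\partial^\beta \phi_{\Phi(\xi)}(\Psi_\xi(\tilde\eta))$ and $[\phi_{\Phi(\xi)}(\Psi_\xi(\tilde\eta))]^{-1}$ for $|\beta| \leq d$, all uniformly bounded on $\supp\tilde h_\xi$ thanks to the $(d+1)$-admissibility estimate~\eqref{eq:PhiHigherDerivativeEstimate}. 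A Faà di Bruno expansion of $\partial^\alpha(\zeta\circ\Psi_\xi)$ into sums of products of $(\partial^\beta\zeta)(\Psi_\xi(\tilde\eta))$ and $\partial^{\gamma_k}\Psi_\xi(\tilde\eta)$ then produces $\|\partial^\alpha \tilde h_\xi\|_{\lebesgue^\infty} \leq C_\alpha$, and the uniform support bound gives $\|\partial^\alpha \tilde h_\xi\|_{\lebesgue^2}^2 \leq \vol(B_{R'}) \cdot C_\alpha^2$, concluding the argument.

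The main obstacle I anticipate is the inductive control of the higher derivatives $\partial^\gamma \Psi_\xi$ via derivatives of $\phi_{\Phi(\xi)}$: this requires some bookkeeping with the matrix-inverse derivative formula $\partial(M^{-1}) = -M^{-1}(\partial M)M^{-1}$ combined with iterated Leibniz and chain rules. Once this is established, the radially-increasing submultiplicativity of $v_0$ absorbs every admissibility constant into a single constant depending only on $\zeta$, $v_0$, $\delta$, $\vartheta$, and $d$, yielding the claim.
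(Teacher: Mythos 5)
Your proof is correct, and it takes a genuinely different (though closely related) route from the paper. The paper reduces to the quantity
\[
  \frac{1}{w(\tau_0)} \int_{\RR^d} F_{\tau_0}(x)\,dx,
  \quad
  F_{\tau_0}(x) := \left| \int_{\RR^d}
    \frac{w(\upsilon+\tau_0)}{w(\tau_0)}\,\zeta(\upsilon)\,
    e^{-2\pi i \langle A^{-T}(\tau_0)\langle x\rangle,\, \Phi^{-1}(\upsilon+\tau_0)\rangle}\,d\upsilon \right|,
\]
and then invokes \cite[Theorem 4.8]{VoHoPreprint} (with $\tau=0$) as a black box to get the uniform decay $F_{\tau_0}(x)\lesssim (1+|x|)^{-(d+1)}$. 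Your reduction to $\|\Fourier^{-1}\tilde{h}_\xi\|_{\lebesgue^1}$ with $\tilde{h}_\xi = \zeta\circ\Psi_\xi$ is, up to a unimodular phase, the same quantity --- the change of variables $\upsilon = \Psi_\xi(\tilde\eta)$ has Jacobian $w(\upsilon+\tau_0)/w(\tau_0)$, which explains the Jacobian factor in $F_{\tau_0}$, and it linearizes the phase. What you gain by parametrizing in the linearized variable $\tilde\eta$ rather than the warped variable $\upsilon$ is a bona fide Fourier transform of a compactly supported, uniformly controlled function, which you can then handle self-containedly via the $\lebesgue^1\hookleftarrow W^{N,2}$ bound with $N=\lceil d/2\rceil+1\leq d+1$, Fa\`a di Bruno, and the cocycle formula $D\Psi_\xi(\tilde\eta) = [\phi_{\Phi(\xi)}(\Psi_\xi(\tilde\eta))]^{-T}$ together with $[\phi_\tau(\upsilon)]^{-1}=\phi_{\tau+\upsilon}(-\upsilon)$, which keeps all the matrix inverses inside the admissibility estimate \eqref{eq:PhiHigherDerivativeEstimate}. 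That approach is more work but does not depend on the external reference, which is a genuine advantage; the paper's route is shorter but outsources the decay estimate. Two small remarks: the uniform support bound should use $\max_{s\in[0,1]}\|\phi_{\Phi(\xi)}(s\Psi_\xi(\tilde\eta))\|$ rather than the form with the offset argument that appears (with a typo) in \eqref{eq:basicIntegralEstimate}, but this is what your invocation implicitly uses; and the resulting constant also depends on the parameters $\delta,\vartheta$ from Definition~\ref{def:BAPUgenerators}, which is consistent with the paper since those are fixed alongside $\zeta$.
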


 \begin{proof}
   Because of $\zeta \in \TestFunctionSpace(\RR^d)$,
   there is some $\widetilde{\zeta} \in \TestFunctionSpace(\RR^d)$ with
   $\widetilde{\zeta} \geq 0$ and $\widetilde{\zeta} \equiv 1$ on $\supp \zeta$.
   Now, we set $\tau_0 := \Phi(\xi) \in \RR^d$ and calculate
   \begin{align*}
   \left\Vert \Fourier^{-1}\varphi^{\left(\xi\right)}\right\Vert _{\lebesgue^{1}}
   & =\int_{\RR^{d}}
          \left|
              \int_{D}\:
                  \frac{1}{w\left(\Phi\left(\xi\right)\right)}
                  \cdot\zeta\left(
                              \Phi\left(\eta\right)-\Phi\left(\xi\right)
                            \right)
                  \cdot e^{2\pi i\left\langle y,\eta\right\rangle }
              \,d\eta
          \right|\,dy\\
   \left({\scriptstyle \upsilon=\Phi\left(\eta\right)-\tau_{0}}\right)
   & =\int_{\RR^{d}}
          \left|
              \int_{\RR^{d}}
                  \frac{w\left(\upsilon+\tau_{0}\right)}{w\left(\tau_{0}\right)}
                  \cdot\zeta\left(\upsilon\right)
                  \cdot e^{2\pi i\left\langle y,\Phi^{-1}\left(\upsilon+\tau_{0}\right)\right\rangle }
              \,d\upsilon
          \right|
      \,dy\\
   \left({\scriptstyle x=-A^{T}\left(\tau_{0}\right)\langle y \rangle
          \text{ and }
          \left|\det A^{T}\left(\tau_{0}\right)\right|=w\left(\tau_{0}\right)}\right)
   & =\frac{1}{w\left(\tau_{0}\right)}
      \cdot\int_{\RR^{d}}
              \left|
                  \int_{\RR^{d}}
                      \frac{w\left(\upsilon+\tau_{0}\right)}{w\left(\tau_{0}\right)}
                      \cdot \zeta\left(\upsilon\right)
                      \cdot e^{-2\pi i\left\langle A^{-T}\left(\tau_{0}\right) \langle x \rangle,
                                                   \Phi^{-1}\left(\upsilon+\tau_{0}\right)
                                      \right\rangle }
                  \,d\upsilon\right|
           \,dx\\
   & =: \frac{1}{w(\tau_0)} \cdot \int_{\RR^d} F_{\tau_0}(x) \, dx .
   \end{align*}

   Since $\zeta, \widetilde{\zeta} \in \TestFunctionSpace(\RR^d)$, we can invoke \cite[Theorem 4.8]{VoHoPreprint} (with $\tau = 0$) to obtain the existence of a finite constant $C = C(\zeta,v_0) > 0$, such that $F_{\tau_0}(x) \leq C \cdot (1 + |x|)^{-(d+1)}$ for all $x \in \RR^d$, uniformly over $\tau_0\in\RR^d$. We get
   \[
       \| \Fourier^{-1} \varphi^{(\xi)} \|_{\lebesgue^1}
       \leq C \cdot \int_{\RR^d} (1+|x|)^{-(d+1)} \, dx \cdot \frac{1}{w(\tau_0)}
       = C' \cdot \frac{1}{w(\Phi(\xi))},
   \]
   as claimed.
 \end{proof}

 \begin{proposition}
     If $\Phi\in \mathcal C^\infty$ is a $(d+1)$-admissible warping function, then the collection of functions $(\varphi_k)_{k \in \ZZ^d}$ as given in Definition \ref{def:BAPUgenerators} is a BAPU for $\CalQ_{\Phi}^{(\delta, r)}$.
     \label{prop:DecompSpecialBAPU}
 \end{proposition}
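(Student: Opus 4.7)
I need to verify three properties of the BAPU together with smoothness: (i) $\supp\varphi_k \subset Q_{\Phi,k}^{(\delta,r)}$, (ii) $\sum_{k\in\ZZ^d}\varphi_k\equiv 1$ on $D$, (iii) $\sup_k\|\Fourier^{-1}\varphi_k\|_{\lebesgue^1}<\infty$, and (0) $\varphi_k\in\TestFunctionSpace(D)$. The key technical ingredient is Lemma~\ref{lem:BAPUBuildingBlocksEstimate} for (iii), and the recurring trick everywhere else is the change of variables $\upsilon=\Phi(\xi)$, for which $d\xi = w(\upsilon)\,d\upsilon$ by \eqref{eq:PhiJacobianDetExpressedInW}, neatly cancelling the $[w(\Phi(\xi))]^{-1}$ prefactor in $\varphi^{(\xi)}$.

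For the support condition, I would argue geometrically: if $\varphi^{(\xi)}(\eta)\neq 0$ then $\Phi(\eta)-\Phi(\xi)\in\supp\zeta\subset\delta B_\vartheta(0)$, so $|\Phi(\eta)-\Phi(\xi)|<\delta\vartheta$. For $\xi\in M_k$ we have $|\Phi(\xi)-\delta k|\leq\delta\sqrt{d}/2$, hence $|\Phi(\eta)-\delta k|<\delta(\vartheta+\sqrt{d}/2)<\delta r$ by the choice $\vartheta<r-\sqrt{d}/2$, which means $\eta\in Q_{\Phi,k}^{(\delta,r)}$. In particular each $\varphi_k$ has compact support inside $D$. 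For smoothness (and hence $\varphi_k\in\TestFunctionSpace(D)$), I would rewrite
\[
   \varphi_k(\eta)
   = \int_{M_k}[w(\Phi(\xi))]^{-1}\zeta(\Phi(\eta)-\Phi(\xi))\,d\xi
   = \int_{\delta(k+[-1/2,1/2)^d)}\zeta(\Phi(\eta)-\upsilon)\,d\upsilon,
\]
which is $(\zeta\ast\Indicator_{\delta(k+[-1/2,1/2)^d)})\circ\Phi$ up to reflection, hence $C^\infty$ since $\zeta\in\TestFunctionSpace(\RR^d)$ and $\Phi\in\mathcal{C}^\infty(D)$.

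The partition of unity property falls out of the same substitution: since the cubes $\delta(k+[-1/2,1/2)^d)$ tile $\RR^d$, the sets $M_k$ tile $D$ up to a null set, and
\[
   \sum_{k\in\ZZ^d}\varphi_k(\eta)
   = \int_D[w(\Phi(\xi))]^{-1}\zeta(\Phi(\eta)-\Phi(\xi))\,d\xi
   = \int_{\RR^d}\zeta(\Phi(\eta)-\upsilon)\,d\upsilon
   = \int_{\RR^d}\zeta(\tau)\,d\tau
   = 1,
\]
using \eqref{eq:DecompAnalyzingVectorAssumptions} at the last step.

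For the uniform $\lebesgue^1$-bound on the inverse Fourier transforms, I would apply Minkowski's integral inequality to pull the $\|\cdot\|_{\lebesgue^1_y}$ norm inside the $\xi$-integral, then invoke Lemma~\ref{lem:BAPUBuildingBlocksEstimate}:
\[
   \|\Fourier^{-1}\varphi_k\|_{\lebesgue^1}
   \leq \int_{M_k}\|\Fourier^{-1}\varphi^{(\xi)}\|_{\lebesgue^1}\,d\xi
   \leq C\int_{M_k}[w(\Phi(\xi))]^{-1}\,d\xi
   = C\int_{\delta(k+[-1/2,1/2)^d)}\,d\upsilon
   = C\,\delta^d,
\]
again via $d\xi = w(\upsilon)\,d\upsilon$. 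The bound is independent of $k$, completing (iii). I expect the main obstacle to be purely notational bookkeeping in the support calculation (keeping the roles of $\vartheta$, $r$, and $\sqrt{d}/2$ straight), since Lemma~\ref{lem:BAPUBuildingBlocksEstimate} already absorbs the genuinely analytic work, and the weight cancellation delivered by the Jacobian formula \eqref{eq:PhiJacobianDetExpressedInW} is what makes both the partition-of-unity and the uniform bound trivial once set up.
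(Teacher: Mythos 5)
Your proof is correct and follows essentially the same route as the paper's: the change of variables $\upsilon = \Phi(\xi)$ (equivalently $\tau = \Phi(\eta)-\Phi(\xi)$) for the partition-of-unity and support properties, and Minkowski's integral inequality combined with Lemma \ref{lem:BAPUBuildingBlocksEstimate} for the uniform $\lebesgue^1$ bound. The only differences are cosmetic: you supply an explicit convolution argument for smoothness (which the paper leaves to the reader), whereas the paper additionally spends a paragraph justifying the Fubini interchange needed to write $\Fourier^{-1}\varphi_k = \int_{M_k}\Fourier^{-1}\varphi^{(\xi)}\,d\xi$ before invoking Minkowski---a routine verification that your compact-support observations already make available.
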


 \begin{proof}
    We leave to the reader the straightforward proof that $\varphi_k \in C^\infty$ and begin by noting 
     for each measurable $M \subset D$ and arbitrary $\eta \in D$, because of
     $[w(\Phi(\xi))]^{-1} = \det D\Phi(\xi)$ for $\xi \in D$,  (cf.~Equation~\eqref{eq:PhiJacobianDetExpressedInW}) that
     \begin{equation}
         \begin{alignedat}{3}
            && \varphi_{M} (\eta) &:= \int_{M} \varphi^{(\xi)}(\eta) \, d \xi 
                                  &=  \int_{M} [w(\Phi(\xi))]^{-1} \cdot \zeta (\Phi(\eta) - \Phi(\xi)) \, d \xi \\
     &&({\scriptstyle{\tau = \Phi(\eta) - \Phi(\xi)}})
                                  &=  \int_{\Phi(\eta) - \Phi(M)} \zeta (\tau) \, d\tau.
         \end{alignedat}
         \label{eq:SpecialBAPUTransformed}
     \end{equation}
     In particular, since $\Phi(\eta) - \Phi(D) = \Phi(\eta) - \RR^d = \RR^d$ and because of
     $\int_{\RR^d} \zeta(\tau) d\tau = 1$ (see \eqref{eq:DecompAnalyzingVectorAssumptions}),
     we get $\varphi_{D} (\eta) = 1$ for all $\eta \in D$.
     But it is not hard to see $D = \biguplus_{k \in \ZZ^d} M_k$, so that the monotone convergence theorem yields
     \[
         \sum_{k \in \ZZ^d} \varphi_k (\eta)
         = \sum_{k \in \ZZ^d} \varphi_{M_k} (\eta)
         = \sum_{k \in \ZZ^d} \int_{\Phi(\eta) - \Phi(M_k)} \zeta(\tau) \, d\tau
         = \int_{\Phi(\eta) - \Phi(D)} \zeta (\tau) \, d\tau
         = \varphi_D (\eta) = 1
     \]
     for all $\eta \in D$.


     Next, if $0 \neq \varphi_k (\eta) = \varphi_{M_k} (\eta)$, then Equation \eqref{eq:SpecialBAPUTransformed}
     shows that there is some $\tau \in \Phi(\eta) - \Phi(M_k)$ satisfying $\tau \in \supp \zeta = K_0$.
     Hence, $\Phi(\eta) \in \tau + \Phi(M_k) \subset K_0 + \Phi(M_k)$, so that $\eta \in \Phi^{-1}(K_0 + \Phi(M_k))$.
     Since the set on the right-hand side is compact, and since $\vartheta < r - \sqrt{d} / 2$, so that
     $[-\frac{1}{2}, \frac{1}{2})^d + B_\vartheta (0) \subset \overline{B_{\sqrt{d}/2}(0)} + B_\vartheta (0) \subset B_r (0)$,
     we conclude in view of \eqref{eq:DecompAnalyzingVectorAssumptions} that
     \[
         \supp \varphi_k
         \subset \Phi^{-1}(K_0 + \Phi(M_k))
         \subset \Phi^{-1} \left( \delta \cdot B_\vartheta (0)
                                  + \delta \left(
                                                  k + \smash{\left[ - \textstyle{\frac{1}{2}}, \frac{1}{2}\right)^d} \,
                                                  \vphantom{ \left[ \textstyle{\frac{1}{2}} \right]}
                                           \right)
                           \right)
         \subset \Phi^{-1}\left(\delta \cdot (k + B_r (0))\right)
         = Q_k^{(\delta, r)},
     \]
     as desired.

    It remains to estimate $\| \Fourier^{-1} \varphi_k \|_{\lebesgue^1}$. But Fubini's theorem, the application of which will be justified post-hoc, shows
%
     \begin{align*}
         (\Fourier^{-1} \varphi_k)(x)
         &= \int_{\RR^d}
                \int_{M_k}
                    \varphi^{(\xi)} (\eta) \cdot e^{2\pi i \left\langle \eta, x\right\rangle}
                d\xi \,
            d\eta \\
         &= \int_{M_k}
                \int_{\RR^d}
                    \varphi^{(\xi)}(\eta) \cdot e^{2\pi i \left\langle \eta, x\right\rangle}
                d\eta \,
            d \xi \\
         &= \int_{M_k} (\Fourier^{-1} \varphi^{(\xi)}) (x) \, d \xi,
     \end{align*}
     so that Minkowski's inequality for integrals (cf. \cite[Theorem 6.19]{FollandRA})
     and Lemma \ref{lem:BAPUBuildingBlocksEstimate} yield
     \begin{alignat*}{3}
         &&\| \Fourier^{-1} \varphi_k \|_{\lebesgue^1}
         \leq \int_{M_k} \| \Fourier^{-1} \varphi^{(\xi)} \|_{\lebesgue^1} d \xi
         & \leq C \cdot \int_{M_k} \frac{1}{w(\Phi(\xi))} d \xi \\
         &&({\scriptstyle{\text{Equation} \eqref{eq:PhiJacobianDetExpressedInW}}})
         & =    C \cdot \mu(\Phi(M_k))
         = C \cdot \mu \left(\delta (k + [\textstyle{-\frac{1}{2},\frac{1}{2}})^d)\right)
         = C \cdot \delta^{d},
     \end{alignat*}
     where the constant $C>0$ is provided by Lemma \ref{lem:BAPUBuildingBlocksEstimate}.
     Since this estimate holds uniformly for all $k \in \ZZ^d$, we are done, once we justify the application
     of Fubini's theorem above.
     But if $\xi \in M_k$ and $\eta \in \RR^d$ with
     $0 \neq \varphi^{(\xi)}(\eta) = [w(\Phi(\xi))]^{-1} \cdot \zeta(\Phi(\eta) - \Phi(\xi))$, then
     $\Phi(\eta) - \Phi(\xi) \in \supp \zeta = K_0$,
     so that ${\Phi(\eta) \in \Phi(\xi) + K_0 \subset \overline{M_k} + K_0}$,
     and hence $\eta \in \Phi^{-1} (\overline{M_k} + K_0) =: K_k$,
     with $K_k \subset D$ compact.
     Furthermore, using
     $|\varphi^{(\xi)}(\eta)|
     \leq \|\Fourier \Fourier^{-1} \varphi^{(\xi)}\|_{\lebesgue^\infty}
     \leq \| \Fourier^{-1} \varphi^{(\xi)}\|_{\lebesgue^1}
     \leq C \cdot [w(\Phi(\xi))]^{-1}$ (cf.~Lemma \ref{lem:BAPUBuildingBlocksEstimate}), we get
     that
     \[
         \int_{M_k} \int_{\RR^d} |\varphi^{(\xi)}(\eta)| d \eta \, d\xi
         \leq \int_{M_k} \mu(K_k) \cdot \| \Fourier^{-1} \varphi^{(\xi)}\|_{\lebesgue^1} d \xi
         \leq C \cdot \mu(K_k) \cdot \int_{M_k}  [w(\Phi(\xi))]^{-1} d\xi  < \infty , 
     \]
     thereby justifying the application of Fubini's theorem above.
 \end{proof}

  \section{Warped coorbit spaces as decomposition spaces}\label{sec:CoorbitAsDecomposition}

 Recall that we always assume that the warping function $\Phi$ and the weight $\kappa$ are compatible, as per Definition \ref{def:StandingDecompositionAssumptions}. In this setting, we will show 
 \begin{equation}
   \Co(\Phi, \lebesgue_{\kappa}^{p,q})
   = \DecompSp(\CalQ_{\Phi}^{(\delta,r)}, \lebesgue^p, \ell^q_{u}),
   \label{eq:CoorbitDecompositionIdentification}
 \end{equation}
 up to trivial identifications. The weight $u = u^{(q)} = (u^{(q)}_k)_{k \in \ZZ^d}$ will be given by
 \begin{equation}
     u_k
     := u^{(q)}_k
     := \kappa(\Phi^{-1}(\delta k)) \cdot [w(\delta k)]^{\frac{1}{q} - \frac{1}{2}}
     \quad \text{for all}\quad k \in \ZZ^d,
     \label{eq:DecompositionSpaceWeight}
 \end{equation}
 with $w = \det (D\Phi^{-1})$ as usual, so that $u^{(q)}$ solely depends on $\Phi$, $\kappa$ and the exponent $q$, as well as on the ``sampling density'' $\delta > 0$ used for the covering $\CalQ_{\Phi}^{(\delta,r)}$.
 
%
 
 \begin{rem*}
        In principle, it would suffice for $\Phi$ to be a $\mathcal C^{d+2}(D)$-diffeomorphism. However, in order to rely on standard theory, a $\CalQ_{\Phi}^{(\delta,r)}$-BAPU contained in $\TestFunctionSpace(D)$ is required. This is the sole reason for requiring $\Phi \in \mathcal C^\infty(D)$ in Definition \ref{def:StandingDecompositionAssumptions}.         
 \end{rem*}
 
 We can apply Theorem \ref{cor:warped_disc_frames} with $\tilde{\kappa}$ as in Definition \ref{def:StandingDecompositionAssumptions}, to see that $\Co(\Phi, \lebesgue_{\kappa}^{p,q})$ is a well-defined Banach space, for any $1\leq p,q\leq \infty$. In the following, the weight $v:\PhSpace\rightarrow [1,\infty)$ in the definition of the space $\GoodVectors$ will always be chosen as $v = v_{\Phi,\kappa} = (\tilde{\kappa} \cdot v_0^d) \circ \Phi$, as per Definition \ref{def:StandingDecompositionAssumptions}. 

  \subsection{\texorpdfstring{The embedding $\Reservoir \hookrightarrow Z'(D)$}
                            {The embedding of the ``coorbit reservoir'' into the ``decomposition space reservoir''}}
 \label{sub:DecompSpReservoirEmbedding}
 
 To demonstrate that the coorbit spaces
 $\Co(\Phi,\lebesgue_\kappa^{p,q})$, 
 which are subspaces of $\Reservoir$, and the decomposition spaces
 $\DecompSp(\CalQ_{\Phi}^{(\delta,r)}, \lebesgue^p, \ell_u^q)$, which are subspaces of $Z'(D)$, coincide, we will establish an isomorphism between these spaces. The first step towards this goal is to show that 
 $\Reservoir \hookrightarrow Z'(D)$. Once this embedding is established, it will be shown that it restricts to an embedding of the coorbit space into the decomposition space. 
 
 \begin{remark}
   In the following, we will always assume that the weight $v$ used to define the space $\GoodVectors$ is chosen according to Remark \ref{rem:SpecialGoodVectorsWeight}, i.e., sufficiently large such that $v \gtrsim v_{\Phi,\kappa}$, with $v_{\Phi,\kappa}$ as in Equation \eqref{eq:SpecialGoodVectorsChoice}, for all $\kappa$ of interest that are compatible with $\Phi$ and such that $\sup_{x\in\RR^d} v(x,\Phi^{-1}(\bullet))$ is moderate with respect to some submultiplicative weight.
 \end{remark}

  To establish $\Reservoir \hookrightarrow Z'(D)$, we first consider the following ``dual'' statement:
 \begin{lemma}\label{lem:DecompSpDualReservoirEmbedding}
     Assume that $\Phi:D\rightarrow \RR^d$ and $\kappa:D\rightarrow \RR^+$ are compatible as per Definition \ref{def:StandingDecompositionAssumptions}, and consider the space $\GoodVectors$ defined with respect to $\mathcal G(\theta,\Phi)$, with $0\neq \theta\in\TestFunctionSpace(\RR^d)$. Let $Z(D)$ be as in Definition \ref{def:SpecialReservoir}, with $\CalO=D$.
     The map
     \[
         \iota\ \ :\ \ Z(D) \to \GoodVectors,\ \ g \mapsto \overline{g}
     \]
     is well-defined and continuous.
     Precisely, for each compact $K \subset D$, there is
     $C_1 = C_1 (d, \theta, {{v}}, K, \Phi) > 0$ satisfying
     \[
       \| \overline{g} \|_{\GoodVectors} = \| \iota g \|_{\GoodVectors}
       \leq C_1 \cdot \max_{|\alpha| \leq d+1} \| \partial^{\alpha} [\Fourier^{-1} g] \|_{\lebesgue_\infty}
     \]
     for all $g \in Z(D)$
     with $\supp(\Fourier^{-1} g) \subset K$.

     Likewise, for each $p \in \NN_0$ and each compact set $K \subset D$,
     there is $C_2 = C_2 (d, p, \theta, K, \Phi) > 0$ satisfying
     \[
         |(V_{\theta,\Phi} \overline{g})(y,\omega)|
         \leq C_2 \cdot \left(\max_{|\alpha|\leq d+p+1} \|\partial^{\alpha} [\Fourier^{-1}g] \|_{\lebesgue_\infty}\right)
              \cdot (1 + |y|)^{-(d+p+1)} \cdot \Indicator_{L_K} (\omega)
         \quad \text{for all}\quad (y,\omega) \in \RR^d \times D,
     \]
     for some compact set $L_K \subset D$ (which only depends on $K,\theta,\Phi$) and
     all $g \in Z(D)$ with $\supp(\Fourier^{-1} g) \subset K$.
 \end{lemma}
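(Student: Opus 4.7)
The plan is to compute $V_{\theta,\Phi}(\overline{g})(y,\omega)$ explicitly and then to extract both estimates from classical Fourier decay bounds for smooth compactly supported functions. First, I would verify that $\overline{g} \in \LtDF$: writing $f := \Fourier^{-1} g \in \TestFunctionSpace(D)$ and computing $\Fourier(\overline{g}) = \overline{f}$, whose support lies in $\supp f \subset D$. Using $g_{y,\omega} = \bd{T}_y \widecheck{g_\omega}$ together with Parseval and the identity $\Fourier(\bd{T}_y \widecheck{g_\omega})(\xi) = e^{-2\pi i y \cdot \xi} g_\omega(\xi)$ yields
\[
    V_{\theta,\Phi}(\overline{g})(y,\omega) = [w(\Phi(\omega))]^{-1/2} \cdot \overline{\Fourier(F_\omega)(y)}, \quad F_\omega(\xi) := f(\xi) \cdot \theta\bigl(\Phi(\xi) - \Phi(\omega)\bigr),
\]
extended by zero to $\RR^d$. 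One then observes that $F_\omega \equiv 0$ unless there exists some $\xi \in K$ with $\Phi(\xi)-\Phi(\omega) \in \supp \theta$, equivalently, unless $\omega \in L_K := \Phi^{-1}\bigl(\Phi(K) - \supp \theta\bigr)$, which is compact in $D$ since $\Phi^{-1}$ is continuous and both $\Phi(K)$ and $\supp \theta$ are compact.

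The main step is a uniform Fourier decay estimate for $F_\omega$. The classical bound
\[
    (1+|y|)^{d+p+1} \cdot |\Fourier(F_\omega)(y)| \lesssim \max_{|\alpha| \leq d+p+1} \|\partial^\alpha F_\omega\|_{\lebesgue^1} \leq \mu(K) \cdot \max_{|\alpha|\leq d+p+1} \|\partial^\alpha F_\omega\|_{\lebesgue_\infty}
\]
reduces the task to controlling $\|\partial^\alpha F_\omega\|_{\lebesgue_\infty}$ uniformly in $\omega$. Applying Leibniz's rule to $F_\omega = f \cdot \bigl((\bd{T}_{\Phi(\omega)}\theta) \circ \Phi\bigr)$ and Faà di Bruno to the composition, $\partial^\alpha F_\omega$ decomposes into a finite sum of products of $\partial^\beta f$ with $|\beta|\leq |\alpha|$, derivatives of $\Phi$ evaluated at $\xi \in K$ (the factor $f$ forces this restriction), and derivatives of $\theta$ evaluated at points of $\supp\theta$. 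All these factors are uniformly bounded by constants depending on $K, \Phi, \theta$ and $|\alpha|$, but not on $\omega$. Combined with the fact that $w\circ \Phi$ is continuous and strictly positive, and hence bounded below on the compact set $L_K$, this produces the pointwise bound asserted in the second estimate.

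The first estimate then follows by integration: taking $p=0$, the factor $(1+|y|)^{-(d+1)}$ is $\lebesgue^1$-integrable on $\RR^d$, and since $v = v_{\Phi,\kappa}(x,\omega) = \tilde{v}(\Phi(\omega))$ is independent of the first variable and continuous, it is bounded on the compact set $L_K$. The double integral of the pointwise bound over $L_K \times \RR^d$ against $v$ is therefore finite, giving $\|\overline{g}\|_{\GoodVectors} \lesssim \max_{|\alpha|\leq d+1} \|\partial^\alpha f\|_{\lebesgue_\infty}$. The main anticipated technical nuisance is the Faà di Bruno bookkeeping for $(\bd{T}_{\Phi(\omega)}\theta) \circ \Phi$ and the uniform-in-$\omega$ boundedness of all emerging factors; however, since the product with $f$ and the support of $\theta$ confine all relevant evaluations to compact sets, this reduces to a careful but routine application of the chain rule rather than a substantive analytical difficulty.
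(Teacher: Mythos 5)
Your proposal is correct and follows essentially the same route as the paper: rewrite $V_{\theta,\Phi}\overline{g}(y,\omega)$ as an inverse Fourier transform of a smooth, compactly supported function, localize the $\omega$-support to the compact set $L_K = \Phi^{-1}(\Phi(K)-\supp\theta)$, and invoke the classical Fourier decay estimate together with uniform bounds on derivatives coming from compactness. The only cosmetic difference is that you spell out the Leibniz/Fa\`a di Bruno bookkeeping explicitly, whereas the paper simply defines $F(\xi,\omega)=[w(\Phi(\omega))]^{-1/2}\overline{\theta(\Phi(\xi)-\Phi(\omega))}$, notes it lies in $\CalC^\infty(D\times D)$, and takes the supremum of its $\xi$-derivatives over the compact set $K\times L_K$ to produce the uniform constant -- which accomplishes the same thing more compactly; either way the $w(\Phi(\omega))^{-1/2}$ factor is controlled on $L_K$ by continuity and positivity.
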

 \begin{proof}
     Let $K \subset D$ be compact and $g \in Z(D)$ with $\supp(\Fourier^{-1} g) \subset K$.
     We first note the elementary identity $\Fourier \overline{g} = \overline{\Fourier^{-1} g}$.
     This identity shows that $\widehat{\overline{g}}$ vanishes on $\RR^d\setminus D$, so that $g \in \LtDF$.
     Furthermore, we get for arbitrary $(y,\omega) \in \RR^d \times D$, and with $g_{y,\omega}$ and $g_\omega$ as in
     \eqref{eq:WarpedSystemDefinition}, that
     \begin{equation}
         \begin{split}
              (V_{\theta,\Phi} \overline{g})(y, \omega)
          &= \left\langle \overline{g}, \, g_{y, \omega}\right\rangle_{\lebesgue^2} \\
          ({\scriptstyle{\text{Plancherel and } \mathcal F\overline{g} = \overline{\mathcal F^{-1}g}}}) &= \left\langle
                \overline{\Fourier^{-1} g} ,
                \, \modulation_{-y} \, g_\omega
             \right\rangle_{\lebesgue^2} \\
          ({\scriptstyle{\supp(\Fourier^{-1} g) \subset D}})&= \int_{D}
                \overline{\Fourier^{-1} g(\xi)}
                \cdot \frac{1}{\sqrt{w(\Phi(\omega))}}
                \cdot \overline{\theta(\Phi(\xi) - \Phi(\omega))}
                \cdot e^{2\pi i \left\langle y, \xi\right\rangle}
             \, d\xi.
         \end{split}
         \label{eq:DecompSpVoiceTransformRewritten}
     \end{equation}
     Thus, if we define
     \[
         F \colon D \times D \to \CC,\
             (\xi, \omega) \mapsto \frac{1}{\sqrt{w(\Phi(\omega))}} \cdot \overline{\theta(\Phi(\xi) - \Phi(\omega))},
     \]
     then $F \in \mathcal C^{\infty}(D\times D)$ and we have shown
     \[
         (V_{\theta,\Phi} \overline{g})(y, \omega)
         = \left(\Fourier^{-1} \left[ \overline{\Fourier^{-1} g} \cdot F(\cdot, \omega) \right]\right) (y)
         =: (\Fourier^{-1} F_{g,\omega})(y)
         \quad \text{for all}\quad (y,\omega) \in \RR^d \times D.
     \]
     Note that $F_{g, \omega} \in \TestFunctionSpace(D) \subset \TestFunctionSpace(\RR^d)$, since $\theta\in\TestFunctionSpace(\RR^d)$.

     Now, note that if $(V_{\theta,\Phi} \overline{g})(y,\omega) \neq 0$, then $F_{g, \omega} \not \equiv 0$,
     so that there is some $\xi \in \supp(\Fourier^{-1} g) \subset K$ satisfying $0 \neq F(\xi, \omega)$ and hence
     $\theta (\Phi(\xi) - \Phi(\omega)) \neq 0$. Thus, $\Phi(\xi) - \Phi(\omega) \in \supp( \theta) =: K_0$, which finally implies
     \[
         \Phi(\omega) \in \Phi(\xi) - K_0 \subset \Phi(K) - K_0
         \qquad \text{ and hence } \qquad
         \omega \in \Phi^{-1} (\Phi(K) - K_0) =: L_{K},
     \]
     where $L_{K} \subset D$ is compact and depends only on $K,\theta,\Phi$.

     Next, by standard properties of the Fourier transform (see, e.g.,~\cite[Theorem 8.22]{FollandRA}), we also have for
     arbitrary $k \in \NN_0$ and $h \in \Schwartz(\RR^d)$ that
     \begin{equation}
         \begin{split}
             (1 + |y|)^{k} \cdot | (\Fourier^{-1} h)(y) |
             &\leq C_k \cdot \left(1 + \sum_{j=1}^d |y_j|^k\right) \cdot  | (\Fourier^{-1} h)(y)| \\
             &\leq C_k \cdot \left(\| h \|_{\lebesgue^1} + \sum_{j=1}^d \| \partial_{j}^k h \|_{\lebesgue^1} \right)
             \leq C_{k}(d+1) \cdot \max_{|\alpha| \leq k} \| \partial^{\alpha} h \|_{\lebesgue^1} \quad ,
         \end{split}
         \label{eq:SmoothnessYieldsFourierDecay}
     \end{equation}
     for arbitrary $y \in \RR^d$. 

     Now, since $F \in \mathcal C^{\infty}(D \times D)$, the constant
     \[
         C_{\theta,d,p,K}
         := \max_{|\alpha| \leq d+p+1}
                \,\, \sup_{\omega \in L_{K}, \xi \in K}
                    | \partial_{\xi}^{\alpha} F(\xi, \omega)|
     \]
     is finite. Thus, Leibniz's rule yields 
     \[
         |\partial^{\alpha} F_{g, \omega} (\xi) |
         = \left|\partial^{\alpha}_\xi \left( \overline{(\Fourier^{-1} g)(\xi)} \cdot F(\xi, \omega) \right)\right|
         \lesssim C_{\theta,d,p,K} \cdot \max_{|\beta| \leq d+p+1} \| \partial^{\beta} [ \Fourier^{-1} g] \|_{\lebesgue_\infty}
     \]
     for all $\xi \in K$, $\omega \in L_K$ and $|\alpha|\leq d+p+1$, where th implied constant only depends on $d, p\in \NN_0$.
     But by what we showed above and since $\supp(\Fourier^{-1} g) \subset K$,
     the left-hand side vanishes for all $(\xi, \omega) \in (D \times D) \setminus (K \times L_K)$,
     so that the estimate is indeed valid for all $\xi, \omega \in D$. Thus, we finally conclude
     \begin{align*}
         |(V_{\theta,\Phi} \overline{g})(y,\omega)|
         &=    |(\Fourier^{-1} F_{g,\omega})(y)| \\
         ({\scriptstyle{\text{by Eq. } \eqref{eq:SmoothnessYieldsFourierDecay}}})
         &\lesssim \max_{|\alpha| \leq d+p+1}
                        \| \partial^{\alpha} F_{g,\omega} \|_{\lebesgue^1}
               \cdot (1 + |y|)^{-(d+p+1)} \\
         ({\scriptstyle{\text{since } \supp( F_{g, \omega}) \subset \supp(\Fourier^{-1} g) \subset K}})
         &\lesssim 
               \max_{|\alpha| \leq d+p+1}
                         \|\partial^{\alpha} [\Fourier^{-1} g]\|_{\lebesgue_\infty}
               \cdot \mu(K)
               \cdot (1 + |y|)^{-(d+p+1)} \\
               &\lesssim
               \max_{|\alpha| \leq d+p+1} \| \partial^{\alpha} [\Fourier^{-1} g] \|_{\lebesgue_\infty} \cdot (1+ |y|)^{-(d+p+1)}.
     \end{align*}
     Noting that the implied constant depends precisely un $d,p\in\NN_0$, $\theta$, $\Phi$ and $K\subset D$ and that $(V_{\theta,\Phi} \overline{g})(y,\omega) = 0$ unless $\omega \in L_K$,
     we have established the second part of the lemma.

     For the first part, simply apply the preceding estimate (with $p=0$) to obtain
     \begin{alignat*}{3}
         \| \overline{g} \|_{\GoodVectors}
         &= \| V_{\theta,\Phi} \overline{g} \|_{\lebesgue_{v}^1}
         = \int_D \| {v}(\omega) \cdot (V_{\theta,\Phi} \overline{g})(\cdot, \omega) \|_{\lebesgue^1} \, d \omega \\
         &\lesssim 
               \| (1 + |\cdot|)^{-(d+1)} \|_{\lebesgue^1}
               \cdot \int_{L_K} {v}(\omega) \, d\omega <\infty 
     \end{alignat*}
     where the last step used that $L_K \subset D$ is compact and that ${v}$ is continuous,
     so that $\int_{L_K} {v}(\omega) \, d\omega < \infty$.
 \end{proof}

 Using the preceding lemma, we now get the embedding $\Reservoir \hookrightarrow Z'(D)$ by duality. Some care is required, however, since the space $\Reservoir$ consists of \emph{antilinear} functionals,
 while $Z'(D)$ consists of linear functionals. The following corollary also takes a first step towards establishing a connection
 between the voice transform of $f \in \Reservoir$ and an expression related to the decomposition space norm of certain functionals in $Z'(D)$. 
 
 \begin{corollary}\label{cor:DecompSpReservoirEmbedding}
     Assume that $\Phi:D\rightarrow \RR^d$ and $\kappa:D\rightarrow \RR^+$ are compatible and let $\iota\ \ :\ \ Z(D) \to \GoodVectors,\ \ g \mapsto \overline{g}$ be as in Lemma \ref{lem:DecompSpDualReservoirEmbedding}.
     For $f \in \Reservoir$, we define $\psi_f = f\circ \iota$. Then,
     \[
         \psi_f : Z(D) \to \CC,
                  g \mapsto f(\overline{g}) = \langle f, \overline{g} \rangle_{\Reservoir, \GoodVectors}.
     \]
     is a well-defined continuous \emph{linear} functional, i.e., an element of $Z'(D)$. Furthermore, the \emph{linear} map
     \[
         \Psi : \Reservoir \to Z'(D), f \mapsto \psi_f
     \]
     is continuous (with respect to the weak-$\ast$-topology on $\Reservoir$), and injective.

     Finally, with $g_\omega = [w(\Phi(\omega))]^{-1/2} \cdot (\translation_{\Phi(\omega)} \theta) \circ \Phi$ as usual,
     we have
     \begin{equation}
         (V_{\theta,\Phi} f)(y, \omega)
         = \left(\Fourier^{-1} [\overline{g_\omega} \cdot \Fourier \psi_f] \right)(y)
         \quad \text{for all}\quad (y, \omega) \in \RR^d \times D \text{ and } f \in \Reservoir.
         \label{eq:DecompSpVoiceTransformFourierLocalization}
     \end{equation}
 \end{corollary}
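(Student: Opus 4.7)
The plan is first to dispatch the structural claims---well-definedness and continuity of $\psi_f$, linearity and weak-$\ast$-continuity of $\Psi$---which are essentially formal, then to establish injectivity by reducing it to the injectivity of the voice transform, and finally to verify the localization formula by a careful distributional computation that respects the various (anti-)linearities involved.

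For the first part, I would note that $\iota\colon Z(D) \to \GoodVectors$ is \emph{antilinear} because complex conjugation is, and it is continuous by Lemma~\ref{lem:DecompSpDualReservoirEmbedding}: every convergent sequence in $Z(D)$ is concentrated on a fixed compact set $K \subset D$, and the seminorms $\max_{|\alpha|\leq d+1}\|\partial^\alpha \Fourier^{-1} g\|_{\lebesgue^\infty}$ controlling $\|\iota g\|_{\GoodVectors}$ are continuous on $Z(D)$. Since $f \in \Reservoir$ is also antilinear and continuous, the composition $\psi_f = f \circ \iota$ is linear and continuous, hence an element of $Z'(D)$. Linearity of $\Psi$ is immediate. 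For weak-$\ast$-continuity, if $f_\alpha \to f$ in $\Reservoir$, then for every fixed $g \in Z(D)$ we have $\overline{g} \in \GoodVectors$, so $\psi_{f_\alpha}(g) = f_\alpha(\overline{g}) \to f(\overline{g}) = \psi_f(g)$.

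For injectivity, the key observation is that for every $(y,\omega)$ the element $\overline{g_{y,\omega}}$ lies in $Z(D)$. Indeed, using the elementary identity $\Fourier^{-1}\overline{h} = \overline{\Fourier h}$ and $\Fourier g_{y,\omega} = \modulation_{-y} g_\omega$, one finds
\[
  \Fourier^{-1}\overline{g_{y,\omega}} = \overline{\Fourier g_{y,\omega}} = \overline{\modulation_{-y} g_\omega} = \modulation_y \overline{g_\omega} \in \TestFunctionSpace(D),
\]
because $\overline{g_\omega}$ is a test function supported in $D$ (as $\theta \in \TestFunctionSpace(\RR^d)$ and $\Phi \in \mathcal C^\infty(D)$). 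Consequently $g_{y,\omega} = \iota(\overline{g_{y,\omega}}) \in \iota(Z(D))$. Now, if $\psi_f = 0$, then $f$ annihilates $\iota(Z(D))$; in particular $(V_{\theta,\Phi} f)(y,\omega) = f(g_{y,\omega}) = 0$ for all $(y,\omega)$, i.e.\ $V_{\theta,\Phi} f \equiv 0$. Injectivity of $V_{\theta,\Phi}$ on $\Reservoir$---a standard consequence of $\mathcal G(\theta,\Phi)$ being a continuous tight frame within the coorbit machinery underlying Theorem~\ref{cor:warped_disc_frames}---then forces $f = 0$.

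For the voice-transform identity, note that $\overline{g_\omega} \in \TestFunctionSpace(D)$ has compact support, so $\overline{g_\omega} \cdot \Fourier \psi_f$ extends by zero to a compactly supported distribution on $\RR^d$. The Paley--Wiener--Schwartz theorem gives
\[
  \big(\Fourier^{-1}[\overline{g_\omega} \cdot \Fourier \psi_f]\big)(y)
  = \big(\overline{g_\omega} \cdot \Fourier \psi_f\big)\!\big(e^{2\pi i \langle y, \cdot\rangle}\big)
  = (\Fourier \psi_f)\!\big(\overline{g_\omega} \cdot e^{2\pi i \langle y, \cdot\rangle}\big).
\]
Since $\overline{g_\omega} \cdot e^{2\pi i \langle y, \cdot\rangle} \in \TestFunctionSpace(D)$, I would then apply the definition $\Fourier \psi_f(h) = \psi_f(\Fourier h)$ together with the identity $\Fourier[h \cdot e^{2\pi i \langle y, \cdot\rangle}] = \translation_y \Fourier h$ to continue:
\[
  \psi_f\!\big(\Fourier[\overline{g_\omega} \cdot e^{2\pi i \langle y, \cdot\rangle}]\big)
  = \psi_f(\translation_y \Fourier \overline{g_\omega})
  = \psi_f(\overline{g_{y,\omega}})
  = f(g_{y,\omega})
  = (V_{\theta,\Phi} f)(y,\omega),
\]
where $\translation_y \Fourier \overline{g_\omega} = \translation_y \overline{\Fourier^{-1} g_\omega} = \overline{\translation_y \Fourier^{-1} g_\omega} = \overline{g_{y,\omega}}$ by the definition of $g_{y,\omega}$.

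The main obstacle is the careful bookkeeping of linearity versus antilinearity throughout: $\Reservoir$ consists of \emph{antilinear} functionals while $Z'(D)$ consists of \emph{linear} ones, the bridging map $\iota$ is antilinear, and complex conjugation enters each Fourier identity. Getting all signs and conjugations consistent---together with invoking the compact-support hypothesis to legitimize the pointwise evaluation of $\Fourier^{-1}$ of a distribution via its action on the plane wave $e^{2\pi i \langle y, \cdot\rangle}$---is the only nontrivial piece; the rest is a chain of essentially formal identifications.
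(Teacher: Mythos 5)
Your proposal is correct and follows essentially the same route as the paper's proof: the same use of Lemma~\ref{lem:DecompSpDualReservoirEmbedding} for well-definedness and continuity of $\iota$, the same weak-$\ast$ net argument for continuity of $\Psi$, the same key observation that $\overline{g_{y,\omega}}\in Z(D)$ with $\Fourier^{-1}\overline{g_{y,\omega}}=\modulation_y\overline{g_\omega}$, the same invocation of the Paley--Wiener theorem to evaluate $\Fourier^{-1}[\overline{g_\omega}\cdot\Fourier\psi_f]$ pointwise against the plane wave, and the same reduction of injectivity to the known injectivity of $V_{\theta,\Phi}$ on $\Reservoir$. The only deviations are cosmetic: you verify the voice-transform identity by unwinding from right to left rather than left to right, and you establish $V_{\theta,\Phi}f\equiv 0$ directly from $\psi_f=0$ and $\overline{g_{y,\omega}}\in Z(D)$ rather than as a corollary of Equation~\eqref{eq:DecompSpVoiceTransformFourierLocalization}.
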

 \begin{remark}\label{rem:RemarkOnThatCorollary}
     The preceding corollary shows that $\Psi : \Reservoir \to Z'(D)$ is continuous
     and injective. In order to fully justify the interpretation of $\Psi$ as an \emph{embedding},
     let us verify that $\psi_f = f$ for $f \in \LtDF$.

     On the left-hand side, $\psi_f \in Z'(D)$ is a functional on
     $Z(D) = \Fourier (\TestFunctionSpace (D)) \subset \Schwartz (\RR^d)$, and on the right-hand side,
     we interpret the function $f \in \LtDF \subset \bd L^2 (\RR^d)$ as a tempered distribution, by virtue of
     $\langle f, \phi \rangle_{\Schwartz' , \Schwartz} = \int_{\RR^d} f(x) \phi(x) dx$, as usual.
     In particular, $f$ is thus a functional on $Z(D) \hookrightarrow \Schwartz(\RR^d)$.
     Taking into account all these ``trivial'' identifications, we get for arbitrary
     $g \in Z(D) \subset \Schwartz (\RR^d)$ that
     \begin{align*}
         \left\langle \psi_f, g \right\rangle_{Z', Z}
         &= \left\langle f, \overline{g} \right\rangle_{\Reservoir, \GoodVectors}
         =  \left\langle f, \overline{g}\right\rangle_{\lebesgue^2}
         = \left\langle f, g\right\rangle_{\Schwartz', \Schwartz}
         = \left\langle f, g\right\rangle_{Z', Z}.
     \end{align*}
     Hence, $f\in \LtDF$ satisfies $\psi_f = f$ as elements of $Z'$. Finally, note that the inclusion $\LtDF \hookrightarrow Z'(D)$
     is injective, since if $0 = \left\langle f, g\right\rangle_{Z', Z}$ for all $g \in Z(D)$, then
     \[
         0 = \left\langle f, g\right\rangle_{Z', Z} 
         = \left\langle f, \overline{g} \right\rangle_{\lebesgue^2}
         = \langle \widehat{f}, \widehat{\overline{g}} \rangle_{\lebesgue^2 (\RR^d)}
         = \langle  \widehat{f}, \overline{\Fourier^{-1} g} \rangle_{\lebesgue^2 (D)},
     \]
     where the last step used $\widehat{f} = 0$ almost everywhere on $\RR^d\setminus D$.
     Since the above holds for all $g\in Z(D)$ and $Z(D) = \Fourier (\TestFunctionSpace(D))$,
     we conclude that $\widehat{f} = 0$ almost everywhere on $D$, and thus $f = 0$.
     Altogether, these considerations justify the interpretation of $\Psi$
     as a \emph{canonical embedding}
     $\Psi : \Reservoir \hookrightarrow Z'(D)$.
 \end{remark}
 \begin{proof}[Proof of Corollary \ref{cor:DecompSpReservoirEmbedding}]
     In view of Lemma \ref{lem:DecompSpDualReservoirEmbedding},
     the first assertion follows, i.e., $\psi_f\in Z'(D)$.
     Thus, we only have to prove continuity and injectivity of $\Psi$,
     as well as validity of \eqref{eq:DecompSpVoiceTransformFourierLocalization}.

     For continuity of $\Psi$, assume that the net $(f_\alpha)_\alpha$ in $\Reservoir$ satisfies
     $f_\alpha \to f$ in the weak-$\ast$-sense. Then $f_\alpha (h) \to f(h)$ for all $h \in \GoodVectors$.
     Since $\overline{g} \in \GoodVectors$ for all $g \in Z(D) = \Fourier (\TestFunctionSpace (D))$, this implies
     for arbitrary $g \in Z(D)$ that $\psi_{f_\alpha} (g) = f_\alpha (\overline{g}) \to f(\overline{g}) = \psi_f (g)$,
     and hence $\psi_{f_\alpha} \to \psi_f$ in $Z'(D)$, since this space is equipped with the weak-$\ast$-topology.

     Next, we verify Equation \eqref{eq:DecompSpVoiceTransformFourierLocalization}.
     To this end, first note that $\theta \in \TestFunctionSpace(\RR^d)$ implies by definition of $g_\omega$ (Equation  \eqref{eq:WarpedSystemDefinition})
     that $g_\omega \in \TestFunctionSpace(D)$. Now, note
        $\overline{g_{y, \omega}}
         = \Fourier(\modulation_{y}\overline{\smallVSpace g_{\omega}}) \in Z(D)$,
         such that we obtain
     \begin{align*}
         (V_{\theta,\Phi} f)(y, \omega)
         &= \left\langle f, g_{y, \omega}\right\rangle_{\Reservoir, \GoodVectors} 
         = \psi_f (\overline{g_{y,\omega}}) 
         = \langle \psi_f, \Fourier (\modulation_y \overline{g_\omega}) \rangle_{Z' (D), Z(D)}
         = \left\langle
                \Fourier \psi_f, \modulation_y \overline{g_\omega}
            \right\rangle_{\mathcal{D}'(D), \TestFunctionSpace(D)}\,\,.
     \end{align*}
     Now, recall from the definition of the Fourier transform on $Z'(D)$ (Definition \ref{def:SpecialReservoir})
     that $\Fourier \psi_f \in \DistributionSpace(D)$ and note $\overline{g_{\omega}} \in \TestFunctionSpace(D)$,
     so that $\overline{g_\omega} \cdot \Fourier \psi_f$ is a compactly supported (and hence tempered) distribution on all of
     $\RR^d$, which is even a well-defined functional on $\mathcal C^{\infty}(\RR^d)$.
     Then, the Paley-Wiener theorem (cf. \cite[Theorem 7.23]{RudinFA}) shows that the inverse Fourier transform
     $\Fourier^{-1} (\overline{g_\omega} \cdot \Fourier \psi_f)$ is given by (integration against)
     a smooth function, which is pointwise defined by
     \[
         (\Fourier^{-1} [\overline{g_{\omega}} \cdot \Fourier \psi_f])(y)
         = \left\langle
               \overline{\smallVSpace g_{\omega}} \cdot \Fourier \psi_f,
                e^{2\pi i \left\langle y, \cdot\right\rangle}
           \right\rangle_{(\mathcal C^{\infty})', \mathcal C^{\infty}}
         = \langle \Fourier \psi_f , \, \modulation_y \overline{g_\omega} \rangle_{\mathcal{D}'(D), \TestFunctionSpace(D)},
     \]
     proving Equation \eqref{eq:DecompSpVoiceTransformFourierLocalization}.

     Finally, to show injectivity of $\Psi$, assume $\psi_f \equiv 0$ for some $f \in \Reservoir$.
     By Equation \eqref{eq:DecompSpVoiceTransformFourierLocalization}, this implies $V_{\theta,\Phi} f \equiv 0$ and hence $f = 0$,
     since $V_{\theta,\Phi} : \Reservoir \to \lebesgue_{1/{v}}^\infty (\Lambda)$ is injective, see ,e.g.,  \cite[Corollary 2.19]{kempka2015general}.
 \end{proof}

 \subsection{\texorpdfstring{The embedding
                             $\Psi :
                              \Co(\Phi,\lebesgue_{\kappa}^{p,q}) \to
                              \DecompSp(\CalQ_{\Phi}^{(\delta,r)},
                                        \lebesgue^p,
                                        \ell_u^q)$}
                            {The embedding of the coorbit space into the
                             decomposition space}}
 \label{sub:CoorbitEmbedsIntoDecomposition}

 In Theorem \ref{cor:warped_disc_frames}, we have noted that $\Co(\Phi,\lebesgue_{\kappa}^{p,q})$ is independent of the choice of $\theta\in\TestFunctionSpace(\RR^d)$ in $\mathcal G(\theta,\Phi)$. Nonetheless, to prove that the embedding $\Psi$ given in Corollary \ref{cor:DecompSpReservoirEmbedding} restricts to an embedding of $\Co(\Phi,\lebesgue_{\kappa}^{p,q})$ into $\DecompSp(\CalQ_{\Phi}^{(\delta,r)},\lebesgue^p,\ell_u^q)$, it will be helpful to choose $\theta$ appropriately, see Lemma \ref{lem:CoorbitDecompositionConnection} below. Before proceeding, we  verify that the decomposition space $\DecompSp (\CalQ_{\Phi}^{(\delta,r)}, \lebesgue^p, \ell_u^q)$ is indeed
 well-defined. In fact, Proposition \ref{prop:DecompSpecialBAPU} has established the existence of a $\CalQ_{\Phi}^{(\delta,r)}$-BAPU, such that $\CalQ_{\Phi}^{(\delta,r)}$ is a semi-structured, admissible decomposition covering of $D$ by Lemma
 \ref{lem:DecompInducedCoveringIsNice}. It only remains to show is that the weight $u = (u_k)_{k \in \ZZ^d}$ is 
 $\CalQ_{\Phi}^{(\delta,r)}$-moderate, cf.~Definition \ref{def:SemiStructuredCoverings}.
 
 \begin{lemma}\label{lem:DecompositionWeightIsModerate}
    Let $1\leq q \leq \infty$ and let $\Phi: D\rightarrow \RR^d$ and $\kappa: D\rightarrow \RR^+$ be compatible.
    Then, with $\kappa_\Phi = \kappa\circ \Phi^{-1}$, the weight $u = u^{(q)}$, given by
    \[
     u^{(q)} \colon \ZZ^d \rightarrow \RR^+,\ k\mapsto u_k^{(q)} := \kappa_\Phi(\delta k)\cdot [w(\delta k)]^{\frac{1}{q} - \frac{1}{2}},
    \]
    as in Equation  \eqref{eq:DecompositionSpaceWeight}, is $\CalQ_{\Phi}^{(\delta,r)}$-moderate.
 \end{lemma}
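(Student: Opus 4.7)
The plan is to unpack the two factors of $u_k^{(q)} = \kappa_\Phi(\delta k) \cdot [w(\delta k)]^{1/q - 1/2}$ and verify that each one is well-behaved under passing from $k$ to a neighbor $\ell \in k^\ast$. By compatibility (Definition \ref{def:StandingDecompositionAssumptions}) the transplanted weight $\kappa_\Phi = \kappa \circ \Phi^{-1}$ is $\tilde\kappa$-moderate with $\tilde\kappa$ submultiplicative and radially increasing; by Theorem \ref{cor:warped_disc_frames}(1) combined with Definition \ref{def:warpfundamental}, the weight $w = \det(\mathrm{D}\Phi^{-1})$ is $w_0$-moderate with $w_0 = v_0^d$ submultiplicative.

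First I would control the index displacement: if $\ell \in k^\ast$, then $Q_{\Phi,\ell}^{(\delta,r)} \cap Q_{\Phi,k}^{(\delta,r)} \neq \emptyset$, which, as in the weak-subordinateness computation in the proof of Lemma~\ref{lem:DecompInducedCoveringIsNice} (applied with $\eps = \delta$ and $\rho = r$), yields $|\ell - k| < 2r$, and hence $|\delta(k - \ell)| < 2r\delta$. Since $\tilde\kappa$ and $v_0$ are radially increasing, we get uniform-in-$k,\ell$ bounds
\[
    \tilde\kappa(\delta(k-\ell)) \leq \tilde\kappa(2r\delta \cdot e_1)
    \quad \text{and} \quad
    w_0(\delta(k-\ell)) = v_0^d(\delta(k-\ell)) \leq v_0^d(2r\delta \cdot e_1).
\]

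Next I would apply moderateness in both directions. From $\tilde\kappa$-moderateness of $\kappa_\Phi$,
\[
    \kappa_\Phi(\delta k) = \kappa_\Phi(\delta \ell + \delta(k-\ell)) \leq \tilde\kappa(\delta(k-\ell)) \cdot \kappa_\Phi(\delta \ell) \leq \tilde\kappa(2r\delta \cdot e_1) \cdot \kappa_\Phi(\delta \ell).
\]
From $w_0$-moderateness of $w$, applied symmetrically in $k$ and $\ell$,
\[
    v_0^{-d}(2r\delta \cdot e_1) \cdot w(\delta \ell) \leq w(\delta k) \leq v_0^d(2r\delta \cdot e_1) \cdot w(\delta \ell),
\]
so $w(\delta k) \asymp w(\delta \ell)$ with constants independent of $k$ and $\ell \in k^\ast$. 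Since the exponent $1/q - 1/2$ is a fixed real number (possibly negative), raising to this power preserves the two-sided comparability, at the cost of replacing the constant by its $|1/q - 1/2|$-th power. Combining the two estimates yields
\[
    u_k^{(q)} = \kappa_\Phi(\delta k) \cdot [w(\delta k)]^{1/q - 1/2}
    \leq C \cdot \kappa_\Phi(\delta \ell) \cdot [w(\delta \ell)]^{1/q - 1/2} = C \cdot u_\ell^{(q)},
\]
with $C := \tilde\kappa(2r\delta \cdot e_1) \cdot v_0^{d|1 - 2/q|/2}(2r\delta \cdot e_1)$ independent of $k$ and $\ell \in k^\ast$, which is exactly the required $\CalQ_\Phi^{(\delta,r)}$-moderateness condition \eqref{eq:Qmoderateness}.

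There is no real obstacle here: the compatibility assumption on $(\Phi,\kappa)$ was essentially engineered to make this check automatic. The only mild subtlety is handling the case $q > 2$, where the exponent $1/q - 1/2$ is negative and one must invoke moderateness of $w$ in the reverse direction; this is why the symmetric two-sided bound on $w(\delta k)$ versus $w(\delta \ell)$ must be recorded before exponentiating.
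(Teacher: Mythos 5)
Your proof is correct and follows essentially the same path as the paper: it reduces to the moderateness of $\kappa_\Phi$ (from compatibility) and of $w$ (from $w_0=v_0^d$-moderateness) combined with a bound on the index displacement $|k-\ell|<2r$ for $\ell\in k^\ast$. The only difference is that the paper compresses this into a one-line citation of Lemma~\ref{lem:DecompInducedCoveringIsNice}(2) with $v' = \kappa_\Phi\cdot w^{1/q-1/2}$, whereas you unwind that lemma directly; the mathematical content, including the handling of the possibly negative exponent $1/q-1/2$ via a two-sided comparison, is identical.
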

 \begin{proof}
     This is a direct application of Lemma \ref{lem:DecompInducedCoveringIsNice}, Item (2), with $y_k = \Phi^{-1}(\delta k)$, $v' = \kappa_\Phi \cdot w^{\frac{1}{q} - \frac{1}{2}}$ and $v = \tilde{\kappa}\cdot v_0^{\frac{d}{q} + \frac{d}{2}}$. To see this, note that $v_0\geq 1$ and that any nonnegative power of a submultiplicative weight is itself submultiplicative and $w^{\frac{1}{q} - \frac{1}{2}}$ is $v_0^{|\frac{1}{q} - \frac{1}{2}|}$-moderate.
 \end{proof}

 While the decomposition space norm is computed using the Fourier-localized
 pieces $\Fourier^{-1} [\varphi_k \cdot \Fourier \psi_f]$,
 the coorbit space norm on $\Co (\Phi,\lebesgue_\kappa^{p,q})$ relies on
 the voice transform $V_{\theta,\Phi} f$.
 The following lemma establishes a first connection between the two quantities
 which will prove essential for showing
 $\| \psi_f \|_{\DecompSp (\mathcal{Q}_\Phi^{(\delta,r)}, \lebesgue^p, \ell_u^q)}
  \lesssim \| f \|_{\Co (\Phi,\lebesgue_\kappa^{p,q})}$.
 For genuine $\lebesgue^2$-functions $f$, the lemma is proven by a simple
 but lengthy computation; the main technical challenge is the extension to all of
 $\Reservoir$.
 
 \begin{lemma}\label{lem:CoorbitDecompositionConnection}
     Let $\Phi: D\rightarrow \RR^d$ and $\kappa: D\rightarrow \RR^+$ be compatible. Fix $\delta > 0$, $r > \sqrt{d} / 2$, and $0 < \vartheta < r - \sqrt{d}/2$. Finally, choose  $0\neq \theta \in \TestFunctionSpace(\RR^d)$ with 
     \begin{equation}\label{eq:CanonicalThetaChoice}
       \theta \geq 0\,, \quad \|\theta\|_1 = 1
       , \quad \text{ and } \quad K_0 := \supp( \theta) \subset \delta \cdot B_\vartheta (0).
     \end{equation}
     Define $\varphi^{(\omega)} := [w(\Phi(\omega))]^{-1/2} g_\omega$, with $g_\omega$ 
     as usual, and 
     $M_k := \Phi^{-1} \left( \delta \cdot
                               \left(
                                   k + \smash{ \left[ \textstyle{-\frac{1}{2}, \frac{1}{2}} \right)^d}
                                   \vphantom{ \left[ \frac{1}{2} \right]} \,
                               \right)
                           \right)$, for all $k\in\ZZ^d$.
    Then $(\varphi_k)_{k\in\ZZ^d}$, with 
    \[
         \varphi_k : D \to [0,\infty), \eta \mapsto \int_{M_k} \varphi^{(\xi)} (\eta) \, d\xi,
    \]
    is a BAPU for the covering  $\CalQ_{\Phi}^{(\delta,r)}$.
    
    \medskip{}
    
    Furthermore, for $f \in \Reservoir$, we have
     \begin{equation}\label{eq:FouLocVsVoice}
         [\Fourier^{-1} (\varphi_k \cdot \Fourier \psi_f)](y)
         =  \int_{M_k} [w(\Phi(\omega))]^{-1/2} \cdot (V_{\theta,\Phi}f)(y, \omega) \, d\omega
     \end{equation}
     for arbitrary $y \in \RR^d$ and $k \in \ZZ^d$.     
 \end{lemma}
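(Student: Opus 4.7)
The plan is to handle the two assertions of the lemma separately: the BAPU claim follows essentially by citation, while the identity~\eqref{eq:FouLocVsVoice} requires a distributional Fubini argument built on the representation of the voice transform obtained in Corollary~\ref{cor:DecompSpReservoirEmbedding}.

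For the first assertion, observe that the hypotheses imposed on $\theta$ in Equation~\eqref{eq:CanonicalThetaChoice} are precisely the hypotheses imposed on $\zeta$ in Equation~\eqref{eq:DecompAnalyzingVectorAssumptions}, i.e., $\theta \in \TestFunctionSpace(\RR^d)$ is nonnegative with $\int\theta = 1$ and $\supp\theta \subset \delta \cdot B_\vartheta(0)$ for some $0<\vartheta < r - \sqrt{d}/2$. Moreover, with this choice of $\zeta = \theta$, the functions $\varphi^{(\xi)}$ from Equation~\eqref{eq:BAPUBuildingBlocksDefinition} coincide with the $\varphi^{(\omega)}$ of the present lemma, and the $\varphi_k$ from \eqref{eq:DecompSpecialBAPUDefinition} coincide with those defined here. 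Hence Proposition~\ref{prop:DecompSpecialBAPU} directly yields the BAPU property of $(\varphi_k)_{k\in\ZZ^d}$.

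For the second assertion, I would start from the identity~\eqref{eq:DecompSpVoiceTransformFourierLocalization} of Corollary~\ref{cor:DecompSpReservoirEmbedding} and the pointwise Paley--Wiener formula derived in its proof, namely
\[
   (V_{\theta,\Phi} f)(y,\omega)
   = \big(\Fourier^{-1}[\overline{g_{\omega}} \cdot \Fourier \psi_f]\big)(y)
   = \big\langle \Fourier\psi_f,\, \modulation_y \overline{g_\omega}\big\rangle_{\DistributionSpace'(D),\TestFunctionSpace(D)}.
\]
Because $\theta\geq 0$ is real-valued, $g_\omega$ is real-valued as well, so $\overline{g_\omega} = g_\omega$, and hence $[w(\Phi(\omega))]^{-1/2}\,\overline{g_\omega} = \varphi^{(\omega)}$. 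Multiplying the above identity by $[w(\Phi(\omega))]^{-1/2}$ and integrating over $M_k$ yields
\[
   \int_{M_k} [w(\Phi(\omega))]^{-1/2}\,(V_{\theta,\Phi}f)(y,\omega)\,d\omega
   = \int_{M_k} \big\langle \Fourier\psi_f,\, \modulation_y \varphi^{(\omega)}\big\rangle_{\DistributionSpace'(D),\TestFunctionSpace(D)}\,d\omega.
\]
It then remains to pull the integral inside the distributional pairing, after which the resulting test function is $\modulation_y \int_{M_k} \varphi^{(\omega)}\,d\omega = \modulation_y \varphi_k$, and one more application of the Paley--Wiener formula (valid since $\varphi_k\in\TestFunctionSpace(D)$ by the first part) recovers $\Fourier^{-1}[\varphi_k\cdot \Fourier\psi_f](y)$.

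The main obstacle is to rigorously justify the interchange of the integration in $\omega$ over $M_k$ with the distributional pairing against $\Fourier\psi_f\in\DistributionSpace'(D)$. I would do this by exhibiting the $\TestFunctionSpace(D)$-valued function $\omega\mapsto \modulation_y \varphi^{(\omega)}$ as continuous on the compact closure $\overline{M_k}\subset D$ (using that $\Phi\in\mathcal{C}^\infty(D)$ and that $M_k$ is relatively compact in $D$, so that the supports of the $\varphi^{(\omega)}$ lie uniformly in a single compact subset of $D$ and all $\omega$-derivatives of $\varphi^{(\omega)}(\eta)$ are uniformly bounded on $\overline{M_k}\times D$). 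Continuity combined with uniform compact support implies that the Riemann sums for the $\omega$-integral converge in $\TestFunctionSpace(D)$ to $\modulation_y\varphi_k$, and sequential continuity of the distribution $\Fourier\psi_f$ then yields the desired interchange. This strategy has the added benefit that it works uniformly for all $f\in\Reservoir$ without needing a separate approximation argument for non-$\lebesgue^2$ elements, so the identity extends from the obvious Fubini computation on $\LtDF$ to the entire reservoir at once.
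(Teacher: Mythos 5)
Your first assertion (BAPU claim) is handled exactly as in the paper, by observing that $\theta$ satisfies the hypotheses of Definition~\ref{def:BAPUgenerators} and invoking Proposition~\ref{prop:DecompSpecialBAPU}.

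For the second assertion, your route is genuinely different from the paper's and, as far as I can tell, correct. The paper first carries out the calculation \eqref{eq:FouLocVsVoice} explicitly for $f\in\LtDF$ via a classical Fubini argument (justified by a moderateness estimate on $\|g_\omega\|_{\lebesgue^2}$), and then lifts the identity to general $f\in\Reservoir$ using the sequential weak-$\ast$-density of $\LtDF$ in $\Reservoir$ (from \cite{fora05}), the norm bound $\|g_{y,\omega}\|_{\GoodVectors}\lesssim v(y,\omega)$ (from \cite{kempka2015general}), the uniform boundedness principle, and dominated convergence. You instead start from the distributional representation $(V_{\theta,\Phi}f)(y,\omega)=\langle\Fourier\psi_f,\modulation_y\overline{g_\omega}\rangle$ of Corollary~\ref{cor:DecompSpReservoirEmbedding} -- which already holds for all $f\in\Reservoir$ -- and reduce the claim to interchanging the $\omega$-integral with the pairing against $\Fourier\psi_f\in\DistributionSpace(D)$. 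Your justification via a Riemann/Bochner integral of the $\TestFunctionSpace_L(D)$-valued continuous map $\omega\mapsto\modulation_y\varphi^{(\omega)}$ (with all supports confined to a fixed compact $L\subset D$, so one works in the Fréchet space $\TestFunctionSpace_L(D)$ where continuity of $\Fourier\psi_f$ is sequential continuity and evaluation functionals are continuous) is sound, and it has the advantage you note of treating all of $\Reservoir$ at once, bypassing the density-plus-dominated-convergence step entirely. The continuity of $\omega\mapsto\modulation_y\varphi^{(\omega)}$ that you sketch is essentially the content of the paper's later Lemma~\ref{lem:VoiceTransformOnLargerReservoirProperties}(1) (via Lemma~\ref{lem:OneArgumentFreeze}); if you wanted to present this self-contained, you would need to supply that short argument here, since in the paper's ordering it appears after the present lemma. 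Beyond that, the only care needed is the standard fact that the vector-valued Riemann integral coincides, component-by-component, with the scalar integrals defining $\varphi_k$, which follows because pointwise evaluation and modulation are continuous linear maps on $\TestFunctionSpace_L(D)$.
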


 \begin{proof}
    That $(\varphi_k)_{k\in\ZZ^d}$ is a BAPU for $\CalQ_{\Phi}^{(\delta,r)}$ is a direct consequence of Proposition \ref{prop:DecompSpecialBAPU}: Simply set $\zeta = \theta$ in Definition \ref{def:BAPUgenerators}. We proceed to prove the second assertion. For arbitrary $f \in \Reservoir$ and $y \in \RR^d$, $k \in \ZZ^d$, we showed, in the proof of Corollary \ref{cor:DecompSpReservoirEmbedding}, that $\varphi_k \cdot \Fourier \psi_f$ is a (tempered) distribution with compact support.
    Hence,
     \begin{equation}\label{eq:FourierVsInner}
         [\Fourier^{-1}(\varphi_k \cdot \Fourier \psi_f)](y)
        = \langle f, \overline{\Fourier [\modulation_y \varphi_k]} \rangle_{\Reservoir, \GoodVectors} = (\ast) \, \, ,
     \end{equation}
     where Lemma \ref{lem:DecompSpDualReservoirEmbedding} shows that $\overline{\Fourier[\modulation_y \varphi_k]} \in \GoodVectors$, since $\varphi_k \in \TestFunctionSpace(D)$. In particular, this calculation shows that the left-hand side of
     the Equation  \ref{eq:FouLocVsVoice} depends weak-$\ast$-continuously on $f \in \Reservoir$.

     Next, if $f \in \LtDF \subset \Reservoir$, 
     we can continue the preceding calculation as follows:
     \begin{equation}
         \begin{split}
             (\ast) &= \left\langle
                    f, \,
                    \overline{\Fourier[\modulation_y \varphi_k]}
                \right\rangle_{\lebesgue^2}
                 = \left\langle
                    f, \,
                    \Fourier^{-1}[ \, \overline{\modulation_y \varphi_k} \, ]
                \right\rangle_{\lebesgue^2}
             = \left\langle
                    \widehat{f}, \,
                    \overline{\modulation_y \varphi_k}
                \right\rangle_{\lebesgue^2} \\
             &= \int_{D}
                    \widehat{f} (\xi) \cdot
                    \varphi_k (\xi)\cdot e^{2\pi i \left\langle y, \xi\right\rangle}
                \, d \xi \\
             ({\scriptstyle{\text{Def. of }\varphi_k \text{ and Fubini}}})
             &= \int_{M_k}
                    \int_{D}
                        \varphi^{(\omega)} (\xi) \cdot
                        \widehat{f}(\xi) \cdot
                        e^{2\pi i \left\langle y, \xi\right\rangle}
                    \, d \xi
                \, d \omega \\
             ({\scriptstyle{g_{\omega} \text{ real}}})
             &= \int_{M_k}
                    [w(\Phi(\omega))]^{-1/2} \cdot
                    \int_{D}
                        \overline{g_{\omega} (\xi)} \cdot
                        \widehat{f}(\xi) \cdot
                        e^{2\pi i \left\langle y, \xi\right\rangle}
                    \, d \xi
                \, d \omega \\
        ({\scriptstyle{\text{Plancherel}}})
             &= \int_{M_k}
                    [w(\Phi(\omega))]^{-1/2} \cdot
                    \left\langle
                        f, \,
                        \translation_y [\Fourier^{-1} \, g_{\omega}]
                    \right\rangle_{\lebesgue^2 (\RR^d)}
                \, d \omega \\
             &=  \int_{M_k}
                    [w(\Phi(\omega))]^{-1/2} \cdot
                    (V_{\theta,\Phi} f)(y, \omega)
                \, d \omega.
         \end{split}
         \label{eq:CoorbitDecompositionConnectionCalculation}
     \end{equation}
     This shows Equation  \eqref{eq:FouLocVsVoice}, for $f\in \LtDF\subset \Reservoir$. To justify the application of Fubini's theorem, note the following:
     Straightforward calculations using $v_0^d$-moderateness of $w$ (see Theorem \ref{cor:warped_disc_frames}) show that $[w(\Phi(\omega))]^{1/2}\|\varphi^{(\omega)}\|_{\lebesgue^2(D)}=\|g_\omega\|_{\lebesgue^2(D)} \leq \| \theta \|_{\lebesgue^2_{v_0^{d/2}}(\RR^d)}<\infty$. Hence,
     \begin{align*}
         \int_{M_k} \int_{D} |\varphi^{(\omega)}(\xi) \cdot \widehat{f}(\xi)| \, d\xi \, d \omega
         &\leq \int_{M_k} \|\varphi^{(\omega)}\|_{\lebesgue^2(D)} \cdot \|\widehat{f}\|_{\lebesgue^2(D)} \, d \omega\\
         &\leq  \|f\|_{\lebesgue^2} \cdot \| \theta \|_{\lebesgue^2_{v_0^{d/2}}}
               \cdot \int_{M_k}
                        [w(\Phi(\omega))]^{-1/2}
                     \, d \omega
         < \infty.
     \end{align*}
     Here, we used that the integrand on the right-hand side is continuous and $\overline{M_k}$ is compact.
    
     \medskip{}
     
     It remains to lift the identity
     \eqref{eq:CoorbitDecompositionConnectionCalculation} from $f \in \LtDF$
     to arbitrary $f \in \Reservoir$.
     To this end, fix $k \in \ZZ^d$ and note that the
     set $M_k$ satisfies
     $\overline{M_k} \subset \Phi^{-1} (\delta \cdot (k + [-1,1]^d)) =: N_k$,
     where the right-hand side is a compact subset of $D$. Furthermore, $(\Phi,\kappa)$ is a compatible pair and, in particular, $\Phi$ is a $(d+1)$-admissible warping function. Hence, by Theorem \ref{cor:warped_disc_frames}, the space $\GoodVectors$ is well-defined and we can apply \cite[Lemma 2.13]{kempka2015general} to obtain a constant $C>0$, such that 
     \begin{equation}\label{eq:FrameElementGoodVectorNormEstimate}
      \| g_{y,\omega} \|_{\GoodVectors} \leq C v(y,\omega),\quad \text{for all}\quad (y,\omega)\in \PhSpace. 
     \end{equation}
     In particular, for fixed $y \in \RR^d$ and $\omega\in N_k \supset M_k$, we obtain $\| g_{y,\omega} \|_{\GoodVectors} \leq C_2(y,k) =: C_2$, for some constant $C_2(y,k)>0$ depending only on $y\in\RR^d$ and $k\in\ZZ^d$.
     
     It was shown in \cite[Lemma 2]{fora05} that
     $\LtDF$ is sequentially weak-$\ast$-dense in
     $\Reservoir$. Hence, there is a sequence $(f_n)_{n \in \NN}$ in $\LtDF \subset \Reservoir$
     with
     \[
         \langle f_n, g \rangle_{\Reservoir, \GoodVectors}
         \xrightarrow[n\to\infty]{} \langle f, g \rangle_{\Reservoir, \GoodVectors}
         \quad \text{for all}\quad g \in \GoodVectors.
     \]
     In particular, this means
     $V_{\theta,\Phi} f_n (y,\omega)
     = \langle f_n, g_{y,\omega} \rangle_{ \Reservoir, \GoodVectors}
     \to \langle f, g_{y,\omega} \rangle_{ \Reservoir, \GoodVectors} = V_{\theta,\Phi} f(y,\omega)$
     for all $\omega \in M_k$.
     Furthermore, by the uniform boundedness principle, there is a finite constant $C$, depending on the full sequence $(f_n)_n$, such that $\| f_n \|_{\Reservoir} \leq C < \infty$
     for all $n \in \NN$, and hence
     \[
         |V_{\theta,\Phi} f_n (y,\omega)|
         = |\langle f_n, g_{y,\omega} \rangle_{\Reservoir, \GoodVectors}|
         \leq C\| g_{y,\omega} \|_{\GoodVectors}
         \leq C \cdot C_2
     \]
     for all $\omega \in M_k$.
     Finally, since $M_k \subset N_k$ with $N_k \subset D$ compact, we see that $\omega \mapsto [w(\Phi(\omega))]^{-1/2}$
     is bounded on $M_k$, and that $\mu(M_k) < \infty$, so that the dominated convergence theorem yields
     \[
         \int_{M_k} [w(\Phi(\omega))]^{-1/2} \cdot V_{\theta,\Phi} f_n (y,\omega) d\omega
         \xrightarrow[n\to\infty]{}  \int_{M_k} [w(\Phi(\omega))]^{-1/2} \cdot V_{\theta,\Phi} f (y,\omega) d\omega.
     \]
     As we observed above, the left-hand side of
     \eqref{eq:FourierVsInner} 
     is also
     weak-$\ast$-continuous on $\Reservoir$, so that \eqref{eq:FouLocVsVoice}
     transfers from $f \in \LtDF$ to arbitrary $f \in \Reservoir$, as desired.
 \end{proof}
 
 \begin{remark}
   Note that, strictly speaking, \cite[Lemma 2.13]{kempka2015general} only yields \eqref{eq:FrameElementGoodVectorNormEstimate} on a dense subset of $\PhSpace$. It is, however, easy to see that the estimate extends to all of $\PhSpace$ if the map $(y,\omega)\mapsto g_{y,\omega}$ is (weakly) continuous, which is clearly the case here, see also \cite[Proposition 3.4]{VoHoPreprint}. 
 \end{remark}

  We are now ready to prove the desired embedding of $\Co(\mathcal G(\theta,\Phi),\lebesgue_{\kappa}^{p,q})$ into $\DecompSp(\CalQ_{\Phi}^{(\delta,r)}, \lebesgue^p, \ell_u^q)$.
 
 \begin{theorem}\label{thm:CoorbitIntoDecomposition}
     Let $\Phi: D\rightarrow \RR^d$ and $\kappa: D\rightarrow \RR^+$ be compatible. Fix $\delta > 0$, $r > \sqrt{d} / 2$, and $0 < \vartheta < r - \sqrt{d}/2$. Finally, choose  $0\neq \theta \in \TestFunctionSpace(\RR^d)$ with  $\|\theta\|_1 = 1$ and $\supp( \theta) \subset \delta \cdot B_\vartheta (0)$. 
     
     Then, the map $\Psi : \Reservoir \to Z'(D)$ from Corollary \ref{cor:DecompSpReservoirEmbedding} restricts
     for arbitrary $p,q \in [1,\infty]$ to a bounded injective linear map
     \[
         \Psi \colon \Co(\mathcal G(\theta,\Phi),\lebesgue_{\kappa}^{p,q}) \to \DecompSp(\CalQ_{\Phi}^{(\delta,r)}, \lebesgue^p, \ell_u^q)
     \]
     with
     \[
         u = (u_k)_{k \in \ZZ^d}
         \qquad \text{ where } \qquad
         u_k := \kappa_\Phi(\delta k) \cdot [w(\delta k)]^{\frac{1}{q} - \frac{1}{2}}.
     \]
 \end{theorem}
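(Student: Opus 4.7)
The map $\Psi$ is already known from Corollary \ref{cor:DecompSpReservoirEmbedding} to be a well-defined, continuous, injective linear map from $\Reservoir$ into $Z'(D)$, so its restriction to $\Co (\mathcal G (\theta,\Phi), \lebesgue_\kappa^{p,q})$ is automatically linear and injective. Moreover, Lemma \ref{lem:DecompInducedCoveringIsNice}, Proposition \ref{prop:DecompSpecialBAPU} and Lemma \ref{lem:DecompositionWeightIsModerate} together confirm that $\DecompSp (\CalQ_\Phi^{(\delta,r)}, \lebesgue^p, \ell_u^q)$ is a well-defined Banach space, and since different BAPUs yield equivalent norms, we are free to work with the particular BAPU $(\varphi_k)_{k \in \ZZ^d}$ built from $\theta$ in Lemma \ref{lem:CoorbitDecompositionConnection}. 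The entire task thus reduces to establishing the norm estimate $\|\psi_f\|_{\DecompSp(\CalQ_\Phi^{(\delta,r)},\lebesgue^p,\ell_u^q)} \lesssim \|f\|_{\Co(\Phi,\lebesgue_\kappa^{p,q})}$.

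The starting point is the identity from Lemma \ref{lem:CoorbitDecompositionConnection},
\[
  [\Fourier^{-1}(\varphi_k \cdot \Fourier \psi_f)](y) = \int_{M_k} [w(\Phi(\omega))]^{-1/2} \cdot (V_{\theta,\Phi} f)(y,\omega) \, d\omega.
\]
Applying Minkowski's integral inequality in the $y$-variable gives
\[
  \|\Fourier^{-1}(\varphi_k \cdot \Fourier \psi_f)\|_{\lebesgue^p} \leq \int_{M_k} [w(\Phi(\omega))]^{-1/2} \, \|V_{\theta,\Phi} f(\cdot,\omega)\|_{\lebesgue^p} \, d\omega.
\]
Because $M_k = \Phi^{-1}(\delta(k + [-\tfrac12,\tfrac12)^d))$ is the preimage of a cube of bounded diameter around $\delta k$, the $w_0 = v_0^d$-moderateness of $w$ and the $\tilde{\kappa}$-moderateness of $\kappa_\Phi$ yield $w(\Phi(\omega)) \asymp w(\delta k)$ and $\kappa(\omega) \asymp \kappa_\Phi(\delta k)$ for $\omega \in M_k$, with constants independent of $k$. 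Multiplying the above inequality by $u_k$ and substituting these comparisons, I can replace every appearance of values at $\delta k$ by the corresponding values at $\omega \in M_k$, arriving at
\[
  u_k \cdot \|\Fourier^{-1}(\varphi_k \cdot \Fourier \psi_f)\|_{\lebesgue^p} \lesssim \int_{M_k} \kappa(\omega) \cdot [w(\Phi(\omega))]^{1/q - 1} \cdot \|V_{\theta,\Phi} f(\cdot,\omega)\|_{\lebesgue^p} \, d\omega.
\]

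For $1 < q < \infty$, I apply Hölder's inequality with exponents $q$ and $q'$, splitting the integrand so that the first factor $\kappa \cdot \|V_{\theta,\Phi} f(\cdot,\omega)\|_{\lebesgue^p}$ is raised to the $q$-th power. The second factor contributes $\left( \int_{M_k} [w(\Phi(\omega))]^{(1/q - 1) q'} \, d\omega \right)^{1/q'} = \left( \int_{M_k} [w(\Phi(\omega))]^{-1} \, d\omega \right)^{1/q'}$; since $[w(\Phi(\omega))]^{-1} = \det D\Phi(\omega)$ by Equation \eqref{eq:PhiJacobianDetExpressedInW}, the substitution $\tau = \Phi(\omega)$ turns this into $\mu(\Phi(M_k))^{1/q'} = \delta^{d/q'}$, a uniform constant. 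Raising to the power $q$ and summing over $k \in \ZZ^d$ and using that the family $(M_k)_{k\in\ZZ^d}$ is disjoint and exhausts $D$, I obtain
\[
  \sum_{k \in \ZZ^d} u_k^q \, \|\Fourier^{-1}(\varphi_k \cdot \Fourier \psi_f)\|_{\lebesgue^p}^q \lesssim \int_D \kappa(\omega)^q \, \|V_{\theta,\Phi} f(\cdot,\omega)\|_{\lebesgue^p}^q \, d\omega = \|f\|_{\Co(\Phi,\lebesgue_\kappa^{p,q})}^q,
\]
which is the desired bound.

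The endpoints $q=1$ and $q=\infty$ are handled by minor variants. For $q=1$, Hölder is unnecessary: one simply bounds $u_k \cdot [w(\Phi(\omega))]^{-1/2} \asymp \kappa(\omega)$ on $M_k$ and sums directly. For $q=\infty$, one uses $\|V_{\theta,\Phi} f(\cdot,\omega)\|_{\lebesgue^p} \leq \kappa(\omega)^{-1} \, \|f\|_{\Co(\Phi,\lebesgue_\kappa^{p,\infty})}$ almost everywhere, then evaluates $\int_{M_k} [w(\Phi(\omega))]^{-1/2} \, d\omega \asymp [w(\delta k)]^{1/2} \delta^d$ (again via moderateness and $\mu(M_k) \asymp w(\delta k) \, \delta^d$), and takes the supremum over $k$. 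The main technical obstacle was already surmounted in Lemma \ref{lem:CoorbitDecompositionConnection}; here the computations are essentially bookkeeping plus a clean application of Hölder, with the change-of-variables identity $\int_{M_k} [w(\Phi(\omega))]^{-1} d\omega = \delta^d$ providing the crucial cancellation that makes the weight $u_k$ match exactly the one stated.
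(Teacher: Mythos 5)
Your proof is correct and follows essentially the same route as the paper: apply Minkowski's integral inequality to the identity from Lemma \ref{lem:CoorbitDecompositionConnection}, move the $q$-th power inside the integral over $M_k$ via a convexity argument, convert $w(\Phi(\omega))$ and $\kappa(\omega)$ to their values at $\delta k$ by moderateness, and finally sum over $k$ using $D = \biguplus_k M_k$. The only real difference is cosmetic: you invoke H\"older's inequality with exponents $(q,q')$ and exploit the exact identity $(1/q-1)q' = -1$ so that $\int_{M_k}[w(\Phi(\omega))]^{-1}\,d\omega = \mu(\Phi(M_k)) = \delta^d$ cancels cleanly, whereas the paper uses Jensen's inequality with the probability measure $d\omega/\mu(M_k)$ and then separately bounds $\mu(M_k)\lesssim \delta^d\,w(\delta k)$ by moderateness; both variants end up charging the same moderateness constants, and your handling of the endpoint cases $q=1$ and $q=\infty$ is likewise sound.
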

 \begin{proof}
     Let $f \in \Co(\mathcal G(\theta,\Phi),\lebesgue_{\kappa}^{p,q}) \subset \Reservoir$ be arbitrary.
     The assumptions on $\theta$ imply that Equation \eqref{eq:CanonicalThetaChoice} is satisfied. Hence, Lemma \ref{lem:CoorbitDecompositionConnection} and Minkowski's inequality for integrals
     (see e.g.~\cite[Theorem 6.19]{FollandRA}) yield for arbitrary $k \in \ZZ^d$ the estimate
     \[
         d_k :=   \| \Fourier^{-1} [\varphi_k \cdot \Fourier \psi_f] \|_{\lebesgue^p}
         \leq  \int_{M_k} [w(\Phi(\omega))]^{-1/2} \cdot \| (V_{\theta,\Phi} f)(\cdot, \omega) \|_{\lebesgue^p} \, d \omega.
     \]
     Now, let us first consider the case $q < \infty$. In this case, and since we only consider the case $q \geq 1$,
     Jensen's inequality~\cite[Theorem 3.3]{rudin1987real} yields
     \begin{equation}
      \begin{split}
         (u_k \cdot d_k)^q
         &\leq \left[
                    u_k \cdot
                   \mu(M_k) \cdot
                   \int_{M_k}
                        \| (V_{\theta,\Phi} f)(\cdot, \omega)\|_{\lebesgue^p}
                        \cdot [w(\Phi(\omega))]^{-1/2}
                   \frac{d\omega}{\mu(M_k)}
               \right]^q \\
         &\leq u_k^q \cdot
               [\mu(M_k)]^q \cdot
               \int_{M_k}
                    \| (V_{\theta,\Phi} f)(\cdot, \omega)\|_{\lebesgue^p}^q
                    \cdot [w(\Phi(\omega))]^{-q/2}
               \frac{d\omega}{\mu(M_k)},
       \end{split}\label{eq:WeightedDecompComponentEstimate}
     \end{equation}
    where $\mu$ denotes the usual $d$-dimensional Lebesgue measure.

     Recall that $v_0$ is radially increasing and that $w$ is $v_0^d$-moderate, by Theorem \ref{cor:warped_disc_frames}. Further, note for
     $\omega \in M_k = \Phi^{-1} \left(
                                     \delta \cdot
                                     \left(
                                         k + \smash{ \left[ \textstyle{-\frac{1}{2}, \frac{1}{2}} \right)^d}
                                         \vphantom{ \left[ \frac{1}{2} \right]} \,
                                     \right)
                                 \right)$
     that $\Phi(\omega) \in \delta k + [-\delta,\delta]^d \subset \delta k + \overline{B_{d\delta}} (0)$, so that
     \[
         w(\delta k)
         = w\left( \Phi(\omega) + [\delta k - \Phi(\omega)] \right)
         \leq w(\Phi(\omega)) \cdot v_0^d (\delta k - \Phi(\omega))
         \leq w(\Phi(\omega)) \cdot v_0^d (d\delta\cdot e_1)
         =: C_{v_0, \delta} \cdot w(\Phi(\omega)).
     \]
     Likewise, we have
     \begin{align*}
         \mu(M_k)
         = \mu \left(
                  \Phi^{-1} \left(
                                \delta \cdot \left( k + [{\textstyle{-\frac{1}{2}, \frac{1}{2})^d}} \right)
                            \right)
               \right)
         = \int_{\delta \cdot \left( k + [{\textstyle{-\frac{1}{2}, \frac{1}{2})^d}} \right)}
               |\det (D\Phi^{-1})(\upsilon)|
           \,d \upsilon
         = \int_{\delta \cdot \left( k + [{\textstyle{-\frac{1}{2}, \frac{1}{2})^d}} \right)}
                w(\upsilon)
           \,d \upsilon,
     \end{align*}
     where
     \[
         w(\upsilon)
         = w(\delta k + (\upsilon - \delta k))
         \leq w(\delta k) \cdot v_0^d (\upsilon - \delta k)
         \leq w(\delta k) \cdot v_0^d (d\delta\cdot e_1)
         \leq C_{v_0, \delta} \cdot w(\delta k)
     \]
     for arbitrary $\upsilon \in \delta \cdot \left[ k + [{\textstyle{-\frac{1}{2}, \frac{1}{2})^d}} \right]
     \subset \delta k + \overline{B_{d \delta} (0)}$.
     Thus, $\mu(M_k) \leq \delta^d \cdot C_{v_0,\delta} \cdot w(\delta k)$.

     Finally, since $\tilde{\kappa}$ is radially increasing, we have for $\omega \in M_k$ 
     \[
         \frac{\kappa (\Phi^{-1}(\delta k))}{\kappa (\omega)}
         = \frac{\kappa_{\Phi} (\delta k)}{\kappa_{\Phi} (\Phi(\omega))}
         \leq \tilde{\kappa}(\delta k - \Phi(\omega))
         \leq \tilde{\kappa}(d\delta\cdot e_1) =: C_{\tilde{\kappa}, \delta}.
     \]

     Altogether, these considerations show
     \begin{align*}
         (u_k \cdot d_k)^q
         &\leq u_k^q
               \cdot [\mu(M_k)]^q
               \cdot \int_{M_k}
                        \| (V_{\theta,\Phi} f)(\cdot, \omega)\|_{\lebesgue^p}^q
                        \cdot [w(\Phi(\omega))]^{-q/2}
                     \frac{d\omega}{\mu(M_k)} \\
         &\leq \delta^{d(q-1)}\cdot C_{v_0,\delta}^{3q/2-1}
               \cdot [w(\delta k)]^{\frac{q}{2} - 1}
               \cdot u_k^q
               \cdot \int_{M_k}
                        \| (V_{\theta,\Phi} f)(\cdot, \omega) \|_{\lebesgue^p}^q
                     \, d \omega \\
         ({\scriptstyle{\text{by def. of } u_k}})
         &\leq \delta^{d(q-1)}\cdot C_{v_0,\delta}^{3q/2-1} 
               \cdot \int_{M_k}
                        [\kappa(\Phi^{-1}(\delta k))]^q
                        \cdot \| (V_{\theta,\Phi} f)(\cdot, \omega) \|_{\lebesgue^p}^q
                     \, d\omega \\
         ({\scriptstyle{\text{with suitable constant } C_{v_0,\tilde{\kappa},\delta,q}}})
         &\leq C_{v_0, \kappa, \delta, q} \cdot
               \int_{M_k}
                    [\kappa(\omega)]^q\cdot \| (V_{\theta,\Phi} f)(\cdot, \omega) \|_{\lebesgue^p}^q
               \, d\omega.
     \end{align*}

     Now, since $(\varphi_k)_{k \in \ZZ^d}$ is a BAPU for $\CalQ_\Phi^{(\delta,r)}$ by Proposition \ref{prop:DecompSpecialBAPU},
     we can sum the preceding estimate over $k \in \ZZ^d$, to obtain because of $D = \biguplus_{k \in \ZZ^d} M_k$
     (which follows directly from the definition of $M_k$ and bijectivity of $\Phi$) that
     \begin{align*}
         \| \psi_f \|_{\DecompSp(\CalQ^{(\delta,r)}, \lebesgue^p, \ell_u^q)}^q
         = \sum_{k \in \ZZ^d} (u_k \cdot d_k)^q
         &\leq C_{v_0, \kappa, \delta, q}
               \cdot \sum_{k \in \ZZ^d}
                        \int_{M_k}
                            [\kappa(\omega)]^q\cdot \| (V_{\theta,\Phi} f)(\cdot, \omega) \|_{\lebesgue^p}^q\,
                        d\omega  \\
         &=    C_{v_0, \kappa, \delta, q}
               \cdot \int_{D} \,\,
                       [\kappa(\omega)]^q\cdot \| (V_{\theta,\Phi} f)(\cdot, \omega) \|_{\lebesgue^p}^q\,
                     d\omega  \\
         &=    C_{v_0, \kappa, \delta, q}
               \cdot \| f \|_{\Co(\mathcal G(\theta,\Phi),\lebesgue_{\kappa}^{p,q})}^q
          <    \infty,
     \end{align*}
     which completes the proof in case of $q  < \infty$.

     \medskip{}

     It remains to consider the case $q=\infty$. Here, we simply note for arbitrary $k \in \ZZ^d$ as a consequence of the estimate \eqref{eq:WeightedDecompComponentEstimate} (setting $q=1$ in that inequality only),
     \begin{align*}
               u_k \cdot d_k
         &\leq u_k
               \cdot \int_{M_k}
                        [w(\Phi(\omega))]^{-1/2}
                        \cdot \| (V_{\theta,\Phi} f)(\cdot, \omega) \|_{\lebesgue^p}
                     \, d \omega \\
         &\leq C_{v_0,\delta}^{1/2}\cdot  [w(\delta k)]^{-1}
               \cdot \mu(M_k)
               \cdot \kappa_\Phi(\delta k)
               \cdot \esssup_{\omega \in M_k}
                        \| (V_{\theta,\Phi} f)(\cdot, \omega)\|_{\lebesgue^p} \\
         &\leq \delta^{d} C_{v_0,\delta}^{3/2}
               \cdot  \kappa_\Phi(\delta k)
               \cdot \esssup_{\omega \in M_k}
                        \| (V_{\theta,\Phi} f)(\cdot, \omega)\|_{\lebesgue^p} \\
         &\leq  C_{v_0, \kappa, \delta}
               \cdot \|f \|_{\Co(\mathcal G(\theta,\Phi),\lebesgue_{\kappa}^{p,\infty})}
         <     \infty.
     \end{align*}
     Because of $\| \psi_f \|_{\DecompSp(\CalQ_{\Phi}^{(\delta,r)}, \lebesgue^p, \ell_u^\infty)}
     = \sup_{k \in \ZZ^d} (u_k \cdot d_k)$, this completes the proof also for $q = \infty$.
 \end{proof}
 

 \subsection{\texorpdfstring{The embedding
                             $\DecompSp(\CalQ_{\Phi}^{(\delta,r)},
                                        \lebesgue^p,
                                        \ell_u^q)
                              \hookrightarrow \Co(\Phi,\lebesgue_{\kappa}^{p,q})$}
                            {The embedding of the decomposition space into the
                             coorbit space}}
 \label{sub:DecompositionEmbedsIntoCoorbit}

 The main technical obstruction to establishing the embedding of $\DecompSp(\CalQ_{\Phi}^{(\delta,r)},\lebesgue^p,\ell_u^q)$ into $\Co(\Phi,\lebesgue_{\kappa}^{p,q})$ is that the action of $f \in \DecompSp(\CalQ_{\Phi}^{(\delta,r)}, \lebesgue^p, \ell_u^q) \subset Z'(D)$ is only defined on the small space $Z(D)\hookrightarrow \GoodVectors$.
 Hence, some kind of extension procedure is required to obtain functionals defined on all of $\GoodVectors$. We will establish this extension specifically for elements of the decomposition space $\DecompSp(\CalQ_{\Phi}^{(\delta,r)}, \lebesgue^p, \ell_u^q)$ instead of general elements of $Z'(D)$. Nonetheless, our first step is the extension of the voice transform $V_{\theta,\Phi}$ to $Z'(D)$.

 \begin{definition}\label{def:VoiceTransformOnLargerReservoir}
     Let $\Phi: D\rightarrow \RR^d$ and $\kappa: D\rightarrow \RR^+$ be compatible, and $0\neq \theta\in \DistributionSpace(\RR^d)$. For $f \in Z'(D)$, define
     \[
         V_{\theta,\Phi} f : \RR^d \times D \to \CC,
                          (y,\omega) \mapsto \left\langle
                                                f,
                                                \smash{\overline{\rule{0pt}{2.5mm} g_{y,\omega}}} \,
                                             \right\rangle_{Z'(D), Z(D)}.
     \]
 \end{definition}
  \begin{remark}\label{rem:ElaborationOfVoiceTransformDef}
     By an argument similar to the one in the proof of Corollary  \ref{cor:DecompSpReservoirEmbedding}, we can see that with this definition $V_{\theta,\Phi} f(y,\omega) = \langle \Fourier f, 
     \modulation_y \overline{g_\omega} \rangle_{\DistributionSpace,\TestFunctionSpace} = \Fourier^{-1}[\overline{g_\omega}\cdot \Fourier f](y)$. In particular, $V_{\theta,\Phi} f$ is well-defined. Furthermore, by that corollary, the above definition coincides with the definition on $\Reservoir$, for all $f\in\Reservoir \hookrightarrow Z'(D)$, where the embedding $\Reservoir \hookrightarrow Z'(D)$ is given by the map $\Psi$ defined in Corollary \ref{cor:DecompSpReservoirEmbedding}.
 \end{remark}

 We proceed to show that the extension of $V_{\theta,\Phi}$ to $Z'(D)$ inherits important properties, such as continuity and the reproducing property. To this end, we will use the following technical lemma.

 \begin{lemma}
     Let $\emptyset \neq U \subset \RR^\ell$ and $\emptyset \neq V \subset \RR^d$ be open and let
     $\Gamma \in \mathcal C^{\infty} (U \times V)$ with the following property:
     \[
         \text{For each compact set $K \subset U$, there is a compact set $L_K \subset V$ satisfying }
         \forall\ u \in K: \, \supp (\Gamma(u, \bullet)) \subset L_K.
     \]
     Then the map
     \[
         \Theta\colon U \to \TestFunctionSpace(V), u \mapsto \Gamma(u, \bullet)
     \]
     is well-defined and continuous.
     \label{lem:OneArgumentFreeze}
 \end{lemma}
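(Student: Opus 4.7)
The claim breaks into well-definedness of $\Theta$ as a map into $\TestFunctionSpace(V)$ and continuity with respect to the inductive-limit topology; essentially all of the work lives in the latter.

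For well-definedness I would fix $u \in U$ and observe that $v \mapsto \Gamma(u,v)$ is smooth on $V$ because $\Gamma \in \mathcal{C}^\infty(U \times V)$; applying the support hypothesis to the singleton $K = \{u\}$ yields a compact $L_{\{u\}} \subset V$ containing $\supp(\Gamma(u,\bullet))$, so that $\Gamma(u,\bullet)$ lies in $\mathcal{C}_c^\infty(V) = \TestFunctionSpace(V)$.

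For continuity, since $U \subset \RR^\ell$ is metrizable it suffices to prove sequential continuity, so I would take an arbitrary sequence $u_n \to u$ in $U$ and show $\Theta(u_n) \to \Theta(u)$ in $\TestFunctionSpace(V)$. The key move is to apply the support hypothesis to the compact set $K := \{u\} \cup \{u_n : n \in \NN\}$, rather than to the individual points: letting $L := L_K \subset V$ be the associated compact set, all of the functions $\Gamma(u_n,\bullet)$ and $\Gamma(u,\bullet)$ have their support in $L$ and therefore live in the Fr\'echet subspace $\TestFunctionSpace_L(V)$ of test functions supported in $L$. Since the inclusion $\TestFunctionSpace_L(V) \hookrightarrow \TestFunctionSpace(V)$ is continuous by definition of the inductive-limit topology, it then suffices to establish convergence in $\TestFunctionSpace_L(V)$, whose topology is generated by the seminorms $\varphi \mapsto \sup_{v \in V} |\partial^\beta \varphi(v)|$ for $\beta \in \NN_0^d$.

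To verify convergence in all of these seminorms, I would fix a multi-index $\beta$ and use the smoothness of $\Gamma$: the function $\partial_v^\beta \Gamma$ is continuous on $U \times V$, hence uniformly continuous on the compact set $K \times L$. Combined with the observation that $\partial_v^\beta \Gamma(u',v) = 0$ for all $u' \in K$ and all $v \in V \setminus L$ (which follows because $\Gamma(u',\bullet)$ vanishes identically on the open set $V \setminus L$ for every $u' \in K$), this yields
\[
\sup_{v \in V} \bigl|\partial_v^\beta \Gamma(u_n,v) - \partial_v^\beta \Gamma(u,v)\bigr|
= \sup_{v \in L} \bigl|\partial_v^\beta \Gamma(u_n,v) - \partial_v^\beta \Gamma(u,v)\bigr|
\xrightarrow[n \to \infty]{} 0,
\]
which is the required convergence. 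There is no serious obstacle here; the only subtlety is the use of the full sequence-plus-limit compact set $K$ in the support hypothesis, which funnels the entire sequence into a single Fr\'echet subspace and thereby sidesteps a direct analysis of the inductive-limit topology of $\TestFunctionSpace(V)$.
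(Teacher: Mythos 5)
Your proof is correct and follows essentially the same approach as the paper: apply the support hypothesis to the compact set consisting of the sequence together with its limit, thereby trapping all supports in a fixed compact $L$, then verify uniform convergence of all $v$-derivatives via uniform continuity of $\partial_v^\beta\Gamma$ on $K \times L$ and the vanishing outside $L$. The only cosmetic difference is that you phrase the final step via the continuous inclusion $\TestFunctionSpace_L(V) \hookrightarrow \TestFunctionSpace(V)$, while the paper cites \cite[Theorem 6.5]{RudinFA}, which encodes the same characterization of convergence in $\TestFunctionSpace(V)$.
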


 \begin{proof}
     By assumption, if we set $K = \{u\}$, for any $u \in U$,
     we see that $\supp( \Gamma(u, \bullet)) \subset L_{\{u\}} \subset V$ is compact,
     so that $\Gamma(u, \bullet) \in \TestFunctionSpace(V)$. Hence, $\Theta$ is well-defined.

     To prove continuity, let $(u_n)_{n \in \NN}$ be a sequence in $U$ satisfying $u_n 
     \rightarrow u_0$
     for some $u_0 \in U$. It is not hard to see (using e.g.~the definition of compactness) that
     $K := \{u_n \,:\, n\in \NN_0\} \subset U$ is compact. By assumption, there is thus a compact set $L_K \subset V$
     satisfying $\supp( \Gamma(u_n, \cdot)) \subset L_K$ for all $n \in \NN_0$.

     Now, let $\alpha \in \NN_0^d$ be arbitrary and let $\eps > 0$.
     Since $\partial_v^{\alpha} \Gamma$ is uniformly continuous on the compact set $K \times L_K \subset U\times V$,
     there is some $\delta > 0$ satisfying
     \[
         |(\partial^{\alpha}_v \Gamma)(u,v) - (\partial^{\alpha}_v \Gamma)(\tilde{u}, \tilde{v})| < \eps
     \]
     for all $u, \tilde{u} \in K$ and $v, \tilde{v} \in L_K$ with $|u-\tilde{u}| < \delta$ and $|v - \tilde{v}| < \delta$.
     Since $u_n \to u_0$, there is some $n_0 \in \NN$ such that $|u_n - u_0| < \delta$ for all $n \geq n_0$.

     Now, for $n \geq n_0$ and $v \in V$, there are two cases:

     \textbf{Case 1}: $v \in L_K$. In this case, we have $u_0, u_n \in K$ and $v \in L_K$ with $|u_n - u_0| < \delta$ and
                      $|v-v| = 0 < \delta$. Hence,
                      \[
                          |\partial^\alpha [\Theta(u_n)](v) - \partial^\alpha [\Theta(u_0)](v)|
                          = |(\partial_v^\alpha \Gamma)(u_n, v) - (\partial_v^{\alpha} \Gamma)(u_0, v)| < \eps.
                      \]

     \textbf{Case 2}: $v \notin L_K$. Since $L_K$ is compact and hence closed, there is some $r > 0$ with
                      $B_r (v) \subset \RR^d\setminus L_K$. Because of $\supp( \Theta(u_\ell)) \subset L_K$ for all $\ell \in \NN_0$,
                      this implies $\Theta(u_\ell) \equiv 0$ on a neighborhood of $v$, for each $\ell \in \NN_0$. Hence,
                      \[
                          |\partial^{\alpha} [\Theta(u_n)](v) - \partial^\alpha [\Theta(u_0)] (v)| = |0 - 0| = 0 < \eps.
                      \]

     Thus, we have shown that $\partial^\alpha [\Theta(u_n)] \to \partial^\alpha [\Theta(u_0)]$ uniformly
     on $V$ for arbitrary $\alpha \in \NN_0^d$, as well as $\supp(\Theta(u_n)) \subset L_K$ for all $n \in \NN_0$.
     In view of \cite[Theorem 6.5 and the ensuing remark]{RudinFA}, this implies $\Theta(u_n) \to \Theta(u_0)$ with
     convergence in $\TestFunctionSpace(V)$, as desired. Note that it indeed suffices to consider sequential continuity,
     since the domain $U \subset \RR^\ell$ of $\Theta$ is first countable.
 \end{proof}
 
 Now we are ready to collect the most important properties of the extended voice transform.
 \begin{lemma}\label{lem:VoiceTransformOnLargerReservoirProperties}
     Let $\Phi: D\rightarrow \RR^d$ and $\kappa: D\rightarrow \RR^+$ be compatible, and $0\neq \theta\in \TestFunctionSpace(\RR^d)$ with $\|\theta\|_{\lebesgue^2(\RR^d)}=1$. Then,
     $g_{y,\omega} \in \Fourier^{-1}(\TestFunctionSpace(D))$, for all $(y,\omega)\in \PhSpace$.
     For $f \in Z'(D)$, the following hold:
     \begin{enumerate}
         \item \label{enu:GeneralVoiceTrafoContinuous} The voice transform $V_{\theta,\Phi} f$, defined in  Definition \ref{def:VoiceTransformOnLargerReservoir}, is continuous.
         \item \label{enu:VoiceTrafoOrthRelations} For any $h\in \Fourier^{-1}(\DistributionSpace(D))$, we have $\overline{h}\in Z(D)$ and
             \[
                 \langle f,\overline{h} \rangle_{Z'(D),Z(D)} = \int_{\PhSpace} V_{\theta,\Phi}f(y,\omega)\cdot \overline{V_{\theta,\Phi}h(y,\omega)}~d(y,\omega),\quad \text{for all } f\in Z'(D).
             \]
         \item \label{enu:GeneralVoiceTrafoReproducing} If $K_{\theta,\Phi}\colon \PhSpace\times\PhSpace \rightarrow \CC$ is given by
         \[
          K_{\theta,\Phi}( (z,\eta), (y,\omega) )
                 := \left\langle g_{y,\omega} , \, g_{z,\eta}\right\rangle_{\lebesgue^2}
                 =  \overline{V_{\theta,\Phi} \, g_{z,\eta}(y,\omega)},\quad \text{for all } (y,\omega), (z,\eta)\in\PhSpace,
         \]
         we have
             \[
                 V_{\theta,\Phi} f = K_{\theta,\Phi}(V_{\theta,\Phi} f) := \int_\PhSpace K_{\theta,\Phi}( \bullet ,(y,\omega) ) \cdot V_{\theta,\Phi} f (y,\omega) ~d(y,\omega),\quad \text{for all } f\in Z'(D).
             \]
         \item \label{enu:GeneralVoiceTrafoInjective} The map
                 \[
                     V_{\theta,\Phi}\colon Z'(D) \to \mathcal{C}(\PhSpace), f \mapsto V_{\theta,\Phi} f
                 \]
                 is injective.
     \end{enumerate}
 \end{lemma}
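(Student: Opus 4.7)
Before addressing the four items, I would record that $g_{y,\omega} \in \Fourier^{-1}(\TestFunctionSpace(D))$ for every $(y,\omega) \in \PhSpace$: since $\theta$ has compact support and $\Phi$ is a $\mathcal{C}^\infty$-diffeomorphism, $g_\omega \in \TestFunctionSpace(D)$ (its support is the compact set $\Phi^{-1}(\Phi(\omega) + \supp \theta)$), and $\Fourier g_{y,\omega} = \modulation_{-y}\, g_\omega \in \TestFunctionSpace(D)$. Consequently $\overline{g_{y,\omega}} = \Fourier(\modulation_{y}\overline{g_\omega}) \in Z(D)$, so the pairing in Definition~\ref{def:VoiceTransformOnLargerReservoir} is well-defined. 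For Item~(1), I would apply Lemma~\ref{lem:OneArgumentFreeze} to the jointly smooth function $\Gamma((y,\omega),\xi) := e^{2\pi i y \cdot \xi}\,\overline{g_\omega(\xi)} = (\Fourier^{-1}\overline{g_{y,\omega}})(\xi)$: as $(y,\omega)$ varies in a compact subset of $\RR^d \times D$, the $\xi$-support stays inside a fixed compact set in $D$, so $(y,\omega) \mapsto \overline{g_{y,\omega}}$ is continuous into $Z(D)$, and composing with the continuous functional $f$ yields~(1).

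Item~(2) is the analytical heart of the lemma. The plan is to use the Paley--Wiener description $V_{\theta,\Phi} f(y,\omega) = \Fourier^{-1}[\overline{g_\omega}\cdot \Fourier f](y)$ (and the analogous formula for $h$) from Remark~\ref{rem:ElaborationOfVoiceTransformDef}, and to compute the $y$-integral for fixed $\omega$ via Parseval. For $h \in \Fourier^{-1}(\TestFunctionSpace(D))$, $\Fourier h$ has compact support $K_h \subset D$, which forces $V_{\theta,\Phi} h(\cdot,\omega)$ to vanish unless $\omega$ lies in the compact set $L_h := \Phi^{-1}(\Phi(K_h) - \supp\theta) \subset D$, while on $L_h$ it is a Schwartz function of $y$ with Schwartz semi-norms bounded uniformly in $\omega$ (joint smoothness of $\overline{g_\omega}\cdot \Fourier h$); on the other hand, $\overline{g_\omega}\cdot \Fourier f$ is a compactly supported distribution on $\RR^d$, so local-order estimates for $\Fourier f \in \DistributionSpace(D)$ make $V_{\theta,\Phi} f(\cdot,\omega)$ grow at most polynomially in $y$, locally uniformly in $\omega \in L_h$. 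This is enough for absolute convergence of the integral in~(2). Parseval (for a compactly supported distribution paired sesquilinearly with a Schwartz function) then gives, at fixed $\omega$,
\[
  \int_{\RR^d} V_{\theta,\Phi} f(y,\omega)\,\overline{V_{\theta,\Phi} h(y,\omega)}\, dy
  = \big\langle \Fourier f,\; |g_\omega|^2 \cdot \overline{\Fourier h}\big\rangle_{\DistributionSpace(D),\TestFunctionSpace(D)} \, .
\]
Integrating in $\omega$ and interchanging with the pairing against $\Fourier f$ leaves the factor $\int_D |g_\omega(\xi)|^2\, d\omega$, which equals $1$ pointwise on $D$: the substitution $\tau = \Phi(\omega)$ sends $d\omega$ to $w(\tau)\, d\tau$, cancelling the factor $[w(\tau)]^{-1}$ in $|g_\omega|^2$, and what remains is $\|\theta\|_2^2 = 1$. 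Hence the right-hand side collapses to $\langle \Fourier f, \overline{\Fourier h}\rangle_{\DistributionSpace(D),\TestFunctionSpace(D)} = \langle f, \Fourier \overline{\Fourier h}\rangle_{Z'(D),Z(D)} = \langle f, \overline{h}\rangle_{Z'(D),Z(D)}$, which is exactly~(2). The $\omega$-interchange I would justify by Riemann sum approximation: Lemma~\ref{lem:OneArgumentFreeze} applied to $(\omega,\xi) \mapsto |g_\omega(\xi)|^2 \overline{\Fourier h(\xi)}$ yields continuity of $\omega \mapsto |g_\omega|^2 \overline{\Fourier h}$ from $L_h$ into $\TestFunctionSpace(K)$ for some compact $K \subset D$, so Riemann sums converge in $\TestFunctionSpace(K)$, on which $\Fourier f$ acts continuously.

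Items~(3) and~(4) follow immediately from~(2). For~(3), substituting $h = g_{z,\eta}$ (which belongs to $\Fourier^{-1}(\TestFunctionSpace(D))$ by the preliminary observation) into~(2) turns the left-hand side into $V_{\theta,\Phi} f(z,\eta)$ and turns the right-hand side into the claimed reproducing integral, since $\overline{V_{\theta,\Phi} g_{z,\eta}(y,\omega)} = K_{\theta,\Phi}((z,\eta),(y,\omega))$ by definition. For~(4), if $V_{\theta,\Phi} f \equiv 0$, then~(2) forces $\langle f, \overline{h}\rangle_{Z'(D),Z(D)} = 0$ for every $h \in \Fourier^{-1}(\TestFunctionSpace(D))$; since $\phi \mapsto \Fourier^{-1}\overline{\phi}$ is a bijection $\TestFunctionSpace(D) \to \Fourier^{-1}(\TestFunctionSpace(D))$, the map $h \mapsto \overline{h}$ is a bijection onto $Z(D) = \Fourier(\TestFunctionSpace(D))$, so $f$ annihilates all of $Z(D)$ and hence $f = 0$. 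The main obstacle throughout is the argument for~(2): obtaining control on $V_{\theta,\Phi} f(\cdot,\omega)$ that is uniform enough in $\omega \in L_h$ to justify both the fixed-$\omega$ Parseval step and the subsequent interchange of $\omega$-integration with the distribution $\Fourier f$.
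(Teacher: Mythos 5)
Your treatment of Items~(1), (3), and~(4) matches the paper's proof essentially line by line: the preliminary observation that $g_{y,\omega} \in \Fourier^{-1}(\TestFunctionSpace(D))$, the use of Lemma~\ref{lem:OneArgumentFreeze} for continuity, the substitution $h = g_{z,\eta}$ for the reproducing formula, and the bijection argument for injectivity are all as in the paper.

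For Item~(2) --- the heart of the lemma --- you take a genuinely different route. The paper treats the full integral over $\PhSpace$ at once as a \emph{vector-valued Bochner integral} $\chi_h := \int_{\PhSpace} \overline{V_{\theta,\Phi}h(y,\omega)}\,\modulation_y \overline{g_\omega}\,d(y,\omega)$ taking values in the Banach space $\mathcal C^N_b(D)$. To pair this with the distribution $\Fourier f$, the paper first extends $\Fourier f$ by Hahn--Banach to a bounded functional $\widetilde F$ on $\mathcal C^N_b(D)$ (exploiting the finite order of distributions on compact sets), then exchanges $\widetilde F$ with the Bochner integral, and finally identifies $\chi_h = \Fourier^{-1}\overline{h}$ by testing against $\mathcal C_c(D)$ and invoking the $\LtRd$-level tight-frame identity from~\cite{VoHoPreprint}. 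You instead decompose the integral Fubini-style: you integrate in $y$ first and apply distributional Parseval at fixed $\omega$ to obtain $\int_{\RR^d} V_{\theta,\Phi}f(y,\omega)\,\overline{V_{\theta,\Phi}h(y,\omega)}\,dy = \langle \Fourier f, |g_\omega|^2\,\overline{\Fourier h}\rangle$, then integrate in $\omega$ and pull the distribution pairing through via Riemann sums in $\TestFunctionSpace(K)$, collapsing the $\omega$-integral using the pointwise identity $\int_D |g_\omega(\xi)|^2\,d\omega = \|\theta\|_2^2 = 1$ (a change of variables that makes the frame identity explicit rather than citing it). Both approaches are sound and rely on the same support/decay facts (compactness of $L_h$, Schwartz decay of $V_{\theta,\Phi}h(\cdot,\omega)$, polynomial growth of $V_{\theta,\Phi}f(\cdot,\omega)$); yours is somewhat more elementary and avoids the Hahn--Banach extension and the abstract Bochner machinery, at the mild cost of handling the $\TestFunctionSpace(K)$-valued Riemann integral, whereas the paper's Banach-valued Bochner integral is a more standard object. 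One small remark: the statement (2) as printed says $h \in \Fourier^{-1}(\DistributionSpace(D))$, which is clearly a misprint for $h \in \Fourier^{-1}(\TestFunctionSpace(D))$ --- you correctly read it the latter way, and so does the paper's own proof.
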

 \begin{proof}
  To prove Item (1), it suffices by Remark \ref{rem:ElaborationOfVoiceTransformDef} to show that the map
  \[
   \Theta\colon \PhSpace \rightarrow \TestFunctionSpace(D),\ (y,\omega)\mapsto \modulation_y \overline{g_\omega}
  \]
  is continuous. Note that the map
  \[
   \Gamma \colon \PhSpace\times D \rightarrow \CC,\ ((y,\omega),\xi)\mapsto \modulation_y\overline{g_\omega}(\xi) = [w(\Phi(\omega))]^{-1/2}\cdot \overline{\theta(\Phi(\xi)-\Phi(\omega))} \cdot e^{2\pi i \langle y,\xi\rangle}
  \]
  is smooth. Here, we used that $\Phi$ is a $\mathcal C^{\infty}$-diffeomorphism, such that $w = \det(D\Phi^{-1}(\bullet))$ is smooth. If $K\subset \PhSpace$ is compact, then there are $K_0\subset \RR^d$ and some compact $K_1\subset D$ with $K \subset K_0\times K_1$. This implies, for $(y,\omega)\in K$, that
  \begin{equation}\label{eq:SuppGammaEstimate}
   \begin{split}
    \supp(\Gamma((y,\omega),\bullet)) & = \supp(\modulation_y \overline{g_\omega}) = \supp(g_\omega)\\
    & \subset \Phi^{-1}(\Phi(\omega)+\supp(\theta)) \subset \Phi^{-1}(\Phi(K_1)+\supp(\theta)) =: L_{K_1},
    \end{split}
  \end{equation}
  where $L_{K_1}\subset D$ is compact. Hence, we can apply Lemma \ref{lem:OneArgumentFreeze} to prove continuity of $\Theta$ and thus of $V_{\theta,\Phi} f$.

  \medskip{}

  To prove Item (3), note that $\theta\in \TestFunctionSpace(\RR^d)$. Hence, $\Fourier g_{y,\omega} = \Fourier (\translation_y \widecheck{g_\omega}) = \modulation_{-y}g_{\omega} \in \TestFunctionSpace(D)$, for all $(y,\omega)\in\PhSpace$, implying $g_{y,\omega}\in \Fourier^{-1}(\TestFunctionSpace(D))$. If we assume for the moment that Item (2) holds, then Item (3) follows by setting $h = g_{z,\eta}$ in Item (2).

   \medskip{}

  Furthermore, if Item (2) holds, and $f\in Z'(D)$ is such that $V_{\theta,\Phi} f\equiv 0$, then
  \[
   \langle f, h \rangle_{Z'(D),Z(D)} = \left\langle f, \overline{\overline{h}} \right\rangle_{Z'(D),Z(D)} = \int_{\PhSpace} V_{\theta,\Phi}f(y,\omega)\cdot \overline{V_{\theta,\Phi}\overline{h}(y,\omega)}~d(y,\omega) = 0,\quad \text{for all } h\in Z(D).
  \]
  Hence $f \equiv 0$, showing that $f\mapsto V_{\theta,\Phi} f$ is injective, proving Item (4).

  \medskip{}

  It remains to prove that Item (2) indeed holds. We will instead prove the equivalent identity
  \begin{equation}\label{eq:AltVoiceTrafoOrthRelations}
    \langle F, \Fourier^{-1}(\overline{h})\rangle_{\DistributionSpace(D),\TestFunctionSpace(D)} = \int_{\PhSpace} V_{\theta,\Phi}(\Fourier^{-1} F)(y,\omega)\cdot \overline{V_{\theta,\Phi} h (y,\omega)}~d(y,\omega),
  \end{equation}
  for all $F\in\DistributionSpace(D),\ h\in\Fourier^{-1}(\TestFunctionSpace(D))$: By elementary properties of the Fourier transform, $h\in \Fourier^{-1}(\TestFunctionSpace(D))$ implies $\overline{h}\in Z(D)$, such that 
  $\langle F,\Fourier^{-1} \overline{h}\rangle_{\DistributionSpace(D),\TestFunctionSpace(D)} = \langle   \Fourier^{-1} F,\overline{h}\rangle_{Z'(D),Z(D)}$, for all $F\in\DistributionSpace(D),\ h\in\Fourier^{-1}(\TestFunctionSpace(D))$.

  To prove Equation \eqref{eq:AltVoiceTrafoOrthRelations}, note that $\Fourier h\in \TestFunctionSpace(D)$, such that
  \begin{equation}\label{eq:KIsCompact}
    K:= \Phi^{-1}(\Phi(\supp(\Fourier h))-\supp(\theta))\subset D\quad \text{is compact.}
  \end{equation}
  Now, if $\omega\in D$ and $y\in\RR^d$ with
  \[
     0 \neq V_{\theta,\Phi} h(y,\omega) \overset{(\textrm{Rem.} \ref{rem:ElaborationOfVoiceTransformDef})}{=} \langle \Fourier f,
     \modulation_y \overline{g_\omega} \rangle_{\DistributionSpace,\TestFunctionSpace},
  \]
  then
  \[
   \emptyset\neq \supp(\Fourier h) \cap \supp(g_\omega) =
   \supp(\Fourier h) \cap \Phi^{-1}(\Phi(\omega) + \supp(\theta)) =
   \Phi^{-1}( \Phi(\supp(\Fourier h)) \cap (\Phi(\omega) + \supp(\theta))),
  \]
  which implies $\Phi(\omega) \in \Phi(\supp(\Fourier h))-\supp(\theta)$ and hence $\omega\in K$. Overall, with $L_K$ as in Equation \eqref{eq:SuppGammaEstimate}, we have  $L_K = \Phi^{-1}(\Phi(K)+\supp(\theta)) = \Phi^{-1}(\Phi(\supp(\Fourier h))-\supp(\theta)+\supp(\theta))$. We see that the effective domain of integration in Equation \eqref{eq:AltVoiceTrafoOrthRelations} is $\RR^d\times K$, and on this set, the support of $\modulation_y \overline{g_\omega}$ is contained in $L_K$.

  Now, since $F\in \DistributionSpace(D)$, we can apply \cite[Theorem 6.8]{RudinFA} to prove the existence of $C=C(F,h)>0$ and $N=N(F,h)\in \NN$ such that
  \[
   |\langle F,g\rangle_{\DistributionSpace,\TestFunctionSpace}| \leq C \|g\|_{\mathcal C^N_b(D)},\quad \text{for all } g\in\TestFunctionSpace(D), \text{ with } \supp(g)\subset \supp(\Fourier h)\cup K\cup L_K.
  \]
  Here, $\mathcal C^N_b(D)$ is the vector space of all $\mathcal C^N$-functions $g\colon D\rightarrow \CC$, such that $\|g\|_{\mathcal C^N_b(D)} := \max_{\alpha\in\NN_0^d,|\alpha|\leq N} \|g\|_{\lebesgue^{\infty}(D)} < \infty$. The Hahn-Banach theorem then yields a bounded, linear functional $\widetilde{F}\in (\mathcal C^N_b(D))'$ that agrees with $F\in\DistributionSpace(D)$ on all $g\in\TestFunctionSpace(D)$ with $\supp(g)\subset \supp(\Fourier h)\cup K\cup L_K$. Since the application of bounded linear functionals can be interchanged with Bochner integrals (see \cite[Chapter VI, Theorem 4.1]{LangRealFunctional}) and since
  \[
   V_{\theta,\Phi}(\Fourier^{-1}F)(y,\omega) = \langle F,\modulation_y \overline{g_\omega}\rangle_{\DistributionSpace,\TestFunctionSpace} = \langle \widetilde{F},\modulation_y \overline{g_\omega}\rangle_{(\mathcal C^N_b(D))',\mathcal C^N_b(D)},\quad \text{for all } (y,\omega)\in \RR^d\times K,
  \]
  we can rewrite the right-hand side of Equation \eqref{eq:AltVoiceTrafoOrthRelations} as
  \begin{equation}\label{eq:AltVoiceTrafoOrthRelations_SecondForm}
    \int_{\PhSpace} V_{\theta,\Phi}(\Fourier^{-1} F)(y,\omega)\cdot \overline{V_{\theta,\Phi} h (y,\omega)}~d(y,\omega) = \left\langle \widetilde{F}, \int_{\PhSpace} \overline{V_{\theta,\Phi} h (y,\omega)} \cdot \modulation_y\overline{g_\omega}~d(y,\omega)\right\rangle_{(\mathcal C^N_b(D))',\mathcal C^N_b(D)},
  \end{equation}
  provided that the (Bochner) integral
  \begin{equation}\label{eq:AlmostInversionFormula}
    \chi_h := \int_{\PhSpace} \overline{V_{\theta,\Phi} h (y,\omega)} \cdot \modulation_y\overline{g_\omega}~d(y,\omega) \in \mathcal C^N_b(D)
  \end{equation}
  exists. Assume for now that this is the case. Then it only remains to show that $\chi_h$ coincides with $\mathcal F^{-1}\overline{h}$. 
  Let $\gamma\in \mathcal C_c(D)$ be arbitrary and note that $h \mapsto \langle h,\gamma\rangle$ is a bounded, linear functional on $\mathcal C^N_b(D)$. Since $\|\theta\|_{\lebesgue^2(\RR^d)} = 1$ and $h\in\Fourier^{-1}(\TestFunctionSpace(D))\subset \LtDF$, we obtain by \cite[Theorem 3.5]{VoHoPreprint},
  \[
   \begin{split}
   \langle \Fourier^{-1}\overline{h}, \gamma\rangle_{\lebesgue^2} & = \langle \overline{\Fourier\gamma},h \rangle_{\lebesgue^2}\\
   & \overset{(\ast)}{=} \int_{\PhSpace} V_{\theta,\Phi}(\overline{\Fourier \gamma})(y,\omega)\overline{V_{\theta,\Phi} h(y,\omega)}~d(y,\omega) \\
   & \overset{(\triangle)}{=} \int_{\PhSpace} \langle \modulation_y\overline{g_\omega},\gamma\rangle_{\lebesgue^2} \cdot \overline{V_{\theta,\Phi} h(y,\omega)}~d(y,\omega) \\
   &\overset{(\square)}{=} \left\langle \int_{\PhSpace} \overline{V_{\theta,\Phi} h(y,\omega)}\modulation_y\overline{g_\omega}~d(y,\omega),\gamma \right\rangle_{\lebesgue^2} \\
   & = \langle \chi_h,\gamma\rangle_{\lebesgue^2}.
   \end{split}
  \]
  Here, the step marked with $(\ast)$ is justified by noting $\overline{\Fourier \gamma} = \Fourier^{-1}\overline{\gamma}\in \Fourier^{-1}(\mathcal C_c(D)) \subset \LtDF$, the step marked with $(\triangle)$ by $V_{\theta,\Phi}(\overline{\Fourier \gamma})(y,\omega) = \langle \overline{\Fourier \gamma},\translation_y \widecheck{g_\omega}\rangle_{\lebesgue^2} = \langle \overline{\gamma},\modulation_{-y} g_\omega\rangle_{\lebesgue^2} = \langle \modulation_{y} \overline{g_\omega},\gamma\rangle_{\lebesgue^2}$. Finally, The step marked with $(\square)$ uses the assumption that the integral defining $\chi_h$ exists and that the application of bounded linear functionals can be interchanged with Bochner integrals (see \cite[Chapter VI, Theorem 4.1]{LangRealFunctional}).

  Altogether, we have shown that $\chi_h\in \mathcal C^N_b(D)\subset \mathcal C(D)$ and $\Fourier^{-1}\overline{h} = \overline{\Fourier h}\in \TestFunctionSpace(D)$ are two continuous functions that satisfy $\langle \Fourier^{-1}\overline{h}, \gamma\rangle = \langle \chi_h,\gamma\rangle$ for all $\gamma\in \mathcal C_c(D)$. Hence, $\chi_h = \Fourier^{-1}\overline{h}$. We get
  \[
  \begin{split}
   \langle F,\Fourier^{-1}\overline{h}\rangle_{\DistributionSpace,\TestFunctionSpace} & =
   \langle \widetilde{F},\Fourier^{-1}\overline{h}\rangle_{(\mathcal C^N_b(D))',\mathcal C^N_b(D)} = \langle \widetilde{F},\chi_h\rangle_{(\mathcal C^N_b(D))',\mathcal C^N_b(D)}\\
   & = \int_{\PhSpace} \langle \widetilde{F},\modulation_y\overline{g_\omega}\rangle_{(\mathcal C^N_b(D))',\mathcal C^N_b(D)} \cdot \overline{V_{\theta,\Phi} h(y,\omega)}~d(y,\omega) \\
   & = \int_{\PhSpace} V_{\theta,\Phi}(\Fourier^{-1}F)(y,\omega)\cdot \overline{V_{\theta,\Phi} h(y,\omega)}~d(y,\omega),
   \end{split}
  \]
  which is the desired equality \eqref{eq:AltVoiceTrafoOrthRelations}.

  \medskip{}

  Finally, we complete the proof by showing that the Bochner integral \eqref{eq:AlmostInversionFormula} does indeed exist. It is well-known that if $X$ is a Banach space, $(\Omega,\mu_\Omega)$ a measure space, and $f\colon \Omega\rightarrow X$ is Bochner measurable with $\int_\Omega \|f\|_X~d\mu_\Omega < \infty$, then the Bochner integral $\int_\Omega f~d\mu_\Omega\in X$ exists, see, e.g., \cite[Chapter VI, Corollary 5.9]{LangRealFunctional}. We first show Bochner measurability of the integrand in \eqref{eq:AlmostInversionFormula}: Recall from the proof of Item (1) that
  $\Theta\colon D\rightarrow \TestFunctionSpace(D) \hookrightarrow \mathcal C^N_b(D),\ (y,\omega)\mapsto \modulation_y\overline{g_\omega}$ is continuous. Since $V_{\theta,\Phi} h\colon \PhSpace \rightarrow \CC$ is continuous as well, see Section \ref{subsec:warpedTF}, the integrand in \eqref{eq:AlmostInversionFormula} is continuous as a map into $\mathcal C^N_b(D)$. Since the domain $\PhSpace = \RR^d\times D\subset \RR^{2d}$ is separable, this implies that the integrand is separable-valued and Borel measurable (by continuity). By \cite[Chapter VI, M11 (Page 124)]{LangRealFunctional}, this implies that the integrand is Bochner measurable.

  Now, we proceed to show that $\int_{\PhSpace} \| \overline{V_{\theta,\Phi} h (y,\omega)} \cdot \modulation_y\overline{g_\omega}\|_{\mathcal C^N_b(D)}~d(y,\omega) < \infty$. Since $\Theta$ is continuous, see above, the map $D\rightarrow \TestFunctionSpace(D),\ \omega\mapsto \overline{g_\omega} = \Theta(0,\omega)$ is continuous as well. Since $\|\bullet\|_{\mathcal C^N_b(D)}$ is a continuous semi-norm on $\TestFunctionSpace(D)$, we conclude that 
  $\max_{\omega\in K} \|\overline{g_\omega}\|_{\mathcal C^N_b(D)}$ is finite. Therefore, Leibniz' rule yields, for $(y,\omega)\in \RR^d\times K$, $\xi\in D$, and $\alpha\in \NN^d_0$ with $|\alpha|\leq N$, that
  \[
    |\partial^\alpha [\modulation_y\overline{g_\omega}](\xi)| \leq \sum_{\beta\leq \alpha} \binom{\alpha}{\beta} |\partial^\beta_\xi e^{2\pi i\langle y,\xi\rangle}|\cdot |(\partial^{\alpha-\beta} \overline{g_\omega})(\xi)| \lesssim 
    \sum_{\beta\leq \alpha} \binom{\alpha}{\beta} |2\pi y|^\beta \lesssim 
    (1+|y|)^N,
  \]
  where the final implied constant depends on $N$, $K$, $\Phi$ and $\theta$. 
  In short, $\|\modulation_y \overline{g_\omega}\|_{\mathcal C^N_b(D)} \lesssim 
  (1+|y|)^N$, for all $y\in\RR^d$ and $\omega\in K$.

  Since $\overline{h}\in Z(D)$ with $\supp(\Fourier^{-1}\overline{h}) = \supp(\Fourier h) =: \tilde{K}$ compact, Lemma \ref{lem:DecompSpDualReservoirEmbedding} yields $C_2 = C_2(d,N,\theta,\tilde{K},\Phi)>0$, such that 
  \[
   |V_{\theta,\Phi}h(y,\omega)|\leq C_2\cdot (1+|y|)^{-(d+N+1)} \Indicator_K(\omega),\quad \text{for all } (y,\omega)\in \PhSpace.
  \]
  Here, we implicitly used that $V_{\theta,\Phi}h(y,\omega) = 0$ whenever $\omega\in D\setminus K$, with $K$ as in Equation \eqref{eq:KIsCompact}, which we already noted in the discussion after said equation. Overall, this implies
  \[
   \int_{\PhSpace} \| \overline{V_{\theta,\Phi} h (y,\omega)} \cdot \modulation_y\overline{g_\omega}\|_{\mathcal C^N_b(D)}~d(y,\omega) \lesssim \cdot \int_{\PhSpace} (1+|y|)^N\cdot (1+|y|)^{-(d+N+1)} \cdot \Indicator_K(\omega)~d(y,\omega) < \infty,
  \]
  since $K\subset D$ is compact. The final implied constant here has the same dependencies as $C_2$ above. We conclude that the Bochner integral \eqref{eq:AlmostInversionFormula} exists and the proof is finished.
 \end{proof}

 \begin{remark}\label{rem:WhatIfThetaIsNotNormalized}
   In Lemma \ref{lem:VoiceTransformOnLargerReservoirProperties}, we assume that $0\neq \theta\in\TestFunctionSpace(\RR^d)$ is such that $\|\theta\|_{\lebesgue^2(\RR^d)}=1$. If that is not the case, it suffices to note that $V_{\theta,\Phi} f$ is conjugate linear in the choice of $\theta$ to see that Lemma \ref{lem:VoiceTransformOnLargerReservoirProperties} remains valid, up to modification by a constant factor in Items \ref{enu:VoiceTrafoOrthRelations} and \ref{enu:GeneralVoiceTrafoReproducing}. Specifically, we have
   \[
        \langle f,\overline{h} \rangle_{Z'(D),Z(D)} = \|\theta\|^{-2}_{\lebesgue^2(\RR^d)}\cdot \int_{\PhSpace} V_{\theta,\Phi}f(y,\omega)\cdot \overline{V_{\theta,\Phi}h(y,\omega)}~d(y,\omega),
   \]
   and
   \[
        V_{\theta,\Phi} f = \|\theta\|^{-2}_{\lebesgue^2(\RR^d)}\cdot K_{\theta,\Phi}(V_{\theta,\Phi} f),
   \]
   for all $0\neq \theta\in\TestFunctionSpace(\RR^d)$ and $f\in Z'(D)$.
 \end{remark}

The next result establishes the main part of the embedding
 $\DecompSp(\CalQ_{\Phi}^{(\delta,r)}, \lebesgue^p, \ell_u^q) \hookrightarrow \Co (\lebesgue_{\kappa}^{p,q})$,
 by showing $\| V_{\theta,\Phi} f\|_{\lebesgue_\kappa^{p,q}}
 \lesssim \| f \|_{\DecompSp(\CalQ_{\Phi}^{(\delta,r)}, \lebesgue^p, \ell_u^q)}$.

 \begin{lemma}\label{lem:DecompositionSpaceGivesVoiceTransformDecay}
     Let $\Phi: D\rightarrow \RR^d$ and $\kappa: D\rightarrow \RR^+$ be compatible 
     and fix $\delta > 0$, $r > \sqrt{d} / 2$, and $0 < t < r - \sqrt{d}/2$. Finally, choose  $0\neq \theta \in \TestFunctionSpace(\RR^d)$ such that Equation \eqref{eq:CanonicalThetaChoice} is satisfied. 
     
     For $p,q \in [1,\infty]$ and
     \[
         u = (u_k)_{k \in \ZZ^d}
         \qquad \text{ with } u_k := \kappa_\Phi(\delta k) \cdot [w(\delta k)]^{\frac{1}{q} - \frac{1}{2}},
     \]
     there is a constant $C>0$ such that each
     $f \in \DecompSp(\CalQ_{\Phi}^{(\delta,r)}, \lebesgue^p, \ell_u^q)$ satisfies
     \[
         \| V_{\theta,\Phi} f \|_{\lebesgue_{\kappa}^{p,q}}
         \leq C \cdot \| f \|_{\DecompSp(\CalQ_{\Phi}^{(\delta,r)}, \lebesgue^p, \ell_u^q)} < \infty.
     \]
 \end{lemma}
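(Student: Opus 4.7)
The plan is to write $(V_{\theta,\Phi}f)(y,\omega)=\Fourier^{-1}[\overline{g_\omega}\cdot\widehat f](y)$ via Remark \ref{rem:ElaborationOfVoiceTransformDef} and then localize $\widehat f$ using the BAPU $(\varphi_k)_{k\in\ZZ^d}$ of Lemma \ref{lem:CoorbitDecompositionConnection} (with $\zeta=\theta$). Since $\sum_k\varphi_k\equiv 1$ on $D$ and $\overline{g_\omega},\varphi_k\in\TestFunctionSpace(D)$, one obtains the finite expansion $\overline{g_\omega}\cdot\widehat f=\sum_{k\in I_\omega}\overline{g_\omega}\cdot\varphi_k\cdot\widehat f$, where $I_\omega:=\{k:\supp(\overline{g_\omega})\cap\supp(\varphi_k)\neq\emptyset\}$. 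The support bounds $\supp(\overline{g_\omega})\subset\Phi^{-1}(\Phi(\omega)+\delta B_t(0))$ and $\supp\varphi_k\subset Q_{\Phi,k}^{(\delta,r)}$ force $|\delta k-\Phi(\omega)|<\delta(r+t)$, i.e.\ $\omega\in Q_{\Phi,k}^{(\delta,r+t)}$, for every $k\in I_\omega$; the counting argument from the proof of Lemma \ref{lem:DecompInducedCoveringIsNice} then produces a uniform bound $|I_\omega|\leq N$.

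On each summand I apply $\Fourier^{-1}[\overline{g_\omega}\cdot\varphi_k\widehat f]=\Fourier^{-1}\overline{g_\omega}\ast\Fourier^{-1}[\varphi_k\widehat f]$: this is valid because $\varphi_k\widehat f$ is a compactly supported distribution whose inverse Fourier transform is a smooth function with finite $\lebesgue^p$-norm $d_k:=\|\Fourier^{-1}[\varphi_k\widehat f]\|_{\lebesgue^p}$ (by Paley--Wiener together with $f\in\DecompSp$), while $\overline{g_\omega}\in\TestFunctionSpace(D)\subset\Schwartz(\RR^d)$. Young's inequality then gives $\|\Fourier^{-1}[\overline{g_\omega}\cdot\varphi_k\widehat f]\|_{\lebesgue^p}\leq\|\Fourier^{-1}\overline{g_\omega}\|_{\lebesgue^1}\,d_k$, and the identity $|\Fourier^{-1}\overline{g_\omega}|=|\Fourier g_\omega|$ combined with Lemma \ref{lem:BAPUBuildingBlocksEstimate} applied to $\zeta=\theta$ (using $g_\omega=[w(\Phi(\omega))]^{1/2}\varphi^{(\omega)}$) yields $\|\Fourier^{-1}\overline{g_\omega}\|_{\lebesgue^1}\leq C[w(\Phi(\omega))]^{-1/2}$. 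Summing over $I_\omega$ produces the pointwise bound $\|(V_{\theta,\Phi}f)(\cdot,\omega)\|_{\lebesgue^p}\leq C[w(\Phi(\omega))]^{-1/2}\sum_{k\in I_\omega}d_k$.

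For $q<\infty$, the uniform bound $|I_\omega|\leq N$ permits Jensen's inequality to replace $(\sum_{k\in I_\omega}d_k)^q$ by $N^{q-1}\sum_k\Indicator_{I_\omega}(k)d_k^q$, after which Fubini yields
\[
\|V_{\theta,\Phi}f\|_{\lebesgue^{p,q}_\kappa}^q\;\lesssim\;\sum_{k\in\ZZ^d}d_k^q\int_{Q_{\Phi,k}^{(\delta,r+t)}}\kappa(\omega)^q\,[w(\Phi(\omega))]^{-q/2}\,d\omega.
\]
On $Q_{\Phi,k}^{(\delta,r+t)}$, the $\tilde\kappa$-moderateness of $\kappa_\Phi$ and the $v_0^d$-moderateness of $w$ (Theorem \ref{cor:warped_disc_frames}) give $\kappa(\omega)\asymp\kappa_\Phi(\delta k)$ and $w(\Phi(\omega))\asymp w(\delta k)$; a change of variables together with the same moderateness yields $\mu(Q_{\Phi,k}^{(\delta,r+t)})=\int_{\delta B_{r+t}(k)}w(\upsilon)\,d\upsilon\asymp w(\delta k)$. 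Multiplying these three comparisons reproduces exactly the weight $u_k^q=[\kappa_\Phi(\delta k)]^q[w(\delta k)]^{1-q/2}$, which gives $\|V_{\theta,\Phi}f\|_{\lebesgue^{p,q}_\kappa}^q\lesssim\|f\|_{\DecompSp}^q$. The case $q=\infty$ uses the same comparisons to infer $\kappa(\omega)[w(\Phi(\omega))]^{-1/2}\asymp u_k$ for any $k\in I_\omega$, and hence $\kappa(\omega)\|(V_{\theta,\Phi}f)(\cdot,\omega)\|_{\lebesgue^p}\lesssim\sum_{k\in I_\omega}u_k d_k\leq N\|f\|_{\DecompSp}$.

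The main technical subtlety is the convolution/Young step on each localized piece: it requires that $\Fourier^{-1}[\varphi_k\widehat f]$ genuinely represent an $\lebesgue^p$-function (which is precisely the content of Paley--Wiener applied to the compactly supported distribution $\varphi_k\widehat f$) and that $\Fourier^{-1}\overline{g_\omega}\in\lebesgue^1$ with the sharp $[w(\Phi(\omega))]^{-1/2}$ dependence. Once these pointwise-in-$\omega$ ingredients are in place, the remaining bookkeeping---the uniform bound on $|I_\omega|$, the measure estimate for $Q_{\Phi,k}^{(\delta,r+t)}$, and the local comparability of $\kappa$ and $w$---is a routine consequence of the submultiplicativity of $v_0$ and $\tilde\kappa$.
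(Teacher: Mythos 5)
Your argument is correct and follows essentially the same route as the paper's proof: both start from $(V_{\theta,\Phi}f)(y,\omega)=\Fourier^{-1}[\overline{g_\omega}\cdot\Fourier f](y)$, localize $\Fourier f$ with the BAPU of Lemma \ref{lem:CoorbitDecompositionConnection}, apply Young's inequality together with the bound $\|\Fourier^{-1}g_\omega\|_{\lebesgue^1}\lesssim[w(\Phi(\omega))]^{-1/2}$ from Lemma \ref{lem:BAPUBuildingBlocksEstimate}, and then convert the local comparisons $\kappa(\omega)\asymp\kappa_\Phi(\delta k)$, $w(\Phi(\omega))\asymp w(\delta k)$, $\mu(\cdot)\asymp w(\delta k)$ into the weight $u_k$. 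The only (harmless) divergence is in the bookkeeping for the finite sum over neighboring indices: the paper fixes $\omega\in M_k$, uses $\varphi_k^\ast=\sum_{\ell\in k^\ast}\varphi_\ell$, and invokes the boundedness of the Feichtinger--Gr\"obner clustering map on $\ell_u^q$, whereas you absorb the sum over $I_\omega$ via the uniform count $|I_\omega|\le N$, Jensen, and Tonelli over the boundedly overlapping sets $Q^{(\delta,r+t)}_{\Phi,k}$ --- both are valid and rest on the same moderateness properties.
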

 \begin{proof}
     Let $f \in \DecompSp(\CalQ_{\Phi}^{(\delta,r)}, \lebesgue^p, \ell_u^q) \subset Z'(D)$ be fixed.
     As seen in Remark \ref{rem:ElaborationOfVoiceTransformDef}, we have
     \[
         (V_{\theta,\Phi} f)(y,\omega)
         = [\Fourier^{-1}(\overline{\smallVSpace g_{\omega}} \cdot \Fourier f)](y)
         = [\Fourier^{-1}(g_{\omega} \cdot \Fourier f)](y)
         \quad \text{for all}\quad (y,\omega) \in \RR^d \times D,
     \]
     since $\theta$, and thus also $g_\omega$, is real. Assume for now that
     $\omega \in M_k: = \Phi^{-1} \left( \delta \cdot \left( k + \smash{ \left[ -\frac{1}{2}, \frac{1}{2} \right)^d}
         \vphantom{ \left[ \frac{1}{2} \right]} \, \right) \right)$.
     Since $\supp( \theta) \subset \delta \cdot B_t (0)$, with $0<t<\sqrt{r}/2$, this implies, by \eqref{eq:DecompAnalyzingVectorAssumptions},
     \begin{align*}
         \supp( g_{\omega})
         =       \Phi^{-1} \left( \Phi(\omega) + \supp( \theta) \right)
         & \subset \Phi^{-1} \left(
                              \delta \cdot \left[
                                  k + \smash{ \left[ \textstyle{-\frac{1}{2}, \frac{1}{2}} \right)^d}
                                                     \vphantom{ \left[ \textstyle{\frac{1}{2}} \right]}
                                           \, \right]
                              + \delta \cdot B_t (0)
                           \right)\\
         &\subset \Phi^{-1} \left( \delta k + \delta \cdot B_{\sqrt{d} / 2}(0) + \delta \cdot B_t (0)\right) \\
         ({\scriptstyle{\text{since } t + \sqrt{d} / 2 < r}})
         &\subset \Phi^{-1} \left( \delta [k + B_r (0)] \right) = Q_{\Phi,k}^{(\delta,r)}.
     \end{align*}
     But since $(\varphi_k)_{k \in \ZZ^d}$ is a $\CalQ_{\Phi}^{(\delta,r)}$-BAPU, we have for
     $\varphi_k^{\ast} := \sum_{\ell \in k^{\ast}} \varphi_{\ell}$, with $k^\ast =\{\ell\in\ZZ^d~:~ Q_{\Phi,\ell}^{(\delta,r)} \cap Q_{\Phi,k}^{(\delta,r)} \neq \emptyset\}$, that
     $\varphi_k^{\ast} \equiv 1$ on $Q_k^{(\delta,r)} \supset \supp (g_{\omega})$.
     Hence, $g_{\omega} = \varphi_k^{\ast} \cdot g_{\omega}$ and thus
     \[
         (V_{\theta,\Phi} f)(y,\omega)
         = [\Fourier^{-1}(g_{\omega} \cdot \Fourier f)](y)
         = [\Fourier^{-1}(g_{\omega} \cdot \varphi_k^{\ast} \cdot \Fourier f)](y)
         \quad \text{for all}\quad \omega \in M_k.
     \]
     Using Young's inequality and Lemma \ref{lem:BAPUBuildingBlocksEstimate}
     (and recalling $g_{\omega} = \sqrt{w(\Phi(\omega))} \cdot \varphi^{(\omega)}$), we thus obtain for all $\omega \in M_k$:
     \begin{align*}
         \| (V_{\theta,\Phi} f)(\cdot, \omega) \|_{\lebesgue^p}
         &\leq \|\Fourier^{-1} g_{\omega}\|_{\lebesgue^1}
               \cdot \|\Fourier^{-1}(\varphi_k^{\ast} \cdot \Fourier f)\|_{\lebesgue^p} \\
         ({\scriptstyle{\text{Lem. } \ref{lem:BAPUBuildingBlocksEstimate}}})
         &\lesssim 
         [w(\Phi(\omega))]^{-1/2}
               \cdot \sum_{\ell \in k^\ast}
                        \|\Fourier^{-1}(\varphi_\ell \cdot \Fourier f)\|_{\lebesgue^p} \\
         &= 
         [w(\Phi(\omega))]^{-1/2} \cdot \sum_{\ell \in k^{\ast}} d_\ell,\quad \text{ with } d_\ell:= \|\Fourier^{-1}(\varphi_\ell \cdot \Fourier f)\|_{\lebesgue^p},\quad \text{for all}\quad \ell\in \ZZ^d.
     \end{align*}
     Here, the implied constant 
     is provided by Lemma \ref{lem:BAPUBuildingBlocksEstimate}, with $\zeta = \theta$.

     \smallskip{}

     Note that $M_k \subset Q_{\Phi,k}^{(\delta,r)}$, since $r>\sqrt{d}/2$ and define $v' := \kappa_\Phi \cdot w^{-1/2}$. The weight $v'$ is $v$-moderate with $v:= \tilde{\kappa}\cdot v_0^{d/2}$, where $v_0$ is a control weight for $\Phi$, see also
     \cite[Lemma 4.9]{VoHoPreprint}. Thus, an application of Lemma \ref{lem:DecompInducedCoveringIsNice}, Item (1), with $\tilde{v} = v'\circ \Phi = \kappa \cdot (w\circ \Phi)^{-1/2}$ shows for all $\omega \in M_k\subset Q^{(\delta,r)}_k$,
     \begin{equation}
         \begin{split}
             F(\omega) &:= \kappa(\omega) \cdot \| (V_{\theta,\Phi} f)(\cdot, \omega) \|_{\lebesgue^p} \\
             & \lesssim 
             \kappa(\omega)[w(\Phi(\omega))]^{-1/2}
                        \cdot \sum_{\ell \in k^{\ast}} d_\ell\\
                  ({\scriptstyle |\delta k-\Phi(\omega)| < \delta r })
                  &\lesssim
                  \kappa_{\Phi}(\delta k)
                             \cdot [w(\delta k)]^{-1/2}
                             \cdot \sum_{\ell \in k^{\ast}} d_{\ell} \\
                       &=  
                       \frac{u_k}{[w(\delta k)]^{1/q}}
                             \cdot \sum_{\ell \in k^{\ast}} d_{\ell}.
         \end{split}
         \label{eq:DecompositionIntoCoorbitMainEstimate}
     \end{equation}
    Note that since the weight $u$ is $\CalQ_{\Phi}^{(\delta,r)}$-moderate
     (cf.~Lemma \ref{lem:DecompositionWeightIsModerate}), the so-called $\CalQ_{\Phi}^{(\delta, r)}$-clustering map
     \[
     \Gamma \colon \ell_u^q (\ZZ^d) \to \ell_u^q (\ZZ^d),\quad (c_k)_{k \in \ZZ^d} \mapsto (c_k^{\ast})_{k \in \ZZ^d},\quad \text{with}\quad  c_k^{\ast} := \sum_{\ell \in k^{\ast}}c_\ell
     \]
      is well-defined and bounded; see
     e.g.~\cite[Definition 2.5 and Lemma 3.2]{fegr85}.
     This implies $
     (d_k^{\ast})_{k \in \ZZ^d} \in \ell_u^q$ with
     $
     \| (d_k^{\ast})_{k \in \ZZ^d} \|_{\ell_u^q} \leq \|\Gamma\| \cdot \|(d_k)_{k \in \ZZ^d}\|_{\ell_u^q}
     = \| \Gamma \| \cdot \|f \|_{\DecompSp(\CalQ_{\Phi}^{(\delta,r)}, \lebesgue^p, \ell_u^q)}$.

     Now, we distinguish the two cases $q < \infty$ and $q = \infty$.
     For $q < \infty$, we take the $q$-th power of \eqref{eq:DecompositionIntoCoorbitMainEstimate} and integrate over
     $\omega \in M_k$ to obtain
     \[
         \int_{M_k} [F(\omega)]^q \, d \omega
         \lesssim 
         \frac{\mu(M_k)}{w(\delta k)} \cdot \left( u_k \cdot \sum_{\ell \in k^{\ast}} d_{\ell} \right)^q.
     \]
     Next, note that since $w$ is $v_0^d$-moderate and $v_0$ radially increasing, we have (with $\Phi(M_k) = \delta k + [-\delta/2,\delta/2]$)
     \[
     \begin{split}
         \mu(M_k)
         &=  \mu \left( \Phi^{-1} \left( \delta \cdot \left[ k + \smash{ \left[ \textstyle{-\frac{1}{2}, \frac{1}{2}} \right)^d}
                                                                          \vphantom{ \left[ \textstyle{\frac{1}{2}} \right]} \,
                                                \right]
                                  \right)
                 \right)\\
         &= \int_{\Phi(M_k)} w(\tau) \, d \tau
         \leq \int_{\Phi(M_k)} w(\delta k) \cdot v_0^d (\tau - \delta k) \, d \tau \\
         &\leq \delta^d \cdot v_0^d(\delta \sqrt{d}) \cdot w(\delta k).
     \end{split}
    \]
     Thus, with an appropriately large constant $C>0$, depending on $\Phi$, $\theta$ and the control weight $v_0$, as well as $d$ and $\delta$, 
     we have $\int_{M_k} [F(\omega)]^q \, d\omega \leq C \cdot (u_k \cdot c_k)^q$ for all
     $k \in \ZZ^d$. But since $D = \biguplus_{k \in \ZZ^d} M_k$, we can sum this estimate over $k \in \ZZ^d$ to conclude
         \begin{align*}
             \| V_{\theta,\Phi} f\|_{\lebesgue_{\kappa}^{p,q}}^q
             &= \int_{D} [F(\omega)]^q \, d \omega
             = \sum_{k \in \ZZ^d} \int_{M_k} [F(\omega)]^q \, d \omega \\
             &\leq C \cdot \sum_{k \in \ZZ^d} (u_k \cdot c_k)^q
             =    C \cdot \| c \|_{\ell_u^q}^q \\
             &\leq C \cdot \|\Gamma\|^q \cdot \|f\|_{\DecompSp(\CalQ_{\Phi}^{(\delta,r)}, \lebesgue^p, \ell_u^q)}^{q}
             < \infty.
         \end{align*}
     This proves the claim in case of $q < \infty$.

     For $q=\infty$, the proof is slightly simpler: From Equation \eqref{eq:DecompositionIntoCoorbitMainEstimate}, and denoting by $\tilde{C}>0$ the implied constant there, we get
     \begin{align*}
         F(\omega)
         &\leq \tilde{C} \cdot u_k \cdot \sum_{\ell \in k^\ast} d_{\ell} 
         = \tilde{C} \cdot u_k \cdot d_k^{\ast} \\
         &\leq \tilde{C} \cdot \|(d_k^{\ast})_{k \in \ZZ^d}\|_{\ell_u^\infty} 
         \leq \tilde{C} \cdot \|\Gamma\| \cdot \|d\|_{\ell_u^\infty} \\
         &\leq \tilde{C} \cdot \|\Gamma\| \cdot \|f\|_{\DecompSp(\CalQ_{\Phi}^{(\delta,r)}, \lebesgue^p, \ell_u^q)}
         < \infty,
     \end{align*}
     which proves the claim, since $q=\infty$ implies
     $\|V_{\theta,\Phi} f\|_{\lebesgue_{\kappa}^{p,q}} = \esssup_{\omega \in D} F(\omega)$.
 \end{proof}

 Now, we can finally show our claim that the coorbit space $\Co (\Phi,\lebesgue_{\kappa}^{p,q})$ is canonically isomorphic to
 the decomposition space $\DecompSp(\CalQ_{\Phi}^{(\delta,r)}, \lebesgue^p, \ell_u^q)$.
 \begin{corollary}\label{cor:CoorbitIsomorphicToDecomposition}
     Let $\Phi: D\rightarrow \RR^d$ and $\kappa: D\rightarrow \RR^+$ be compatible, see Definition \ref{def:StandingDecompositionAssumptions}, and fix $\delta > 0$, $r > \sqrt{d} / 2$, and $0 < t < r - \sqrt{d}/2$. Finally, choose  $0\neq \theta \in \TestFunctionSpace(\RR^d)$ such that Equation \eqref{eq:CanonicalThetaChoice} is satisfied. 
     
     For $p,q \in [1,\infty]$, the bounded linear map
     \[
      \Psi\colon \Co (\Phi,\lebesgue_{\kappa}^{p,q}) \to \DecompSp(\CalQ_{\Phi}^{(\delta,r)}, \lebesgue^p, \ell_u^q),\quad f\mapsto \psi_f\quad \text{with}\quad
      \psi_f : Z(D) \to \CC,\quad
      g \mapsto f(\overline{g}) = \langle f, \overline{g} \rangle_{\Reservoir, \GoodVectors}.
     \]
     is an isomorphism of Banach spaces. Here, $u = (u_k)_{k \in \ZZ^d}$, $u_k := \kappa_\Phi(\delta k) \cdot [w(\delta k)]^{\frac{1}{q} - \frac{1}{2}}$ for $k\in\ZZ^d$, as before.
 \end{corollary}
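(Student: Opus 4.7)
Boundedness and injectivity of $\Psi$ are already established by Theorem \ref{thm:CoorbitIntoDecomposition}, so the only remaining task is surjectivity; once this is shown, the norm bound on $\Psi^{-1}$ follows either from the open mapping theorem, or, more transparently, from the construction below combined with Lemma \ref{lem:DecompositionSpaceGivesVoiceTransformDecay}. My plan is thus to build, for every $f \in \DecompSp(\CalQ_{\Phi}^{(\delta,r)},\lebesgue^p,\ell_u^q) \subset Z'(D)$, an explicit preimage $\tilde{f} \in \Co(\Phi,\lebesgue^{p,q}_\kappa) \subset \Reservoir$ satisfying $\Psi(\tilde{f})=f$, and then to read off the norm estimate from the reproducing formula.

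The starting point is Lemma \ref{lem:DecompositionSpaceGivesVoiceTransformDecay}, which gives $V_{\theta,\Phi} f \in \lebesgue^{p,q}_\kappa(\PhSpace)$ with $\|V_{\theta,\Phi} f\|_{\lebesgue^{p,q}_\kappa} \lesssim \|f\|_{\DecompSp(\CalQ_{\Phi}^{(\delta,r)},\lebesgue^p,\ell_u^q)}$, where $V_{\theta,\Phi}$ is the extended voice transform of Definition \ref{def:VoiceTransformOnLargerReservoir}. Guided by the orthogonality relation of Lemma \ref{lem:VoiceTransformOnLargerReservoirProperties}(2) and the normalization correction of Remark \ref{rem:WhatIfThetaIsNotNormalized}, I define
\[
   \tilde{f} \colon \GoodVectors \to \CC,\quad
   h \mapsto \|\theta\|_{\lebesgue^2}^{-2}
             \int_{\PhSpace}
                 V_{\theta,\Phi} f(y,\omega)
                 \cdot \overline{V_{\theta,\Phi} h(y,\omega)}
             \, d(y,\omega) .
\]
The first verification is that $\tilde{f}$ is well-defined, anti-linear, and bounded, i.e.\ $\tilde{f}\in\Reservoir$. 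Since $h \in \GoodVectors$ forces $V_{\theta,\Phi} h \in \lebesgue^1_v$ for $v=v_{\Phi,\kappa}$, a H\"older-type duality on the mixed-norm space, using that $\tilde{\kappa}$-moderateness of $\kappa_\Phi$ and the definition \eqref{eq:SpecialGoodVectorsChoice} arrange for $v$ to dominate $\kappa$ together with the $y$-direction decay inherited by $V_{\theta,\Phi}h$ from the reproducing formula, yields $|\tilde{f}(h)| \lesssim \|V_{\theta,\Phi} f\|_{\lebesgue^{p,q}_\kappa} \cdot \|h\|_{\GoodVectors} \lesssim \|f\|_{\DecompSp}\cdot\|h\|_{\GoodVectors}$.

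The second verification is the identity $\Psi(\tilde{f}) = f$ in $Z'(D)$. For arbitrary $g \in Z(D) = \Fourier(\TestFunctionSpace(D))$ we have $\overline{g} \in \Fourier^{-1}(\TestFunctionSpace(D))$, so Lemma \ref{lem:VoiceTransformOnLargerReservoirProperties}(2) applied with $h := \overline{g}$ (together with Remark \ref{rem:WhatIfThetaIsNotNormalized}) gives
\[
   \langle f, g\rangle_{Z'(D),Z(D)}
   = \|\theta\|_{\lebesgue^2}^{-2}
     \int_{\PhSpace}
         V_{\theta,\Phi} f \cdot \overline{V_{\theta,\Phi}\overline{g}}
     \, d(y,\omega)
   = \tilde{f}(\overline{g})
   = \psi_{\tilde{f}}(g) ,
\]
so $\Psi(\tilde{f}) = f$ and $\Psi$ is surjective. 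Finally, computing $V_{\theta,\Phi}\tilde{f}(y,\omega) = \tilde{f}(g_{y,\omega})$ from the definition and invoking the reproducing formula of Lemma \ref{lem:VoiceTransformOnLargerReservoirProperties}(3) shows $V_{\theta,\Phi}\tilde{f} = V_{\theta,\Phi} f$ pointwise, hence $\|\tilde{f}\|_{\Co(\Phi,\lebesgue^{p,q}_\kappa)} = \|V_{\theta,\Phi} f\|_{\lebesgue^{p,q}_\kappa} \lesssim \|f\|_{\DecompSp}$; this simultaneously certifies $\tilde{f} \in \Co(\Phi,\lebesgue^{p,q}_\kappa)$ and provides the required bound on $\Psi^{-1}$.

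The main obstacle will be the H\"older-type estimate in the first step: the natural pairing $\lebesgue^{p,q}_\kappa \times \lebesgue^{p',q'}_{1/\kappa}$ does not a priori accommodate $\lebesgue^1_v$-data in the second slot, so one has to genuinely exploit the kernel smoothing induced by $K_{\theta,\Phi}$ on $V_{\theta,\Phi} h$ (i.e.\ that $V_{\theta,\Phi}h$ is not an arbitrary $\lebesgue^1_v$-function but lies in the image of a convolution-type reproducing operator) to transfer the $y$-integrability. The other two steps are essentially bookkeeping built on Lemma \ref{lem:VoiceTransformOnLargerReservoirProperties}, once the estimate is in hand.
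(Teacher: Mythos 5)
Your proposal takes a more hands-on route than the paper, and correctly identifies all the right lemmas, but the analytic core is left as an acknowledged gap rather than proven.

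What the paper does: it sets $F := V_{\theta,\Phi} f$, shows $F \in \lebesgue^{p,q}_\kappa$ (Lemma~\ref{lem:DecompositionSpaceGivesVoiceTransformDecay}) and $F = \|\theta\|_{\lebesgue^2}^{-2}\, K_{\theta,\Phi}(F)$ (Lemma~\ref{lem:VoiceTransformOnLargerReservoirProperties}(3) together with Remark~\ref{rem:WhatIfThetaIsNotNormalized}), and then simply invokes the abstract retrieval result \cite[Lemma~2.30]{kempka2015general} to produce $f_0 \in \Co(\Phi,\lebesgue^{p,q}_\kappa)$ with $V_{\theta,\Phi} f_0 = F$. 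A short computation combined with injectivity of the extended voice transform (Lemma~\ref{lem:VoiceTransformOnLargerReservoirProperties}(4)) gives $\Psi f_0 = f$, and surjectivity together with the bounded inverse theorem finishes the proof. Your plan is, in effect, to re-prove \cite[Lemma~2.30]{kempka2015general} in this concrete setting: you define $\tilde f(h) := \|\theta\|_{\lebesgue^2}^{-2} \langle V_{\theta,\Phi} f, V_{\theta,\Phi} h\rangle$ and want to show that this extends $f$ and lies in $\Co(\Phi,\lebesgue^{p,q}_\kappa)$. Steps (2) and (3) of your sketch (the identity $\Psi\tilde f = f$ via Lemma~\ref{lem:VoiceTransformOnLargerReservoirProperties}(2), and $V_{\theta,\Phi}\tilde f = V_{\theta,\Phi} f$ via the reproducing formula, using the compatibility of the two notions of voice transform from Remark~\ref{rem:ElaborationOfVoiceTransformDef}) are essentially correct.

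The genuine gap is step (1): you must show $\tilde f \in \Reservoir$, i.e.\ that $\tilde f$ is bounded on $\GoodVectors$. As you note, the naive pairing $\lebesgue^{p,q}_\kappa \times \lebesgue^1_v \to \CC$ does not converge in general. You correctly diagnose that one has to use the kernel structure, but you stop there. What is actually required is a Schur-type boundedness statement for the reproducing kernel: one needs that $K_{\theta,\Phi}$ maps $\lebesgue^{p,q}_\kappa$ boundedly into $\lebesgue^\infty_{1/v}$, so that $V_{\theta,\Phi} f = \|\theta\|^{-2} K_{\theta,\Phi}(V_{\theta,\Phi} f)$ in fact lies in $\lebesgue^\infty_{1/v}$, which does pair with $V_{\theta,\Phi} h \in \lebesgue^1_v$. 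This kernel boundedness, in the precise generality required here, is nontrivial and is exactly the content encapsulated by the standing assumptions of \cite{kempka2015general} (established in this setting via \cite{VoHoModules}); it is the reason the paper cites \cite[Lemma~2.30]{kempka2015general} rather than arguing directly. So the idea is right and the supporting lemmas are identified correctly, but the hard estimate that makes surjectivity go through is announced as ``the main obstacle'' and never resolved, whereas the paper's citation both covers it and is shorter.
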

 \begin{proof}
     By Theorem \ref{thm:CoorbitIntoDecomposition} and the bounded inverse theorem, it suffices to show that $\Psi$ is surjective.
     Thus, let $f \in \DecompSp(\CalQ_{\Phi}^{(\delta,r)}, \lebesgue^p, \ell_u^q) \subset Z'(D)$ be arbitrary,
     and let $F := V_{\theta,\Phi} f$.
     By Lemma \ref{lem:DecompositionSpaceGivesVoiceTransformDecay}, we have $F \in \lebesgue_{\kappa}^{p,q}$.
     Furthermore, Lemma \ref{lem:VoiceTransformOnLargerReservoirProperties} and Remark \ref{rem:WhatIfThetaIsNotNormalized} show that $F = \|\theta\|^{-2}_{\lebesgue^2(\RR^d)}\cdot K_{\theta,\Phi} (F)$, where
     $K_{\theta,\Phi}$ is as in the lemma.
     
     Apply \cite[Lemma 2.30]{kempka2015general} (with the reproducing kernel $R = \|\theta\|^{-2}_{\lebesgue^2(\RR^d)}\cdot K_{\theta,\Phi}$) to see that $F = V_{\theta,\Phi} f_0$ for some $f_0 \in \Co(\Phi,\lebesgue_{\kappa}^{p,q})$.
     With $f := \Psi f_0 = \psi_{f_0}$, Theorem \ref{thm:CoorbitIntoDecomposition} shows
     $f \in \DecompSp (\CalQ_{\Phi}^{(\delta,r)}, \lebesgue^p, \ell_u^q) \subset Z'(D)$.
     Furthermore, we have
     \begin{alignat*}{3}
         (V_{\theta,\Phi} f)(y,\omega)
         &= \left\langle f, \overline{g_{y,\omega}}\right\rangle_{Z'(D), Z(D)}
         &&= \left\langle \psi_{f_0}, \overline{g_{y,\omega}}\right\rangle_{Z'(D), Z(D)} \\
         ({\scriptstyle{\text{Def. of } \psi_{f_0}}})
         &= \left\langle f_0, g_{y,\omega}\right\rangle_{\Reservoir, \GoodVectors}
         &&= (V_{\theta,\Phi} f_0)(y,\omega) = G(y,\omega) = (V_{\theta,\Phi} f)(y,\omega)
     \end{alignat*}
     for all $(y,\omega) \in \RR^d \times D$.

     But as shown in Lemma \ref{lem:VoiceTransformOnLargerReservoirProperties},
     this implies $f = \Psi f_0 \in \mathrm{image}(\Psi)$, so that $\Psi$ is indeed surjective and the proof is complete.
 \end{proof}

 At last, we drop our special assumptions regarding $\theta$.

\begin{theorem}\label{thm:CoorbitDecompositionIsomorphism}
     Let $\Phi: D\rightarrow \RR^d$ and $\kappa: D\rightarrow \RR^+$ be compatible, see Definition \ref{def:StandingDecompositionAssumptions}, 
     and let $0\neq\theta\in\TestFunctionSpace(\RR^d)$. 
     Further, fix $\delta > 0$ and $r > \sqrt{d} / 2$ and consider the $(\delta, r)$-fine frequency covering $\CalQ_{\Phi}^{(\delta, r)}$ induced by $\Phi$, see Definition \ref{def:frequency_space_covering}.

     The map $\Psi$ defined in Corollary \ref{cor:DecompSpReservoirEmbedding} restricts to an isomorphism of Banach spaces
     \[
         \Psi : \Co (\mathcal G(\theta,\Phi), \lebesgue_\kappa^{p,q})
                \to \DecompSp (\CalQ_\Phi^{(\delta, r)}, \lebesgue^p, \ell_u^q)
         \quad \text{ with } \quad u = (u_k)_{k \in \ZZ^d}
         \text{ and } u_k = \kappa (\Phi^{-1}(\delta k)) \cdot [w(\delta k)]^{\frac{1}{q} - \frac{1}{2}} \,\, .
     \]
 \end{theorem}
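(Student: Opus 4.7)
The plan is to reduce this theorem directly to Corollary \ref{cor:CoorbitIsomorphicToDecomposition}, which already establishes the desired isomorphism but only for generators $\theta$ satisfying the special normalization and small-support conditions of Equation \eqref{eq:CanonicalThetaChoice}. The crucial observation is that the map $\Psi : \Reservoir \to Z'(D)$ from Corollary \ref{cor:DecompSpReservoirEmbedding} is defined intrinsically via the antidual pairing $f \mapsto (g \mapsto \langle f, \overline{g}\rangle_{\Reservoir, \GoodVectors})$ and does \emph{not} involve $\theta$ in its definition. Moreover, by Theorem \ref{cor:warped_disc_frames}(2), the space $\GoodVectors$ (and hence its antidual $\Reservoir$) is independent of the generator $\theta$ up to equivalence of norms, so the domain of $\Psi$ is also intrinsic.

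First, I would fix some auxiliary $\theta_0 \in \TestFunctionSpace(\RR^d)$ that does satisfy \eqref{eq:CanonicalThetaChoice}: pick any $0 < \vartheta < r - \sqrt{d}/2$ (which is admissible since $r > \sqrt{d}/2$) and take $\theta_0$ to be a nonnegative bump function supported in $\delta \cdot B_\vartheta(0)$, rescaled so that $\|\theta_0\|_1 = 1$. Then Corollary \ref{cor:CoorbitIsomorphicToDecomposition} applied to $\theta_0$ yields that
\[
  \Psi : \Co(\mathcal G(\theta_0,\Phi), \lebesgue_\kappa^{p,q}) \to \DecompSp(\CalQ_\Phi^{(\delta,r)}, \lebesgue^p, \ell_u^q)
\]
is an isomorphism of Banach spaces, with $u$ as stated.

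Next, I would invoke Theorem \ref{cor:warped_disc_frames}(3), which guarantees that $\Co(\mathcal G(\theta, \Phi), \lebesgue_\kappa^{p,q}) = \Co(\mathcal G(\theta_0, \Phi), \lebesgue_\kappa^{p,q})$ as sets, with equivalent norms; this is precisely the statement that justifies the abbreviation $\Co(\Phi, \lebesgue_\kappa^{p,q})$. Since $\Psi$ itself does not depend on the choice of generator, its restriction to $\Co(\mathcal G(\theta, \Phi), \lebesgue_\kappa^{p,q})$ coincides with its restriction to $\Co(\mathcal G(\theta_0, \Phi), \lebesgue_\kappa^{p,q})$, and hence is an isomorphism onto the same decomposition space, up to passing from the norm $\|\cdot\|_{\Co(\mathcal G(\theta_0, \Phi), \lebesgue_\kappa^{p,q})}$ to the equivalent norm $\|\cdot\|_{\Co(\mathcal G(\theta, \Phi), \lebesgue_\kappa^{p,q})}$.

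There is no real obstacle here beyond bookkeeping: every ingredient is either part of Corollary \ref{cor:CoorbitIsomorphicToDecomposition} or of the independence statements in Theorem \ref{cor:warped_disc_frames}. The only point worth a brief explicit remark is that the weight $v$ used to define $\GoodVectors$ (and hence $\Reservoir$) must be chosen large enough to accommodate both $\theta$ and $\theta_0$ simultaneously, which is handled by Remark \ref{rem:SpecialGoodVectorsWeight}: taking $v \gtrsim v_{\Phi,\kappa}$ suffices for all the required independence arguments, so the identification of the two coorbit spaces is genuine and the composite $\Psi$ inherits the boundedness of the inverse from Corollary \ref{cor:CoorbitIsomorphicToDecomposition} via the equivalence of coorbit norms.
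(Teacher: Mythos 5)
Your proposal is correct and follows essentially the same route as the paper: the paper's proof also reduces the statement to Corollary \ref{cor:CoorbitIsomorphicToDecomposition} by invoking Theorem \ref{cor:warped_disc_frames} to conclude that $\Co(\mathcal G(\theta,\Phi),\lebesgue^{p,q}_\kappa) = \Co(\mathcal G(\theta_0,\Phi),\lebesgue^{p,q}_\kappa)$ for any two nonzero $\theta,\theta_0 \in \TestFunctionSpace(\RR^d)$. Your explicit observation that the map $\Psi$ is defined purely via the antidual pairing and hence does not depend on the generator, together with the remark on choosing the weight $v$ to accommodate both generators, correctly fills in the small bookkeeping that the paper dispatches in one line.
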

 \begin{proof}
   By Theorem \ref{cor:warped_disc_frames}, the coorbit space
   $\Co (\Phi, \lebesgue_\kappa^{p,q}) = \Co (\mathcal{G}(\theta, \Phi), \lebesgue_\kappa^{p,q})$
   is independent of the choice of $0\neq \theta \in \TestFunctionSpace (\RR^d)$. Hence, the result follows immediately from
   Corollary \ref{cor:CoorbitIsomorphicToDecomposition}.
 \end{proof}

  \section{Fundamentals of radial warping}\label{sec:radial}

The frequency coverings associated with well-known instances of decomposition spaces, say Besov or $\alpha$-modulation spaces, exhibit symmetries with respect to the modulus of the frequency position. The same can be said about related constructions of frame decompositions. This observation has motivated the study of \emph{radial warping functions} in \cite{VoHoPreprint}. Since such radially warped time-frequency systems feature prominently in our embedding results presented in Sections \ref{sec:WarpedEmbeddings}--\ref{sec:DominatingMixedSmoothness}. Finally, we obtain conditions for equality of warped coorbit spaces and embeddings between warped coorbit spaces with respect to radial warping functions, in Section , we shortly recall the most important results on radial warping presented in \cite{VoHoPreprint}.

\begin{definition} [{\cite[Definition 8.3]{VoHoPreprint}}]\label{def:RadialWarpingFunction}
  For a diffeomorphism $\vrho : \RR \to \RR$, with $\vrho(\xi) = c \xi$ for all
  $\xi \in (-\eps , \eps)$ and some $\eps,c > 0$, the
  \textbf{associated radial warping function} is given by
  \begin{equation}
    \Phi_\vrho
    : \RR^d \to \RR^d,
    \xi \mapsto \widetilde{\vrho}(|\xi|) \cdot \xi,
    \quad \text{with} \quad
    \widetilde{\vrho}(t) := \vrho(t)/t
    \quad \text{for } t \in \RR \setminus \{0\} \, ,
    \quad \text{and} \quad \widetilde{\vrho}(0) := c \,.
    \label{eq:RadialWarping}
  \end{equation}
\end{definition}

For a more concise notation and to stay consistent with \cite{VoHoPreprint} we denote by $\vrhoinv:=\vrho^{-1}$ the inverse of a bijection $\vrho : \RR \to \RR$. If $\vrho$ satisfies some more structured assumptions, specified in the following definition, then $\Phi_\vrho$ is a $k$-admissible warping function in the sense of Definition \ref{assume:DiffeomorphismAssumptions}, see below.

\begin{definition}[\hspace{1sp}{\cite[Definition 8.1]{VoHoPreprint}}]\label{def:AdmissibleRho}
  Let $k \in \NN_0$. A function $\vrho : \RR \to \RR$ is called a
  \textbf{$k$-admissible radial component with control weight
  $v : \RR \to (0, \infty)$},
  if the following hold:
  \begin{enumerate}[leftmargin=0.7cm]
    \item $\vrho$ is a strictly increasing $\mathcal C^{k+1}$-diffeomorphism with inverse
          $\vrhoinv = \vrho^{-1}$.
    \item $\vrho$ is antisymmetric, that is, $\vrho(-\xi) = -\vrho(\xi)$ for all
          $\xi \in \RR$.  In particular, $\vrho(0)=0$.

    \item There are $\eps > 0$ and $c > 0$ with $\vrho(\xi) = c \xi$
          for all $\xi \in (-\eps, \eps)$.

    \item The weight $v$ is continuous, submultiplicative,
          and radially increasing. Additionally, $\vrhoinv'$ and
          \begin{equation}
            \widetilde{\vrhoinv} : \RR \to (0,\infty),
            \qquad \text{defined by} \qquad
            \widetilde{\vrhoinv}(\xi) := \vrhoinv(\xi)/\xi
            \quad \text{for} \quad \xi \neq 0,
            \quad \text{and} \quad \widetilde{\vrhoinv}(0) := c^{-1}.
            \label{eq:PsiTildeDefinition}
          \end{equation}
          are $v$-moderate.
    \item There are constants $C_0, C_1 > 0$ with
          \begin{equation}
            C_0 \cdot \widetilde{\vrhoinv}(\xi)
            \leq \vrhoinv'(\xi)
            \leq C_1 \cdot (1+\xi) \cdot \widetilde{\vrhoinv}(\xi)
            \qquad \forall \, \xi \in (0,\infty).
            \label{eq:AdmissibilityFirstDerivative}
          \end{equation}

    \item We have $|\vrhoinv^{(\ell)}| \lesssim \vrhoinv'$, for all $\ell \in \underline{k + 1}$.
    \end{enumerate}
\end{definition}

\begin{remark}
 Note that here, we use a simpler, equivalent form of Item (6) already mentioned in \cite{VoHoPreprint},  after Definition 8.1 therein. An analogous simplification applies in the case of weakly admissible radial components, see Definition \ref{def:WeaklyAdmissibleRadialComponent} below. 
\end{remark}

\begin{theorem}[\hspace{1sp}{\cite[Proposition 8.7, Corollary 8.8]{VoHoPreprint}}]\label{cor:RadialWarpingIsWarping}
  Let $\vrho : \RR \to \RR$ be a $k$-admissible radial component, for some
  $k \in \NN$ with $k \geq d+1$, with control weight $v : \RR \!\to\! (0,\infty)$. 
  Then the associated radial warping function
  $\Phi_\vrho : \RR^d \to \RR^d$ is a $k$-admissible warping function. Furthermore, there is a constant $C>0$, such that the control weight $v_0$ for $\Phi_\vrho$ can be chosen as
  \begin{equation} \label{eq:RadialWarpingControlWeightExplicit}
    v_0 : \RR^d \to (0, \infty),
          \tau \mapsto C \cdot (1+|\tau|) \cdot v(|\tau|),
  \end{equation}
  and the weight $w = \det(\mathrm{D}\Phi_\vrho^{-1})$ is given by
  \begin{equation}
    w(\tau)
    = \vrhoinv ' (|\tau|) \cdot [\widetilde{\vrhoinv} (|\tau|)]^{d-1}
    \, .
    \label{eq:RadialWarpingWeightExplicit}
  \end{equation}
\end{theorem}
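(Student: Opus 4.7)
The plan is to reduce everything to radial scalar quantities, exploiting the fact that $\Phi_\vrho$ preserves rays through the origin. Because $|\Phi_\vrho(\xi)| = \vrho(|\xi|)$ and $\Phi_\vrho(\xi)$ is parallel to $\xi$, the inverse has the same radial form $\Phi_\vrho^{-1}(\tau) = \widetilde{\vrhoinv}(|\tau|)\,\tau$, and the linearization assumption $\vrho(\xi) = c\xi$ on $(-\eps,\eps)$ ensures $\widetilde\vrhoinv\in C^{k+1}$ near $0$, so $\Phi_\vrho^{-1}$ is a $C^{k+1}$-diffeomorphism on all of $\RR^d$. A direct differentiation yields the symmetric spectral form
\[
  A(\tau) := \mathrm{D}\Phi_\vrho^{-1}(\tau) = \vrhoinv'(|\tau|)\,P_\tau + \widetilde\vrhoinv(|\tau|)\,(I - P_\tau),
\]
where $P_\tau := |\tau|^{-2}\,\tau\tau^T$ for $\tau\neq 0$, with $A(0) = c^{-1}I$ by continuity. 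Taking determinants delivers the formula \eqref{eq:RadialWarpingWeightExplicit} at once, and the positivity of $\vrhoinv'$ and $\widetilde\vrhoinv$ (from strict monotonicity of $\vrho$) gives $w>0$ as required by Definition \ref{def:warpfundamental}.

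The hard part is the derivative estimate \eqref{eq:PhiHigherDerivativeEstimate} for $\phi_\tau(\upsilon) = A(\upsilon+\tau)\,A^{-1}(\tau)$, because the two factors are diagonal in the different orthonormal frames associated with $\upsilon+\tau$ and $\tau$, producing four cross-terms of the shape $P_{\upsilon+\tau}P_\tau$, $P_{\upsilon+\tau}(I-P_\tau)$, and so on. Expanding via Leibniz, the scalar ratios $\vrhoinv'(|\upsilon+\tau|)/\vrhoinv'(|\tau|)$ and $\widetilde\vrhoinv(|\upsilon+\tau|)/\widetilde\vrhoinv(|\tau|)$ are controlled by $v(|\upsilon|)$ via the $v$-moderateness in Item~(4) of Definition \ref{def:AdmissibleRho}, and higher-order derivatives $\vrhoinv^{(\ell+1)}(|\upsilon+\tau|)$ with $\ell\leq k$ are dominated by $\vrhoinv'(|\upsilon+\tau|)$ thanks to Item~(6). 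The cross-ratios $\vrhoinv'(|\upsilon+\tau|)/\widetilde\vrhoinv(|\tau|)$ and $\widetilde\vrhoinv(|\upsilon+\tau|)/\vrhoinv'(|\tau|)$ require the two-sided comparison \eqref{eq:AdmissibilityFirstDerivative} to trade $\vrhoinv'$ for $(1+|\bullet|)\cdot\widetilde\vrhoinv$, which is precisely what manufactures the single extra factor $(1+|\upsilon|)$ in the target control weight. The angular derivatives of $P_{\upsilon+\tau}$ produce spurious negative powers of $|\upsilon+\tau|$; I would deal with these by splitting cases on whether $|\upsilon+\tau|<\eps$ (in which case $\vrho$ is linear there and $A(\upsilon+\tau) = c^{-1}I$ is constant, killing all $\upsilon$-derivatives of the first factor) or $|\upsilon+\tau|\geq\eps$ (in which case $P_{\upsilon+\tau}$ and its $\upsilon$-derivatives are bounded uniformly).

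Assembling the contributions term by term, every derivative $\|\partial^\alpha\phi_\tau(\upsilon)\|$ with $|\alpha|\leq k$ is bounded uniformly in $\tau$ by a constant multiple of $(1+|\upsilon|)\cdot v(|\upsilon|)$, yielding the control weight $v_0$ of \eqref{eq:RadialWarpingControlWeightExplicit}. Continuity, submultiplicativity, and the radially increasing property of $v_0$ are then inherited directly from the corresponding properties of $1+|\bullet|$ and $v$.
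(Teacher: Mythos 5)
This statement is quoted from [VoHoPreprint, Proposition 8.7, Corollary 8.8] and is not proved in the present paper, so there is no internal proof to compare against; however, the paper does cite [VoHoPreprint, Lemma 8.4] for exactly the spectral decomposition $A(\tau) = \vrhoinv'(|\tau|)\pi_\tau + \widetilde\vrhoinv(|\tau|)\pi_\tau^\perp$ that you write down, and your derivation of the determinant formula \eqref{eq:RadialWarpingWeightExplicit} from it is correct and matches the intended route.

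There is, however, a genuine gap in your treatment of the control-weight estimate. You claim that applying the two-sided comparison \eqref{eq:AdmissibilityFirstDerivative} to the cross-ratio $\vrhoinv'(|\upsilon+\tau|)/\widetilde\vrhoinv(|\tau|)$ ``manufactures the single extra factor $(1+|\upsilon|)$.'' It does not: the trade gives $C_1(1+|\upsilon+\tau|)\,\widetilde\vrhoinv(|\upsilon+\tau|)/\widetilde\vrhoinv(|\tau|) \leq C_1(1+|\upsilon+\tau|)\,v(|\upsilon|)$, and $(1+|\upsilon+\tau|)$ is unbounded in $\tau$ at fixed $\upsilon$, so this alone does not close. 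The bound only closes because this ratio is always multiplied by a geometric factor that decays in $|\upsilon+\tau|$: the zeroth-order cross-block has $\|P_{\upsilon+\tau}(I-P_\tau)\| = \sin\theta = |(I-P_\tau)\upsilon|/|\upsilon+\tau| \leq |\upsilon|/|\upsilon+\tau|$, and higher derivatives of $P_{\upsilon+\tau}$ contribute powers of $|\upsilon+\tau|^{-1}$. These negative powers, which you dismiss as ``spurious'' artefacts to be absorbed by a case split, are in fact the essential cancellation against $(1+|\upsilon+\tau|)$; without them one only obtains a control weight growing in $|\tau|$, which is not a control weight at all. (Your case split on small $|\upsilon+\tau|$ is still needed, and the threshold should be $|\upsilon+\tau| < c\eps$ rather than $< \eps$, since linearity of $\vrho$ on $(-\eps,\eps)$ translates to $\vrhoinv$ being linear on $(-c\eps,c\eps)$; and one must then separately check that $c^{-1}\|A^{-1}(\tau)\| \lesssim v(|\upsilon|)$ there, which works because $\widetilde\vrhoinv(|\tau|)^{-1}\leq c\,v(|\upsilon|)$ by moderateness.) So the framework is right, but the mechanism by which the single factor $(1+|\upsilon|)$ emerges is misidentified.
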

 
The technical requirement that $\vrho$ be linear in a neighborhood of the origin (Condition (3) in Definition \ref{def:AdmissibleRho}) is often unnatural, but for every function $\vsig$ that satisfies the following weaker assumption, an \emph{equivalent} $k$-admissible radial component $\vrho$ can be constructed, see also Remark \ref{rem:SlowStartEquivalence} for a discussion of said equivalence.  

\begin{definition}[\hspace{1sp}{\cite[Definition 8.12]{VoHoPreprint}}]\label{def:WeaklyAdmissibleRadialComponent}
  Let $k \in \NN_0$.
  A continuous function $\vsig : [0,\infty) \to [0,\infty)$
  is called a \textbf{weakly $k$-admissible radial component with control weight
  $u : [0,\infty) \to (0,\infty)$}, if it satisfies the following conditions:
  \begin{enumerate}
\item $\vsig$ is $\mathcal C^{k+1}$ on $(0,\infty)$, with $\vsig'(\xi) > 0$
          for all $\xi \in (0,\infty)$. In particular, $\vsig_\ast = \vsig^{-1}$ exists and is $\mathcal C^{k+1}$ on its domain.

    \item $\vsig (0) = 0$ and $\vsig (\xi) \to \infty$ as $\xi \to \infty$.

    \item The control weight $u$ is continuous and increasing with
          $u(\xi+\eta) \leq u(\xi)\cdot u(\eta)$ for all $\xi,\eta\in [0,\infty)$.
          Furthermore, there are $\delta > 0$ and $C_0, C_1 > 0$, such that the following properties hold for all $\xi,\eta\notin B_{\delta}(0)$:
          \begin{align}
            C_0 \cdot \widetilde{\vsiginv}(\xi)
            \leq \vsiginv ' (\xi)
            \leq C_1 \cdot \vsiginv (\xi),
            &
            \label{eq:SlowStartDerivativeLarge} \\
            \widetilde{\vsiginv}(\xi) \leq \widetilde{\vsiginv}(\eta) \cdot u(| \xi-\eta |),
            &
            \label{eq:SlowStartPsiTildeModerate} \\
            |\vsiginv^{(\ell)} (\xi)| \lesssim \vsiginv'(\xi) 
            & \qquad \forall \, 
                                \ell \in \underline{k + 1} \, ,
            \label{eq:SlowStartHigherDerivativesBound}
          \end{align}
          where $\widetilde{\vsiginv}(\eta) = \vsiginv(\eta)/\eta$, for all $\eta\in(0,\infty)$, analogous to $\widetilde{\vrho_\ast}$ in \eqref{eq:PsiTildeDefinition}.
  \end{enumerate}
\end{definition}

\begin{theorem}[{\cite[Proposition 8.14]{VoHoPreprint}}]\label{thm:SlowStartFullCriterion}
  Let $k \in \NN_0$, and let $\vsig : [0,\infty) \to [0,\infty)$ be a
  weakly $k$-admissible radial component with control weight
  $u : [0,\infty) \to (0,\infty)$.
  
  Fix some $\eps > 0$, some 
   $c \in \big(0, \vsig (\eps) / (2\eps)\big)$, 
  and an even function $\Omega \in C_c^\infty(\R)$ that satisfies
   $\Omega(\xi) = 1$ for $x\in B_\eps(0)$,  $\Omega(\xi) = 0$ for $x\not\in B_{2\eps}(0)$,
   and $\Omega'(\xi) \leq 0$ for $\xi\in[0,\infty)$. 
 Then the function
 \begin{equation}
  \vrho\colon \RR \to \RR,\
  \xi \mapsto
  \begin{cases}
    c \xi \cdot \Omega(\xi)
    + \sgn (\xi) \cdot (1 - \Omega(\xi)) \cdot \vsig (|\xi|),
    & \text{if } \xi \neq 0 \, , \\
    0, & \text{if } \xi = 0 \, ,
  \end{cases}
  \label{eq:SlowStartDefinition}
\end{equation}
is a $k$-admissible radial component. There is a a constant $C \geq 1$, such that
  \[
    v \colon \RR \to (0,\infty),\ \xi \mapsto C \cdot u(|\xi|) \,
  \]
can be chosen as control weight for $\vrho$.
\end{theorem}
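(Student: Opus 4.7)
The plan is to verify the six conditions of Definition \ref{def:AdmissibleRho} in order. Antisymmetry of $\vrho$ is immediate from $\Omega$ being even together with the $\sgn$ factor; smoothness in $\mathcal{C}^{k+1}$ follows from $\Omega \in \mathcal{C}_c^\infty(\R)$, $\vsig \in \mathcal{C}^{k+1}((0,\infty))$, and the fact that $\vrho(\xi) = c\xi$ on $(-\eps, \eps)$ (where $\Omega \equiv 1$), which extends smoothly through the origin and pins down $\vrho(0) = 0$. For strict monotonicity, I compute for $\xi > 0$
\[
  \vrho'(\xi)
  = c\,\Omega(\xi) + (1-\Omega(\xi))\,\vsig'(\xi) + \Omega'(\xi)\,\bigl(c\xi - \vsig(\xi)\bigr).
\]
The first two terms are nonnegative, and at least one is strictly positive since $\Omega(\xi) \in [0,1]$, $c > 0$, and $\vsig' > 0$. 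The third term is nonnegative because $\Omega' \leq 0$ on $[0,\infty)$, while the choice $c < \vsig(\eps)/(2\eps)$ combined with monotonicity of $\vsig$ yields $c\xi \leq 2c\eps < \vsig(\eps) \leq \vsig(\xi)$ throughout the transition region $[\eps, 2\eps]$ (and $\Omega' = 0$ outside this region). Hence $\vrho'$ is strictly positive on $(0,\infty)$, and by antisymmetry on all of $\R \setminus \{0\}$; combined with $\vrho(\xi) = \vsig(\xi) \to \infty$ as $\xi \to \infty$, this yields conditions (1)--(3).

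For conditions (4)--(6) I exploit the three-regime structure of $\vrhoinv$: on $[-c\eps, c\eps]$ one has $\vrhoinv(\xi) = \xi/c$; on $|\xi| \geq \vsig(2\eps)$ one has $\vrhoinv(\xi) = \sgn(\xi) \cdot \vsiginv(|\xi|)$; and on the bounded intermediate region $\vrhoinv$ is $\mathcal{C}^{k+1}$. The weight $v(\xi) = C \cdot u(|\xi|)$ inherits continuity, submultiplicativity, and radial monotonicity from $u$ for any $C \geq 1$. Antisymmetry of $\vrho$ makes $\vrhoinv'$ and $\widetilde{\vrhoinv}$ even, reducing moderateness on $\R$ to moderateness on $[0,\infty)$. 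On the intermediate (bounded) region both functions are continuous and uniformly bounded above and below by positive constants, so moderateness is trivial after absorption into $C$. For arguments both in $[\vsig(2\eps), \infty)$, moderateness reduces to that of $\vsiginv'$ and $\widetilde{\vsiginv}$, which is precisely what \eqref{eq:SlowStartDerivativeLarge} and \eqref{eq:SlowStartPsiTildeModerate} provide. Mixed cases (one argument small, one large) are handled by combining the uniform bounds on the transition region with $u \geq u(0) \geq 1$, enlarging $C$ as needed.

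For (5), the bound $C_0 \widetilde{\vrhoinv}(\xi) \leq \vrhoinv'(\xi) \leq C_1(1+\xi)\widetilde{\vrhoinv}(\xi)$ is immediate on $[0, c\eps]$, where both sides equal $1/c$; on $[\vsig(2\eps), \infty)$ it follows directly from \eqref{eq:SlowStartDerivativeLarge}; on the bounded intermediate range, continuity and strict positivity of $\vrhoinv'$ and $\widetilde{\vrhoinv}$ furnish the constants. Condition (6) is handled in the same spirit: on $[0, c\eps]$, $\vrhoinv^{(\ell)} \equiv 0$ for $\ell \geq 2$ while $\vrhoinv' \equiv 1/c$, so the estimate is trivial; on $[\vsig(2\eps), \infty)$, the identity $\vrhoinv = \sgn(\cdot)\,\vsiginv(|\cdot|)$ gives $|\vrhoinv^{(\ell)}| = |\vsiginv^{(\ell)}| \lesssim \vsiginv' = \vrhoinv'$ via \eqref{eq:SlowStartHigherDerivativesBound}; and the intermediate region is absorbed by continuity. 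The main technical obstacle is bookkeeping in the moderateness step: the constant $C$ in $v = C \cdot u(|\cdot|)$ must be chosen large enough to simultaneously absorb the bounded oscillation of $\widetilde{\vrhoinv}$ and $\vrhoinv'$ across the transition region, match the large-argument asymptotics dictated by $u$, and handle mixed small/large pairs $(\xi,\eta)$; this requires splitting into a finite but slightly tedious list of cases based on whether each of $|\xi|, |\eta|, |\xi - \eta|$ lies in the transition zone or beyond.
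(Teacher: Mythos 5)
Your verification of conditions (1)--(3) of Definition \ref{def:AdmissibleRho} and of conditions (5)--(6) is sound, and the three-regime decomposition of $\vrhoinv$ (linear zone, compact transition, tail coinciding with $\vsiginv$) is the right organizing idea. However, there is a genuine gap in your treatment of condition (4) for $\vrhoinv'$. You assert that on the large-argument regime "moderateness reduces to that of $\vsiginv'$ and $\widetilde{\vsiginv}$, which is precisely what \eqref{eq:SlowStartDerivativeLarge} and \eqref{eq:SlowStartPsiTildeModerate} provide." This is not the case: \eqref{eq:SlowStartDerivativeLarge} is a \emph{pointwise} two-sided comparison between $\vsiginv'$, $\widetilde{\vsiginv}$, and $\vsiginv$, not a moderateness estimate for $\vsiginv'$. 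Definition \ref{def:WeaklyAdmissibleRadialComponent} explicitly postulates $u$-moderateness only for $\widetilde{\vsiginv}$ (via \eqref{eq:SlowStartPsiTildeModerate}); it does not assume $\vsiginv'$ is $u$-moderate, and the naive chain
\[
 \vsiginv'(\xi) \;\leq\; C_1\vsiginv(\xi) \;=\; C_1\,\xi\,\widetilde{\vsiginv}(\xi) \;\leq\; C_1\,\xi\, u(|\xi-\eta|)\,\widetilde{\vsiginv}(\eta) \;\leq\; (C_1/C_0)\,\xi\, u(|\xi-\eta|)\,\vsiginv'(\eta)
\]
carries an uncontrolled factor of $\xi$, because $\vsiginv/\widetilde{\vsiginv}=\xi$ is unbounded. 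So you still owe an argument for why $\vsiginv'$ is moderate with weight comparable to $C\cdot u(|\cdot|)$, which is what condition (4) requires once you accept the claimed control weight $v = C\,u(|\cdot|)$.

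A possible route you do not exploit is \eqref{eq:SlowStartHigherDerivativesBound} with $\ell=2$: it gives $|(\ln\vsiginv')'| = |\vsiginv''/\vsiginv'| \lesssim 1$ on $[\delta,\infty)$, so $\ln\vsiginv'$ is Lipschitz there and $\vsiginv'$ is moderate with respect to \emph{some} exponential weight. But matching that exponential to $C\,u(|\cdot|)$ is nonobvious (for $\vsig_\alpha$ the sharp moderateness weight of $\vsiginv'$ is polynomial and coincides with $u$, whereas the crude Lipschitz argument only gives an exponential bound), and needs to be justified — for instance by sharpening the estimate using the decay of $\vsiginv''/\vsiginv'$ together with \eqref{eq:SlowStartPsiTildeModerate}. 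Your mixed small/large cases inherit the same gap, since they ultimately rest on moderateness in the large regime.
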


\begin{remark}
In \cite{VoHoPreprint}, the radial component $\vrho$ constructed in Theorem \ref{thm:SlowStartFullCriterion} is referred to as a \textbf{slow start version} of $\vsig$. We will adopt this terminology in the following sections. Although $\Phi_\vsig : \xi \mapsto \xi/|\xi| \cdot \vsig(|\xi|)$ is not a warping function in the sense of Definition \ref{def:warpfundamental}, it is continuous and invertible, such that we can define the induced $(\delta,r)$-fine frequency coverings $\CalQ_{\Phi_\vsig}^{(\delta, r)} := \big[\, Q_{\Phi_\vsig,k}^{(\delta, r)} \,\big]_{k \in \ZZ^d}$ by $Q_{\Phi_\vsig,k}^{(\delta, r)} := \Phi_\vsig^{-1}(\delta \cdot B_r (k))$, for all $k\in\ZZ^d$, as in Definition \ref{def:frequency_space_covering}. The coverings $\CalQ^{(\delta,r)}_{\Phi_\vsig}$ and $\CalQ^{(\delta,r)}_{\Phi_\vrho}$ coincide up to finitely many covering elements which have nontrivial intersection with the ball $B_{2\eps}(0)$. It is thus easy to show that they are weakly equivalent. Since all the elements of either covering are path-connected, they are equivalent by Lemma \ref{lem:almost_subordinateness_connected} and, by Theorem \ref{thm:decomposition_space_coincidence}, generate the same decomposition spaces (in the sense of equivalent norms). Strictly speaking, it must be verified that $\CalQ^{(\delta,r)}_{\Phi_\vsig}$ is indeed a decomposition covering. This, however, is a straightforward task, which only requires the modification of the BAPU in Definition \ref{def:BAPUgenerators} for the finite number of elements in $\CalQ^{(\delta,r)}_{\Phi_\vsig}$ that differ from those in $\CalQ^{(\delta,r)}_{\Phi_\vrho}$.
\end{remark}

  \section{Embedding relation between warped coorbit spaces}
\label{sec:WarpedEmbeddings}

We begin by recalling some results from \cite{voigtlaender2016embeddings}, which will be useful for the verification of embedding relations between pairs of warped coorbit spaces in this section and between Besov spaces and warped coorbit spaces in Section \ref{sec:WarpedCoorbitAndBesov}. In Sections \ref{sec:alpha} and \ref{sec:DominatingMixedSmoothness} we will further show that $\alpha$-modulation spaces and certain spaces of dominating mixed smoothness coincide with certain warped coorbit spaces. This will be accomplished by verifying the conditions of Theorem \ref{thm:decomposition_space_coincidence}. 

\begin{definition}\label{def:relativeModerateness}
Let $\emptyset\neq D\subset \RR^d$ be open and let $\CalQ = (Q_i)_{i\in I} = (T_iQ'_i + b_i)_{i\in I}$ and $\mathcal P = (P_j)_{j\in J} = (S_j P'_j + c_j)_{j\in J}$ be two open, tight, semi-structured coverings of $D$, and $v = (v_j)_{j\in J}$ a $\CalP$-moderate weight. We say that $v$ is \emph{relatively $\CalQ$-moderate}, if
\[
 \sup_{i\in I} \sup_{k,\ell \in \{ j\in J \with P_j\cap Q_i\neq \emptyset\}} v_k/v_\ell < \infty,
\]
and $\CalP$ is \emph{relatively $\CalQ$-moderate}, if
\[
 \sup_{i\in I} \sup_{k,\ell \in \{ j\in J \with P_j\cap Q_i\neq \emptyset\}} |\det(S_k)|/|\det(S_\ell)| < \infty.
\]
\end{definition}

In the following theorem, we use the convention that the conjugate exponent $p'$ is defined for $p\in (0,1]$ to equal $p' = \infty$.

\begin{theorem}\label{thm:DecompEmbeddings}
  Let $\emptyset\neq D\subset \RR^d$ be open and let $\CalQ = (Q_i)_{i\in I} = (T_iQ'_i + b_i)_{i\in I}$ and $\mathcal P = (P_j)_{j\in J} = (S_j P'_j + c_j)_{j\in J}$ be two open, tight, semi-structured coverings of $D$. Furthermore, let $u = (u_i)_{i\in I}$ be $\CalQ$-moderate and $v = (v_j)_{j\in J}$ be $\mathcal P$-moderate, and let $p_1,p_2,q_1,q_2\in [1,\infty]$. Assume that $\CalP$ is almost subordinate to $\CalQ$ and choose for each $i\in I$ some $j_i\in J$ with $Q_i \cap P_{j_i} \neq \emptyset$. If $\mathcal P$ and $v$ are relatively $\CalQ$-moderate. Then the following hold:
  \begin{enumerate}
    \item \emph{\cite[Theorem 7.2 (4)]{voigtlaender2016embeddings}} $\DecompSp(\CalQ,\lebesgue^{p_1},\ell^{q_1}_u)\hookrightarrow \DecompSp(\CalP,\lebesgue^{p_2},\ell^{q_2}_v)$ if and only if
    \[
      p_1 \leq p_2\quad \text{and}\quad K_t:= \left\|\left(\frac{v_{j_i}}{u_i}\cdot |\det T_i|^{t} \cdot |\det S_{j_i}|^{p_1^{-1} - p_2^{-1} - t}\right)_{i\in I}\right\|_{\ell^{q_2\cdot (q_1/q_2)'}}< \infty,
    \]
    with $t =  \max(0,q_2^{-1} - \min(p_1^{-1},1-p_1^{-1}))$.
     \item \emph{\cite[Theorem 7.4 (4)]{voigtlaender2016embeddings}} $\DecompSp(\CalP,\lebesgue^{p_1},\ell^{q_1}_v)\hookrightarrow \DecompSp(\CalQ,\lebesgue^{p_2},\ell^{q_2}_u)$ if and only if
    \[
      p_1 \leq p_2\quad \text{and}\quad K_{\tilde{t}}:= \left\|\left(\frac{u_{i}}{v_{j_i}}\cdot |\det T_{i}|^{\tilde{t}} \cdot |\det S_{j_i}|^{p_1^{-1} - p_2^{-1} - \tilde{t}}
      \right)_{i\in I}\right\|_{\ell^{q_2\cdot (q_1/q_2)'}}< \infty,
    \]
    with $\tilde{t} = \max(0,\max(p_2^{-1},1-p_2^{-1})-q_1^{-1})$.
    \item \emph{\cite[Corollary 7.3]{voigtlaender2016embeddings}} If $\CalQ = \mathcal P$, then $\DecompSp(\CalQ,\lebesgue^{p_1},\ell^{q_1}_u)\hookrightarrow \DecompSp(\CalQ,\lebesgue^{p_2},\ell^{q_2}_v)$ if and only if
    \[
      p_1 \leq p_2\quad \text{and}\quad K:= \left\|\left(\frac{v_i}{u_i}\cdot |\det T_i|^{p_1^{-1} - p_2^{-1}}\right)_{i\in I}\right\|_{\ell^{q_2\cdot (q_1/q_2)'}}< \infty.
    \]
  \end{enumerate}
\end{theorem}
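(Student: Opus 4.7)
The plan is that each of the three items is explicitly attributed to a specific result from the earlier paper \cite{voigtlaender2016embeddings}, so the argument should consist of verifying that the hypotheses stated here match those of the cited results and then invoking them. My first step would be to check the standing assumptions: that $\CalQ = (T_i Q_i' + b_i)_{i\in I}$ and $\CalP = (S_j P_j' + c_j)_{j\in J}$ are open, tight, semi-structured coverings of $D$, together with $u$ being $\CalQ$-moderate, $v$ being $\CalP$-moderate, $\CalP$ almost subordinate to $\CalQ$, and $\CalP, v$ relatively $\CalQ$-moderate, indeed match the hypotheses of Theorems 7.2(4), 7.4(4), and Corollary 7.3 in \cite{voigtlaender2016embeddings}.

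For item (1), I would invoke Theorem 7.2(4) of \cite{voigtlaender2016embeddings} directly. The necessary and sufficient condition provided there is $p_1 \leq p_2$ together with finiteness of a weighted $\ell^{q_2 \cdot (q_1/q_2)'}$-norm involving a specific combination of ratios of weights and determinants; I would check that this combination agrees term-by-term with the expression defining $K_t$, and that the case distinction giving $t = \max(0, q_2^{-1} - \min(p_1^{-1}, 1-p_1^{-1}))$ matches the optimization built into the cited statement. Item (2) is handled symmetrically by Theorem 7.4(4), which governs the opposite embedding direction and produces the dual parameter $\tilde{t} = \max(0, \max(p_2^{-1}, 1-p_2^{-1}) - q_1^{-1})$; no independent argument is required.

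Item (3) is the diagonal case $\CalQ = \CalP$, where almost subordinateness and relative moderateness are automatic, and one may take $j_i = i$ so that $S_{j_i} = T_i$. Under this simplification the two determinant factors in $K_t$ merge to $|\det T_i|^{p_1^{-1} - p_2^{-1}}$, independently of the case parameter, which recovers precisely the weight appearing in Corollary 7.3 of \cite{voigtlaender2016embeddings}. Alternatively, it can be derived as a direct special case of either (1) or (2).

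The main --- and essentially only --- potential obstacle is bookkeeping. One must confirm that notational conventions align between the two papers: in particular, that the definition of \emph{tight} used here matches the one in \cite{voigtlaender2016embeddings}, that the conjugate-exponent convention $p' = \infty$ for $p \in (0,1]$ (explicitly stated just above the theorem) is the one used in the reference, and that the specific choice of $j_i$ is immaterial, which follows from $\CalP$-moderateness of $v$ together with the relative $\CalQ$-moderateness of both $\CalP$ and $v$ (these guarantee that $v_{j_i}$ and $|\det S_{j_i}|$ are determined up to uniform constants by $i$ alone). Since both works share the same author and use cross-referenced conventions, this verification is routine and the theorem then follows.
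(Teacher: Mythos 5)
Your proposal is correct and matches the paper's treatment: the paper gives no independent proof of this theorem but simply imports the statements from \cite[Theorems 7.2(4), 7.4(4), Corollary 7.3]{voigtlaender2016embeddings}, exactly as you do, with the only work being the routine verification that the hypotheses (tight semi-structured coverings, moderate weights, almost subordinateness, relative moderateness, the conjugate-exponent convention) align and that the choice of $j_i$ is immaterial by relative moderateness.
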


Interpreting warped coorbit spaces as decomposition spaces provides access to Theorem \ref{thm:DecompEmbeddings} for proving embedding relations between spaces $\Co(\Phi_1,\bd L^{p_1,q_1}_{\kappa_1})$ and $\Co(\Phi_2,\bd L^{p_2,q_2}_{\kappa_2})$. In general, for an embedding  $\Co(\Phi_1,\bd L^{p_1,q_1}_{\kappa_1}) \hookrightarrow \Co(\Phi_2,\bd L^{p_2,q_2}_{\kappa_2})$, Theorem \ref{thm:DecompEmbeddings} suggests that $p_1\leq p_2$ is necessary, such that we can assume $p_\delta = p_1^{-1}-p_2^{-1} \geq 0$. 

\subsection{Embeddings between warped coorbit spaces generated by the same warping function}\label{sub:WarpedEmbeddingsSame}

For now, assume that $\Phi = \Phi_1 = \Phi_2$ and $(\Phi,\kappa_1)$ and $(\Phi,\kappa_2)$ form compatible pairs as per Definition \ref{def:StandingDecompositionAssumptions}. In particular, $(\kappa_j)_\Phi = \kappa_j\circ \Phi^{-1}$, $j\in\{1,2\}$, is $\tilde{\kappa_j}$-moderate, where $\tilde{\kappa_j}$ is continuous, submultiplicative and radially increasing. Recall from Lemma~\ref{lem:DecompInducedCoveringIsNice} and
Proposition~\ref{prop:DecompSpecialBAPU} that the induced $(\delta,r)$-fine frequency covering $\CalQ^{(\delta,r)}_\Phi$, with $\delta>0$ and $r > \sqrt{d}/2$ is tight and semi-structured. Under these assumptions, Theorem~\ref{thm:CoorbitDecompositionIsomorphism} yields
\[\Co(\Phi, \lebesgue^{p_1,q_1}_{\kappa_1})
 = \DecompSp(\CalQ_\Phi^{(\delta,r)}, \lebesgue^{p_1}, \ell_{u^{(q_1)}}^{q_1})\quad \text{and}\quad \Co(\Phi, \lebesgue^{p_2,q_2}_{\kappa_2})
 = \DecompSp(\CalQ_\Phi^{(\delta,r)}, \lebesgue^{p_2}, \ell_{v^{(q_2)}}^{q_2})\] up to canonical identifications, with the sequence elements of $u^{(q_1)}, v^{(q_2)}$ given by 
 \[u^{(q_1)}_k = (\kappa_1)_\Phi(\delta k)\cdot [w(\delta k)]^{\frac{1}{q_1}-\frac{1}{2}}\quad \text{and}\quad v^{(q_2)}_k = (\kappa_2)_\Phi(\delta k)\cdot [w(\delta k)]^{\frac{1}{q_2}-\frac{1}{2}}, \quad k\in\ZZ^d.\] 
 Finally, the linear maps $T_k$, $k\in\ZZ^d$, in Definition \ref{def:SemiStructuredCoverings} (2) take the form $T_k = A(\delta k) = D\Phi^{-1}(\delta k)$, such that $|\det T_k| = w(\delta k)$, see Lemma \ref{lem:DecompInducedCoveringIsNice}. 
 
 Overall, we can apply Theorem \ref{thm:DecompEmbeddings} (3) to show that the relation $\Co(\Phi,\bd L^{p_1,q_1}_{\kappa_1})
 \hookrightarrow \Co(\Phi,\bd L^{p_2,q_2}_{\kappa_2})$ 
  holds if and only if 
 \begin{equation}\label{eq:SimplifiedWarpedEmbeddingCond}
  p_1\leq p_2\quad \text{and}\quad K := \left\|
         \Big(
           [w(\delta k)]^{p_\delta-q_\delta}
           \cdot \frac{(\kappa_2)_\Phi(\delta k)}{(\kappa_1)_\Phi(\delta k)}
         \Big)_{k \in \ZZ^d}
       \right\|_{\ell^{q_2 \cdot (q_1 / q_2)'}}
    < \infty \quad \text{with } q_\delta = q_1^{-1}-q_2^{-2}.
\end{equation}
If $q_1\leq q_2$, then $(q_1/q_2)' = \infty$ such that the above simplifies to $(\kappa_2)_\Phi  \lesssim  w^{q_\delta-p_\delta} \cdot (\kappa_1)_\Phi$, which is equivalent to 
\[
  \kappa_2 \lesssim  [w\circ \Phi]^{q_\delta-p_\delta} \cdot \kappa_1.
\]
If even $\kappa_1\asymp \kappa_2$, we simply obtain $[w(\delta k)]^{p_\delta-q_\delta} \lesssim 1$. If on the other hand $q_1>q_2$, then \eqref{eq:SimplifiedWarpedEmbeddingCond} depends heavily on the  integrability of $w^{p_\delta-q_\delta}$ or the compensation of said term's non-integrability by means of a weight $\kappa_1$ with sufficient growth.
For example, if $\kappa_2 \equiv 1$, then it is easy to verify that
\[
  \begin{split}
  \text{\eqref{eq:SimplifiedWarpedEmbeddingCond} fails if}\quad \kappa_1 & \lesssim [w\circ \Phi]^{p_\delta-q_\delta}\cdot [1+|\Phi(\bullet)|]^{\frac{d}{q_2 \cdot (q_1 / q_2)'}},\\
  \text{but \eqref{eq:SimplifiedWarpedEmbeddingCond} holds if}\quad \kappa_1 & \gtrsim [w\circ \Phi]^{p_\delta-q_\delta}\cdot [1+|\Phi(\bullet)|]^{\frac{d+\eps}{q_2 \cdot (q_1 / q_2)'}}, \text{ for some } \eps > 0.
  \end{split}
\]

\subsection{Embeddings between warped coorbit spaces generated by different warping functions}\label{sub:WarpedEmbeddingsDifferent} 
Here, we only consider the case $p=p_1=p_2$ and $q=q_1=q_2$. Specifically, Theorem \ref{thm:EqualityOfWarpedCoorbitSpaces} introduces a condition that yields equality of warped coorbit spaces, whereas Theorem \ref{thm:EmbeddingForRadialComp} studies embeddings between warped coorbit spaces in the important special case of radial warping functions. As preparation, the next lemma is concerned with  subordinateness relations between the $(\delta,r)$-coverings induced by $\Phi_1$ and $\Phi_2$.

\begin{lemma}\label{lem:AbstractWarpingSuffCond}
  Let $\Phi_1,\Phi_2 : D \rightarrow \RR^d$ be two $0$-admissible warping functions. If there exists a constant $C>0$, such that
    \begin{equation}\label{eq:InducedCoverSufficientSubordinateCond}
    \| D\Phi_2 (\Phi_1^{-1} (\tau)) \cdot D\Phi_1^{-1}(\tau) \| \leq C,\quad \text{for all } \tau\in \RR^d,
  \end{equation}
  then $\CalQ^{(\delta_1,r_1)}_{\Phi_1}$ is almost subordinate to $\CalQ^{(\delta_2,r_2)}_{\Phi_2}$, for all $\delta_1,\delta_2>0$ and $r_1,r_2>\sqrt{d}/2$.

  Given a weight $\kappa: D\rightarrow \RR^+$ such that $\kappa\circ \Phi_2^{-1}$ is $\tilde{\kappa}$-moderate for some continuous, submultiplicative, and radially increasing weight $\tilde{\kappa}\colon \RR^d\rightarrow [1,\infty)$, we additionally have
  \begin{equation}\label{eq:CompatibleWeightModerateness}
    \kappa(\Phi_1^{-1}(\delta_1 k)) \asymp \kappa(\Phi_2^{-1}(\delta_2 \ell)),\quad \text{for all } k,\ell\in\ZZ^d, \text{ with } Q_{\Phi_1,k}^{(\delta_1,r_1)}\cap Q_{\Phi_2,\ell}^{(\delta_2,r_2)}\neq \emptyset.
  \end{equation}
  In particular, if $\Phi_1,\Phi_2$ are $\CalC^\infty$-diffeomorphisms and $(d+1)$-admissible warping functions, then compatibility of $\Phi_2$ and $\kappa$ in the sense of Definition \ref{def:StandingDecompositionAssumptions} implies that $\Phi_1$ and $\kappa$ are compatible as well.
\end{lemma}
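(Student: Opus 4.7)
The heart of the argument is that condition \eqref{eq:InducedCoverSufficientSubordinateCond} says precisely that the composition $\Psi := \Phi_2 \circ \Phi_1^{-1} : \RR^d \to \RR^d$ has uniformly bounded Jacobian, since by the chain rule $\mathrm{D}\Psi(\tau) = \mathrm{D}\Phi_2(\Phi_1^{-1}(\tau)) \cdot \mathrm{D}\Phi_1^{-1}(\tau)$. Integrating this bound along line segments shows that $\Psi$ is globally $C$-Lipschitz on $\RR^d$, and this single estimate is the engine behind all three claims of the lemma.

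For the almost subordinateness, I would fix $k \in \ZZ^d$ and observe that the Lipschitz bound forces $\Psi(\delta_1 \cdot B_{r_1}(k))$ into the Euclidean ball $\overline{B_{C\delta_1 r_1}}(\Psi(\delta_1 k))$. Since $r_2 > \sqrt{d}/2$, Lemma~\ref{lem:WhenIsFrequencyCoveringCovering} guarantees that $(\delta_2 \cdot B_{r_2}(j))_{j \in \ZZ^d}$ covers $\RR^d$; pick $j_k \in \ZZ^d$ with $\Psi(\delta_1 k) \in \delta_2 \cdot B_{r_2}(j_k)$. The ball $B_{C\delta_1 r_1 + \delta_2 r_2}(\delta_2 j_k)$ is then covered by those $\delta_2 \cdot B_{r_2}(\ell)$ whose lattice centers satisfy $|\ell - j_k| \leq (C\delta_1 r_1 + \delta_2 r_2)/\delta_2 + \sqrt{d}/2$, by the standard argument of approximating a target point by its nearest lattice point. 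These $\ell$ all lie in $j_k^{N\ast}$ for a fixed $N = N(C, \delta_1, \delta_2, r_1, r_2)$, and applying $\Phi_2^{-1}$ yields $Q^{(\delta_1, r_1)}_{\Phi_1, k} \subset P_{j_k^{N\ast}}$ with $P_j := Q^{(\delta_2, r_2)}_{\Phi_2, j}$.

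For the weight estimate \eqref{eq:CompatibleWeightModerateness}, pick $\xi$ in the nonempty intersection $Q^{(\delta_1, r_1)}_{\Phi_1, k} \cap Q^{(\delta_2, r_2)}_{\Phi_2, \ell}$, so that $|\Phi_1(\xi) - \delta_1 k| < \delta_1 r_1$ and $|\Phi_2(\xi) - \delta_2 \ell| < \delta_2 r_2$. Using $\Psi(\Phi_1(\xi)) = \Phi_2(\xi)$ together with the Lipschitz bound and the triangle inequality yields $|\Psi(\delta_1 k) - \delta_2 \ell| < C\delta_1 r_1 + \delta_2 r_2$. Combining the $\tilde\kappa$-moderateness of $\kappa_{\Phi_2}$ with the identity $\kappa(\Phi_1^{-1}(\delta_1 k)) = \kappa_{\Phi_2}(\Psi(\delta_1 k))$ and the radial monotonicity of $\tilde\kappa$ then bounds $\kappa(\Phi_1^{-1}(\delta_1 k))$ by $\kappa(\Phi_2^{-1}(\delta_2 \ell)) \cdot \tilde\kappa((C\delta_1 r_1 + \delta_2 r_2) \cdot e_1)$; the reverse estimate is entirely symmetric.

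Finally, the compatibility assertion follows by applying the Lipschitz estimate to arbitrary $\tau, \eta \in \RR^d$: one obtains $\kappa_{\Phi_1}(\tau + \eta) = \kappa_{\Phi_2}(\Psi(\tau+\eta)) \leq \kappa_{\Phi_1}(\tau) \cdot \tilde\kappa(C|\eta| \cdot e_1)$. The candidate moderating weight $\tilde\kappa'(\eta) := \tilde\kappa(C|\eta| \cdot e_1)$ is clearly continuous and radially increasing, and its submultiplicativity follows from $|\eta_1 + \eta_2| \leq |\eta_1| + |\eta_2|$, the radial monotonicity of $\tilde\kappa$, and the submultiplicativity of $\tilde\kappa$ applied along $e_1$. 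No step presents a serious obstacle; the only care required is verifying that $N$ in the almost subordinateness argument can be chosen independently of $k$, which is immediate from the uniformity of the Lipschitz constant $C$.
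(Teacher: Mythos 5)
Your proposal is correct and rests on exactly the same central observation as the paper: condition \eqref{eq:InducedCoverSufficientSubordinateCond} makes $\Psi = \Phi_2 \circ \Phi_1^{-1}$ globally $C$-Lipschitz, and this single estimate drives the covering inclusion, the weight comparison \eqref{eq:CompatibleWeightModerateness}, and the transfer of moderateness. The only execution difference is in the subordinateness step: you work directly with arbitrary $\delta_1,\delta_2,r_1,r_2$ and use a lattice-geometry argument to place the relevant indices $\ell$ inside a fixed cluster $j_k^{N\ast}$, whereas the paper first shows plain subordinateness of $\CalQ^{(\delta,r)}_{\Phi_1}$ to $\CalQ^{(\delta,(C+1)r)}_{\Phi_2}$ for matching $\delta$ and then invokes Lemma~\ref{lem:DecompInducedCoveringIsNice} (equivalence of all $\Phi$-induced coverings) together with transitivity of almost subordinateness to reach general parameters — a small organizational difference with no change in substance.
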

\begin{proof}
   Our goal is to show that $\CalQ^{(\delta_1,r_1)}_{\Phi_1}$ is almost subordinate to $\CalQ^{(\delta_2,r_2)}_{\Phi_2}$, for all $\delta_1,\delta_2>0$ and $r_1,r_2>\sqrt{d}/2$. To this end, we begin by showing that, for a given $\delta>0$ and $r>\sqrt{d}/{2}$, and provided that \eqref{eq:InducedCoverSufficientSubordinateCond} holds, there exists $R>\sqrt{d}/2$, such that $\CalQ_{\Phi_1}^{(\delta,r)}$ is subordinate, and thereby almost subordinate, to $\CalQ_{\Phi_2}^{(\delta,R)}$. Since almost subordinateness is a transitive relation, and equivalent covers are almost subordinate to one another, Lemma \ref{lem:DecompInducedCoveringIsNice}, which showed the equivalence of all $\Phi$-induced $(\delta,r)$-fine coverings, can be applied to $\Phi_1$ and to $\Phi_2$ in order to obtain 
   the result for arbitrary $\delta_1,\delta_2>0$ and $r_1,r_2>\sqrt{d}/2$.

   Fix any $\delta>0$, $r>\sqrt{d}/2$. Since $\CalQ_{\Phi_2}^{(\delta,r)}$ is a covering of $D$ by Lemma \ref{lem:DecompInducedCoveringIsNice}, there exists, for every $k\in\ZZ^d$, a $j = j(k)\in\ZZ^d$, such that $
   \Phi^{-1}_1(\delta k) \in \Phi^{-1}_2(\delta \cdot B_{r}(j))$, or equivalently
   \begin{equation}\label{eq:SubordinatenessOfWarpingsOne}
     \Phi_2(\Phi^{-1}_1(\delta k)) \in  B_{\delta r}(\delta j).
   \end{equation}
   By the chain rule, \eqref{eq:InducedCoverSufficientSubordinateCond} states that
   \[
    \| D(\Phi_2\circ \Phi_1^{-1}) (\tau) \| = \| D\Phi_2 (\Phi_1^{-1} (\tau)) \cdot D\Phi_1^{-1}(\tau) \| \leq C,
   \]
   for all $\tau\in \RR^d$, i.e., $\Phi_2\circ\Phi_1^{-1}\colon \RR^d\rightarrow \RR^d$ is Lipschitz-continuous with Lipschitz constant no larger than $C$. If $\xi\in Q^{(\delta,r)}_{\Phi_1,k}$, then $\tau:= \Phi_1(\xi)\in B_{\delta r}(\delta k)$, and thus
   \[
    \left|\Phi_2(\Phi^{-1}_1(\tau)) - \Phi_2(\Phi^{-1}_1(\delta k))\right| < C\delta r.
   \]
   Together with \eqref{eq:SubordinatenessOfWarpingsOne} and $R :=  r+C\delta r/\delta = (C+1)r$, we obtain $\Phi_2(\xi) = \Phi_2(\Phi^{-1}_1(\tau)) \in  \delta B_{R}(j)$ and thus $\tau\in Q^{(\delta,R)}_{\Phi_2,j}$. Since the choice of $\xi \in Q^{(\delta,r)}_{\Phi_1,k}$ was arbitrary, this implies
   \[
    Q^{(\delta,r)}_{\Phi_1,k} \subset Q^{(\delta,R)}_{\Phi_2,j}.
   \]
   Since further $R = (C+1)r$ is independent of the choice of $k\in\ZZ^d$, $\CalQ_{\Phi_1}^{(\delta,r)}$ is subordinate to $\CalQ_{\Phi_2}^{(\delta,R)}$, proving the first part of the lemma, as discussed in the first paragraph of the proof.

   \medskip{}

   To prove the second part, assume that $k,\ell\in\ZZ^d$ are such that $Q_{\Phi_1,k}^{(\delta_1,r_1)}\cap Q_{\Phi_2,\ell}^{(\delta_2,r_2)}\neq \emptyset$ and let $\xi\in Q_{\Phi_1,k}^{(\delta_1,r_1)}\cap Q_{\Phi_2,\ell}^{(\delta_2,r_2)}$. This implies
   \[
    |\Phi_1(\xi)-\delta_1 k| < \delta_1 r_1\quad \text{and}\quad |\Phi_2(\xi)-\delta_2 \ell| < \delta_2 r_2.
   \]
   Using the Lipschitz-continuity of $\Phi_2\circ\Phi_1^{-1}$, see above, and the triangle inequality, we conclude that
   \[
    |\Phi_2(\xi)- \Phi_2(\Phi_1^{-1}(\delta_1 k))| < C\delta_1 r_1\quad \text{and}\quad |\delta_2  \ell - \Phi_2(\Phi_1^{-1}(\delta_1 k))| < \delta_2 r_2 + C\delta_1 r_1.
   \]
   With this estimate and the $\tilde{\kappa}$-moderateness of $\kappa\circ \Phi_2$, it follows that
   \[
    \frac{\kappa(\Phi^{-1}_1(\delta_1 k))}{\kappa(\Phi^{-1}_2(\delta_2 \ell))} = \frac{(\kappa\circ \Phi^{-1}_2)((\Phi_2\circ\Phi^{-1}_1)(\delta_1 k))}{(\kappa\circ \Phi^{-1}_2)(\delta_2  \ell)} \leq \tilde{\kappa}((\delta_2 r_2 + C\delta_1 r_1)\cdot e_1),
   \]
   and the lower bound $\tfrac{\kappa(\Phi^{-1}_1(\delta_1 k))}{\kappa(\Phi^{-1}_2(\delta_2 k))} \geq [\tilde{\kappa}((\delta_2 r_2 + C\delta_1 r_1)\cdot e_1]^{-1}$ is obtained analogously. Here, we used that $\tilde{\kappa}$ is radially increasing. 
   
   The statement about compatibility of $\Phi_1,\ \Phi_2$ and $\kappa$ now follows by noting that the assumptions on $\kappa$ are exactly those in Definition \ref{def:StandingDecompositionAssumptions}, whereas the calculation 
   \[
    \frac{\kappa(\Phi^{-1}_1(\tau))}{\kappa(\Phi^{-1}_1(\sigma))} = \frac{\kappa(\Phi^{-1}_2(\Phi_2\circ \Phi^{-1}_1(\tau)))}{\kappa(\Phi^{-1}_2(\Phi_2\circ \Phi^{-1}_1(\sigma)))} \leq \tilde{\kappa}(|\Phi_2\circ \Phi^{-1}_1(\tau)-\Phi_2\circ \Phi^{-1}_1(\sigma)|\cdot e_1) \leq \tilde{\kappa}(C\cdot |\tau-\sigma|\cdot e_1) = \tilde{\kappa}(C\cdot (\tau-\sigma))
   \]
   shows that $\kappa\circ \Phi^{-1}_1$ is $(\tilde{\kappa}(C\bullet))$-moderate, i.e., $\Phi_1$ and $\kappa$ satisfy the conditions of Definition \ref{def:StandingDecompositionAssumptions} as well. 
\end{proof}

If \eqref{eq:InducedCoverSufficientSubordinateCond} additionally holds with interchanged roles of $\Phi_1$ and $\Phi_2$, then we obtain equality of warped coorbit spaces.

\begin{theorem}\label{thm:EqualityOfWarpedCoorbitSpaces}
  Let $\Phi_1,\Phi_2 \colon D\rightarrow \RR^d$ and $\kappa : D \rightarrow \RR^+$ be such that $(\Phi_{1},\kappa)$ and $(\Phi_{2},\kappa)$ form compatible pairs as per Definition \ref{def:StandingDecompositionAssumptions}, and suppose that there is a constant $C>0$, such that
  \begin{equation}\label{eq:InducedCoverSufficientEquivalenceCond}
    \| D\Phi_{3-j} (\Phi_{j}^{-1} (\tau)) \cdot D\Phi_j^{-1}(\tau) \| \leq C,\quad \text{for all } \tau\in \RR^d \text{ and } j\in\{1,2\}.
  \end{equation}
  Then we have, for all $p,q\in [1,\infty]$, the equality
  \begin{equation}\label{eq:EqualityOfWarpedCoorbitSpaces}
    \Co(\Phi_{1},\lebesgue_{\kappa}^{p,q}) = \Co(\Phi_{2},\lebesgue_\kappa^{p,q}).
  \end{equation}
\end{theorem}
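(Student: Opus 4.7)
The plan is to reduce the equality \eqref{eq:EqualityOfWarpedCoorbitSpaces} to equality of two associated decomposition spaces via Theorem \ref{thm:CoorbitDecompositionIsomorphism}, and then to invoke the coincidence criterion of Theorem \ref{thm:decomposition_space_coincidence}. Fix any $\delta > 0$ and $r > \sqrt{d}/2$. By Theorem \ref{thm:CoorbitDecompositionIsomorphism}, applied once for each $j \in \{1,2\}$, we have
\[
  \Co(\Phi_j, \lebesgue_\kappa^{p,q})
  = \DecompSp \big(\CalQ_{\Phi_j}^{(\delta, r)}, \lebesgue^p, \ell_{u^{(j)}}^q \big),
\]
where $u^{(j)}_k := \kappa(\Phi_j^{-1}(\delta k)) \cdot [w_j(\delta k)]^{1/q - 1/2}$ and $w_j := \det(D\Phi_j^{-1})$. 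Choosing the weight defining $\GoodVectors$ large enough to simultaneously accommodate both $\Phi_j$ (as permitted by Remark \ref{rem:SpecialGoodVectorsWeight}), both coorbit spaces embed into the same $Z'(D)$ through the same map $\Psi$ of Corollary \ref{cor:DecompSpReservoirEmbedding}, so \eqref{eq:EqualityOfWarpedCoorbitSpaces} reduces to proving equality of the two decomposition spaces inside $Z'(D)$ with equivalent norms.

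Since \eqref{eq:InducedCoverSufficientEquivalenceCond} holds in both directions $j=1,2$, a double application of Lemma \ref{lem:AbstractWarpingSuffCond} yields that $\CalQ_{\Phi_1}^{(\delta, r)}$ and $\CalQ_{\Phi_2}^{(\delta, r)}$ are mutually almost subordinate, hence equivalent, and in particular weakly equivalent in the sense of Definition \ref{def:subordinateness}. The same lemma also furnishes the $\kappa$-comparability
\[
  \kappa(\Phi_1^{-1}(\delta k)) \asymp \kappa(\Phi_2^{-1}(\delta \ell))
  \quad \text{whenever} \quad
  Q_{\Phi_1,k}^{(\delta,r)} \cap Q_{\Phi_2,\ell}^{(\delta,r)} \neq \emptyset,
\]
which disposes of the $\kappa$-factor in $u^{(1)}$ and $u^{(2)}$.

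The remaining comparison of $w_j$-factors is where the determinant structure of \eqref{eq:InducedCoverSufficientEquivalenceCond} is actually used. Setting $\Psi_{12} := \Phi_2 \circ \Phi_1^{-1}$, condition \eqref{eq:InducedCoverSufficientEquivalenceCond} amounts exactly to $\|D\Psi_{12}(\tau)\| \leq C$ and $\|D\Psi_{12}^{-1}(\sigma)\| \leq C$ for all $\tau,\sigma \in \RR^d$. From $|\det A| \leq \|A\|^d$ for $A \in \RR^{d \times d}$, both $\det D\Psi_{12}$ and $\det D\Psi_{12}^{-1}$ are uniformly bounded above; as reciprocals of each other they are also bounded below. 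The chain rule $D\Phi_1^{-1}(\tau) = D\Phi_2^{-1}(\Psi_{12}(\tau)) \cdot D\Psi_{12}(\tau)$ then gives $w_1(\tau) \asymp w_2(\Psi_{12}(\tau))$ uniformly in $\tau$. For any $\xi \in Q_{\Phi_1,k}^{(\delta,r)} \cap Q_{\Phi_2,\ell}^{(\delta,r)}$ we have $|\Phi_1(\xi) - \delta k| < \delta r$ and $|\Phi_2(\xi) - \delta \ell| < \delta r$, so the $v_0^d$-moderateness of $w_j$ from Theorem \ref{cor:warped_disc_frames} yields
\[
  w_1(\delta k) \asymp w_1(\Phi_1(\xi)) \asymp w_2(\Phi_2(\xi)) \asymp w_2(\delta \ell).
\]

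Combining the two comparabilities gives $u^{(1)}_k \asymp u^{(2)}_\ell$ whenever the corresponding covering elements intersect. All hypotheses of Theorem \ref{thm:decomposition_space_coincidence} are thus in place, and that theorem delivers the equality of the two decomposition spaces in $Z'(D)$ with equivalent norms, which via the identifications of the first paragraph proves \eqref{eq:EqualityOfWarpedCoorbitSpaces}. Beyond quoting previously established tools, the only substantive step I foresee is the transfer from the operator-norm bound \eqref{eq:InducedCoverSufficientEquivalenceCond} to comparability of the Jacobian determinants of $\Phi_1^{-1}$ and $\Phi_2^{-1}$ carried out in the third paragraph; I do not anticipate further obstacles.
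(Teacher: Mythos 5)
Your proposal is correct, and its overall architecture coincides with the paper's: identify both coorbit spaces as decomposition spaces via Theorem \ref{thm:CoorbitDecompositionIsomorphism}, apply Lemma \ref{lem:AbstractWarpingSuffCond} in both directions to get equivalence of the coverings and comparability of the $\kappa$-factors, and then invoke Theorem \ref{thm:decomposition_space_coincidence} once $u^{(1)}_k \asymp u^{(2)}_\ell$ on intersecting elements is established. The one place where you genuinely diverge is the comparison $w_1(\delta k) \asymp w_2(\delta \ell)$. You argue pointwise: writing $\Psi_{12} = \Phi_2 \circ \Phi_1^{-1}$, the hypothesis \eqref{eq:InducedCoverSufficientEquivalenceCond} bounds $\|D\Psi_{12}\|$ and $\|D\Psi_{12}^{-1}\|$, whence $|\det A| \le \|A\|^d$ and positivity of the Jacobian determinants give $\det D\Psi_{12} \asymp 1$, and the chain rule yields $w_1(\tau) \asymp w_2(\Psi_{12}(\tau))$ uniformly; moderateness of each $w_j$ then transfers this to the sample points $\delta k$ and $\delta\ell$. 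The paper instead proceeds geometrically: it shows $\mu(Q^{(\delta,r)}_{\Phi_j,k}) \asymp w_j(\delta k)$ by integrating $w_j$ over $\delta \cdot B_r(k)$ and then sandwiches the measures using the nested inclusions $Q^{(\delta,r)}_{\Phi_1,k} \subset Q^{(\delta,R)}_{\Phi_2,\ell} \subset Q^{(\delta,R_2)}_{\Phi_1,k}$ furnished by the subordination argument. Your route is more direct and makes transparent exactly where the determinant structure of the hypothesis enters; the paper's route reuses machinery (the measure estimate \eqref{eq:MeasureOfCoveringElements}) that it needs elsewhere anyway. Both are valid, and I see no gap in your argument.
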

\begin{proof}
  By Theorem \ref{thm:CoorbitDecompositionIsomorphism} we have, for arbitrary $\delta>0$, $r>\sqrt{d}/2$ and $j\in\{1,2\}$, the equalities
   \[
     \Co(\Phi_{j},\lebesgue_\kappa^{p,q}) = \DecompSp(\CalQ^{(\delta,r)}_{\Phi_{j}},\lebesgue^p,\ell^q_{u_j}),\quad \text{with } (u_j)_\ell = \kappa(\Phi_{j}^{-1}(\delta \ell)) \cdot [w_{j}(\delta \ell)]^{1/q-1/2},\ \ell\in\ZZ^d.
   \]
   Since \eqref{eq:InducedCoverSufficientEquivalenceCond} holds, we can apply Lemma \ref{lem:AbstractWarpingSuffCond} as is and with interchanged roles of $\Phi_1$ and $\Phi_2$, to obtain equivalence of the coverings $\CalQ^{(\delta,r)}_{\Phi_{1}}$ and $\CalQ^{(\delta,r)}_{\Phi_{2}}$. By Theorem \ref{thm:decomposition_space_coincidence}, we obtain the desired equality \eqref{eq:EqualityOfWarpedCoorbitSpaces}, provided that $u_1\asymp u_2$. Lemma \ref{lem:AbstractWarpingSuffCond} further implies that
   \[
    \kappa(\Phi_{1}^{-1}(\delta k)) \asymp \kappa(\Phi_{2}^{-1}(\delta \ell)),\quad \text{for all } k,\ell\in\ZZ^d,\ \text{with } Q^{(\delta,r)}_{\Phi_{1},k}\cap Q^{(\delta,r)}_{\Phi_{2},\ell}\neq \emptyset.
   \]
   It remains to show that, likewise,
   \[
   w_1(\delta k) \asymp w_2(\delta \ell),\quad \text{for all } k,\ell\in\ZZ^d,\ \text{with } Q^{(\delta,r)}_{\Phi_{1},k}\cap Q^{(\delta,r)}_{\Phi_{2},\ell}\neq \emptyset.
   \]
   To do so, first note that, for any $\delta>0$, $r>\sqrt{d}/2$ and $j\in\{1,2\}$, we can estimate the Lebesgue measure $\mu$ of $Q^{(\delta,r)}_{\Phi_{j},k}$ a follows:
   \begin{equation}\label{eq:MeasureOfCoveringElements}
     \mu(Q^{(\delta,r)}_{\Phi_{j},k}) = \int_{D} \Indicator_{Q^{(\delta,r)}_{\Phi_{j},k}}(\xi)~d\xi = \int_{\RR^d} |\det(D\Phi_{j}^{-1})(\tau)|\cdot \Indicator_{Q^{(\delta,r)}_{\Phi_{j},k}}(\Phi_{j}^{-1}(\tau))~d\tau = \int_{\delta\cdot B_{r}(k)} w_j(\tau)~d\tau \asymp w_j(\delta k),
   \end{equation}
   for all $k\in\ZZ^d$. Here, we used that $w_j$ is $v_j$-moderate, with respect to the control weight $v_j$ for the $(d+1)$-admissible warping function $\Phi_{j}$, $j\in\{1,2\}$, such that the implied constants in \eqref{eq:MeasureOfCoveringElements} depend on $\delta$, $r$, $d$, and $v_j$. Now let $R = (C + 1)r$ be as in the proof of Lemma \ref{lem:AbstractWarpingSuffCond} and $R_2 = (C+1) R$, with $C>0$ as in \eqref{eq:InducedCoverSufficientEquivalenceCond}. Then, as in the proof of said Lemma, $Q^{(\delta,r)}_{\Phi_{1},k}\cap Q^{(\delta,r)}_{\Phi_{2},\ell}\neq \emptyset$ implies $Q^{(\delta,r)}_{\Phi_{1},k}\subset Q^{(\delta,R)}_{\Phi_{2},\ell}$ and, likewise, $Q^{(\delta,R)}_{\Phi_{2},\ell}\subset Q^{(\delta,R_2)}_{\Phi_{1},k}$. By an application of \eqref{eq:MeasureOfCoveringElements} with $j=1$, we thus find that 
   \[
    w_1(\delta k) \lesssim \mu(Q^{(\delta,r)}_{\Phi_{1},k}) \leq \mu(Q^{(\delta,R)}_{\Phi_{2},\ell}) \leq \mu(Q^{(\delta,R_2)}_{\Phi_{1},k}) \lesssim w_1(\delta k).
   \]
   Applying \eqref{eq:MeasureOfCoveringElements} again with $j=2$, we obtain $w_1(\delta k) \asymp w_2(\delta \ell)$ as desired. Therefore, we can apply Theorem \ref{thm:CoorbitDecompositionIsomorphism} to show the desired equality \eqref{eq:EqualityOfWarpedCoorbitSpaces} of warped coorbit spaces.
\end{proof}

As we now show, the assumptions of both results above simplify greatly in the case of radial warping functions $\Phi_1 = \Phi_{\vrho_1}$, $\Phi_2 = \Phi_{\vrho_2}$ generated from the the radial components $\vrho_1,\vrho_2$.

\begin{corollary}\label{cor:SubordinatenessForBoundingDerivativeRadialComp}
  Let $\vrho_1,\vrho_2 : \RR \to \RR$ be $0$-admissible radial components, such that there is a constant $C>0$ with
  \begin{equation}\label{eq:RadialCompSubordinatenessCondition}
     (\vrho_2)'(\xi) \leq C (\vrho_1)'(\xi),\quad \text{for all } \xi\in\RR^+,
  \end{equation}
  then $\Phi_{\vrho_1}, \Phi_{\vrho_2}$ satisfy the conditions of Lemma \ref{lem:AbstractWarpingSuffCond}. In particular, the covering $\CalQ^{(\delta_1,r_1)}_{\Phi_{\vrho_1}}$ is almost subordinate to $\CalQ^{(\delta_2,r_2)}_{\Phi_{\vrho_2}}$, for every $\delta_1,\delta_2>0$ and $r_1,r_2>\sqrt{d}/2$.
\end{corollary}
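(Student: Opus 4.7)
My plan is to verify the hypothesis \eqref{eq:InducedCoverSufficientSubordinateCond} of Lemma \ref{lem:AbstractWarpingSuffCond} for $\Phi_1 = \Phi_{\vrho_1}$ and $\Phi_2 = \Phi_{\vrho_2}$, namely the uniform bound
\[
  \bigl\| D\Phi_{\vrho_2}\bigl(\Phi_{\vrho_1}^{-1}(\tau)\bigr) \cdot D\Phi_{\vrho_1}^{-1}(\tau) \bigr\|
  \leq C' \qquad \text{for all } \tau\in\RR^d .
\]
Once this is in hand, the almost subordinateness statement is precisely the conclusion of Lemma \ref{lem:AbstractWarpingSuffCond}.

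The first step is a direct computation of the Jacobian of a radial warping function. Writing $r=|\xi|$, one differentiates $\Phi_\vrho(\xi)=\widetilde{\vrho}(r)\xi$ to obtain, for $\xi\neq 0$,
\[
  D\Phi_\vrho(\xi) = \widetilde{\vrho}(r)\,I + \frac{\widetilde{\vrho}\,'(r)}{r}\,\xi\xi^T .
\]
This matrix is symmetric, with eigenvalue $\widetilde{\vrho}(r)$ on the orthogonal complement $\xi^\perp$ and eigenvalue $\widetilde{\vrho}(r)+r\,\widetilde{\vrho}\,'(r)=\vrho'(r)$ along $\xi$ (the latter identity follows from $\widetilde{\vrho}(r)=\vrho(r)/r$). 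The same formula applied to $\vrho_\ast=\vrho^{-1}$ yields the Jacobian of $\Phi_\vrho^{-1}=\Phi_{\vrho_\ast}$.

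The key observation is that $\Phi_{\vrho_1}^{-1}(\tau)=\widetilde{\vrho_{1,\ast}}(|\tau|)\,\tau$ is a positive multiple of $\tau$. Consequently the two matrices $D\Phi_{\vrho_2}(\Phi_{\vrho_1}^{-1}(\tau))$ and $D\Phi_{\vrho_1}^{-1}(\tau)$ share the same eigenvectors (the line $\RR\tau$ and its orthogonal complement), hence commute, and their product is symmetric with eigenvalues equal to the products of the corresponding eigenvalues. Setting $s=|\tau|$ and using $|\Phi_{\vrho_1}^{-1}(\tau)|=\vrho_{1,\ast}(s)$, one obtains
\[
  \lambda_\parallel(\tau) = \vrho_2'(\vrho_{1,\ast}(s))\cdot \vrho_{1,\ast}'(s) = (\vrho_2\circ\vrho_{1,\ast})'(s),
  \quad
  \lambda_\perp(\tau) = \widetilde{\vrho_2}(\vrho_{1,\ast}(s))\cdot \widetilde{\vrho_{1,\ast}}(s) = \frac{\vrho_2(\vrho_{1,\ast}(s))}{s}.
\]

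The hypothesis $\vrho_2'\leq C\vrho_1'$ on $\RR^+$, combined with $\vrho_{1,\ast}'(s)=1/\vrho_1'(\vrho_{1,\ast}(s))$, immediately gives $\lambda_\parallel(\tau)\leq C$. For $\lambda_\perp(\tau)$, since $\vrho_2(\vrho_{1,\ast}(0))=0$, the mean value theorem yields some $t\in(0,s)$ with $\vrho_2(\vrho_{1,\ast}(s))/s=(\vrho_2\circ\vrho_{1,\ast})'(t)\leq C$. Hence both eigenvalues are bounded by $C$ uniformly in $\tau\neq 0$. For $\tau=0$, the linearity of $\vrho_j$ near the origin (Definition \ref{def:AdmissibleRho}~(3)) makes $\Phi_{\vrho_j}$ linear near $0$, so the composed Jacobian is a fixed multiple of the identity; alternatively, continuity and the uniform bound away from $0$ suffice. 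The main mild subtlety is the alignment-of-eigenframes observation; once that is noticed, the estimate is a two-line consequence of the hypothesis and the mean value theorem, whence Lemma \ref{lem:AbstractWarpingSuffCond} delivers the claimed almost subordinateness for all admissible $\delta_1,\delta_2>0$ and $r_1,r_2>\sqrt d/2$.
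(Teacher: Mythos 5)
Your proposal is correct and is essentially the same as the paper's. The paper cites \cite[Lemma 8.4]{VoHoPreprint} for the formula $D\Phi_\vrho(\xi) = \widetilde{\vrho}(|\xi|)\,\pi_\xi^\perp + \vrho'(|\xi|)\,\pi_\xi$; you derive it directly, which is a useful check. You both exploit that $\Phi_{\vrho_1}^{-1}(\tau)$ is a positive multiple of $\tau$ so the two Jacobians share the eigenspaces $\RR\tau$ and $\tau^\perp$, and both reduce the operator norm of the product to the maximum of the two eigenvalue products, the radial one being $\vrho_2'(\vrho_{1,\ast}(s))\,\vrho_{1,\ast}'(s)\le C$ and the tangential one being $\vrho_2(\vrho_{1,\ast}(s))/s$. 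The only cosmetic difference is how the tangential eigenvalue is bounded: you invoke the mean value theorem for $\vrho_2\circ\vrho_{1,\ast}$, while the paper first notes that $\vrho_2\le C\vrho_1$ on $\RR^+$ (by integrating the hypothesis from $0$) and then cancels $\vrho_1(\vrho_{1,\ast}(s))=s$. These are the same estimate in two packagings, and both then appeal to continuity (or linearity near the origin, as you note) to cover $\tau=0$.
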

\begin{proof}
 By \cite[Lemma 8.4]{VoHoPreprint}, we have for $\vrho\in\{\vrho_1,\vrho_2,(\vrho_1)_\ast,(\vrho_2)_\ast\}$ and $\xi\in\RR^d\setminus\{0\}$ that 
 \[
  D\Phi_\vrho(\xi) = \widetilde{\vrho}(|\xi|)\cdot \pi_\xi^\perp + \vrho'(|\xi|)\cdot \pi_\xi, 
 \]
 where $\pi_\xi$ is the orthogonal projection onto the span of $\xi$, and $\pi_\xi^\perp = \operatorname{id}_{\RR^d} - \pi_\xi$. It is easy to see that
 \[
  (a \pi_\xi^\perp + b \pi_\xi)\cdot (c \pi_\xi^\perp + d \pi_\xi) = ac \pi_\xi^\perp + bd \pi_\xi\quad \text{and}\quad \|a \pi_\xi^\perp + b \pi_\xi\| = \max\{a,b\},
 \]
 for all $a,b,c,d\in\RR^+$. Using $\Phi_{\vrho_1}^{-1} = \Phi_{{\vrho_1}_\ast}$, we obtain for $\tau\in\RR^d\setminus\{0\}$
 \[
  \| D\Phi_{\vrho_2}(\Phi_{\vrho_1}^{-1} (\tau))\cdot D\Phi_{\vrho_1}^{-1}(\tau)\| \leq
  \max\{\ (\vrho_1)_\ast'(|\tau|)\cdot(\vrho_2)'((\vrho_1)_\ast(|\tau|)),\ \widetilde{(\vrho_1)_\ast}(|\tau|)\cdot\widetilde{\vrho_2}((\vrho_1)_\ast(|\tau|))\ \}.
 \]
 By assumption, $(\vrho_1)_\ast'(|\tau|)\cdot(\vrho_2)'((\vrho_1)_\ast(|\tau|)) \leq C (\vrho_1)_\ast'(|\tau|)\cdot(\vrho_1)'((\vrho_1)_\ast(|\tau|)) = C$. Further, since $\vrho_1(0) = \vrho_2(0) = 0$, Equation \eqref{eq:RadialCompSubordinatenessCondition} implies $\vrho_2 \leq C \vrho_1$ on $\RR^+$, and thus
 \[
  \widetilde{(\vrho_1)_\ast}(|\tau|)\cdot\widetilde{\vrho_2}((\vrho_1)_\ast(|\tau|)) = \widetilde{(\vrho_1)_\ast}(|\tau|)\cdot \frac{\vrho_2((\vrho_1)_\ast(|\tau|))}{(\vrho_1)_\ast(|\tau|)}
  \leq C \widetilde{(\vrho_1)_\ast}(|\tau|)\cdot \frac{\vrho_1((\vrho_1)_\ast(|\tau|))}{(\vrho_1)_\ast(|\tau|)} = C.
 \]
 By continuity, the estimate $\| D\Phi_{\vrho_2} (\Phi_{\vrho_1}^{-1} (\tau)) \cdot D\Phi_{\vrho_1}^{-1}(\tau) \| \leq C$ extends from $\tau\in\RR^d\setminus\{0\}$ to all $\tau\in\RR^d$. This finishes the proof.
\end{proof}

\begin{corollary}\label{cor:EqualityOfRadialWarpedCoorbitSpaces}
  Let $\vrho_1,\vrho_2 : \RR \to \RR$ and $\kappa : \RR^d \rightarrow \RR^+$ be such that $(\Phi_{\vrho_1},\kappa)$ and $(\Phi_{\vrho_2},\kappa)$ form compatible pairs as per Definition \ref{def:StandingDecompositionAssumptions}. In particular, 
  assume that $\vrho_1,\vrho_2$ are $(d+1)$-admissible radial components. If $\vrho_1' \asymp \vrho_2'$ and $p,q\in [1,\infty]$, then we have the equality
  \begin{equation}\label{eq:EqualityOfRadialWarpedCoorbitSpaces}
    \Co(\Phi_{\vrho_1},\lebesgue_{\kappa}^{p,q}) = \Co(\Phi_{\vrho_2},\lebesgue_\kappa^{p,q}).
  \end{equation}
\end{corollary}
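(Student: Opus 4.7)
The plan is to derive this as a direct consequence of Theorem \ref{thm:EqualityOfWarpedCoorbitSpaces} by showing that the hypothesis $\vrho_1' \asymp \vrho_2'$ implies condition \eqref{eq:InducedCoverSufficientEquivalenceCond} for the radial warping functions $\Phi_{\vrho_1}, \Phi_{\vrho_2}$. The equivalence $\vrho_1' \asymp \vrho_2'$ unpacks to the existence of a constant $C > 0$ with
\[
  \vrho_2'(\xi) \leq C \vrho_1'(\xi) \quad \text{and} \quad \vrho_1'(\xi) \leq C \vrho_2'(\xi) \quad \text{for all } \xi \in \RR^+.
\]
Each of these one-sided estimates is exactly the hypothesis \eqref{eq:RadialCompSubordinatenessCondition} of Corollary \ref{cor:SubordinatenessForBoundingDerivativeRadialComp}, applied once with the pair $(\vrho_1, \vrho_2)$ and once with the pair $(\vrho_2, \vrho_1)$. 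Note that $(d+1)$-admissibility of $\vrho_1, \vrho_2$ in particular entails $0$-admissibility, so the hypotheses of Corollary \ref{cor:SubordinatenessForBoundingDerivativeRadialComp} are met in both directions.

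Corollary \ref{cor:SubordinatenessForBoundingDerivativeRadialComp} thus provides a constant $\tilde{C} > 0$ with
\[
  \| \mathrm{D}\Phi_{\vrho_2}(\Phi_{\vrho_1}^{-1}(\tau)) \cdot \mathrm{D}\Phi_{\vrho_1}^{-1}(\tau) \| \leq \tilde{C}
  \quad \text{and} \quad
  \| \mathrm{D}\Phi_{\vrho_1}(\Phi_{\vrho_2}^{-1}(\tau)) \cdot \mathrm{D}\Phi_{\vrho_2}^{-1}(\tau) \| \leq \tilde{C},
\]
uniformly for $\tau \in \RR^d$. This is precisely condition \eqref{eq:InducedCoverSufficientEquivalenceCond} in Theorem \ref{thm:EqualityOfWarpedCoorbitSpaces}. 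Since by assumption both $(\Phi_{\vrho_1}, \kappa)$ and $(\Phi_{\vrho_2}, \kappa)$ are compatible pairs, all remaining hypotheses of Theorem \ref{thm:EqualityOfWarpedCoorbitSpaces} are verified, and the theorem yields the equality $\Co(\Phi_{\vrho_1}, \lebesgue^{p,q}_\kappa) = \Co(\Phi_{\vrho_2}, \lebesgue^{p,q}_\kappa)$ for all $p, q \in [1, \infty]$.

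There is essentially no technical obstacle here: the whole argument is a matter of quoting two already-established results in the correct order. The only very small bookkeeping point is to observe that $\vrho_1' \asymp \vrho_2'$ is a two-sided estimate on $\RR^+$, so Corollary \ref{cor:SubordinatenessForBoundingDerivativeRadialComp} must be invoked twice with interchanged roles; this is needed because Theorem \ref{thm:EqualityOfWarpedCoorbitSpaces}, unlike Lemma \ref{lem:AbstractWarpingSuffCond}, requires the norm bound on the composed Jacobian in \emph{both} directions.
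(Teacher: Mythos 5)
Your proof is correct and follows exactly the same route as the paper: invoke Corollary \ref{cor:SubordinatenessForBoundingDerivativeRadialComp} twice (once with each ordering of $\vrho_1,\vrho_2$) to verify \eqref{eq:InducedCoverSufficientEquivalenceCond}, then apply Theorem \ref{thm:EqualityOfWarpedCoorbitSpaces}. Your additional remarks on the two-sided nature of $\vrho_1' \asymp \vrho_2'$ and the downgrade from $(d+1)$-admissibility to $0$-admissibility make the bookkeeping a bit more explicit, but the argument is the same.
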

\begin{proof}
  Use Corollary \ref{cor:SubordinatenessForBoundingDerivativeRadialComp} twice to verify  \eqref{eq:InducedCoverSufficientEquivalenceCond} in the assumptions of Theorem \ref{thm:EqualityOfWarpedCoorbitSpaces}, setting $\Phi_1 = \Phi_{\vrho_1}$ and $\Phi_2 = \Phi_{\vrho_2}$. Then said theorem yields \eqref{eq:EqualityOfRadialWarpedCoorbitSpaces}.
\end{proof}

\begin{remark}\label{rem:SlowStartEquivalence}
  As a consequence of Corollary \ref{cor:EqualityOfRadialWarpedCoorbitSpaces}, we immediately obtain that different slow start versions $\vrho_1,\ \vrho_2$ of the same weakly $k$-admissible radial component $\vsig$ generate the same coorbit spaces.
\end{remark}

If $\vrho_1'$ and $\vrho_2'$ are not proportional, i.e., \eqref{eq:RadialCompSubordinatenessCondition} \emph{does not hold} with the roles of $\vrho_1$ and $\vrho_2$ interchanged, then we require additional restrictions on $\vrho_1$ to prove embeddings between the coorbit spaces associated with the radial warping functions $\Phi_{\vrho_1}$ and $\Phi_{\vrho_2}$. The next result, concerned with relative moderateness of the induced $(\delta,r)$-frequency coverings, is the final step towards such embedding relations.


\begin{proposition}\label{prop:RadialCoveringSubordinateness}
  Let $\vrho_1,\vrho_2 : \RR \to \RR$ be $0$-admissible radial components and
  $\CalQ_{\Phi_{\vrho_1}}^{(\delta,r)}$, $\CalQ_{\Phi_{\vrho_2}}^{(\delta,r)}$ the respective induced $(\delta,r)$-fine frequency coverings, for arbitrary $\delta>0$ and $r > \sqrt{d} / 2$.
  Then
  \begin{equation}
    1 + |\xi|
    \asymp 1 + (\vrho_j)_\ast (\delta |k|)
    \quad \forall \, j\in\{1,2\},\, k \in \ZZ^d,\,
                     \text{ and } \xi \in Q_{\Phi_{\vrho_j},k}^{(\delta,r)} \, .
    \label{eq:RadialWarpingsCoveringNormControl}
  \end{equation}
  If, additionally, there exists a function $C_{\vrho_1} : [1,\infty)\rightarrow [1,\infty)$, such that
  \begin{equation}\label{eq:WarpingOneDerivativeProportionalRelation}
    C_{\vrho_1}(\alpha)^{-1}\vrho_1'(\xi)\leq \vrho_1'(\alpha\xi) \leq C_{\vrho_1}(\alpha)\vrho_1'(\xi),\quad \text{for all } \alpha\in [1,\infty),\ \xi\in\RR^+,
  \end{equation}
  then $\CalQ_{\Phi_{\vrho_1}}^{(\delta,r)}$ is relatively $\CalQ_{\Phi_{\vrho_2}}^{(\delta,r)}$-moderate.

  Specifically, if \eqref{eq:WarpingOneDerivativeProportionalRelation} holds, we have 
  \begin{equation}
    \widetilde{(\vrho_1)_\ast} (\delta |k|)
    \asymp [\widetilde{\vrho_1}((\vrho_2)_\ast(\delta|\ell|))]^{-1}
    \quad \forall \, k,\ell \in \ZZ^d
                  \text{ with }
                  Q_{\Phi_{\vrho_1}, k}^{(\delta,r)} \cap Q_{\Phi_{\vrho_2}, \ell}^{(\delta,r)} \neq \emptyset \, ,
    \label{eq:RadialWarpingsCoveringRhoTildeStarModerateness}
  \end{equation}
  and
  \begin{equation}
    w_1(\delta k) = \det(D\Phi_{\vrho_1}^{-1}(\delta k)) \asymp \frac{[\widetilde{\vrho_1} ((\vrho_2)_\ast(\delta|\ell|))]^{1-d}}{\vrho_1'((\vrho_2)_\ast(\delta|\ell|))}
    \quad \text{if} \quad
    Q_{\Phi_{\vrho_1}, k}^{(\delta,r)} \cap Q_{\Phi_{\vrho_2}, \ell}^{(\delta,r)} \neq \emptyset \, .
    \label{eq:RadialWarpingsCoveringRelativeModerateness}
  \end{equation}
  In particular, $w_1$ is relatively $\CalQ_{\Phi_{\vrho_2}}^{(\delta,r)}$-moderate.
\end{proposition}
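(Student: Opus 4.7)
The plan is to establish the four claims sequentially: first \eqref{eq:RadialWarpingsCoveringNormControl}, then the comparability $(\vrho_1)_\ast(\delta|k|) \asymp (\vrho_2)_\ast(\delta|\ell|)$ on overlapping covering elements, then \eqref{eq:RadialWarpingsCoveringRhoTildeStarModerateness}, and finally \eqref{eq:RadialWarpingsCoveringRelativeModerateness}, from which the relative $\CalQ_{\Phi_{\vrho_2}}^{(\delta,r)}$-moderateness of $w_1$ (and hence of $\CalQ_{\Phi_{\vrho_1}}^{(\delta,r)}$) is immediate.

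For \eqref{eq:RadialWarpingsCoveringNormControl}, I would fix $\xi \in Q_{\Phi_{\vrho_j},k}^{(\delta,r)}$, so $\Phi_{\vrho_j}(\xi)\in \delta B_r(k)$, and exploit the radial form $|\Phi_{\vrho_j}(\xi)| = \vrho_j(|\xi|)$ together with the reverse triangle inequality to obtain $|\vrho_j(|\xi|) - \delta|k|| < \delta r$. For $\delta|k|\geq 2\delta r$, strict monotonicity of $\vrho_j$ forces $|\xi|\in [(\vrho_j)_\ast(\delta|k|-\delta r),(\vrho_j)_\ast(\delta|k|+\delta r)]$, and $v$-moderateness of $\widetilde{(\vrho_j)_\ast}$ (applied to the fixed perturbation $\delta r$) yields $|\xi|\asymp (\vrho_j)_\ast(\delta|k|)$. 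For $\delta|k|< 2\delta r$, both $|\xi|$ and $(\vrho_j)_\ast(\delta|k|)$ are bounded by $(\vrho_j)_\ast(3\delta r)$, so the quantities $1+|\xi|$ and $1+(\vrho_j)_\ast(\delta|k|)$ are both sandwiched between $1$ and a fixed constant, giving the claim.

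Now assume $Q^{(\delta,r)}_{\Phi_{\vrho_1}, k}\cap Q^{(\delta,r)}_{\Phi_{\vrho_2}, \ell}\neq \emptyset$ and pick $\xi$ in the intersection. Applying \eqref{eq:RadialWarpingsCoveringNormControl} with $j=1$ and $j=2$ yields $1+(\vrho_1)_\ast(\delta|k|) \asymp 1+|\xi| \asymp 1+(\vrho_2)_\ast(\delta|\ell|)$. Setting $s := (\vrho_1)_\ast(\delta|k|)$ and $t := (\vrho_2)_\ast(\delta|\ell|)$, a routine case distinction (using that $(\vrho_j)_\ast$ is linear near the origin) reduces this to $s \asymp t$, with the small-argument regime yielding bounded ratios trivially. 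To upgrade $s \asymp t$ to $\vrho_1(s) \asymp \vrho_1(t)$, I use the scaling identity $\vrho_1(\alpha\xi) = \alpha\int_0^\xi \vrho_1'(\alpha v)\,dv$ for $\alpha \geq 1$, which by \eqref{eq:WarpingOneDerivativeProportionalRelation} gives $\vrho_1(\alpha\xi)\asymp \alpha\vrho_1(\xi)$ with constant depending on $\alpha$; applied to $\alpha = \max(s/t, t/s)$, which is bounded by the $\asymp$, this furnishes $\vrho_1(s) \asymp \vrho_1(t)$. Since $s/\vrho_1(s) = \widetilde{(\vrho_1)_\ast}(\delta|k|)$ and $t/\vrho_1(t) = [\widetilde{\vrho_1}(t)]^{-1}$, \eqref{eq:RadialWarpingsCoveringRhoTildeStarModerateness} follows.

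For \eqref{eq:RadialWarpingsCoveringRelativeModerateness}, I invoke Theorem \ref{cor:RadialWarpingIsWarping} to write $w_1(\delta k) = (\vrho_1)_\ast'(\delta|k|)\cdot[\widetilde{(\vrho_1)_\ast}(\delta|k|)]^{d-1}$. The inverse function formula gives $(\vrho_1)_\ast'(\delta|k|) = [\vrho_1'(s)]^{-1}$, and \eqref{eq:WarpingOneDerivativeProportionalRelation} applied with $\alpha = \max(s/t,t/s)\geq 1$ yields $\vrho_1'(s)\asymp \vrho_1'(t)$. Combined with \eqref{eq:RadialWarpingsCoveringRhoTildeStarModerateness}, this produces exactly the right-hand side of \eqref{eq:RadialWarpingsCoveringRelativeModerateness}. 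Since the resulting expression depends on $\ell$ only, the values $w_1(\delta k) = |\det T_k|$ taken over all $k$ with $Q^{(\delta,r)}_{\Phi_{\vrho_1}, k}\cap Q^{(\delta,r)}_{\Phi_{\vrho_2}, \ell}\neq \emptyset$ are mutually comparable with constants uniform in $\ell$, which is precisely the required relative moderateness. The main technical nuisance will be the bookkeeping for small arguments, where several $\asymp$-relations only hold after adjoining a $1+\bullet$; here I would isolate the bounded regime and exploit the linear behavior of both $\vrho_j$ near the origin.
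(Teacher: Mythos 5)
Your proposal is correct and follows essentially the same route as the paper's proof: the same monotonicity-plus-moderateness argument with a small/large $|k|$ case split for \eqref{eq:RadialWarpingsCoveringNormControl}, the integrated form $\vrho_1(\alpha\xi)\asymp\alpha\vrho_1(\xi)$ of \eqref{eq:WarpingOneDerivativeProportionalRelation} (which is exactly the paper's intermediate Equation \eqref{eq:WarpingOneProportionalRelation}) to pass from $(\vrho_1)_\ast(\delta|k|)\asymp(\vrho_2)_\ast(\delta|\ell|)$ to \eqref{eq:RadialWarpingsCoveringRhoTildeStarModerateness}, and the inverse-function identity $(\vrho_1)_\ast'=[\vrho_1'\circ(\vrho_1)_\ast]^{-1}$ together with \eqref{eq:RadialWarpingWeightExplicit} for \eqref{eq:RadialWarpingsCoveringRelativeModerateness}. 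The paper handles the degenerate indices ($k=0$ or $\ell=0$) by finiteness of the relevant index sets rather than by linearity near the origin, but this is an inessential difference.
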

\begin{proof}
  We first prove \eqref{eq:RadialWarpingsCoveringNormControl}.
  Fix $j\in\{1,2\}$ and let $v_j : \RR \to \RR^+$ be a control weight for $\vrho_j$.
  For $\xi \in Q_{\Phi_{\vrho_j}, k}^{(\delta,r)}
  = \Phi_{\vrho_j}^{-1} \big(\delta\cdot B_{ r} (k) \big)$, we have
  ${\vrho_j} (|\xi|)
   = |\Phi_{\vrho_j} (\xi)|
   \in \delta|k| + [- \delta r, \delta r]$.
  Since ${\vrho_j}$ is strictly increasing, this implies
  \begin{equation}
    ({\vrho_j})_\ast (\delta |k| - \delta r)
    \leq |\xi|
    \leq ({\vrho_j})_\ast (\delta |k| + \delta r) \, .
    \label{eq:RadialWarpingNormControlStep1}
  \end{equation}

  Now, let us first consider the case $|k| \geq 2r + 2$, so that
  $|k| \asymp |k| - r > 0$ and $|k| + r \asymp |k|$.
    Since by definition of a $0$-admissible radial component,
  $\widetilde{({\vrho_j})_\ast}$ is $v$-moderate, we see
  \[
    |\xi|
    \leq ({\vrho_j})_\ast (\delta |k| + \delta r)
    = \widetilde{({\vrho_j})_\ast} (\delta |k| + \delta r)
      \cdot (\delta |k| + \delta r)
    \lesssim v_j(\delta r)
             \cdot \widetilde{({\vrho_j})_\ast} (\delta |k|)
             \cdot \delta |k|
    = v_j (\delta r) \cdot (\vrho_j)_\ast (\delta |k|) \, ,
  \]
  for $\xi \in Q_{\Phi_{\vrho_j}, k}^{(\delta,r)}$. The lower bound $[v_j(\delta r)]^{-1} ({\vrho_j})_\ast(\delta |k|) \lesssim |\xi|$ is derived similarly, such that overall $|\xi| \asymp ({\vrho_j})_\ast (\delta |k|)$, for $\xi \in Q_{\Phi_{\vrho_j}, k}^{(\delta,r)}$, and thus, whenever $|k| \geq 2r + 2$, we have $1 + |\xi| \asymp 1 + ({\vrho_j})_\ast (\delta |k|)$.

  To prove \eqref{eq:RadialWarpingsCoveringNormControl}, it remains
  to consider the case $|k| \leq 2r + 2$, for which Equation \eqref{eq:RadialWarpingNormControlStep1} yields
  \[
    |\xi| \leq (\vrho_j)_\ast(\delta |k| + \delta r)
         \leq  (\vrho_j)_\ast(3\delta r + 2 \delta)
         \lesssim 1\quad \text{and}\quad (\vrho_j)_\ast(\delta |k|) \leq  (\vrho_j)_\ast(2\delta r + 2 \delta)\lesssim 1.
  \]
  Together, $1 + |\xi| \asymp 1 \asymp 1 +  (\vrho_j)_\ast (\delta |k|)$ if $\xi \in Q_{\Phi_\vrho, k}^{(\delta,r)}$ and $|k| \leq 2r + 2$, as desired.

  \medskip{}

  To prove Equation~\eqref{eq:RadialWarpingsCoveringRhoTildeStarModerateness}, first note that $\emptyset \neq Q_{\Phi_{\vrho_1}, k}^{(\delta,r)} \cap Q_{\Phi_{\vrho_2}, \ell}^{(\delta,r)} \ni \xi$ implies,
  by \eqref{eq:RadialWarpingsCoveringNormControl}, that $1+({\vrho_1})_\ast(\delta |k|)\asymp 1 + |\xi| \asymp 1+({\vrho_2})_\ast(\delta |\ell|)$, i.e., there exists $C_0\geq 1$, such that 
  \[
   C_0^{-1}\cdot (1+({\vrho_2})_\ast(\delta |\ell|)) \leq 1+({\vrho_1})_\ast(\delta |k|) \leq C_0\cdot  (1+({\vrho_2})_\ast(\delta |\ell|)).
  \]

  Moreover, since $\vrho_1(0)=0$, \eqref{eq:WarpingOneDerivativeProportionalRelation} implies
  \begin{equation}\label{eq:WarpingOneProportionalRelation}
   \alpha C_{\vrho}(\alpha)^{-1}\vrho_1(\xi)\leq \vrho_1(\alpha\xi) \leq \alpha C_{\vrho}(\alpha)\vrho_1(\xi),\quad \text{for all } \alpha\in [1,\infty),\ \xi\in\RR^+.
  \end{equation}

  If $k=0$ or $\ell=0$, then Equations \eqref{eq:RadialWarpingsCoveringRhoTildeStarModerateness} and \eqref{eq:RadialWarpingsCoveringRelativeModerateness} are trivial: Both sides are positive and there are only finitely many $k\in\ZZ^d$ with $Q_{\Phi_{\vrho_1}, k}^{(\delta,r)} \cap Q_{\Phi_{\vrho_2}, 0}^{(\delta,r)}\neq \emptyset$. Likewise, there are only finitely many $\ell\in\ZZ^d$ with $Q_{\Phi_{\vrho_1}, 0}^{(\delta,r)} \cap Q_{\Phi_{\vrho_2}, \ell}^{(\delta,r)}\neq \emptyset$. 
  Hence, we can assume that $k\neq 0 \neq \ell$. Then, \eqref{eq:RadialWarpingsCoveringNormControl} implies $({\vrho_1})_\ast(\delta |k|)\asymp ({\vrho_2})_\ast(\delta |\ell|)$, and we obtain
  \[
   [\widetilde{\vrho_1}((\vrho_2)_\ast(\delta|\ell|))]^{-1} = \frac{(\vrho_2)_\ast(\delta|\ell|)}{\vrho_1((\vrho_2)_\ast(\delta|\ell|))} \asymp \frac{(\vrho_1)_\ast(\delta|k|)}{\delta |k|} = \widetilde{(\vrho_1)_\ast} (\delta |k|),
  \]
  where we used \eqref{eq:WarpingOneProportionalRelation} and $(\vrho_1)_\ast(\delta|k|) \asymp (\vrho_2)_\ast(\delta|\ell|)$ to deduce $\vrho_1((\vrho_2)_\ast(\delta|\ell|)) \asymp \vrho_1((\vrho_1)_\ast(\delta|k|)) = \delta |k|$.

  \medskip{}

  To prove the last part of the proposition, recall from \eqref{eq:RadialWarpingWeightExplicit} that $\det(D\Phi_{\vrho_1}^{-1}(\delta k)) = (\vrho_1)_\ast' (\delta |k|) \cdot [\widetilde{(\vrho_1)_\ast} (\delta |k|)]^{d-1}$ and recall that it suffices to consider the case $k\neq 0\neq l$. Because of $(\vrho_1)_\ast(\delta|k|) \asymp (\vrho_2)_\ast(\delta|\ell|)$ we get
  \[
   (\vrho_1)_\ast' (\delta |k|) = [\vrho_1'((\vrho_1)_\ast (\delta |k|))]^{-1} \asymp [\vrho_1'((\vrho_2)_\ast (\delta |\ell|))]^{-1},
  \]
  by an application of \eqref{eq:WarpingOneDerivativeProportionalRelation}. Estimating the remaining term $[\widetilde{(\vrho_1)_\ast} (\delta |k|)]^{d-1}$ with \eqref{eq:RadialWarpingsCoveringRhoTildeStarModerateness} yields the desired estimate \eqref{eq:RadialWarpingsCoveringRelativeModerateness}.

  With \eqref{eq:RadialWarpingsCoveringRelativeModerateness}, we can derive that $\CalQ^{(\delta,r)}_{\Phi_{\vrho_1}}$ is relatively $\CalQ^{(\delta,r)}_{\Phi_{\vrho_2}}$-moderate. To see this, recall from Lemma \ref{lem:DecompInducedCoveringIsNice} that the elements $Q^{(\delta,r)}_{\Phi_{\vrho_1},k}$ of the semi-structured coverning $\CalQ^{(\delta,r)}_{\Phi_{\vrho_1}}$ can be written as $Q^{(\delta,r)}_{\Phi_{\vrho_1},k} = T_k Q_k ' + b_k$, with $T_k = D\Phi_{\vrho_1}^{-1}(\delta k)$, and hence $\det(T_k) = w_1(\delta k)$, such that \eqref{eq:RadialWarpingsCoveringRelativeModerateness} yields relative $\CalQ^{(\delta,r)}_{\Phi_{\vrho_2}}$-moderateness.
\end{proof}

We are now ready to prove embeddings $\Co(\Phi_{\vrho_1},\lebesgue_{\kappa_1}^{p,q})\hookrightarrow \Co(\Phi_{\vrho_2},\lebesgue_{\kappa_2}^{p,q})$, for appropriate weights $\kappa_{1},\kappa_2$.

\begin{theorem}\label{thm:EmbeddingForRadialComp}
  Let $\vrho_1,\vrho_2 : \RR \to \RR$ and $\kappa : \RR^d \rightarrow \RR^+$ be such that $(\Phi_{\vrho_1},\kappa)$ and $(\Phi_{\vrho_2},\kappa)$ form compatible pairs as per Definition \ref{def:StandingDecompositionAssumptions}. In particular, 
  assume that $\vrho_1,\vrho_2$ are $(d+1)$-admissible radial components. If there is a constant $C>0$, and a function $C_{\vrho_1} : [1,\infty)\rightarrow [1,\infty)$, such that
  \[
     (\vrho_2)'(\xi) \leq C (\vrho_1)'(\xi),\quad \text{and}\quad C_{\vrho_1}(\alpha)^{-1}\vrho_1'(\xi)\leq \vrho_1'(\alpha\xi) \leq C_{\vrho_1}(\alpha)\vrho_1'(\xi),\quad \text{for all } \alpha\in [1,\infty),\ \xi\in\RR^+,
  \]
  then, for all $p,q\in [1,\infty]$, we have the embeddings
   \begin{equation}\label{eq:RadialWarpedCoorbitEmbedding}
     \Co(\Phi_{\vrho_1},\lebesgue_{\kappa_{\vrho_1,\vrho_2,-\tilde{t}}}^{p,q})\hookrightarrow \Co(\Phi_{\vrho_2},\lebesgue_\kappa^{p,q}) \hookrightarrow \Co(\Phi_{\vrho_1},\lebesgue_{\kappa_{\vrho_1,\vrho_2,t}}^{p,q}),\
  \end{equation}
  with $\tilde{t} = \max(0,\max(p^{-1},1-p^{-1})-q^{-1})\leq 1$, and $t = \max(0,q^{-1}-\min(p^{-1},1-p^{-1}))\leq 1$. Here, for $t\in\RR$,
  \[
   \kappa_{\vrho_1,\vrho_2,t} := \kappa_{\vrho_1,\vrho_2,t}^{(q)} := \kappa \cdot \left[\frac{w_2\circ\Phi_{\vrho_2}}{w_1\circ \Phi_{\vrho_1}}\right]^{\frac{1}{q}-\frac{1}{2}-t} \quad\text{and}\quad w_j = \det(D\Phi^{-1}_{\vrho_j}),\quad j\in\{1,2\}.
  \]
\end{theorem}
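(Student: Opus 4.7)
The plan is to translate the two coorbit spaces into decomposition spaces via Theorem \ref{thm:CoorbitDecompositionIsomorphism}, transfer the problem to the embedding criteria of Theorem \ref{thm:DecompEmbeddings}, and verify that the explicit choice of $\kappa_{\vrho_1,\vrho_2,\pm t}$ makes the required $\ell^\infty$-quantities bounded. First, I verify that $(\Phi_{\vrho_1},\kappa_{\vrho_1,\vrho_2,\pm t})$ is a compatible pair in the sense of Definition \ref{def:StandingDecompositionAssumptions}: the derivative bound $\vrho_2' \leq C \vrho_1'$ gives, through Corollary \ref{cor:SubordinatenessForBoundingDerivativeRadialComp} and Lemma \ref{lem:AbstractWarpingSuffCond}, both compatibility of $(\Phi_{\vrho_1},\kappa)$ and moderateness of $\kappa \circ \Phi_{\vrho_1}^{-1}$; the correction factor $[(w_2\circ\Phi_{\vrho_2})/(w_1\circ\Phi_{\vrho_1})]^{1/q-1/2\mp t} \circ \Phi_{\vrho_1}^{-1}$ is moderate because $\Phi_{\vrho_2}\circ\Phi_{\vrho_1}^{-1}$ is Lipschitz (cf.\ the proof of Lemma \ref{lem:AbstractWarpingSuffCond}) and both $w_1, w_2$ are moderate by $(d+1)$-admissibility. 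Fixing $\delta>0$ and $r>\sqrt{d}/2$, Theorem \ref{thm:CoorbitDecompositionIsomorphism} then identifies each coorbit space with a decomposition space over $\CalQ^{(\delta,r)}_{\Phi_{\vrho_j}}$.

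Next, I collect the structural properties of the two coverings. Corollary \ref{cor:SubordinatenessForBoundingDerivativeRadialComp} yields almost subordinateness of $\CalP := \CalQ^{(\delta,r)}_{\Phi_{\vrho_1}}$ to $\CalQ := \CalQ^{(\delta,r)}_{\Phi_{\vrho_2}}$, while the second hypothesis \eqref{eq:WarpingOneDerivativeProportionalRelation} activates Proposition \ref{prop:RadialCoveringSubordinateness}, producing relative $\CalQ$-moderateness of $\CalP$ and of the weight $w_1$. The moderateness of the discrete weights $v_j$ (for either choice of the sign) follows from the same three inputs: $\CalQ$-cluster-constancy of $\kappa \circ \Phi_{\vrho_1}^{-1}$ via Lemma \ref{lem:AbstractWarpingSuffCond}, the relative moderateness of $w_1$ from Proposition \ref{prop:RadialCoveringSubordinateness}, and moderateness of $w_2\circ\Phi_{\vrho_2}\circ\Phi_{\vrho_1}^{-1}$ from the Lipschitz property of $\Phi_{\vrho_2}\circ\Phi_{\vrho_1}^{-1}$ combined with $v_0^d$-moderateness of $w_2$. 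This places us squarely in the setting of Theorem \ref{thm:DecompEmbeddings}.

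The first inclusion in \eqref{eq:RadialWarpedCoorbitEmbedding} corresponds to Theorem \ref{thm:DecompEmbeddings}(2) with $p_1=p_2=p,\ q_1=q_2=q$, so that the $\ell^{q_2 (q_1/q_2)'}$-norm collapses to an $\ell^\infty$-norm and the exponent parameter is exactly $\tilde t$; the second inclusion corresponds symmetrically to item (1) with parameter $t$. Recalling $|\det T_i| = w_2(\delta i)$ and $|\det S_{j_i}| = w_1(\delta j_i)$, and unfolding the definitions of $u_i$, $v_{j_i}$, and $\kappa_{\vrho_1,\vrho_2,\pm t}$, one obtains on any intersecting pair of cells
\[
    \frac{u_i}{v_{j_i}}
    \;\asymp\;
    \left[\frac{w_1(\delta j_i)}{w_2(\delta i)}\right]^{\tilde t}
    \quad \text{respectively} \quad
    \frac{v'_{j_i}}{u_i}
    \;\asymp\;
    \left[\frac{w_1(\delta j_i)}{w_2(\delta i)}\right]^{t},
\]
the $\kappa$-factor being absorbed through Lemma \ref{lem:AbstractWarpingSuffCond} and the $w_2$-factor through the Lipschitz bound. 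Multiplying by $(|\det T_i|/|\det S_{j_i}|)^{\pm \tilde t}$ or $(|\det T_i|/|\det S_{j_i}|)^{\pm t}$ cancels exactly, so both $K_{\tilde t}$ and $K_t$ are bounded and the two embeddings follow.

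The main subtlety is bookkeeping: making sure the exponents $1/q-1/2\mp t$ in the definition of $\kappa_{\vrho_1,\vrho_2,\pm t}$ are precisely the ones needed to annihilate the $w_1/w_2$-quotient arising from the mismatch between the two $u$-weights and the mismatch between $|\det T_i|$ and $|\det S_{j_i}|$. The choice is dictated by the fact that the ``integrability shift'' $t$ (respectively $\tilde t$) enters Theorem \ref{thm:DecompEmbeddings} as a \emph{pair} of exponents on $|\det T_i|$ and on $|\det S_{j_i}|$ that sum to $p_1^{-1}-p_2^{-1}=0$, so that the correction baked into $\kappa_{\vrho_1,\vrho_2,\pm t}$ must compensate for exactly this imbalance. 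Once the algebra is carried out, the coincidence of the exponents confirms that the weights in \eqref{eq:RadialWarpedCoorbitEmbedding} are not only sufficient but canonical for this embedding.
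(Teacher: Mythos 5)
Your proof is correct and follows essentially the same route as the paper: verify compatibility of $(\Phi_{\vrho_1},\kappa_{\vrho_1,\vrho_2,\pm t})$ via Corollary~\ref{cor:SubordinatenessForBoundingDerivativeRadialComp} and Lemma~\ref{lem:AbstractWarpingSuffCond}, identify both coorbit spaces as decomposition spaces via Theorem~\ref{thm:CoorbitDecompositionIsomorphism}, check the structural hypotheses of Theorem~\ref{thm:DecompEmbeddings} using Corollary~\ref{cor:SubordinatenessForBoundingDerivativeRadialComp} and Proposition~\ref{prop:RadialCoveringSubordinateness}, and then show that the determinant factors cancel against the $w_1/w_2$ quotient built into $\kappa_{\vrho_1,\vrho_2,\pm t}$. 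Your explicit cancellation identities (e.g.\ $u_i/v_{j_i}\asymp[w_1(\delta j_i)/w_2(\delta i)]^{\tilde t}$, using $\kappa(\Phi_{\vrho_1}^{-1}(\delta j_i))\asymp\kappa(\Phi_{\vrho_2}^{-1}(\delta i))$ and $w_2(\delta i)\asymp w_2(\Phi_{\vrho_2}\Phi_{\vrho_1}^{-1}(\delta j_i))$ on intersecting cells) match the paper's computation of $K_t$ and $K_{\tilde t}$.
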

\begin{proof}
   We begin by showing that $(\Phi_1,\kappa_{\vrho_1,\vrho_2,t})$ forms a compatible pair in the sense of Definition \ref{def:StandingDecompositionAssumptions}, for any $t\in\RR$. By assumption, $(\Phi_{\vrho_1},\kappa)$ form a compatible pair, and $w_1\circ \Phi_{\vrho_1}\circ \Phi_{\vrho_1}^{-1} = w_1$ is $v_1$-moderate, with $v_1$ a (submultiplicative) control weight for the $(d+1)$-admissible warping function $\Phi_{\vrho_1}$. Further, it is a direct consequence of the definition of admissible warping functions, Definition \ref{assume:DiffeomorphismAssumptions}, that $(\Phi_{\vrho_2},w_2\circ \Phi_{\vrho_2})$ form a compatible pair. By Corollary \ref{cor:SubordinatenessForBoundingDerivativeRadialComp}, we can apply Lemma \ref{lem:AbstractWarpingSuffCond}, which yields compatibility of $\Phi_{\vrho_1}$ and $w_2\circ \Phi_{\vrho_2}$. Altogether, we conclude that $\kappa_{\vrho_1,\vrho_2,t}\circ \Phi_{\vrho_1}^{-1}$ is moderate with respect to the weight
   \[
    \tilde{\kappa}_1 \cdot \left[v_2(C\bullet)\cdot v_1 \right]^{\left|\frac{1}{q}-\frac{1}{2}-t\right|},
   \]
   where $\tilde{\kappa}_1$ is the moderating weight for the compatible pair $(\Phi_{\vrho_1},\kappa)$, see Definition \ref{def:StandingDecompositionAssumptions}, $v_2$ is a submultiplicative, radially increasing control weight for $\Phi_{\vrho_2}$, see Definition \ref{assume:DiffeomorphismAssumptions}, and the proof of Lemma  \ref{lem:AbstractWarpingSuffCond} shows that $w_2\circ \Phi_{\vrho_2}\circ \Phi_{\vrho_1}^{-1}$ is moderate with respect to $v_2(C\bullet)$.

   Hence, by Theorem \ref{thm:CoorbitDecompositionIsomorphism} we can identify $\Co(\Phi_{\vrho_1},\lebesgue_{\kappa_{\vrho_1,\vrho_2,t}}^{p,q})$ and $\Co(\Phi_{\vrho_2},\lebesgue_\kappa^{p,q})$ with decomposition spaces. Specifically, we obtain, for all $\delta>0$, $r>\sqrt{d}/2$,
   \[
     \Co(\Phi_{\vrho_2},\lebesgue_\kappa^{p,q}) = \DecompSp(\CalQ^{(\delta,r)}_{\Phi_{\vrho_2}},\lebesgue^p,\ell^q_{u_2}),\quad \text{and}\quad \Co(\Phi_{\vrho_1},\lebesgue_{\kappa_{\vrho_1,\vrho_2,t}}^{p,q}) = \DecompSp(\CalQ^{(\delta,r)}_{\Phi_{\vrho_1}},\lebesgue^p,\ell^q_{u_1^{(t)}}),
  \]   
  with 
   \[
      (u_2)_\ell = \kappa(\Phi_{\vrho_2}^{-1}(\delta \ell)) \cdot [w_{2}(\delta \ell)]^{1/q-1/2}\ \ \text{and}\ \   (u_1^{(t)})_k = \kappa(\Phi_{\vrho_1}^{-1}(\delta k)) \cdot \left[w_{1}(\delta k)\right]^{t} \cdot \left[w_{2}(\Phi_{\vrho_2}(\Phi_{\vrho_1}^{-1}(\delta k)))\right]^{1/q-1/2-t},
   \]
   for all $\ell,k\in\ZZ^d$. For ease of notation, and without loss of generality, we will subsequently choose the same values of $\delta>0$ and $r>\sqrt{d}/2$ for both coverings.

   \medskip{}

   In order to apply Theorem \ref{thm:DecompEmbeddings} to prove the desired embeddings, it remains to verify that $\CalQ^{(\delta,r)}_{\Phi_{\vrho_1}}$ is almost subordinate to $\CalQ^{(\delta,r)}_{\Phi_{\vrho_2}}$, and that $\CalQ^{(\delta,r)}_{\Phi_{\vrho_1}}$ and $u_1^ {(t)}$ are relatively $\CalQ^{(\delta,r)}_{\Phi_{\vrho_2}}$-moderate. The first property follows directly from Corollary \ref{cor:SubordinatenessForBoundingDerivativeRadialComp}, and an application of Proposition \ref{prop:RadialCoveringSubordinateness} yields relative $\CalQ^{(\delta,r)}_{\Phi_{\vrho_2}}$-moderateness of $\CalQ^{(\delta,r)}_{\Phi_{\vrho_1}}$.

   To show that $u_1 := u_1^{(t)}$ is relatively $\CalQ^{(\delta,r)}_{\Phi_{\vrho_2}}$-moderate, 
   note as a consequence of Lemma \ref{lem:AbstractWarpingSuffCond} (applied with $\kappa$ and with $w_2\circ \Phi_{\vrho_2}$ instead of $\kappa$) that if $Q^{(\delta,r)}_{\Phi_{\vrho_1},k}\cap Q^{(\delta,r)}_{\Phi_{\vrho_2},\ell} \neq \emptyset$, then 
   \begin{equation}\label{eq:KappaAndWOneTwoPhiOneTwoEquivalence}
     \kappa(\Phi^{-1}_{\vrho_1}(\delta k)) \asymp \kappa(\Phi^{-1}_{\vrho_2}(\delta \ell))\quad \text{and}\quad w_2(\delta \ell) \asymp w_2((\Phi_{\vrho_2}\circ \Phi_{\vrho_1}^{-1}(\delta k)).
   \end{equation}
   Finally, Equation \eqref{eq:RadialWarpingsCoveringRelativeModerateness} in Proposition \ref{prop:RadialCoveringSubordinateness} shows that $w_1(\delta k) = \det(D\Phi_{\vrho_1}^{-1}(\delta k)) \asymp \frac{[\widetilde{\vrho_1} ((\vrho_2)_\ast(\delta|\ell|))]^{1-d}}{\vrho_1'((\vrho_2)_\ast(\delta|\ell|))}$. This clearly shows that $u_1^{(t)}$ is relatively $\CalQ^{(\delta,r)}_{\Phi_{\vrho_2}}$-moderate.

   \medskip{}

   We have now verified all the prerequisites for applying Theorem \ref{thm:DecompEmbeddings}, Items (1)
   and (2), with $\CalP = \CalQ^{(\delta,r)}_{\Phi_{\vrho_1}}$, $\CalQ = \CalQ^{(\delta,r)}_{\Phi_{\vrho_2}}$
   and the weights $v=u_1^{(t)}$, $u = u_2$. To apply Theorem \ref{thm:DecompEmbeddings}(1), we set $t = \max(0,q^{-1}-\min(p^{-1},1-p^{-1})$.
   Select, for each $\ell\in\ZZ^d$, some arbitrary $k_\ell\in\ZZ^d$ such that $Q^{(\delta,r)}_{\Phi_{\vrho_1},k_\ell}\cap Q^{(\delta,r)}_{\Phi_{\vrho_2},\ell}\neq \emptyset$. Then, recalling the form of $\CalQ^{(\delta,r)}_{\Phi_{\vrho_1}}$ and $\CalQ^{(\delta,r)}_{\Phi_{\vrho_2}}$ as semi-structured coverings, given in Lemma \ref{lem:DecompInducedCoveringIsNice},  $K_t$ in Theorem \ref{thm:DecompEmbeddings}(1) takes the form
   \[
     \begin{split}
         K_t & = \sup_{\ell\in\ZZ^d} \left| \frac{(u_1^{(t)})_{k_\ell}}{(u_2)_{\ell}}\cdot [w_2(\delta{\ell})]^{t} \cdot [w_1(\delta k_\ell)]^{-t}\right|\\
         & = \sup_{\ell\in\ZZ^d} \left|\frac{\kappa(\Phi_{\vrho_1}^{-1}(\delta k_\ell))}{\kappa(\Phi_{\vrho_2}^{-1}(\delta \ell))}\right| \cdot \left[\frac{w_{1}(\delta k_\ell)}{w_1(\delta k_\ell)}\right]^{t} \cdot \left[\frac{w_{2}(\Phi_{\vrho_2}(\Phi_{\vrho_1}^{-1}(\delta k_\ell)))}{w_2(\delta {\ell} )}\right]^{1/q+1/2-t}.
     \end{split}
   \]
   The second factor above trivially equals $1$. The other two factors are bounded by a constant, see Equation \eqref{eq:KappaAndWOneTwoPhiOneTwoEquivalence}. In total, we obtain $K_t < \infty$. Since $K_t$ is finite, we obtain the embedding $\Co(\Phi_{\vrho_2},\lebesgue_\kappa^{p,q}) \hookrightarrow \Co(\Phi_{\vrho_1},\lebesgue_{\kappa_{\vrho_1,\vrho_2,t}}^{p,q})$. The second embedding is proven analogously, with an application of Theorem \ref{thm:DecompEmbeddings}(2). Specifically, we set $t = -\tilde{t} = -\max(0,\max(p^{-1},1-p^{-1})-q^{-1})$ and obtain
   \[
    K_{\tilde{t}} = \sup_{\ell\in\ZZ^d} \left| \frac{(u_2)_{\ell}}{(u_1^{(-\tilde{t})})_{k_\ell}}\cdot [w_2(\delta {\ell})]^{\tilde{t}} \cdot [w_1(\delta k_\ell )]^{-\tilde{t}}\right| < \infty,
   \]
   such that Theorem \ref{thm:DecompEmbeddings}(2) yields $\Co(\Phi_{\vrho_1},\lebesgue_{\kappa_{\vrho_1,\vrho_2,-\tilde{t}}}^{p,q}) \hookrightarrow \Co(\Phi_{\vrho_2},\lebesgue_\kappa^{p,q})$. This finishes the proof.
\end{proof}

\begin{remark}
  Note that, in in the context of Theorem \ref{thm:EmbeddingForRadialComp}, the seemingly weaker assumption that $\Phi_{\vrho_1}$ be a $\mathcal C^\infty$-diffeomorphism and a $(d+1)$-admissible warping function is, in fact, equivalent to $(\Phi_{\vrho_1},\kappa)$ being a compatible pair: Under the conditions of the theorem, we can apply Lemma \ref{lem:AbstractWarpingSuffCond}, which then yields compatibility of $\Phi_{\vrho_1}$ and $\kappa$.
\end{remark}

  \section{Embeddings between Besov spaces and warped coorbit spaces}
\label{sec:WarpedCoorbitAndBesov}

The homogeneous and inhomogeneous Besov spaces~\cite{besov59,triebel2010theory}, although they have historically been defined in various ways, are prototypical decomposition spaces. Specifically, they can be described as decomposition spaces with respect to a radial, dyadic covering and exponential weights. In particular, for the \emph{inhomogeneous} Besov spaces $B^{p,q}_s(\RR^d)$, we have the equality $B_s^{p,q} (\RR^d) = \DecompSp (\CalB, \lebesgue^p, \ell_{v^s}^q)$, with the weight $v^s = (v^s_j)_{j\in\NN_0}$, $v^s_j = 2^{js}$ and the \emph{inhomogeneous Besov covering} 
$\CalB = (B_j)_{j \in \NN_0}$ of $\RR^d$, given by
  \begin{equation}
    B_0 := \{\xi \in \RR^d \,:\, |\xi| < 2 \}
    \quad \text{and} \quad
    B_j := \{\xi \in \RR^d \,:\, 2^{j-1} < |\xi| < 2^{j+1} \}
    \quad \text{for } j \in \NN \, , 
    \label{eq:BesovCoveringDefinition}
\end{equation}
see also \cite[Definition 2.2.1]{Grafakos2009Modern}. In particular, $v^s$ is $\CalB$-moderate and the covering $\CalB$ is semi-structured and tight~\cite[Lemma 9.10]{voigtlaender2016embeddings}.

We now proceed to investigate how warped coorbit spaces relate to inhomogeneous Besov spaces. In particular, we show that, for any admissible radial component $\vrho$, the $\Phi_\vrho$-induced $(\delta,r)$-fine frequency covering is almost subordinate to the standard inhomogeneous Besov covering $\CalB$ just defined. However, we also show that the converse can only be true in dimension $d=1$, even if $\Phi$ is not a radial warping function. Nonetheless, we obtain general conditions for embeddings between $\Co (\Phi_\vrho, \lebesgue_\kappa^{p_1,q_1})$ and $B_{s}^{p_2, q_2}(\RR^d)$, for specific weights $\kappa$ depending on $s$. Specifically, if $\vrho_1$ is any slow start version of the weakly admissible radial component $\vsig_{1} = \ln(1+\bullet)$, we show equality of $\Co (\Phi_\vrho, \lebesgue_\kappa^{p,q})$ and $B_{s}^{p, q}(\RR^d)$ in dimension $d=1$, for a certain weight $\kappa$. In dimension $d\geq 2$, we obtain a \emph{closeness} relation, in the sense that $B_{s}^{p, q}(\RR^d)$ can be nested between two warped coorbit spaces with respect to $\Phi_{\vrho_1}$ that only differ by a minor modification of the weight $\kappa$. 

The following result shows that the covering $\CalQ_{\Phi_\vrho}^{(\delta,r)}$ induced by $\Phi_\vrho$ is almost subordinate to, and even relatively moderate with respect to, the inhomogeneous Besov covering.

\begin{proposition}\label{prop:RadialCoveringBesovSubordinateness}
  Let $\vrho : \R \to \R$ be a $0$-admissible radial component and let $\CalB$ be the inhomogeneous Besov covering.
  Then the $(\delta,r)$-fine frequency covering $\CalQ_{\Phi_\vrho}^{(\delta,r)}$
  induced by $\Phi_\vrho$ is almost subordinate to $\CalB$,
  for arbitrary $\delta>0$, $r > \sqrt{d} / 2$.
  Specifically, we have the properties  
  \begin{equation}
    \Big|
      j - \log_2 \big(1 + \vrho_\ast (\delta |k|) \big)
    \Big|
    \lesssim 1
    \quad \text{if} \quad
    B_j \cap Q_{\Phi_\vrho,k}^{(\delta,r)} \neq \emptyset \, ,
    \label{eq:RadialWarpingCoveringBesovSubordinateness}
  \end{equation}
  as well as
  \begin{equation}
    \widetilde{\vrho_\ast} (\delta |k|)
    \asymp \frac{2^j}{\vrho (2^j)}
    \quad \forall \, j \in \NN_0 \text{ and } k \in \ZZ^d
                  \text{ with }
                  B_j \cap Q_{\Phi_\vrho, k}^{(\delta,r)} \neq \emptyset \, .
    \label{eq:RadialWarpingCoveringRhoTildeStarModerateness}
  \end{equation}

  \medskip{}

  Furthermore, if $\vrho$ is $C^2$, and there are $R, C > 0$ such that
  \begin{equation}
    \left|
      \frac{\vrho_\ast (\gamma) \cdot \vrho_\ast ''(\gamma)}
           {[\vrho_\ast '(\gamma)]^2}
    \right|
    \leq C < \infty
    \quad \forall \, \gamma \in [R, \infty) \, ,
    \label{eq:RadialComponentBesovModeratenessCondition}
  \end{equation}
  then $\CalQ_{\Phi_\vrho}^{(\delta,r)}$ is relatively $\CalB$-moderate, with
  \begin{equation}
    w(\delta k) \asymp \vrho_\ast ' \big( \vrho (2^j) \big)
                       \cdot \left(
                               2^j \big/ \vrho (2^j)
                             \right)^{d-1}
    \quad \text{whenever } j\in\NN_0 \text{ and } k\in\ZZ^d \text{ with }
    B_j \cap Q_{\Phi_\vrho,k}^{(\delta,r)} \neq \emptyset \, .
    \label{eq:RadialWarpingCoveringRelativeModerateness}
  \end{equation}
\end{proposition}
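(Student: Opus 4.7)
The plan is to treat the two halves of the proposition in sequence, exploiting that Proposition~\ref{prop:RadialCoveringSubordinateness} already delivers the essential radial size control. First, apply Proposition~\ref{prop:RadialCoveringSubordinateness} in the degenerate case $\vrho_1 = \vrho_2 = \vrho$: the estimate~\eqref{eq:RadialWarpingsCoveringNormControl} then reads $1 + |\xi| \asymp 1 + \vrho_\ast(\delta |k|)$ for every $\xi \in Q_{\Phi_\vrho,k}^{(\delta,r)}$. Combined with the dyadic characterization $1 + |\xi| \asymp 2^j$ for $\xi \in B_j$ (with the appropriate reading when $j = 0$), this yields $2^j \asymp 1 + \vrho_\ast(\delta|k|)$ whenever $B_j \cap Q_{\Phi_\vrho,k}^{(\delta,r)} \neq \emptyset$, which after taking $\log_2$ is exactly~\eqref{eq:RadialWarpingCoveringBesovSubordinateness}. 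The same estimate shows that only $O(1)$ many indices $j$ are compatible with each fixed $k$, so $\CalQ_{\Phi_\vrho}^{(\delta,r)}$ is weakly subordinate to $\CalB$; since each $Q_{\Phi_\vrho,k}^{(\delta,r)}$ is path-connected and each $B_j$ is open, Lemma~\ref{lem:almost_subordinateness_connected} upgrades this to almost subordinateness.

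For \eqref{eq:RadialWarpingCoveringRhoTildeStarModerateness}, I would rewrite $\widetilde{\vrho_\ast}(\delta|k|) = \vrho_\ast(\delta|k|)/(\delta|k|)$ and observe that the target $2^j/\vrho(2^j)$ factors analogously, so the task reduces to the numerator estimate $\vrho_\ast(\delta|k|) \asymp 2^j$ (already in hand from the first step) together with the denominator estimate $\delta|k| \asymp \vrho(2^j)$. For the latter, combine the defining bound $|\vrho(|\xi|) - \delta|k|| \leq \delta r$ with $|\xi| \in [2^{j-1}, 2^{j+1}]$; this is effective once one extracts from \eqref{eq:AdmissibilityFirstDerivative} the multiplicative doubling $\vrho(\alpha t) \asymp \vrho(t)$ for $\alpha \in [1/2, 2]$, $t \geq 1$. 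The latter follows since \eqref{eq:AdmissibilityFirstDerivative} gives $(\log \vrho_\ast)'(\gamma) \leq 2C_1$ on $[1,\infty)$, hence $\vrho_\ast(2\gamma) \leq e^{2C_1} \vrho_\ast(\gamma)$, which transfers to $\vrho$ by inverting. Finitely many small indices $(j,k)$ (where the bounded regime applies) are verified directly, since both sides lie in a fixed interval bounded away from zero and infinity.

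For \eqref{eq:RadialWarpingCoveringRelativeModerateness}, substitute the factorization $w(\delta k) = \vrho_\ast'(\delta|k|) \cdot [\widetilde{\vrho_\ast}(\delta|k|)]^{d-1}$ from \eqref{eq:RadialWarpingWeightExplicit}, so that the second factor is handled directly by \eqref{eq:RadialWarpingCoveringRhoTildeStarModerateness}. The remaining task is to compare $\vrho_\ast'(\delta|k|)$ with $\vrho_\ast'(\vrho(2^j))$. Rewriting \eqref{eq:RadialComponentBesovModeratenessCondition} as the differential inequality $|(\log \vrho_\ast')'(\gamma)| \leq C \cdot (\log \vrho_\ast)'(\gamma)$ on $[R,\infty)$ and integrating between $\delta|k|$ and $\vrho(2^j)$ gives $|\log (\vrho_\ast'(\delta|k|)/\vrho_\ast'(\vrho(2^j)))| \leq C \cdot |\log (\vrho_\ast(\delta|k|)/\vrho_\ast(\vrho(2^j)))|$; the right-hand side is uniformly bounded since $\vrho_\ast(\delta|k|) \asymp 2^j = \vrho_\ast(\vrho(2^j))$ by Step 1. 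The few small-$j$ cases where one or both arguments fall below $R$ form a finite set and are verified by direct inspection. Relative $\CalB$-moderateness then follows immediately since Lemma~\ref{lem:DecompInducedCoveringIsNice} identifies $|\det T_k|$ with $w(\delta k)$, whose asymptotic value depends only on $j$.

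The main obstacle I anticipate is the third step: packaging the pointwise curvature assumption \eqref{eq:RadialComponentBesovModeratenessCondition} into a clean global multiplicative comparison for $\vrho_\ast'$, while cleanly partitioning the range of indices into an asymptotic regime (where the logarithmic integration succeeds) and a bounded regime (where a trivial compactness argument applies), without the constants silently blowing up at the transition.
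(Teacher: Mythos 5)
Your first step and the overall structure of Steps~2 and 3 are sound. In fact, your argument for \eqref{eq:RadialWarpingCoveringRelativeModerateness} is a genuinely cleaner route than the paper's: rather than establishing a Lipschitz bound for the change-of-variable function $g(\tau) = \ln\vrho_\ast'(\vrho(e^\tau))$ and separately invoking the $v$-moderateness of $\vrho_\ast'$, you integrate the curvature inequality directly from $\delta|k|$ to $\vrho(2^j)$ and control the result via $|\log\vrho_\ast(\delta|k|) - \log\vrho_\ast(\vrho(2^j))| = |\log\vrho_\ast(\delta|k|) - j\log 2| \lesssim 1$, which is already available from Step~1.

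However, the derivation of the multiplicative doubling $\vrho(\alpha t) \asymp \vrho(t)$ in Step~2 has a genuine error. From $(\log\vrho_\ast)'(\gamma) \leq 2C_1$ on $[1,\infty)$ you conclude $\vrho_\ast(2\gamma) \leq e^{2C_1}\vrho_\ast(\gamma)$, but integrating the premise over $[\gamma,2\gamma]$ gives only $\vrho_\ast(2\gamma) \leq e^{2C_1\gamma}\vrho_\ast(\gamma)$, and the stated conclusion is in fact false: for a slow start version of $\vsig_1(\xi)=\ln(1+\xi)$ one has $\vrho_\ast(\gamma)\asymp e^\gamma$ for large $\gamma$, so $\vrho_\ast(2\gamma)/\vrho_\ast(\gamma)\asymp e^\gamma\to\infty$. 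Even if $\vrho_\ast$ did satisfy a multiplicative doubling bound, the announced "transfers to $\vrho$ by inverting" would not deliver what you need: such a bound controls how slowly $\vrho_\ast$ grows, and upon inversion it gives only a \emph{lower} bound on the change of $\vrho$ over a dyadic window, not the upper bound required for $\vrho(\alpha t) \lesssim \vrho(t)$.

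The doubling of $\vrho$ actually follows from the \emph{left-hand} inequality in \eqref{eq:AdmissibilityFirstDerivative}, namely $\vrho_\ast'(\gamma)\geq C_0\widetilde{\vrho_\ast}(\gamma)$. Setting $h(\tau):=\ln(\vrho(e^\tau))$ and $\gamma := \vrho(e^\tau)$, the inverse function theorem gives
\[
  0 \leq h'(\tau) = \frac{e^\tau \vrho'(e^\tau)}{\vrho(e^\tau)} = \frac{\widetilde{\vrho_\ast}(\gamma)}{\vrho_\ast'(\gamma)} \leq C_0^{-1},
\]
so $h$ is Lipschitz on $\RR$, whence $\vrho(\alpha t)/\vrho(t) = e^{h(\ln t + \ln\alpha) - h(\ln t)} \in [2^{-1/C_0}, 2^{1/C_0}]$ for all $\alpha\in[1/2,2]$ and $t>0$; this is precisely the paper's argument. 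With that substitution, the remainder of your Step~2 goes through.
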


\begin{remark}\label{rem:RadialWarpingDecompositionSpaceWeightRelativeModerateness}
  If Condition \eqref{eq:RadialComponentBesovModeratenessCondition} is satisfied,
  and if the weight $\kappa : \RR^d \to \RR^+$ is given by
  $\kappa(\xi) = (1 + |\xi|)^s$ for some $s \in \RR$, then
  the associated weight $u^{(q)}$ defined in \eqref{eq:DecompositionSpaceWeight}
  is relatively $\CalB$-moderate. Indeed, if $\emptyset \neq B_j \cap Q_{\Phi_\vrho, k}^{(\delta,r)} \ni \xi$,
  then $1 + |\xi| \asymp 2^j$,
  so that Equation~\eqref{eq:RadialWarpingsCoveringNormControl} shows
  \[
    \kappa \big( \Phi_\vrho^{-1} (\delta k) \big)
    = \kappa \big( \Phi_{\vrho_\ast} (\delta k) \big)
    = (1 + |\Phi_{\vrho_\ast} (\delta k)|)^s
    = \big( 1 + \vrho_\ast (\delta |k|) \big)^s
    \asymp (1 + |\xi|)^s \asymp 2^{s j} \, .
  \]
  In combination with Equation~\eqref{eq:RadialWarpingCoveringRelativeModerateness},
  we thus see
  \begin{equation}
    u_k^{(q)}
    = \kappa \big( \Phi_\vrho^{-1} (\delta k) \big)
      \cdot [w(\delta k)]^{\frac{1}{q} - \frac{1}{2}}
    \asymp 2^{s j}
           \cdot \left[
                   \vrho_\ast ' \big( \vrho (2^j) \big)
                   \cdot \left(
                           2^j \big/ \vrho(2^j)
                         \right)^{d-1}
                 \right]^{\frac{1}{q} - \frac{1}{2}}
    \quad \text{if } B_j \cap Q_{\Phi_\vrho, k}^{(\delta,r)} \neq \emptyset \, ,
    \label{eq:RadialWarpingDecompositionSpaceWeightRelativeModerateness}
  \end{equation}
  which implies in particular that $u^{(q)}$ is relatively $\CalB$-moderate.
\end{remark}

\begin{proof} 
Let $v : \RR \to \RR^+$ be a control weight for $\vrho$. 
  To prove Equation~\eqref{eq:RadialWarpingCoveringBesovSubordinateness},
  fix $k \in \ZZ^d$, and let $j \in \NN_0$ be arbitrary. If $\emptyset \neq B_j \cap Q_{\Phi_\vrho, k}^{(\delta,r)} \ni \xi$, then $1 + |\xi| \asymp 2^j$ follows directly from the definition of $B_j$. In combination with \eqref{eq:RadialWarpingsCoveringNormControl}, we thus see
  that there are $0 < c_0 \leq 1 \leq c_1$ (independent of $j,k,\xi$) such
  that $c_0 \cdot \big(1 + \vrho_\ast (\delta |k|) \big)
        \leq 2^j
        \leq c_1 \cdot \big(1 + \vrho_\ast (\delta |k|) \big)$. Hence,
  $\big|j - \log_2 (1 + \vrho_\ast (\delta |k|)) \big|
   \leq \max \{ \log_2 (c_1), \log_2 (c_0^{-1}) \}$, such that
  \eqref{eq:RadialWarpingCoveringBesovSubordinateness} is satisfied.

  In particular, there are no more than $1 + 2 \max \{ \log_2 (c_1), \log_2 (c_0^{-1}) \}$
  nonnegative integers $j \in \NN_0$ which satisfy this estimate. We thus see that
  \[
    \sup_{k \in \ZZ^d}
      |
       \{
         j \in \NN_0
         \,:\,
         B_j \cap Q_{\Phi_\vrho, k}^{(\delta,r)} \neq \emptyset
       \}
      |
    \leq 1 + 2 \max \{ \log_2 (c_1), \log_2 (c_0^{-1}) \}
    < \infty \, ,
  \]
  and $\CalQ_{\Phi_\vrho}^{(\delta,r)}$ is weakly subordinate to $\CalB$.
  Since the elements of  $\CalQ_{\Phi_\vrho}^{(\delta,r)}$ are path-connected, Lemma~\ref{lem:almost_subordinateness_connected} shows that $\CalQ_{\Phi_\vrho}^{(\delta,r)}$ is almost subordinate to $\CalB$.

  \medskip{}

  Our proof of Equation~\eqref{eq:RadialWarpingCoveringRhoTildeStarModerateness} distinguishes two cases:  Since $\CalQ_{\Phi_\vrho}^{(\delta,r)}$ is weakly subordinate to $\CalB$, the set
  $\{
         (j,k) \in \NN_0\times \ZZ^d
         \,:\,
         |k|\leq 2r \text{ and } B_j \cap Q_{\Phi_\vrho, k}^{(\delta,r)} \neq \emptyset
  \}$ is finite. Since $\widetilde{\vrho_\ast}$ and $2^{\bullet} \big/ \vrho(2^{\bullet})$ are positive functions,
  $\widetilde{\vrho_\ast} (\delta |k|) \asymp 2^j \big/ \vrho(2^j)$ for all $(j,k)$ in the above set. Note that Equation \eqref{eq:RadialWarpingNormControlStep1} implies that the set $\{
         k \in \ZZ^d
         \,:\,
         B_0 \cap Q_{\Phi_\vrho, k}^{(\delta,r)} \neq \emptyset
  \}$ is likewise finite, yielding $\widetilde{\vrho_\ast} (\delta |k|) \asymp 2^0 \big/ \vrho(2^0)$ for all such $k$.

  Finally, we consider the case $|k|> 2r$ and $j\neq 0$. To this end, note that the map $h : \RR \to \RR, \tau \mapsto \ln (\vrho (e^\tau))$
  is well-defined and strictly increasing, since $\vrho$ is strictly increasing
  with $\vrho (0) = 0$. Setting $\gamma := \vrho (e^\tau) > 0$, we see that the derivative
  of $h$ satisfies
  \[
    0 \leq h'(\tau)
    = \frac{1}{\vrho (e^\tau)} \cdot \vrho' (e^\tau) \cdot e^\tau
    = \frac{1}{\gamma}
      \cdot \frac{1}{\vrho_\ast ' (\gamma)}
      \cdot \vrho_\ast (\gamma)
    = \frac{\widetilde{\vrho_\ast} (\gamma)}
           {\vrho_\ast ' (\gamma)}
    \leq C_0^{-1} \, ,
  \]
  where we used $\vrho' = [\vrho_\ast'\circ \vrho]^{-1}$ by the inverse function theorem and, since $\vrho$ is $0$-admissible, that $\vrho_\ast '(\gamma) \geq C_0 \cdot \widetilde{\vrho_\ast} (\gamma)$, cf. Equation~\eqref{eq:AdmissibilityFirstDerivative}. The above estimate implies in particular that
  $|h (\tau) - h(\sigma)| \leq C_0^{-1} \cdot |\tau - \sigma|$
  for all $\tau,\sigma \in \RR$.
  Let $\xi\in B_j\cap Q^{\delta,r}_{\Phi_\vrho,k}$ and set $\tau := \ln (|\xi|)$ and $\sigma := \ln(2^j)$, such that
  $|\tau - \sigma| \leq \ln(2)$, since $2^{-1} \leq |\xi| / 2^j \leq 2$.
  Therefore,
  \begin{equation}\label{eq:ModeratenessRhoOnBesovElements}
    C_0^{-1} \ln(2)
    \geq |h (\tau) - h(\sigma)|
    =    \big|
           \ln \big( \vrho(|\xi|) \big) - \ln \big( \vrho(2^j) \big)
         \big|
    = \ln \bigg(
            \max \bigg\{
                   \frac{\vrho(|\xi|)}{\vrho(2^j)},
                   \frac{\vrho(2^j)}{\vrho(|\xi|)}
                 \bigg\}
          \bigg) \, ,
  \end{equation}
  and hence $\vrho(2^j)\asymp \vrho (|\xi|) \asymp \delta |k|$, where the latter equivalence follows from Equation \eqref{eq:RadialWarpingNormControlStep1}, in the proof of Proposition \ref{prop:RadialCoveringSubordinateness}, with $|k|\geq 2r$.
 Since $k,j \neq 0$ and $|\xi| \geq 2^{j-1} \geq 1$, we have $1 + \vrho_\ast (\delta |k|) \asymp \vrho_\ast (\delta |k|)$ and
 $1 + |\xi| \asymp |\xi|$. Therefore, Equation~\eqref{eq:RadialWarpingsCoveringNormControl} implies $\vrho_\ast (\delta |k|) \asymp |\xi| \asymp 2^j$. Together with the prior estimate, we obtain \eqref{eq:RadialWarpingCoveringRhoTildeStarModerateness}.

  \medskip{}

  To prove the last part of the proposition, let us assume that
  \eqref{eq:RadialComponentBesovModeratenessCondition} is satisfied.
  Note that $\vrho_\ast '$ is a \emph{positive}, continuous function,
  since $\vrho$ is a strictly increasing diffeomorphism.
  Therefore, the left-hand side of
  \eqref{eq:RadialComponentBesovModeratenessCondition} is a continuous function,
  so that \eqref{eq:RadialComponentBesovModeratenessCondition} is in fact
  satisfied for all $\gamma \in [0,\infty)$,
  after possibly enlarging the constant $C$.
  In the following, we will assume that $C$ is chosen in this way.

  Now, define $f : \RR \to \RR^+, t \mapsto \vrho_\ast ' (\vrho (t))$
  and $g : \RR \to \RR, \tau \mapsto \ln(f(e^\tau))$.
  Setting $\gamma := \vrho(e^\tau) > 0$,
  we obtain, with \eqref{eq:RadialComponentBesovModeratenessCondition},
  \[
    |g'(\tau)|
    = \left|\frac{1}{f(e^\tau)} \cdot f'(e^\tau) \cdot e^\tau\right|
    = \left|\frac{e^\tau
            \cdot \vrho_\ast '' (\vrho (e^\tau))
            \cdot \vrho' (e^\tau)}
           {\vrho_\ast ' (\vrho (e^\tau))}\right|
    = \left|\frac{\vrho_\ast (\gamma) \cdot \vrho_\ast ''(\gamma)}
           {[\vrho_\ast ' (\gamma)]^2}\right| \leq C < \infty,\quad \text{for all } \tau\in\RR,
  \]
  where we used $\vrho' = [\vrho_\ast'\circ \vrho]^{-1}$. We conclude that
  \begin{equation}\label{eq:gIsLipschitz}
  |g(\tau) - g(\sigma)| \leq C \cdot |\tau - \sigma|,\quad \text{for all } \tau, \sigma \in \RR.
  \end{equation}

  Now, let $j \in \NN$ and $k \in \ZZ^d$ satisfy
  $\emptyset \neq B_j \cap Q_{\Phi_\vrho, k}^{(\delta,r)} \ni \xi$,
  and recall from \eqref{eq:RadialWarpingNormControlStep1} that
  $\big| \, \vrho(|\xi|) - \delta |k| \, \big| \leq \delta r$.
  Since $\vrho_\ast '$ is $v$-moderate, and since $v$ is radially increasing,
  this implies
  \[
    \vrho_\ast ' (\delta |k|)
    = \vrho_\ast ' \big( \delta |k| - \vrho(|\xi|) + \vrho(|\xi|) \big)
    \leq v(\delta r) \cdot \vrho_\ast ' (\vrho (|\xi|))
    =    v (\delta r) \cdot f (|\xi|) \, .
  \]
  Likewise, $f (|\xi|) \leq v(\delta r) \cdot \vrho_\ast ' (\delta |k|)$. Now,
  $f (|\xi|) \asymp f (2^j)$ and consequently $\vrho_\ast ' (\delta |k|) \asymp
  \vrho_\ast ' (\vrho (2^j))$, for all $j \in \NN$, $k \in \ZZ^d$ with $
  \emptyset \neq B_j \cap Q_{\Phi_\vrho, k}^{(\delta,r)}$
  is shown analogous to \eqref{eq:ModeratenessRhoOnBesovElements}, with an application of \eqref{eq:gIsLipschitz}. 
  Combining this with Equations~\eqref{eq:RadialWarpingWeightExplicit} and
  \eqref{eq:RadialWarpingCoveringRhoTildeStarModerateness}, we thus see
  \[
    w(\delta k)
    = \vrho_\ast ' (\delta |k|)
      \cdot [\widetilde{\vrho_\ast} (\delta |k|)]^{d-1}
    \asymp \vrho_\ast ' (\vrho (2^j))
           \cdot \left( 2^j \big/ \vrho (2^j) \right)^{d-1}
  \]
  for $j \in \NN$ and $k \in \ZZ^d$ with
  $B_j \cap Q_{\Phi_\vrho, k}^{(\delta,r)} \neq \emptyset$,
  proving \eqref{eq:RadialWarpingCoveringRelativeModerateness}
  for positive $j \in \NN$. However, if $j = 0$, then we recall that there are only finitely many $k\in\ZZ^d$ such that $B_0 \cap Q_{\Phi_\vrho, k}^{(\delta,r)} \neq \emptyset$. Since both sides of \eqref{eq:RadialWarpingCoveringRelativeModerateness} are positive, equivalence is trivial.
\end{proof}

We just saw that the covering $\CalQ_{\Phi_\vrho}^{(\delta,r)}$ is always
almost subordinate to the inhomogeneous Besov covering $\CalB$.
Our next result provides a straightforward, sufficient condition for the converse to be true, but only in the 
one-dimensional case $d=1$. In Theorem \ref{thm:LnWarpingBesovEmbeddingsExplicit},  we will construct an example of an admissible radial component which satisfies this condition. The situation is radically different in higher dimensions: In Theorem \ref{thm:WarpingNoBesovEquivalence} below, we show that $\CalB$ is never weakly subordinate to $\CalQ_{\Phi}^{(\delta,r)}$, if $d \geq 2$. This holds regardless of $\Phi$ being radial or not.

\begin{lemma}\label{lem:RadialWarpingBesovEquivalence}
  Let $d = 1$, let $r > 1/2$,
  let $\vrho : \RR \to \RR$ be a $0$-admissible radial component,
  and assume that $\vrho$ is $C^2$ and satisfies
  Equation~\eqref{eq:RadialComponentBesovModeratenessCondition}.
  If
  \begin{equation}
    2^j \lesssim \vrho_\ast ' (\vrho (2^j)) = [\vrho'(2^j)]^{-1}
    \qquad \forall \, j \in \NN_0 \, ,
    \label{eq:RadialWarpingBesovEquivalenceCondition}
  \end{equation}
  then the inhomogeneous Besov covering $\CalB = (B_j)_{j \in \NN_0}$ defined
  in \eqref{eq:BesovCoveringDefinition} is weakly subordinate to
  $\CalQ_{\Phi_\vrho}^{(\delta,r)}$.
\end{lemma}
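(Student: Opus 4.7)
The plan is to directly count, for each fixed $j\in\NN_0$, the number of $k\in\ZZ$ such that $Q_{\Phi_\vrho,k}^{(\delta,r)}\cap B_j\neq\emptyset$, and show that this number is bounded uniformly in $j$. Since $d=1$, the radial warping function $\Phi_\vrho$ coincides with $\vrho$ itself, so $Q_{\Phi_\vrho,k}^{(\delta,r)}=\vrho^{-1}\big(\delta(k-r,k+r)\big)$, and bijectivity of $\vrho$ reduces the problem to counting the $k\in\ZZ$ with $\vrho(B_j)\cap\delta(k-r,k+r)\neq\emptyset$. By antisymmetry and monotonicity of $\vrho$, the image $\vrho(B_j)$ is a union of at most two open intervals (one interval for $j=0$, two symmetric intervals $\pm(\vrho(2^{j-1}),\vrho(2^{j+1}))$ for $j\geq 1$). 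For any interval $I\subset\RR$ of length $\ell$, the number of $k\in\ZZ$ with $\delta(k-r,k+r)\cap I\neq\emptyset$ is at most $\lceil \ell/\delta \rceil + 2r+1$; hence it suffices to bound the length of each component of $\vrho(B_j)$ by a constant independent of $j$.

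For $j=0$, the single component $(-\vrho(2),\vrho(2))$ has length $2\vrho(2)<\infty$, which is harmless. For $j\geq 1$, each of the two components has length
\[
\vrho(2^{j+1})-\vrho(2^{j-1}) \;=\; \int_{2^{j-1}}^{2^{j+1}} \vrho'(\xi)\,d\xi,
\]
so we need to estimate $\vrho'$ uniformly on the dyadic shell $[2^{j-1},2^{j+1}]$. Here I would recycle the log-Lipschitz argument already carried out in the proof of Proposition~\ref{prop:RadialCoveringBesovSubordinateness}: Condition \eqref{eq:RadialComponentBesovModeratenessCondition} yields, with $g(\tau):=\ln\bigl(\vrho_\ast'(\vrho(e^\tau))\bigr)$, the uniform bound $|g'(\tau)|\leq C$, hence $|g(\ln\xi)-g(\ln 2^j)|\leq C\ln 2$ for all $\xi\in[2^{j-1},2^{j+1}]$. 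By the inverse function theorem, $\vrho_\ast'\circ\vrho=1/\vrho'$, so this rewrites as $\vrho'(\xi)\asymp\vrho'(2^j)$ on $[2^{j-1},2^{j+1}]$, uniformly in $j\in\NN$.

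Combining this with the standing hypothesis \eqref{eq:RadialWarpingBesovEquivalenceCondition}, which gives $\vrho'(2^j)\lesssim 2^{-j}$, yields
\[
\vrho(2^{j+1})-\vrho(2^{j-1})
\;\lesssim\; (2^{j+1}-2^{j-1})\cdot 2^{-j}
\;=\; \tfrac{3}{2},
\]
uniformly for $j\in\NN$. Hence the number of $k\in\ZZ$ for which $\delta(k-r,k+r)$ meets either component of $\vrho(B_j)$ is bounded by a constant independent of $j$, proving weak subordinateness of $\CalB$ to $\CalQ_{\Phi_\vrho}^{(\delta,r)}$. The main conceptual point is that \eqref{eq:RadialComponentBesovModeratenessCondition} and \eqref{eq:RadialWarpingBesovEquivalenceCondition} interact in a complementary way: the former ensures that $\vrho'$ is essentially constant on each dyadic shell, while the latter pins that constant to $2^{-j}$; the remaining work is routine bookkeeping.
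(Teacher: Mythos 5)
Your argument is correct, and it takes a genuinely different (and more elementary) route than the paper. The paper establishes the bound on $|I_j| = |\{k : Q_{\Phi_\vrho,k}^{(\delta,r)} \cap B_j \neq \emptyset\}|$ by a volume-packing argument: it recycles the relative $\CalB$-moderateness of $w$ from Proposition~\ref{prop:RadialCoveringBesovSubordinateness} (Equation~\eqref{eq:RadialWarpingCoveringRelativeModerateness}) to show that each $\mu(Q_{\Phi_\vrho,k}^{(\delta,r)}) \asymp w(\delta k) \asymp \vrho_\ast'(\vrho(2^j))$ for $k\in I_j$, then uses admissibility ($\sum_k \Indicator_{Q_{\Phi_\vrho,k}} \leq K$) and the previously established almost-subordinateness to trap $\sum_{k\in I_j}\mu(Q_{\Phi_\vrho,k}^{(\delta,r)})$ under $K\cdot\mu(B_{j^{(2N+1)\ast}})\lesssim 2^j$, and divides. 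You instead work directly in the warped coordinate: since $\Phi_\vrho = \vrho$ in $d=1$, pulling everything forward by $\vrho$ turns the question into counting $\delta$-translates of $(-\delta r,\delta r)$ meeting $\vrho(B_j)$, and the whole lemma reduces to showing each component of $\vrho(B_j)$ has length $\lesssim 1$. You prove this with the same log-Lipschitz estimate that underlies~\eqref{eq:RadialWarpingCoveringRelativeModerateness} (showing $\vrho'$ is essentially constant on a dyadic shell) combined with~\eqref{eq:RadialWarpingBesovEquivalenceCondition}. Your route avoids invoking the prior subordinateness result, the admissibility constant $K$, the cluster-size bound, and the measure-of-covering-elements equivalence, so it is both shorter and more self-contained; it also makes the geometric content transparent — in warped coordinates, the Besov annuli become uniformly bounded sets — which the paper's packing argument leaves implicit. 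The only cosmetic imprecision is that $\lceil\ell/\delta\rceil + 2r + 1$ is not sharp (and not an integer), but any bound of the form $\ell/\delta + O(1)$ suffices, and your essential estimate $\vrho(2^{j+1})-\vrho(2^{j-1}) \lesssim (2^{j+1}-2^{j-1})\,\vrho'(2^j) \lesssim 1$ is correct.
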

\begin{proof}
  Proposition~\ref{prop:RadialCoveringBesovSubordinateness} shows that there is
  some $N \in \NN$ such that, for each $k \in \ZZ$,
  there is some $j_k \in \NN_0$ satisfying
  $Q_{\Phi_\vrho, k}^{(\delta,r)} \subset B_{j_k^{N\ast}}$.
  For $j \in \NN_0$, set
  \[
    I_j := \big\{
             k \in \ZZ
             \,:\,
             Q_{\Phi_\vrho, k}^{(\delta,r)} \cap B_j \neq \emptyset
           \big\} \, .
  \]
  Now, if $k \in I_j$, then
  $\emptyset \neq B_j \cap Q_{\Phi_\vrho, k}^{(\delta,r)}
             \subset B_j \cap B_{j_k^{N\ast}}$,
  and hence $j_k \in j^{(N+1)\ast}$,
  which implies $j_k^{N\ast} \subset j^{(2N+1)\ast}$, and finally
  \begin{equation}
    Q_{\Phi_\vrho, k}^{(\delta,r)}
    \subset B_{j_k^{N\ast}}
    \subset B_{j^{(2N+1)\ast}}
    \qquad \forall \, j \in \NN_0 \text{ and } k \in I_j \, .
    \label{eq:RadialWarpingBesovSubordinatenessStrengthened}
  \end{equation}
  Since $\CalQ_{\Phi_\vrho}^{(\delta,r)}$ is an admissible covering,
  there is some $K \in \NN$ such that
  $\sum_{k \in \ZZ}
     \Indicator_{Q_{\Phi_\vrho, k}^{(\delta,r)}} \leq K$.
  In view of Equation~\eqref{eq:RadialWarpingBesovSubordinatenessStrengthened},
  this shows $\sum_{k \in I_j} \Indicator_{Q_{\Phi_\vrho, k}^{(\delta,r)}}
              \leq K \cdot \Indicator_{B_{j^{(2N+1)\ast}}}$.
  Thus, noting that the factor $(2^j/\vrho(2^j))^{d-1}$ in Equation \eqref{eq:RadialWarpingCoveringRelativeModerateness} vanishes for $d=1$, we obtain 
  \[
   \begin{split}
    |I_j|
    \cdot \vrho_\ast' (\vrho(2^j)) & \overset{\eqref{eq:RadialWarpingCoveringRelativeModerateness}}{\asymp} \sum_{k \in I_j} w(\delta k) \overset{\text{ Lem. } \ref{lem:DecompInducedCoveringIsNice}}{\asymp} \sum_{k \in I_j}
             \mu \big( Q_{\Phi_\vrho, k}^{(\delta,r)} \big)\\
    & \overset{\text{(MCT)}}{=} \int_{\RR}
        \sum_{k \in I_j}
          \Indicator_{Q_{\Phi_\vrho, k}^{(\delta,r)}} (\xi)
      \, d \xi
    \leq K \cdot \mu \big( B_{j^{(2N+1)\ast}} \big)
    \lesssim 2^{j} \, ,
    \end{split}
  \]
  where (MCT) indicates an application of the monotone convergence theorem. Now,
  apply \eqref{eq:RadialWarpingBesovEquivalenceCondition} to show that $\CalB$ is weakly subordinate to $\CalQ_{\Phi_\vrho}^{(\delta,r)}$. Specifially, we obtain $|I_j| \lesssim 2^j \big/ \vrho_\ast ' (\vrho(2^j)) \lesssim 1$.
\end{proof}

The next result shows that Lemma \ref{lem:RadialWarpingBesovEquivalence} cannot be extended to dimensions $d\geq 2$.

 \begin{theorem}\label{thm:WarpingNoBesovEquivalence}
   Let $2\leq d \in \NN$, $D\subset \RR^d$ open, and $\Phi\colon D\rightarrow \RR^d$ a diffeomorphism. The $(\delta,r)$-fine frequency covering $\CalQ^{(\delta,r)}_{\Phi}$ induced by $\Phi$, see Definition \ref{def:frequency_space_covering}, is not weakly equivalent to $\CalB$, for any choice of $\delta>0$ and $r>\sqrt{d}/2$. 
 \end{theorem}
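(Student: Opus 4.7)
The plan is to derive a contradiction from a diameter estimate in the warped coordinate system provided by $\Phi$. First, if $D \subsetneq \RR^d$, then $\CalQ_{\Phi}^{(\delta,r)}$ covers only the proper subset $D$ of $\RR^d$ while $\CalB$ covers all of $\RR^d$, so the two cannot be weakly equivalent as coverings of a common set; hence we may assume $D = \RR^d$, so that $\Phi$ is a diffeomorphism of $\RR^d$ onto itself. Suppose, toward contradiction, that $\CalB$ is weakly subordinate to $\CalQ := \CalQ_{\Phi}^{(\delta,r)}$, i.e., there is $M \in \NN$ with $|I_j| \leq M$ for every $j \in \NN_0$, where $I_j := \{ k \in \ZZ^d \with Q_{\Phi,k}^{(\delta,r)} \cap B_j \neq \emptyset \}$. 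Only this direction of weak equivalence will actually be used.

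The first step is to pass to warped coordinates. Applying $\Phi$ to the inclusion $B_j \subset \bigcup_{k \in I_j} Q_{\Phi,k}^{(\delta,r)}$ and using $\Phi(Q_{\Phi,k}^{(\delta,r)}) = \delta \cdot B_r(k)$, we obtain
\[
    W_j := \Phi(B_j) \subset \bigcup_{k \in I_j} \delta \cdot B_r(k),
\]
that is, $W_j$ is covered by at most $M$ balls of radius $\delta r$ in warped space. Since $d \geq 2$, every Besov element is connected: $B_0 = \{|\xi|<2\}$ is an open Euclidean ball, and for $j \geq 1$ the set $B_j = \{2^{j-1} < |\xi| < 2^{j+1}\}$ is an open spherical shell. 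Consequently $W_j = \Phi(B_j)$ is connected, so it must lie in a single connected component of the union above, and that component has diameter at most $\widetilde{D} := 2M\delta r$. Hence $\mathrm{diam}(W_j) \leq \widetilde{D}$ uniformly in $j$.

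Now set $U_j := \Phi(\{ \xi \in \RR^d \with |\xi| < 2^{j+1} \})$. Since $\Phi$ is a homeomorphism, $U_j$ is open, $\overline{U_j} = \Phi(\overline{B_{2^{j+1}}(0)})$ is compact (so $U_j$ is bounded), and $\partial U_j = \Phi(\{|\xi| = 2^{j+1}\}) \subset \Phi(B_{j+1}) = W_{j+1}$, which gives $\mathrm{diam}(\partial U_j) \leq \widetilde{D}$. The key technical input is the elementary identity $\mathrm{diam}(V) = \mathrm{diam}(\partial V)$ for any bounded open set $V \subset \RR^d$: if $p,q \in \overline{V}$ realize $\mathrm{diam}(\overline{V}) = \mathrm{diam}(V)$, then extending the segment past $p$ along the ray from $q$ cannot remain in $\overline{V}$, for otherwise one would exhibit a point of $\overline{V}$ strictly farther from $q$ than $p$; hence $p$ lies in the boundary, and symmetrically so does $q$. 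Applied to each $U_j$ this yields $\mathrm{diam}(U_j) \leq \widetilde{D}$ for every $j$. However, the $U_j$ form a nested family with $\bigcup_{j \in \NN_0} U_j = \Phi(\RR^d) = \RR^d$, so $\sup_{j} \mathrm{diam}(U_j) = \mathrm{diam}(\RR^d) = \infty$, yielding the required contradiction. The main obstacle is spotting the right geometric invariant, namely the diameter-boundary identity, which converts the combinatorial weak-subordination hypothesis into a global constraint incompatible with $\Phi$ being a diffeomorphism of $\RR^d$; the dimensional restriction $d \geq 2$ enters precisely through the connectedness of the shells $B_j$.
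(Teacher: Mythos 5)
Your proof is correct, and it takes a genuinely different route from the paper's. The paper proceeds combinatorially: it compares the growth rates of the $n$-th order neighbor sets $i^{n\ast}$, showing (Lemma~\ref{lem:EstimateOfNthOrderNeighbors}) that for $\CalQ_\Phi^{(\delta,r)}$ they grow like $(1+2n)^d$ while for $\CalB$ they grow at most like $1+2n$, and then uses a general transfer lemma (Lemma~\ref{lem:RelativeEstimateOfSubordinateCoverNeighbors}) to show equivalent coverings cannot have such disparate growth when $d\geq 2$. Your argument is instead geometric: you transport everything into warped coordinates, observe that each Besov shell $B_j$ is connected for $d\geq 2$ so $\Phi(B_j)$ sits in a single component of a union of at most $M$ balls of fixed radius and hence has uniformly bounded diameter, and then invoke the elementary identity $\mathrm{diam}(V)=\mathrm{diam}(\partial V)$ for nonempty bounded open sets to propagate this bound from the spheres $\Phi(\{|\xi|=2^{j+1}\})$ to the full sets $U_j=\Phi(B_{2^{j+1}}(0))$, contradicting surjectivity of $\Phi$. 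Your argument uses only one direction of weak subordinateness ($\CalB$ weakly subordinate to $\CalQ_\Phi^{(\delta,r)}$) and avoids upgrading to full equivalence via Lemma~\ref{lem:almost_subordinateness_connected}, whereas the paper's proof passes through equivalence. What the paper's approach buys is modularity (the neighbor-growth lemma is a general combinatorial fact about coverings, reusable elsewhere); what yours buys is directness and a transparent geometric reading of where $d\geq 2$ enters, namely through connectedness of the annular shells, which is the same dimensional obstruction the paper encodes via the exponent $d$ in $(1+2n)^d$. Both proofs are sound; yours is a valid and arguably more self-contained alternative.
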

 
 Before we prove Theorem \ref{thm:WarpingNoBesovEquivalence}, we require some auxiliary results, estimating the number of $n$-th order neighbors for $\CalQ^{(\delta,r)}_{\Phi}$ and $\CalB$, see also Remark \ref{rem:higher_cluster_sets}. 
 
 \begin{lemma}\label{lem:EstimateOfNthOrderNeighbors}
    Let $2\leq d \in \NN$, $D\subset \RR^d$ open, and $\Phi\colon D\rightarrow \RR^d$ a diffeomorphism. Then, for any $\delta>0$ and $r>\sqrt{d}/2$, the $(\delta,r)$-fine frequency covering $\CalQ^{(\delta,r)}_{\Phi}= (Q^{(\delta,r)}_{\Phi,k})_{k\in\ZZ^d}$ induced by $\Phi$ satisfies 
    \begin{equation}\label{eq:lowerNeighborBoundForWarpedCover}
      k^{n\ast} \supset \{\ell\in\ZZ^d ~:~  \|k-\ell\|_{\ell^{\infty}} \leq n\},\quad \text{such that } |k^{n\ast}|\geq (1+2n)^d,\quad \text{for all } k\in\ZZ^d,\ n\in\NN_0.
    \end{equation}
    
    \medskip{}
    
    Moreover, the inhomogeneous Besov covering  $\CalB = (B_j)_{j\in\NN_0}$ satisfies  
    \begin{equation}\label{eq:upperNeighborBoundForBesovCover}
       j^{n\ast} \subset \{\ell\in\NN_0 ~:~ |j-\ell| \leq n\}, \quad \text{such that } |j^{n\ast}|\leq 1+2n,\quad \text{for all } j, n\in\NN_0.
    \end{equation}
 \end{lemma}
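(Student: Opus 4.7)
My plan is to prove both inclusions by induction on $n$, after first reducing the question of whether two covering elements intersect to a simple condition on the indices. The two parts are essentially parallel arguments, once one observes the following: for the warped frequency covering, $Q_{\Phi,k}^{(\delta,r)} \cap Q_{\Phi,\ell}^{(\delta,r)} \neq \emptyset$ iff $B_r(k) \cap B_r(\ell) \neq \emptyset$ iff $|k-\ell| < 2r$ (since $\Phi$ is a bijection and scaling by $\delta$ preserves intersection patterns). For the Besov covering, intersection of the annuli reduces to a condition on $|j - \ell|$. The induction step in each part is essentially the triangle inequality applied in the right norm.

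For the lower bound \eqref{eq:lowerNeighborBoundForWarpedCover}, the base case $n=1$ amounts to showing that any $\ell \in \ZZ^d$ with $\|k-\ell\|_{\ell^\infty} \leq 1$ lies in $k^\ast$. Since then $|k-\ell| \leq \sqrt{d}$ and by assumption $2r > \sqrt{d}$, we have $|k-\ell| < 2r$ and hence $\ell \in k^\ast$ by the intersection criterion above. For the induction step, assuming the inclusion holds for $n$, I would fix $\ell$ with $\|k-\ell\|_{\ell^\infty} \leq n+1$ and construct coordinate-wise an intermediate lattice point $m \in \ZZ^d$ with $\|k-m\|_{\ell^\infty} \leq n$ and $\|m-\ell\|_{\ell^\infty} \leq 1$: for each coordinate $i$, set $m_i := \ell_i$ if $|k_i - \ell_i| \leq n$, and $m_i := \ell_i - \sgn(\ell_i - k_i)$ otherwise. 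Then $m \in k^{n\ast}$ by the induction hypothesis and $\ell \in m^\ast$ by the base case, so $\ell \in (k^{n\ast})^\ast = k^{(n+1)\ast}$. The cardinality bound $|k^{n\ast}| \geq (1+2n)^d$ follows by counting the elements of the $\ell^\infty$-ball of radius $n$.

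For the upper bound \eqref{eq:upperNeighborBoundForBesovCover}, I would first compute $j^\ast$ directly from the definition of the annuli $B_j$. A short calculation (checking the cases $j \geq 2$, $j = 1$, and $j = 0$ separately using that $B_0$ is the open ball of radius $2$) shows that $j^\ast \subset \{\ell \in \NN_0 : |j - \ell| \leq 1\}$. Then, for the induction step, any $\ell \in j^{(n+1)\ast} = (j^{n\ast})^\ast$ arises as $\ell \in m^\ast$ for some $m \in j^{n\ast}$; the induction hypothesis and the base case give $|j - m| \leq n$ and $|m - \ell| \leq 1$, so $|j - \ell| \leq n+1$ by the triangle inequality. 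The cardinality bound $|j^{n\ast}| \leq 1 + 2n$ follows since there are at most $1+2n$ nonnegative integers within distance $n$ of $j$.

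The main technical content of the whole lemma is really just the base case of Part 1, which isolates the role of the hypothesis $r > \sqrt{d}/2$: this threshold is exactly what makes diagonal neighbors (all $2^d$ lattice neighbors in the $\ell^\infty$ sense) belong to $k^\ast$, giving the exponential-in-$d$ growth $(1+2n)^d$ that will eventually contrast with the linear-in-$n$ bound for the Besov covering and drive the non-equivalence result in Theorem~\ref{thm:WarpingNoBesovEquivalence}. The case $n=0$ is handled by the convention $k^{0\ast} := \{k\}$ (equivalently, interpreting the statement for $n \in \NN$), which makes both inclusions trivial.
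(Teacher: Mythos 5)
Your proof is correct and follows essentially the same route as the paper's: reduce the intersection question for $\CalQ^{(\delta,r)}_\Phi$ to one about the preimages under $\Phi$ (you use the ball criterion $|k-\ell|<2r$, the paper uses the inscribed cubes $P_k = k + [-1/2,1/2]^d$, but the threshold $r>\sqrt{d}/2$ plays the identical role), establish the base case $n=1$ from it, and then induct via an explicitly constructed intermediate lattice point; the Besov part is likewise the same direct computation of $j^\ast$ followed by induction. Your coordinate-wise construction of the intermediate point $m$ is a welcome bit of extra explicitness over the paper's mere existence assertion, but it does not change the argument.
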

 \begin{proof}
   We first prove the upper bound for $|j^{n\ast}|$, i.e., Equation \eqref{eq:upperNeighborBoundForBesovCover}. If $\xi\in B_j\cap B_\ell\neq \emptyset$, with $j,\ell\in\NN$, then $2^{\ell-1}<|\xi|<2^{j+1}$, and thus $\ell-1<j+1$. implying $\ell\leq j+1$. Repeat the argument with interchanged roles of $j,\ell$ to obtain $|j-\ell|\leq 1$. Further, if $j=0$ instead, then $2^{\ell-1}<|\xi|< 2 = 2^1$, similarly yielding $|0-\ell|\leq 1$. Overall, 
   \[
    j^{\ast} \subset \{\ell\in\NN_0 ~:~ |j-\ell| \leq 1\}, \quad \text{such that } |j^{\ast}|\leq 3,\quad \text{for all } j\in\NN_0.
   \]
   Equation \eqref{eq:upperNeighborBoundForBesovCover} now follows by a straightforward induction. 
   
   \medskip{}
   
   The proof of Equation \eqref{eq:lowerNeighborBoundForWarpedCover} is slightly more involved. Note that $r>\sqrt{d}/2$ implies
   \[
    P_k\subset B_r(k),\quad \text{where}\quad P_k = k + [-1/2,1/2]^d.
   \]
   Therefore $P_k \cap P_\ell \neq \emptyset$ implies $Q^{(\delta,r)}_{\Phi,k}\cap Q^{(\delta,r)}_{\Phi,\ell} = \Phi^{-1}(\delta\cdot (B_r(k)\cap B_r(\ell))) \neq \emptyset$. Hence, it suffices to prove \eqref{eq:lowerNeighborBoundForWarpedCover} for $\CalP = (P_k)_{k\in\ZZ^d}$ in place of $\CalQ^{(\delta,r)}_{\Phi}$.
   
   Now define, for $k,\ell\in\ZZ^d$, $\eps:= k-\ell$ and note that $\|k-\ell\|_{\ell^{\infty}}\leq 1$ implies $\eps\in [-1,1]^d$. In that case, we clearly have $k-\eps/2 = \ell+\eps/2 \in P_k\cap P_\ell$. Hence, 
   \[
    \ell\in k^{\ast},\quad \text{whenever } k,\ell\in\ZZ^d \text{ with } \|k-\ell\|_{\ell^{\infty}}\leq 1.
   \]
      
   Finally, assume that the first part of Equation \eqref{eq:lowerNeighborBoundForWarpedCover} holds for some $n\in\NN_0$ and let $k,\ell\in\ZZ^d$ with $\|k-\ell\|_{\ell^{\infty}} = n+1$. There exists a $\tilde{k}\in\ZZ^d$ with $\|k-\tilde{k}\|_{\ell^{\infty}}\leq n$ and $\|\tilde{k}-\ell\|_{\ell^{\infty}}=1$. Hence, $\tilde{k}\in k^{n\ast}$ and $\ell\in\tilde{k}^\ast$, such that $\ell\in k^{(n+1)\ast}$ follows. This proves the first part of Equation \eqref{eq:lowerNeighborBoundForWarpedCover} by induction. To prove the second part, simply note that $|\{\ell\in\ZZ^d ~:~ \|k-\ell\|_{\ell^{\infty}} \leq n \}| = (1+2n)^d$.
 \end{proof}

Our second auxiliary result provides an estimate for neighbor sets of arbitrary order, for two equivalent coverings.

\begin{lemma}\label{lem:RelativeEstimateOfSubordinateCoverNeighbors}
  Let $\CalQ = (Q_i)_{i\in I}$ and $\CalP = (P_j)_{j\in J}$ be two equivalent, admissible coverings of a common set $\mathcal O$, such that $Q_i$, $i\in I$, are nonempty. Then there are constants $M,N\in\NN$ such that 
  \begin{equation}\label{eq:RelativeEstimateOfSubordinateCoverNeighbors}
    |i^{n\ast}| \leq M\cdot |j^{N(n+1)\ast}|,
  \end{equation}
  for all $i\in I, j\in J$ such that $Q_i\cap P_j\neq \emptyset$, and arbitrary $n\in\NN_0$.
\end{lemma}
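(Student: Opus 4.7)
The plan is to combine a ``chain tracking'' argument (moving along a chain of $\CalQ$-neighbors and tracking the corresponding $\CalP$-indices) with the admissibility bound to control how many $\CalQ$-indices map to any fixed $\CalP$-index. Since $\CalQ$ and $\CalP$ are equivalent, I may fix constants $N_1, N_2 \in \NN$ and maps $\widetilde{\jmath}: I \to J$, $\widetilde{\imath}: J \to I$ such that $Q_i \subset P_{\widetilde{\jmath}(i)^{N_1\ast}}$ for all $i \in I$ and $P_j \subset Q_{\widetilde{\imath}(j)^{N_2\ast}}$ for all $j \in J$. I would also fix $K := \sup_{i \in I} |i^\ast| < \infty$, noting the iterated bound $|i^{k\ast}| \leq K^k$ for every $i \in I$ and $k \in \NN$ (a straightforward induction on $k$).

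First, for each $i' \in I$, since $Q_{i'}$ is nonempty and $\CalP$ covers $\mathcal{O}$, I can choose $j(i') \in J$ with $Q_{i'} \cap P_{j(i')} \neq \emptyset$. Combining this with $Q_{i'} \subset P_{\widetilde{\jmath}(i')^{N_1\ast}}$ yields $P_{j(i')} \cap P_{\widetilde{\jmath}(i')^{N_1\ast}} \neq \emptyset$, hence $j(i') \in \widetilde{\jmath}(i')^{(N_1+1)\ast}$. Similarly, applying this with $i' = i$ and using the hypothesis $Q_i \cap P_j \neq \emptyset$ in place of $Q_i \cap P_{j(i)} \neq \emptyset$ gives $j \in \widetilde{\jmath}(i)^{(N_1+1)\ast}$, i.e., $\widetilde{\jmath}(i) \in j^{(N_1+1)\ast}$.

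Next, I would show inductively that $\widetilde{\jmath}(i') \in \widetilde{\jmath}(i)^{n(2N_1+1)\ast}$ for every $i' \in i^{n\ast}$. The base case $n=0$ is trivial. For the inductive step: if $\widetilde{\jmath}(i_{k-1})$ has been controlled and $Q_{i_{k-1}} \cap Q_{i_k} \neq \emptyset$, then $P_{\widetilde{\jmath}(i_{k-1})^{N_1\ast}} \cap P_{\widetilde{\jmath}(i_k)^{N_1\ast}} \neq \emptyset$, so some $j' \in \widetilde{\jmath}(i_{k-1})^{N_1\ast}$ and $j'' \in \widetilde{\jmath}(i_k)^{N_1\ast}$ satisfy $P_{j'} \cap P_{j''} \neq \emptyset$; chasing through the neighbor relations gives $\widetilde{\jmath}(i_k) \in \widetilde{\jmath}(i_{k-1})^{(2N_1+1)\ast}$. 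Chaining this together with the two pointwise estimates from the previous paragraph produces $j(i') \in \widetilde{\jmath}(i')^{(N_1+1)\ast} \subset \widetilde{\jmath}(i)^{(n(2N_1+1)+N_1+1)\ast} \subset j^{(n(2N_1+1)+2N_1+2)\ast} \subset j^{N(n+1)\ast}$ for $N := 2N_1+2$.

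Finally, I would bound the fibers of the map $i^{n\ast} \to j^{N(n+1)\ast}$, $i' \mapsto j(i')$. For any $j' \in J$, the inclusion $P_{j'} \subset Q_{\widetilde{\imath}(j')^{N_2\ast}}$ forces every $i'$ with $Q_{i'} \cap P_{j'} \neq \emptyset$ to lie in $\widetilde{\imath}(j')^{(N_2+1)\ast}$, and hence the fiber has cardinality at most $K^{N_2+1} =: M$. Summing over $j' \in j^{N(n+1)\ast}$ yields $|i^{n\ast}| \leq M \cdot |j^{N(n+1)\ast}|$, as claimed. The only mildly delicate step is the inductive bookkeeping in the chain argument to ensure that the extra $+2N_1+2$ offset (from the endpoint corrections relating $j$ to $\widetilde{\jmath}(i)$ and $\widetilde{\jmath}(i')$ to $j(i')$) can be absorbed into the factor $(n+1)$ rather than $n$; choosing $N=2N_1+2$ makes this book-keeping succeed uniformly in $n \in \NN_0$.
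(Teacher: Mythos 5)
Your proof is correct and follows the same overall strategy as the paper's: reduce to showing that every $i' \in i^{n\ast}$ meets some $P_{j'}$ with $j' \in j^{N(n+1)\ast}$, then count the fibers of the map $i' \mapsto j(i')$. The only real difference is that the paper short-circuits the chain argument by invoking \cite[Lemma 2.11(3)]{voigtlaender2016embeddings}, which upgrades almost subordinateness to the stronger statement ``$Q_i \subset P_{j^{N\ast}}$ for \emph{every} $j$ with $Q_i \cap P_j \neq \emptyset$,'' whence the induction $Q_{i^{n\ast}} \subset P_{j^{N(n+1)\ast}}$ is one line; you instead start from the raw Definition \ref{def:subordinateness} form of almost subordinateness (a single witness $\widetilde{\jmath}(i)$ per index) and do the extra bookkeeping via the $2N_1+1$ offsets, which is more work but yields the same inclusion with $N = 2N_1+2$. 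Likewise the paper obtains the fiber bound $M := \sup_j |I_j| < \infty$ directly from weak subordinateness of $\CalP$ to $\CalQ$, whereas you re-derive it from almost subordinateness plus admissibility as $K^{N_2+1}$; both are valid, and your route is slightly more self-contained at the cost of a less sharp constant. No gaps.
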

\begin{proof}
  Recall that $Q_L$, for some $L\subset I$, denotes the set $Q_L:=\bigcup_{i\in L} Q_i$ and $P_L$ for some $L\subset J$ is defined analogously. Since $\CalQ$ is almost subordinate to $\CalP$, there exists, by \cite[{Lemma 2.11 (3)}]{voigtlaender2016embeddings}, an  $N\in\NN$, such that $Q_i\subset P_{j^{N\ast}}$, for all $i\in I, j\in J$ such that $Q_i\cap P_j\neq \emptyset$. A straightforward induction shows that, in fact, $Q_{i^{n\ast}} \subset P_{j^{N(n+1)\ast}}$ holds for all $n\in\NN_0$, whenever $Q_i\cap P_j\neq \emptyset$. Denote by 
  $I_j$, $j\in J$, the sets
  \[
    I_j = \{ i\in I ~:~ Q_i\cap P_j \neq \emptyset \}.
  \]
  Since $\CalP$ is almost subordinate to $\CalQ$, it is, in particular, weakly subordinate to $\CalQ$ (\cite[{Lemma 2.11 (1)}]{voigtlaender2016embeddings}, see also Remark \ref{rem:AlmostSubImpliesWeakSub}), such that $M := \sup_{j\in J} |I_j| < \infty$. 
  
  Now, let $i\in I$ and $j\in J$ be arbitrary, with $P_j\cap Q_i \neq \emptyset$. Since $Q_{i^{n\ast}} \subset P_{j^{N(n+1)\ast}}$ holds, we also have $Q_k\subset P_{j^{N(n+1)\ast}}$ for all $k \in i^{n\ast}$, i.e., there is some $\ell\in j^{N(n+1)\ast}$ with $Q_k\cap P_\ell\neq \emptyset$. We conclude that $i^{n\ast} \subset \bigcup_{\ell\in j^{N(n+1)\ast}} I_\ell$. Therefore,
  \[
   |i^{n\ast}| \leq \Big| \bigcup_{\ell\in j^{N(n+1)\ast}} I_\ell \Big| \leq \sum_{\ell\in j^{N(n+1)\ast}} |I_\ell| \leq M\cdot |j^{N(n+1)\ast}|.\hfill\qedhere
  \]
\end{proof}

\begin{proof}[Proof of Theorem \ref{thm:WarpingNoBesovEquivalence}]
  If $\CalB$ and $\CalQ^{(\delta,r)}_{\Phi}$ are weakly equivalent, then they are equivalent, since their elements are path-connected non-empty. Applying Lemma \ref{lem:RelativeEstimateOfSubordinateCoverNeighbors} with $\CalQ = \CalQ^{(\delta,r)}_{\Phi}$ and $\CalP = \CalB$ and inserting the estimates in Lemma \ref{lem:EstimateOfNthOrderNeighbors}, 
  we obtain, for all $n\in\NN_0$,  
  \[
   (1+2n)^d \leq |k^{n\ast}| \leq M \cdot |j^{N(n+1)\ast}| \leq M \cdot (1+2N(n+1)),
  \]
  for some constants $M,N\in \NN$, and all $k\in \ZZ^d$, $j\in\NN_0$ with $\CalQ^{(\delta,r)}_{\Phi,k}\cap B_j\neq \emptyset$. Since $d\geq 2$, this is a contradiction (let $n\rightarrow\infty$). 
\end{proof}

As a consequence of Theorem \ref{thm:WarpingNoBesovEquivalence}, we obtain the following negative result on equality between warped coorbit spaces and Besov spaces.

\begin{theorem}\label{thm:WarpedCoorbitNoBesovEquality}
  Let $d\geq 2$, $p_1,p_2,q_1,q_2\in [1,\infty]$, and let $\Phi\colon \RR^d \rightarrow \RR^d$, $\kappa\colon \RR^d \rightarrow \RR^+$ be compatible in the sense of Definition \ref{def:StandingDecompositionAssumptions}. If $\Co(\Phi,\lebesgue^{p_1,q_1}_\kappa)$ coincides, up to canonical identifications, with an (inhomogeneous) Besov space $B^{p_2,q_2}_s(\RR^d)$
  , then $p_1=p_2=q_1=q_2=2$. 
\end{theorem}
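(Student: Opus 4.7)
The strategy is to realize both spaces as decomposition spaces and apply a sharpness result for decomposition space coincidence, leveraging the negative equivalence result in Theorem \ref{thm:WarpingNoBesovEquivalence}. First, by Theorem \ref{thm:CoorbitDecompositionIsomorphism}, fixing any $\delta>0$ and $r>\sqrt{d}/2$, I would identify
\[
  \Co(\Phi,\lebesgue^{p_1,q_1}_\kappa) \;=\; \DecompSp(\CalQ^{(\delta,r)}_\Phi,\lebesgue^{p_1},\ell^{q_1}_u)
\]
canonically, for the $\CalQ^{(\delta,r)}_\Phi$-moderate weight $u$ determined by $\kappa$ (cf.~Lemma \ref{lem:DecompositionWeightIsModerate}). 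On the other hand, it is standard that the inhomogeneous Besov space satisfies $B^{p_2,q_2}_s(\RR^d) = \DecompSp(\CalB,\lebesgue^{p_2},\ell^{q_2}_{v^s})$, where $\CalB$ is the covering from Equation~\eqref{eq:BesovCoveringDefinition} and $v^s=(2^{js})_{j\in\NN_0}$.

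Assuming the hypothesized identification $\DecompSp(\CalQ^{(\delta,r)}_\Phi,\lebesgue^{p_1},\ell^{q_1}_u) = \DecompSp(\CalB,\lebesgue^{p_2},\ell^{q_2}_{v^s})$ holds (up to canonical identifications via the embedding $\Psi$ from Corollary \ref{cor:DecompSpReservoirEmbedding}), I would then invoke the sharpness result \cite[Theorem 6.9]{voigtlaender2016embeddings}, which is exactly the converse companion to Theorem \ref{thm:decomposition_space_coincidence} mentioned in the text. That theorem states that coincidence of two decomposition spaces implies the following dichotomy: either (i) the underlying semi-structured coverings are weakly equivalent with a compatibility condition on the weights, or (ii) one lands in the exceptional case $p_1=p_2=q_1=q_2=2$, where Plancherel's identity reduces the decomposition space norm to a weighted $\lebesgue^2$-norm in frequency of the form $\|\hat{f}\cdot W\|_{\lebesgue^2}$ with $W := \big(\sum_i u_i^2 |\varphi_i|^2\big)^{1/2}$, which is insensitive to the particular covering or BAPU provided the weight $W$ is equivalent.

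By Theorem \ref{thm:WarpingNoBesovEquivalence}, for $d\geq 2$ the coverings $\CalQ^{(\delta,r)}_\Phi$ and $\CalB$ are never weakly equivalent, so option (i) is impossible. The only remaining possibility is (ii), which yields $p_1=p_2=q_1=q_2=2$ as required.

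The main obstacle will be invoking the sharpness theorem cleanly: one needs to check that its hypotheses (in particular semi-structuredness and tightness of both coverings, as well as moderateness of both weights) are satisfied—semi-structuredness and tightness of $\CalQ^{(\delta,r)}_\Phi$ are provided by Lemmas \ref{lem:DecompInducedCoveringIsNice} and \ref{lem:DecompInducedCoveringIsTight}, while those for $\CalB$ are standard (cf.\ \cite[Lemma 9.10]{voigtlaender2016embeddings}). One also has to verify that the "canonical identifications" assumed in the hypothesis of the theorem are compatible with viewing both spaces as subspaces of $Z'(\RR^d)$ through Corollary \ref{cor:DecompSpReservoirEmbedding}, so that the assumed equality of Banach spaces transfers to equality of the two decomposition spaces inside $Z'(\RR^d)$; this is the only technical subtlety and follows from the fact that both embeddings into $Z'(\RR^d)$ send each function to its natural Fourier representation.
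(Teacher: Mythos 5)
Your proposal matches the paper's proof: both identify $\Co(\Phi,\lebesgue^{p_1,q_1}_\kappa)$ with $\DecompSp(\CalQ^{(\delta,r)}_\Phi,\lebesgue^{p_1},\ell^{q_1}_u)$ via Theorem~\ref{thm:CoorbitDecompositionIsomorphism}, identify $B^{p_2,q_2}_s(\RR^d)$ as $\DecompSp(\CalB,\lebesgue^{p_2},\ell^{q_2}_{v^s})$, invoke Theorem~\ref{thm:WarpingNoBesovEquivalence} to rule out weak equivalence of the coverings, and then apply the sharpness theorem \cite[Theorem 6.9]{voigtlaender2016embeddings} to force $p_1=p_2=q_1=q_2=2$. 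The paper is slightly more precise in citing items (1), (3), and (4) of that theorem for the $p$-exponents and $q$-exponents separately, but the argument is the same.
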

\begin{proof}
  By Theorem \ref{thm:CoorbitDecompositionIsomorphism}, we have 
  \[
    \Co(\Phi,\lebesgue^{p_1.q_1}_\kappa) = \DecompSp(\CalQ^{(\delta,r)}_{\Phi},\lebesgue^{p_1},\ell^{q_1}_u),
  \]
  where $\CalQ^{(\delta,r)}_{\Phi}= (Q^{(\delta,r)}_{\Phi,k})_{k\in\ZZ^d}$, with arbitrary $\delta>0$ and $r>\sqrt{d}/2$, is the $\Phi$-induced $(\delta,r)$-fine frequency covering and $u = (u_k)_{k\in\ZZ^d}$ is a suitable $\CalQ^{(\delta,r)}_{\Phi}$-moderate weight. 
  Hence, the assumptions of the theorem imply 
  \[
   \DecompSp(\CalQ^{(\delta,r)}_{\Phi},\lebesgue^{p_1},\ell^{q_1}_u) = \Co(\Phi,\lebesgue^{p_1.q_1}_\kappa) = B^{p_2,q_2}_s(\RR^d)  = \DecompSp(\CalB,\lebesgue^{p_2},\ell^{q_2}_v),
  \]
   where $v = (2^{js})_{j\in\NN_0}$. Since $\CalQ^{(\delta,r)}_{\Phi}$ and $\CalB$ are not weakly equivalent by Theorem \ref{thm:WarpingNoBesovEquivalence}, \cite[Theorem 6.9, Items (1) and (3)]{voigtlaender2016embeddings} shows that $p_1=p_2=2$. Finally, \cite[Theorem 6.9, Item (4)]{voigtlaender2016embeddings} shows that $q_1=q_2=2$ and the proof is finished. 
\end{proof}
 
Now that we have determined the limits of identifying warped coorbit spaces with Besov spaces, we return to what is possible in the setting of radial warping functions $\Phi_{\vrho}$. The next lemma is the final step for preparing our embedding result, stated in Proposition \ref{prop:RadialWarpingBesovEmbedding} below. We show that the weight $\kappa(\xi) = (1 + |\xi|)^s$, together with any radial warping function $\Phi_\vrho$ derived from a $0$-admissible radial component $\vrho$, form a compatible pair, see Definition~\ref{def:StandingDecompositionAssumptions}.

\begin{lemma}\label{lem:RadialWarpingStandardWeightAdmissible}
  Let $\vrho : \R \to \R$ be a $0$-admissible radial component with control weight $v\colon \RR\rightarrow \RR^+$, 
  and let $\kappa : \RR^d \to \RR^+, \xi \mapsto (1 + |\xi|)^s$
  for some $s \in \RR$. Then, there exists a constant $C>0$, such that $\kappa\circ \Phi_{\vrho}^{-1}$ is $\tilde{\kappa}$-moderate, with $\tilde{\kappa} = \left(1 + C \cdot (1 + |\bullet|) \cdot v(\bullet) \right)^{|s|}$. In particular, if $\vrho\in\CalC^{\infty}(\RR)$ is $(d+1)$-admissible, then $\Phi_\vrho$ and $\kappa$ are compatible in the sense of Definition \ref{def:StandingDecompositionAssumptions}.  
\end{lemma}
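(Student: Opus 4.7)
The plan is to reduce the claim to a scalar estimate for the auxiliary function $F(t) := 1 + \vrho_\ast(t)$. Since $\Phi_\vrho^{-1} = \Phi_{\vrho_\ast}$ satisfies $|\Phi_\vrho^{-1}(\tau)| = \vrho_\ast(|\tau|)$ for every $\tau\in\RR^d$, we can write
\[
\kappa(\Phi_\vrho^{-1}(\tau)) = (1 + \vrho_\ast(|\tau|))^s = F(|\tau|)^s.
\]
Consequently, $\tilde{\kappa}$-moderateness of $\kappa \circ \Phi_\vrho^{-1}$ is equivalent to the bound $F(|\tau+\sigma|)^s / F(|\sigma|)^s \leq \tilde{\kappa}(\tau)$ for all $\tau,\sigma\in\RR^d$, so the whole matter comes down to controlling the ratio $F(a+b)/F(b)$ for nonnegative scalars $a,b$.

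The key estimate I would establish is that, for some constant $C'>0$ depending only on $\vrho$,
\[
F(a+b) \leq \big(1 + C'\, a \, v(a)\big) \cdot F(b) \qquad \text{for all } a,b \geq 0.
\]
The first ingredient is the elementary pointwise bound $\vrho_\ast'(b) \lesssim 1 + \vrho_\ast(b) = F(b)$: for $b$ in a fixed bounded interval this is immediate from continuity of $\vrho_\ast'$ combined with $F \geq 1$, while for large $b$ it follows from \eqref{eq:AdmissibilityFirstDerivative}, which gives $\vrho_\ast'(b) \leq C_1 (1+b)\, \vrho_\ast(b)/b \lesssim \vrho_\ast(b)$. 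The second ingredient is the $v$-moderateness of $\vrho_\ast'$ in Definition \ref{def:AdmissibleRho}, which combined with $v$ being radial and radially increasing yields $\vrho_\ast'(s) \leq v(s-b)\, \vrho_\ast'(b) \leq v(a)\, \vrho_\ast'(b)$ for $s \in [b,a+b]$. The fundamental theorem of calculus then gives
\[
F(a+b) - F(b) = \int_b^{a+b} \vrho_\ast'(s)\, ds \leq a\, v(a)\, \vrho_\ast'(b) \leq C' a\, v(a)\, F(b).
\]
To transfer this into the desired moderateness inequality, I will split cases according to the sign of $s$: for $s \geq 0$, apply the estimate with $b = |\sigma|$ and $a = |\tau|$ using monotonicity of $F$ together with $|\tau+\sigma| \leq |\tau|+|\sigma|$; for $s<0$, instead apply the estimate with $b = |\tau+\sigma|$ and $a = |\tau|$, using $|\sigma| \leq |\tau+\sigma|+|\tau|$. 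Absorbing $1 + C' a\, v(a) \leq 1 + C'(1+a)\, v(a)$ and passing to the $|s|$-th power produces exactly the $\tilde{\kappa}$ of the statement, with $C := \max(C',1)$.

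It remains to verify that $\tilde{\kappa}(\tau) = (1 + C(1+|\tau|) v(\tau))^{|s|}$ is continuous, submultiplicative, and radially increasing. Continuity and the radially-increasing property are inherited from $v$ and from $1 + |\cdot|$. For submultiplicativity, I will first check that the base function $\tau\mapsto 1 + C(1+|\tau|)v(\tau)$ is submultiplicative: from $1 + |\tau+\sigma| \leq (1+|\tau|)(1+|\sigma|)$ and submultiplicativity of $v$ one obtains
\[
1 + C(1+|\tau+\sigma|) v(\tau+\sigma) \leq 1 + C(1+|\tau|)(1+|\sigma|)v(\tau)v(\sigma),
\]
which, upon expanding the product $(1 + C(1+|\tau|)v(\tau))(1+C(1+|\sigma|)v(\sigma))$ and using $C\leq C^2$ (valid since $C\geq 1$), is bounded above by this product; raising to the nonnegative exponent $|s|$ preserves submultiplicativity. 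The final conclusion about compatibility then follows from Theorem \ref{cor:RadialWarpingIsWarping}, which guarantees that a $(d+1)$-admissible $\vrho$ induces a $(d+1)$-admissible $\Phi_\vrho$, while the $\CalC^\infty$ regularity of $\Phi_\vrho$ on all of $\RR^d$ is immediate from $\vrho\in\CalC^\infty(\RR)$ together with the fact that $\widetilde{\vrho}$ is constant near the origin (since $\vrho$ is linear there). The only subtle point in the argument is the case of small $b$ in the pointwise bound $\vrho_\ast'(b) \lesssim F(b)$, where \eqref{eq:AdmissibilityFirstDerivative} alone is not useful and one has to fall back on continuity.
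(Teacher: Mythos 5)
Your argument is correct, and it reaches the stated $\tilde{\kappa}$ by a genuinely different route than the paper. The paper works with the divided difference $\widetilde{\vrho_\ast}(t)=\vrho_\ast(t)/t$: after reducing to $s=1$, it bounds $1+\vrho_\ast(|\xi+\tau|)\leq 1+\widetilde{\vrho_\ast}(|\xi|+|\tau|)\cdot(|\xi|+|\tau|)$ via monotonicity and $v$-moderateness of $\widetilde{\vrho_\ast}$, and then absorbs the result into $\tilde{\kappa}(\xi)\cdot(1+\vrho_\ast(|\tau|))$ through a three-way case distinction on the relative sizes of $|\xi|$, $|\tau|$, and $1$. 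You instead prove the single multiplicative increment estimate $F(a+b)\leq(1+C'a\,v(a))F(b)$ for $F=1+\vrho_\ast$ by combining the fundamental theorem of calculus with $v$-moderateness of $\vrho_\ast'$ and the pointwise bound $\vrho_\ast'\lesssim 1+\vrho_\ast$ (which you correctly extract from \eqref{eq:AdmissibilityFirstDerivative} for large arguments and from continuity near the origin, where that inequality is not directly usable). This eliminates the paper's case analysis entirely; the only case split you retain is on the sign of $s$, which the paper instead dispatches by its opening remark that real powers of a moderate weight are moderate with respect to the $|s|$-th power of the moderating weight (for $s<0$ this implicitly uses the reverse inequality $\kappa_1(x+y)\geq\kappa_1(y)/\tilde{\kappa}_1(x)$, exactly what your choice $b=|\tau+\sigma|$, $a=|\tau|$ encodes). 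Both proofs draw only on properties guaranteed by $0$-admissibility — the paper on moderateness of $\widetilde{\vrho_\ast}$, you on moderateness of $\vrho_\ast'$ together with condition \eqref{eq:AdmissibilityFirstDerivative} — so neither assumes more than the other; your explicit verification of submultiplicativity of $\tilde{\kappa}$ (via $C\leq C^2$ for $C\geq 1$) and of the ``in particular'' clause is also sound and slightly more complete than what the paper records.
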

\begin{proof}
  It is sufficient to show the result for $\kappa,\tilde{\kappa}$ with $s=1$, since all relevant 
  properties are retained if real powers $s\in\RR$ of the moderate weight, respectively nonnegative powers $|s|$ 
  of the moderating weight are taken.
  
  Let $v : \RR \to \RR^+$ denote a control weight for the radial component
  $\vrho$. In particular, this implies that $\widetilde{\vrho_\ast}$ is
  $v$-moderate, see Definition \ref{def:AdmissibleRho}. 
  We will show that $\kappa_{\Phi_\vrho} = \kappa\circ \Phi_{\vrho}^{-1}$ is $\tilde{\kappa}$-moderate for the weight
  \[
    \tilde{\kappa}\colon \RR^d \to \RR^+,\
        \xi \mapsto 1 + C \cdot (1 + |\xi|) \cdot v(|\xi|) \, ,\quad \text{with}\quad C := \max \{2, \max_{\tau_0 \in [-1,1]} \widetilde{\vrho_\ast} (\tau_0) \}, 
  \]
  which is easily seen to be radially increasing, continuous,
  and submultiplicative.

  \medskip{}

  Observe that $
    \kappa_{\Phi_\vrho} (\xi)
    = 1 + |\Phi_{\vrho_\ast} (\xi)|
    = 1 + \vrho_\ast (|\xi|)$, for all $\xi \in \RR^d$.
  Since $\vrho_\ast$ is increasing, and since $\widetilde{\vrho_\ast}$ is
  $v$-moderate, we have
  \[
    \kappa_{\Phi_\vrho} (\xi + \tau)
    = 1 + \vrho_\ast (|\xi + \tau|)
    \leq 1 + \vrho_\ast (|\xi| + |\tau|)
    =    1 + \widetilde{\vrho_\ast} (|\xi| + |\tau|) \cdot (|\xi| + |\tau|)
    \leq 1 + \big(
               v(|\xi|) \cdot \widetilde{\vrho_\ast} (|\tau|)
             \big)
             \cdot (|\xi| + |\tau|)
    =: \circledast \, .
  \]
  We now distinguish three cases:

  \noindent
  \textbf{Case 1:} If $|\tau| \leq 1$, then 
  \[
    \circledast
    \leq 1 + \max_{\tau_0 \in [-1,1]} \widetilde{\vrho_\ast} (\tau_0) \cdot (1 + |\xi|) \cdot v(|\xi|)
    \leq    \tilde{\kappa}(\xi)
    \leq \tilde{\kappa}(\xi) \cdot (1 + \vrho_\ast (|\tau|))
    =    \tilde{\kappa}(\xi) \cdot \kappa_{\Phi_\vrho} (\tau) \, .
  \]

  \noindent
  \textbf{Case 2:} If $|\xi| \leq |\tau|$ and $|\tau| > 1$, then 
  \[
    \circledast
    \leq 1 + v (|\xi|) \cdot \widetilde{\vrho_\ast} (|\tau|) \cdot 2 |\tau|
    =    1 + 2 v(|\xi|) \cdot \vrho_\ast (|\tau|)
    \leq \tilde{\kappa} (\xi) \cdot (1 + \vrho_\ast (|\tau|))
    =    \tilde{\kappa} (\xi) \cdot \kappa_{\Phi_\vrho} (\tau) \, .
  \]

  \noindent
  \textbf{Case 3:} If $1 < |\tau| \leq |\xi|$, then 
  $\widetilde{\vrho_\ast} (|\tau|)
   = \vrho_\ast (|\tau|) / |\tau|
   \leq \vrho_\ast (|\tau|)$, and 
  \[
    \circledast
    \leq 1 + 2|\xi| \cdot v(|\xi|) \cdot \vrho_\ast (|\tau|)
    \leq \tilde{\kappa} (\xi) \cdot (1 + \vrho_\ast (|\tau|))
    =    \tilde{\kappa}(\xi) \cdot \kappa_{\Phi_\vrho} (\tau) \, .
    \qedhere
  \]
\end{proof}

Finally, we can characterize the existence of embeddings of the warped
coorbit spaces $\Co(\Phi_\vrho, \lebesgue_\kappa^{p,q})$, with the 
weight $\kappa = (1 + |\bullet|)^s$ into inhomogeneous Besov spaces, and vice versa.

\begin{proposition}\label{prop:RadialWarpingBesovEmbedding}
  Let $\vrho : \RR \to \RR$ be $\CalC^\infty$ and a $(d+1)$-admissible radial component, with 
  \[
     \left|
      \tfrac{\vrho_\ast (\gamma) \cdot \vrho_\ast ''(\gamma)}
           {[\vrho_\ast '(\gamma)]^2}
    \right|
    \leq C < \infty
    \quad \text{for some } R>0\ \text{and}\ \gamma \in [R, \infty) \, .
  \]
  Furthermore, let $\kappa : \RR^d \to \RR^+, \xi \mapsto (1 + |\xi|)^{s_1}$
  for some $s_1 \in \RR$.
  
  \medskip{}

  Then the warped coorbit spaces
  $\Co (\Phi_\vrho, \lebesgue_\kappa^{p_1, q_1})$
  are well-defined and independent of the choice
  of $0\neq \theta \in \TestFunctionSpace (\RR^d) \setminus \{0\}$,
  for arbitrary $p_1, q_1 \in [1,\infty]$. We further have the following equivalences, for $p_1,p_2\in [1,\infty]$:
  \begin{enumerate}
   \item $\Co (\Phi_\vrho, \lebesgue_\kappa^{p_1, q_1})
          \hookrightarrow B_{s_2}^{p_2, q_2} (\RR^d)$ holds if and only if
  \begin{equation}
    p_1 \leq p_2\quad
    \text{and }
    \left\|
      \left(
        2^{j (s_2 - s_1 + d \tilde{t})}
        \cdot \left[
                \vrho_\ast ' (\vrho(2^j))
                \cdot \big( 2^j \big/ \vrho(2^j) \big)^{d-1}
              \right]^{p_1^{-1} - p_2^{-1} - \tilde{t} + 2^{-1} - q_1^{-1}}
      \right)_{j \in \NN_0}
    \right\|_{\ell^{q_2 \cdot (q_1 / q_2)'}} < \infty \, ,
    \label{eq:RadialWarpedCoorbitIntoBesov}
  \end{equation}
  where $\tilde{t} = \max(0,\max(p_2^{-1},1-p_2^{-1})-q_1^{-1})$.
  \item $B_{s_2}^{p_2, q_2} (\RR^d)
   \hookrightarrow \Co (\Phi_\vrho, \lebesgue_\kappa^{p_1, q_1})$
  holds if and only if
  \begin{equation}
    p_2 \leq p_1\quad
    \text{and }
    \left\|
      \left(
        2^{j (s_1 - s_2 + d t)}
        \cdot \left[
                \vrho_\ast ' (\vrho(2^j))
                \cdot \big( 2^j \big/ \vrho(2^j) \big)^{d-1}
              \right]^{p_2^{-1} - p_1^{-1} - t + q_1^{-1} - 2^{-1}}
      \right)_{j \in \NN_0}
    \right\|_{\ell^{q_1 \cdot (q_2 / q_1)'}} < \infty \, ,
    \label{eq:BesovIntoRadialWarpedCoorbit}
  \end{equation}
  where $t := \max(0,q_1^{-1} - \min(p_2^{-1}, 1-p_2^{-1}))$.
  \end{enumerate}
\end{proposition}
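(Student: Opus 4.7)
The plan is to reduce everything to the embedding theorems for decomposition spaces, namely Theorem \ref{thm:DecompEmbeddings}, items (1) and (2), by identifying both the warped coorbit space and the inhomogeneous Besov space with suitable decomposition spaces. The well-definedness claim follows immediately from Lemma \ref{lem:RadialWarpingStandardWeightAdmissible}, which shows $(\Phi_\vrho,\kappa)$ is a compatible pair in the sense of Definition \ref{def:StandingDecompositionAssumptions}; Theorem \ref{cor:warped_disc_frames} then yields the $\theta$-independence, and Theorem \ref{thm:CoorbitDecompositionIsomorphism} gives
\[
\Co(\Phi_\vrho,\lebesgue^{p_1,q_1}_\kappa) = \DecompSp(\CalQ_{\Phi_\vrho}^{(\delta,r)},\lebesgue^{p_1},\ell^{q_1}_u),
\quad u_k = (1+\vrho_\ast(\delta|k|))^{s_1}\cdot [w(\delta k)]^{q_1^{-1}-2^{-1}},
\]
for any $\delta>0$, $r>\sqrt d/2$, while by definition the Besov space equals $\DecompSp(\CalB,\lebesgue^{p_2},\ell^{q_2}_{v^{s_2}})$ with $v^{s_2}_j = 2^{js_2}$.

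Next, I would set $\CalP := \CalQ_{\Phi_\vrho}^{(\delta,r)}$ and $\CalQ := \CalB$ in order to apply Theorem \ref{thm:DecompEmbeddings}. Proposition \ref{prop:RadialCoveringBesovSubordinateness} shows precisely that $\CalP$ is almost subordinate to $\CalQ$, and Remark \ref{rem:RadialWarpingDecompositionSpaceWeightRelativeModerateness} together with Equation \eqref{eq:RadialWarpingCoveringRelativeModerateness} provide the relative $\CalB$-moderateness of $\CalP$ and of the weight $u$. Thus all prerequisites of Theorem \ref{thm:DecompEmbeddings} are in place. Item (2) of that theorem applied to this configuration yields the embedding $\Co(\Phi_\vrho,\lebesgue^{p_1,q_1}_\kappa)\hookrightarrow B_{s_2}^{p_2,q_2}(\RR^d)$ (i.e. part (1) of the proposition), while Item (1) yields $B_{s_2}^{p_2,q_2}(\RR^d)\hookrightarrow \Co(\Phi_\vrho,\lebesgue^{p_1,q_1}_\kappa)$ (part (2) of the proposition), in each case exactly when $p_1\leq p_2$ (resp.\ $p_2\leq p_1$) and a certain $\ell^{q_2\cdot(q_1/q_2)'}$-norm (resp.\ $\ell^{q_1\cdot(q_2/q_1)'}$-norm) indexed over $j\in\NN_0$ is finite.

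The substantive remaining step is the translation of the sequence-space quantities $K_t$ and $K_{\tilde t}$ appearing in Theorem \ref{thm:DecompEmbeddings} into the concrete form stated in \eqref{eq:RadialWarpedCoorbitIntoBesov} and \eqref{eq:BesovIntoRadialWarpedCoorbit}. For each $j\in\NN_0$ one picks some $k_j\in\ZZ^d$ with $B_j\cap Q^{(\delta,r)}_{\Phi_\vrho,k_j}\neq\emptyset$; the required substitutions are then
\[
|\det S_j|\asymp 2^{jd},\qquad
|\det T_{k_j}| = w(\delta k_j)\asymp \vrho_\ast'(\vrho(2^j))\cdot\bigl(2^j/\vrho(2^j)\bigr)^{d-1},
\]
from Equation \eqref{eq:RadialWarpingCoveringRelativeModerateness}, together with
\[
u_{k_j}\asymp 2^{s_1 j}\cdot\bigl[\vrho_\ast'(\vrho(2^j))\cdot(2^j/\vrho(2^j))^{d-1}\bigr]^{q_1^{-1}-2^{-1}}
\]
from Equation \eqref{eq:RadialWarpingDecompositionSpaceWeightRelativeModerateness}. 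A direct bookkeeping then yields exactly the exponents $s_2-s_1+d\tilde t$ on $2^j$ and $p_1^{-1}-p_2^{-1}-\tilde t+2^{-1}-q_1^{-1}$ on the bracket in \eqref{eq:RadialWarpedCoorbitIntoBesov}, and symmetrically for \eqref{eq:BesovIntoRadialWarpedCoorbit}.

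The main obstacle is not conceptual but organisational: one must make sure that the arbitrary choice of $k_j$ is harmless (using $\CalQ$-moderateness of $u$ and the relative moderateness statements, so that any two admissible choices produce equivalent sequences), and one must handle the $j=0$ case separately since $\vrho(2^j)$ degenerates there, but only finitely many indices are affected, so this contributes only a constant. Once these details are checked, invoking Theorem \ref{thm:DecompEmbeddings} yields both equivalences, completing the proof.
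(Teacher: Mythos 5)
Your proposal is correct and follows essentially the same route as the paper: identify $\Co(\Phi_\vrho,\lebesgue^{p_1,q_1}_\kappa)$ with a decomposition space via Theorem \ref{thm:CoorbitDecompositionIsomorphism}, invoke Proposition \ref{prop:RadialCoveringBesovSubordinateness} and Remark \ref{rem:RadialWarpingDecompositionSpaceWeightRelativeModerateness} for almost subordinateness and relative moderateness, then apply Theorem \ref{thm:DecompEmbeddings} Items (2) and (1) with $\CalP=\CalQ_{\Phi_\vrho}^{(\delta,r)}$, $\CalQ=\CalB$. One small caution: with this assignment, the theorem's notation attaches $T_i$ to $\CalB$ (so $|\det T_j|=2^{jd}$) and $S_j$ to $\CalQ_{\Phi_\vrho}^{(\delta,r)}$ (so $|\det S_{k_j}|=w(\delta k_j)$); you write the two roles the other way around, and if one plugs the swapped determinants into the theorem's $K_{\tilde t}$ or $K_t$ literally, the $2^{jd}$ and $\gamma_j$ exponents trade places and the formula comes out wrong -- your claimed final exponents are the correct ones, so this is presumably only a slip of presentation, but it should be cleaned up. Also, the $j=0$ case needs no special handling here: $\vrho(2^0)=\vrho(1)>0$, and the cited equivalences \eqref{eq:RadialWarpingCoveringRhoTildeStarModerateness}, \eqref{eq:RadialWarpingCoveringRelativeModerateness} are stated and proved for all $j\in\NN_0$, with the finitely many small-$(j,k)$ pairs already absorbed into the implicit constants inside Proposition \ref{prop:RadialCoveringBesovSubordinateness}.
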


\begin{proof}
  By \cite[Lemma 9.10]{voigtlaender2016embeddings}, $\CalB$ is a tight semi-structured covering of $\RR^d$. 
  Further, Theorem \ref{cor:RadialWarpingIsWarping} shows that $\Phi_\vrho$ is a $(d+1)$-admissible warping function, such that Theorem~\ref{cor:warped_disc_frames} shows that the warped coorbit spaces
  $\Co( \Phi_\vrho, \lebesgue_\kappa^{p_1, q_1})$ are well-defined and independent of the choice of $0\neq \theta\in\TestFunctionSpace(\RR^d)$. Lemmas  \ref{lem:DecompInducedCoveringIsNice} and \ref{lem:DecompInducedCoveringIsTight} show that $\CalQ_{\Phi_\vrho}^{(\delta,r)}$ is semi-structured and tight. Finally, by Lemma~\ref{lem:RadialWarpingStandardWeightAdmissible}, the tuple $(\Phi_\vrho, \kappa)$ is a compatible pair as per Definition~\ref{def:StandingDecompositionAssumptions}, such that the assumptions of Theorem~\ref{thm:CoorbitDecompositionIsomorphism} are satisfied, and 
  $\Co( \Phi_\vrho, \lebesgue_\kappa^{p_1, q_1})
   = \DecompSp (\CalQ_{\Phi_\vrho}^{(\delta,r)}, \lebesgue^{p_1}, \ell_{u^{(q_1)}}^{q_1})$, with $\delta > 0$ and $r > \sqrt{d}/2$ arbitrary, up to canonical identifications. The weight $u^{(q_1)}$ is defined as in \eqref{eq:DecompositionSpaceWeight}, i.e.,
   \[
    u_k^{(q_1)} = \kappa_{\Phi_\vrho}(\delta k) \cdot [w(\delta k)]^{\frac{1}{q_1}-\frac{1}{2}}.
   \]
 
   Since, by assumption, there exist $C, R>0$, such that $\left|\tfrac{\vrho_\ast (\gamma) \cdot \vrho_\ast ''(\gamma)}{[\vrho_\ast '(\gamma)]^2}\right| \leq C$, for all $\gamma \in [R, \infty)$, Proposition~\ref{prop:RadialCoveringBesovSubordinateness} shows that $\CalQ_{\Phi_\vrho}^{(\delta,r)}$
   is almost subordinate to, and relatively moderate with respect to, $\CalB$ and, as discussed in Remark~\ref{rem:RadialWarpingDecompositionSpaceWeightRelativeModerateness}, $u^{(q_1)}$ is relatively $\CalB$-moderate as well. Hence, the assumptions of Theorem \ref{thm:DecompEmbeddings}, Items (1) and (2), are satisfied.
   
   Finally, together with Lemma~\ref{lem:DecompInducedCoveringIsNice}, which specifies that the operator $T_k$ in the semi-structured representation of $Q_{\Phi_\vrho,k}^{(\delta,r)}$ is given by $T_k = D\Phi_\vrho^{-1}(\delta k)$, we obtain 
   \[
    |\det T_k|
    = w(\delta k)
    \asymp \vrho_\ast ' (\vrho(2^j))
           \cdot \big( 2^j \big/ \vrho(2^j) \big)^{d-1}
    =: \gamma_j
    \quad \text{and} \quad
    u_k^{(q_1)}
    \asymp 2^{s_1 j} \cdot \gamma_j^{q_1^{-1} - 2^{-1}}\, ,
  \]
  if $k \in \ZZ^d$ and $j \in \NN_0$ satisfy
  $B_j \cap Q_{\Phi_\vrho, k}^{(\delta,r)} \neq \emptyset$. The equivalences \eqref{eq:RadialWarpedCoorbitIntoBesov} and \eqref{eq:BesovIntoRadialWarpedCoorbit} are now direct
  consequences of Theorem \ref{thm:DecompEmbeddings}, Items (2) and (1), respectively.
\end{proof}

Closing the discussion of Besov embeddings, we show that for $d=1$, and choosing the radial warping function derived from a certain weakly $k$-admissible radial component, $B^{p,q}_s(\RR)$ is recovered as a warped coorbit space. For $d\geq 2$, $B^{p,q}_s(\RR^d)$ is nested between two warped coorbit spaces which only differ by a minimal change of the weight $\kappa$.

\begin{theorem}\label{thm:LnWarpingBesovEmbeddingsExplicit}
   The function $\vsig_{1} : [0,\infty) \to [0,\infty), \xi \mapsto \ln(1+\xi)$ is a weakly $k$-admissible radial component for any $k\in\NN_0$. Assume that $1\leq p,q\leq \infty$ and let $\vrho_{1}:\RR\rightarrow\RR$ be any \emph{slow-start} version of $\vsig_{1}$, see Theorem \ref{thm:SlowStartFullCriterion}. Further, define for $s\in\RR$ and $d\in\NN$,
   \[
    \kappa^{(s,q)} := \kappa^{(d,s,q)} \colon \RR^d \rightarrow \RR^+,\quad \xi \mapsto (1+|\xi|)^{s+d(2^{-1}-q^{-1})}.
   \]
   We have
   \begin{equation}\label{eq:WarpedBesovCloseness}
      \Co (\Phi_{\vrho_{1}}, \lebesgue^{p,q}_{\kappa^{(s+\eps,q)}})
    \hookrightarrow
    B_s^{p,q} (\RR^d)
    \hookrightarrow
    \Co (\Phi_{\vrho_{1}}, \lebesgue^{p,q}_{\kappa^{(s-\eps,q)}})
    \qquad \forall \, \eps > 0 \, .
  \end{equation}
  If and only if $d=1$ or $p=q=2$, the equality 
  \begin{equation}\label{eq:DImOneWarpedBesovEquality}
  B_s^{p,q} (\RR^d)
  = \Co (\Phi_{\vrho_{1}}, \lebesgue^{p,q}_{\kappa^{(s,q)}})\, 
  \end{equation}
  holds. If $d\geq 2$ and $p\neq 2$, then neither of the two embeddings implied by \eqref{eq:DImOneWarpedBesovEquality} is valid. If $d\geq 2$, and $p=2$, then $B_s^{p,q} (\RR^d) \hookrightarrow \Co (\Phi_{\vrho_{1}}, \lebesgue^{p,q}_{\kappa^{(s,q)}})$ if and only if $q\leq 2$ and, likewise, $\Co (\Phi_{\vrho_{1}}, \lebesgue^{p,q}_{\kappa^{(s,q)}}) \hookrightarrow B_s^{p,q} (\RR^d)$ if and only if $q\geq 2$.
\end{theorem}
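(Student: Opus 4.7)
The plan is to apply Proposition \ref{prop:RadialWarpingBesovEmbedding} to $\Phi_{\vrho_1}$ after first establishing its prerequisites. I will verify that $\vsig_1(\xi) = \ln(1+\xi)$ is weakly $k$-admissible for every $k \in \NN_0$: its inverse is $\vsig_{1,\ast}(\gamma) = e^\gamma - 1$, so $\vsig_{1,\ast}^{(\ell)}(\gamma) = e^\gamma = \vsig_{1,\ast}'(\gamma)$ for all $\ell \geq 1$; this gives conditions \eqref{eq:SlowStartDerivativeLarge} and \eqref{eq:SlowStartHigherDerivativesBound} once $\delta>0$ is chosen such that $e^\gamma - 1 \asymp e^\gamma$ for $\gamma \geq \delta$, while \eqref{eq:SlowStartPsiTildeModerate} for $\widetilde{\vsig_{1,\ast}}(\gamma) = (e^\gamma - 1)/\gamma$ is immediate with a control weight like $u(t) = C \cdot (1 + t) \cdot e^t$. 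Theorem \ref{thm:SlowStartFullCriterion} then yields slow-start versions $\vrho_1 \in \CalC^\infty(\RR)$ that are $k$-admissible for every $k$. Since $\vrho_1 \equiv \vsig_1$ outside a compact set and $\vrho_{1,\ast}(\gamma) \vrho_{1,\ast}''(\gamma)/[\vrho_{1,\ast}'(\gamma)]^2 = 1 - e^{-\gamma}$ for large $\gamma$, condition \eqref{eq:RadialComponentBesovModeratenessCondition} is satisfied, and Lemma \ref{lem:RadialWarpingStandardWeightAdmissible} shows that $(\Phi_{\vrho_1}, \kappa^{(s,q)})$ is a compatible pair for every $s \in \RR$.

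The core computation is the asymptotic evaluation of the quantity $\gamma_j := \vrho_{1,\ast}'(\vrho_1(2^j)) \cdot (2^j/\vrho_1(2^j))^{d-1}$ appearing in Proposition \ref{prop:RadialWarpingBesovEmbedding}. For $j$ large enough that the slow-start modification is inactive, $\vrho_1(2^j) = \ln(1+2^j) \asymp j$ and $\vrho_{1,\ast}'(\vrho_1(2^j)) = [\vrho_1'(2^j)]^{-1} = 1+2^j \asymp 2^j$, so $\gamma_j \asymp 2^{jd} \cdot j^{1-d}$; for the remaining finitely many indices $\gamma_j$ is bounded above and below. I will specialize Proposition \ref{prop:RadialWarpingBesovEmbedding} to $p_1 = p_2 = p$, $q_1 = q_2 = q$, so that $q_2 \cdot (q_1/q_2)' = \infty$ and the test norm becomes a supremum. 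Substituting the weights $s_1 = s \pm \eps + d(2^{-1} - q^{-1})$ from $\kappa^{(s\pm\eps,q)}$, I expect the $2^j$-powers coming from $2^{j(s_2-s_1+d\tilde t)}$ and from $\gamma_j^{-\tilde t + 2^{-1}-q^{-1}}$ (respectively from $2^{j(s_1-s_2+dt)}$ and $\gamma_j^{-t+q^{-1}-2^{-1}}$) to cancel exactly, leaving the test sequences
\[
  2^{-j\eps} \cdot j^{(d-1)(\tilde t + q^{-1} - 2^{-1})} \quad \text{and} \quad 2^{-j\eps} \cdot j^{(d-1)(t + 2^{-1} - q^{-1})},
\]
where $\tilde t$ and $t$ are as in Proposition \ref{prop:RadialWarpingBesovEmbedding}.

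The closeness relation \eqref{eq:WarpedBesovCloseness} follows immediately: for any $\eps > 0$ the exponential factor $2^{-j\eps}$ dominates any polynomial growth in $j$, so both suprema are finite. Setting $\eps = 0$ reduces equality \eqref{eq:DImOneWarpedBesovEquality} to the requirement that each polynomial $j$-exponent is non-positive. In dimension $d = 1$ both exponents vanish identically, yielding unconditional equality. For $d \geq 2$, a case analysis on the value of $\tilde t = \max(0, \max(p^{-1}, 1-p^{-1}) - q^{-1})$ (and the sign of $2^{-1} - q^{-1}$) shows that $(d-1)(\tilde t + q^{-1} - 2^{-1}) \leq 0$ forces $p = 2$ and $q \geq 2$; symmetrically, $(d-1)(t + 2^{-1} - q^{-1}) \leq 0$ forces $p = 2$ and $q \leq 2$. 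Intersecting these gives the ``only if'' direction of the equality: it forces $p = q = 2$. The same analysis, combined with the sharp ``if and only if'' of Proposition \ref{prop:RadialWarpingBesovEmbedding}, yields the unilateral equivalences for $d \geq 2, p = 2$ and the failure of both embeddings when $d \geq 2, p \neq 2$.

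The main obstacle is executing the case analysis in the last paragraph: one must treat the cases $p < 2$, $p = 2$, $p > 2$ and $q < 2$, $q = 2$, $q > 2$ (including the boundary cases $p, q \in \{1, \infty\}$) separately, and verify in each combination that the relevant polynomial exponent is \emph{strictly positive} whenever the corresponding embedding should fail, so that Proposition \ref{prop:RadialWarpingBesovEmbedding} rules the embedding out in its ``only if'' direction. All other steps reduce to routine algebraic simplifications built on the asymptotic $\vrho_1(\xi) \asymp \ln(1+\xi)$ at infinity and on the general framework of Section \ref{sec:coverings}.
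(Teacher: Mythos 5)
Your proposal is correct and follows essentially the same route as the paper: verifying the hypotheses of Proposition \ref{prop:RadialWarpingBesovEmbedding}, computing $\gamma_j = (\vrho_1)_\ast'(\vrho_1(2^j)) \cdot (2^j/\vrho_1(2^j))^{d-1} \asymp 2^{jd} (1+j)^{1-d}$ via $(\vrho_1)_\ast'(\vrho_1(2^j)) \asymp 2^j$ and $\vrho_1(2^j) \asymp 1+j$, confirming the exact cancellation of the $2^j$-powers after substituting the shifted exponent $s_1 + d(2^{-1}-q^{-1})$ from $\kappa^{(s_1,q)}$, and then reading off the embeddings from the sign of the residual polynomial exponents $(d-1)(\tilde t + q^{-1}-2^{-1})$ and $(d-1)(t + 2^{-1}-q^{-1})$. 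The only cosmetic difference is that you re-derive the weak $k$-admissibility of $\vsig_1$ from Definition \ref{def:WeaklyAdmissibleRadialComponent}, whereas the paper cites it from \cite[Example 8.16]{VoHoPreprint}.
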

\begin{proof}
   It was already shown in \cite[Example 8.16]{VoHoPreprint} that $\vsig_1$ is weakly $k$-admissible, for any $k\in\NN_0$.
   Hence, we conclude that $\Phi_\vrho$ is $(d+1)$-admissible for any $d\in\NN$, by Theorems \ref{thm:SlowStartFullCriterion} and \ref{cor:RadialWarpingIsWarping}. Before we can apply Proposition \ref{prop:RadialWarpingBesovEmbedding}, it remains to show that there exist $C, R>0$, such that $\left|\tfrac{\vrho_\ast (\gamma) \cdot \vrho_\ast ''(\gamma)}{[\vrho_\ast '(\gamma)]^2}\right| \leq C$, for all $\gamma \in [R, \infty)$. However, by construction, see Theorem \ref{thm:SlowStartFullCriterion}, there is some $R > 0$ such that $\vrho_1(\xi) = \vsig_1(\xi)$ for all $\xi \in [R,\infty)$ and, possibly after increasing $R$, $(\vrho_1)_\ast (\xi) = (\vsig_1)_\ast(\xi) = e^{\xi} - 1$ as well.
   Consequently, we obtain the desired bound  
   \[
  \Big|
    \frac{(\vrho_1)_\ast (\gamma) \cdot (\vrho_1)_\ast ''(\gamma) }
         {[(\vrho_1)_\ast ' (\gamma)]^2}
  \Big|
= \frac{(e^\gamma - 1) e^\gamma}{e^{2 \gamma}}
  \leq 1
  \qquad \forall \, \gamma \in [R,\infty) \, .
\]

Given that $(\vrho_1)_\ast '(\xi) = e^{|\xi|}$ for all $\xi\in\RR\setminus (-R,R)$, a short calculation show that 
\begin{equation}
  2^j \asymp (\vrho_1)_\ast ' (\vrho_1(2^j))
  \qquad \forall \, j \in \NN_0 \, .
  \label{eq:BesovWarpingEquivalenceCondition}
\end{equation}

We are now ready to apply Proposition \ref{prop:RadialWarpingBesovEmbedding}, where due to the chosen weight $\kappa^{(s,q)}$ we have $s_1 + d(2^{-1}-q^{-1})$ in place of $s_1$. By Equation \eqref{eq:BesovWarpingEquivalenceCondition}, and additionally noting that $\vrho_1(2^j) \asymp \vsig_1 (2^j) \asymp 1 + j$,
for all $j\in\NN_0$, we can rewrite the norm expression in Equation \eqref{eq:RadialWarpedCoorbitIntoBesov}, with $p_1=p_2=p$ and $q_1=q_2=q$, as
\begin{equation}\label{eq:CoorbitIntoBesovSimple}
  \begin{split}
    K_{\tilde{t}} & =\left\|
      \left(
        2^{j (s_2 - (s_1 + d (2^{-1} - q^{-1})) + d \tilde{t})}
        \cdot \left[
                (\vrho_1)_\ast ' (\vrho_1(2^j))
                \cdot \big( 2^j \big/ \vrho_1(2^j) \big)^{d-1}
              \right]^{2^{-1} - q^{-1} - \tilde{t}}
      \right)_{j \in \NN_0}
    \right\|_{\ell^{\infty}}\\
    &  \asymp  \left\|
      \left(
        2^{j (s_2 - s_1)}
        \big/ (1+j)^{(d-1) ( - \tilde{t} + 2^{-1} -q^{-1})}
      \right)_{j \in \NN_0}
    \right\|_{\ell^{\infty}} =: (*),
  \end{split}
\end{equation}
with $\tilde{t} = \max(0,\max(p^{-1},1-p^{-1})-q^{-1})$. Similarly, 
the norm expression in Equation \eqref{eq:BesovIntoRadialWarpedCoorbit} simplifies to 
\begin{equation}\label{eq:BesovIntoCoorbitSimple}\begin{split}
      K_t & = \left\|
      \left(
        2^{j [ (s_1 + d (2^{-1} - q^{-1})) - s_2 + d t]}
        \cdot \left[
                (\vrho_1)_\ast ' (\vrho_1(2^j))
                \cdot \big( 2^j \big/ \vrho_1(2^j) \big)^{d-1}
                  \right]^{q^{-1} - 2^{-1} - t}
      \right)_{j \in \NN_0}
    \right\|_{\ell^{\infty}} \\
    & \asymp \left\|
      \left(
        2^{j (s_1 - s_2)}
        \big/ (1 + j)^{(d-1) (q^{-1} - 2^{-1} - t)}
      \right)_{j \in \NN_0}
    \right\|_{\ell^{\infty}} =: (\star),
  \end{split}
 \end{equation}
 with $t := \max(0,q^{-1} - \min(p^{-1}, 1-p^{-1}))$. In the setting of this Theorem, we consider 
 $s_2 = s$ and $s_1\in \{s-\eps,s,s+\eps\}$. It is easily seen that $(\ast) < \infty$ if $s = s_2 < s_1 = s+\eps$ and likewise $(\star) < \infty$ if $s = s_2 > s_1 = s-\eps$, such that the closeness relation \eqref{eq:WarpedBesovCloseness} follows from Proposition \ref{prop:RadialWarpingBesovEmbedding}.
 
 If $s_1=s_2 = s$, then 
 \[
   (\ast) = \left\|
      \left(
         (1+j)^{(d-1) (\tilde{t} - 2^{-1} + q^{-1})}
      \right)_{j \in \NN_0}
    \right\|_{\ell^{\infty}},\quad \text{and}\quad (\star) =  \left\|
      \left(
        (1 + j)^{(d-1) (t - q^{-1} + 2^{-1})}
      \right)_{j \in \NN_0}
    \right\|_{\ell^{\infty}}
 \]
 which are finite if and only if $0\geq (d-1) (\tilde{t} - 2^{-1} + q^{-1})$, respectively $0\geq (d-1) (t - q^{-1} + 2^{-1})$. Both inequalities are trivially satisfied if $d=1$, such that the equality \eqref{eq:DImOneWarpedBesovEquality} holds. If $d>1$, then $d-1>0$, but a simple check of all possible configurations reveals that $\tilde{t} - 2^{-1} + q^{-1}$ is positive, unless $p=2$ and $q\geq 2$, when it equals $0$. Likewise, $t - q^{-1} + 2^{-1}>0$, unless $p=2$ and $q\leq2$, when it equals $0$. Hence,  Proposition \ref{prop:RadialWarpingBesovEmbedding} yields the equality \eqref{eq:DImOneWarpedBesovEquality} for $d=1$ and $p=q=2$. The above discussion also settles the remaining cases for $d\geq 2$, as stated in the theorem. This completes the proof.
\end{proof}

\begin{remark}\label{rem:WarpingBesovEmbeddingsModifiedWeight}
  It is easy to see that the statement of Theorem \ref{thm:LnWarpingBesovEmbeddingsExplicit} remains true, if we substitute $\kappa = \kappa^{s,q}$ given in the theorem by 
  \[
    \kappa^{(1,s,q)} := \kappa^{(1,s,d,q)} \colon \RR^d \rightarrow \RR^+,\ \xi \mapsto (1+|\xi|)^{s+d(2^{-1}-q^{-1})}\cdot (1+|\ln(1+|\xi|)|)^{(1-d)(2^{-1}-q^{-1})},
  \]
  a weight that we will encounter again in our study of embeddings between warped coorbit spaces and $\alpha$-modulation spaces in Section \ref{sec:alpha}.  
  
  To see that $\Phi_{\vrho_1}$ and $\kappa^{(1,s,q)}$ are compatible, recall that $\Phi_{\vrho_1}$ and $(1+|\bullet|)^s$ are compatible by Lemma \ref{lem:RadialWarpingStandardWeightAdmissible}, and that $(1+|\bullet|)^{d(2^{-1}-q^{-1})}\cdot (1+|\ln(1+|\bullet|)|)^{(1-d)(2^{-1}-q^{-1})}$ is just $[w \circ \Phi_{\vrho_1}]^{2^{-1}-q^{-1}}$, which is clearly compatible with $\Phi_{\vrho_1}$, in disguise:
  By construction of $\vrho_1$, we have, for all $\xi\in \RR^d\setminus B_{2\eps}(0)$, $|\Phi_{\vrho_1}(\xi)|=\ln(1+|\xi|)$. Furthermore, with $\delta = \Phi_{\vrho_1}(2\eps)>0$, we have, for all $\tau\in \RR^d\setminus B_{\delta}(0)$,that $w(\tau) = e^{d|\tau|}\cdot |\tau|^{1-d} \asymp e^{d|\tau|}\cdot (1+|\tau|)^{1-d}$. Hence, $w(\Phi_{\vrho_1}(\xi)) \asymp (1+|\bullet|)^{d}\cdot (1+|\ln(1+|\bullet|)|)^{(1-d)}$ for all $\xi\in \RR^d\setminus B_{2\eps}(0)$. On the other hand, if $\xi\in \overline{B_{2\eps}(0)}$, then $w(\Phi_{\vrho_1}(\xi)) \asymp 1 \asymp (1+|\bullet|)^{d}\cdot (1+|\ln(1+|\bullet|)|)^{(1-d)}$, such that both weights are equivalent. In particular, $(1+|\bullet|)^{d}\cdot (1+|\ln(1+|\bullet|)|)^{(1-d)}$ is $Cv_0^d$-moderate, where $C\geq 1$ and $v_0$ is a control weight for the $k$-admissible warping function $\Phi_{\vrho_1}$.
  
  Now, by exactly the same steps as in the proof of Theorem \ref{thm:LnWarpingBesovEmbeddingsExplicit}, the expressions \eqref{eq:RadialWarpedCoorbitIntoBesov} and \eqref{eq:BesovIntoRadialWarpedCoorbit} are equivalent to the even simpler 
 \[
   \left\|
      \left(
        2^{j(s_2-s_1)}\cdot (1+j)^{(d-1)\tilde{t}}
      \right)_{j \in \NN_0}
    \right\|_{\ell^{\infty}},\quad \text{and}\quad \left\|
      \left(
       2^{j(s_1-s_2)} \cdot (1 + j)^{(d-1) t}
      \right)_{j \in \NN_0}
    \right\|_{\ell^{\infty}},\quad \text{respectively.}
 \]
 These conditions are, in turn, satisfied for the exact same values of $s_1,\ s_2$ and $\tilde{t},\ t$ as the analogous conditions given in \eqref{eq:CoorbitIntoBesovSimple} and \eqref{eq:BesovIntoCoorbitSimple}.
\end{remark}

\begin{remark}
Note that it can be shown that shearlet-type coverings as discussed in
\cite{labate2013shearlet,StructuredBanachFrames2}
are weakly subordinate to the covering $\CalQ^{(\delta,r)}_{\Phi_{\vrho}}$.
In fact, the \emph{connected} shearlet covering
(see \cite[Definition 3.1]{StructuredBanachFrames2}) 
is almost subordinate to $\CalQ^{(\delta,r)}_{\Phi_{\vrho}}$ and also relatively
moderate with respect to $\CalQ^{(\delta,r)}_{\Phi_{\vrho}}$.
Hence, one can also completely characterize the embedding relations between
$\Co(\Phi_{\rho},L^{p,q}_{\kappa_{\gamma}})$
and the shearlet smoothness spaces discussed in
\cite{labate2013shearlet,StructuredBanachFrames2}. We leave the detailed derivation to the interested reader.
\end{remark}

We will now turn our attention to \emph{$\alpha$-modulation spaces} and (Besov-type) \emph{spaces of dominating mixed smoothness}~\cite{nikol1962boundary,nikol1963boundary,vybiral2006function}, showing that these spaces coincide with certain warped coorbit spaces. In either case, we note that the discretization results in \cite{VoHoPreprint} imply that warped time-frequency systems provide atomic decompositions for the respective spaces that differ from previous constructions, see, e.g., \cite{BorupNielsenAlphaModulationSpaces,speckbacher2016alpha,fefo06} for $\alpha$-modulation spaces and \cite[Section 2.4]{vybiral2006function} for spaces of dominating mixed smoothness.

\section{Warped coorbit spaces as ($\alpha$-)modulation spaces}
\label{sec:alpha}

The $\alpha$-modulation spaces~\cite{gr92-2,BorupNielsenAlphaModulationSpaces} are a family of decomposition spaces defined with respect to coverings that are in some sense geometrically intermediate between the uniform covering and the dyadic, (inhomogeneous) Besov covering. We will show that the radial warping obtained from $\vsig_\beta (t) = (1+t)^{1/\beta}-1$ for $\beta\geq 1$ recovers
$\alpha$-modulation spaces with $\alpha = 1-\beta^{-1} \in [0,1)$. The following definition is a slightly extended version of ~\cite[Definitions 2.1 and 2.4]{BorupNielsenAlphaModulationSpaces}.

\begin{definition}\label{def:AlphaCoveringAndModulationSpaces}
   Let $\CalQ = (Q_i)_{i \in I}$ be an admissible covering of $\RR^d$, 
   and let $\alpha \in (-\infty,1]$.
  The family $\CalQ$ is called an $\alpha$-covering of $\RR^d$, if the
  following hold:
  \begin{enumerate}
     \item\label{enu:AlphaCoveringInnerOuter} With 
            \begin{equation}
            \quad\quad\quad
            r_Q
            := \sup \{
                      r > 0 \,:\, \exists \ \xi \in \RR^d : B_r (\xi) \subset Q
                    \}
            \quad \!\! \text{and} \quad \!\!
            R_Q
            := \inf \{
                      R > 0 \,:\, \exists \ \xi \in \RR^d : Q \subset B_R (\xi)
                    \}
            \, ,
            \label{eq:InnerOuterRadiusDefintion}
          \end{equation}
          we have
          $\sup_{i \in I} R_{Q_i} / r_{Q_i} < \infty$.

    \item\label{enu:AlphaCoveringMeasure} We have $\mu(Q_i) \asymp (1 + |\xi|)^{\alpha \cdot d}$
          for all $i \in I$ and $\xi \in Q_i$,
          where the implied constant does not depend on $i,\xi$,
          and where $\mu$ denotes the Lebesgue measure as usual.
  \end{enumerate}
  
  If $1\leq p,q\leq \infty$, $\CalQ^{(\alpha)}$ is an $\alpha$-covering, and $u^s = (u^s_i)_{i\in I}$, with $u^s_i = [\mu(Q_i^{(\alpha)})]^{s(d\alpha)^{-1}}$ for some $s\in\RR$, then the \emph{$\alpha$-modulation space} $M^{s,\alpha}_{p,q}(\RR^d)$ is defined by 
  \begin{equation}\label{eq:AlphaModulationDefinition}
         M_{p,q}^{s,\alpha}(\RR^d)
    := \DecompSp (\CalQ^{(\alpha)}, \lebesgue^p, \ell_{u^s}^q).
  \end{equation}
\end{definition}

\begin{remark}
   In Appendix \ref{sec:AlphaCoverings}, we show that the resulting function space is independent of the choice of the $\alpha$-covering $\CalQ^{(\alpha)}$, see also \cite[Lemma B.2]{BorupNielsenAlphaModulationSpaces}.
\end{remark}

Note that prior work~\cite{gr92-2,BorupNielsenAlphaModulationSpaces} was concerned specifically with the case $\alpha \in [0,1]$. We show in Appendix \ref{sec:AlphaCoverings} that $\alpha\in (-\infty,1]$ is indeed the most general setting, and the natural generalization of Defintion \ref{def:AlphaCoveringAndModulationSpaces} to $\alpha>1$ yields an empty set of $\alpha$-coverings for any such $\alpha$. To justfiy the notion of $\alpha$-coverings, $\alpha \in [0,1]$, as \emph{intermediate} between the uniform and dyadic coverings, consider Item (\ref{enu:AlphaCoveringMeasure}) in Definition \ref{def:AlphaCoveringAndModulationSpaces} and observe that the uniform covering $\CalQ = ( Q + k )_{k \in \ZZ^d}$ with a
  suitable base set $Q$ satisfies
  $\mu(Q + k) \asymp 1 = (1+|\xi|)^{0 \cdot d}$
  for $\xi \in Q+k$, whereas the dyadic covering $\CalB$ consisting of the annuli
  $B_j = B_{2^{j+1}} (0) \setminus \overline{B_{2^{j-1}} (0)}$ for $j \in \N$,
  and a ball $B_0 = B_2 (0)$, satisfies
  $\mu(B_j) \asymp 2^{dj} \asymp (1 + |\xi|)^{1 \cdot d}$ for
  $\xi \in B_j$.

\medskip{}

In the following, we will consider warping functions $\Phi_{\vrho_\alpha}$, $\alpha\in(-\infty,1)$, derived from a slow start version of
\begin{equation}\label{eq:WeaklyAdmissibleAlphaComponent}
  \vsig_{\alpha}\colon [0,\infty)\rightarrow [0,\infty),\quad \xi \mapsto (1+\xi)^{1-\alpha}-1.
\end{equation}
A straightforward derivation shows that $\vsig_{\alpha}$ is indeed a weakly $k$-admissible radial component, for any $k\in\NN_0$. Hence, any slow start version $\vrho_\alpha$ of $\vsig_{\alpha}$, generates a $k$-admissible warping function $\Phi_{\vrho_\alpha}$, see Theorem \ref{thm:SlowStartFullCriterion} and the ensuing remark. We now proceed to show that $\CalQ^{(\delta,r)}_{\Phi_{\vrho_{\alpha}}}$ is an $\alpha$-covering, such that $\Co(\Phi_{\vrho_{\alpha}},\lebesgue^{p,q}_{\kappa}) = \DecompSp (\CalQ^{(\delta,r)}_{\Phi_{\vrho_{\alpha}}}, \lebesgue^p, \ell_{u^s}^q) = M_{p,q}^{s,\alpha}(\RR^d)$, for an appropriate choice of $\kappa = \kappa^{(\alpha,s,d,q)}$.

\begin{lemma}\label{lem:InducedCoverIsAlphaCover}
  Let $\alpha\in (-\infty,1)$ and $\vrho_{\alpha}$ be an arbitrary slow start version of $\vsig_\alpha$, as in Theorem \ref{thm:SlowStartFullCriterion}. Then, for any $\delta>0$ and $r\geq \sqrt{d}/2$, $\CalQ^{(\delta,r)}_{\Phi_{\vrho_{\alpha}}}$ is an $\alpha$-covering.
\end{lemma}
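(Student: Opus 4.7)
The plan is to verify the two defining properties of an $\alpha$-covering in Definition \ref{def:AlphaCoveringAndModulationSpaces}, exploiting the semi-structured form of $\CalQ_{\Phi_{\vrho_\alpha}}^{(\delta,r)}$ established in Section \ref{sec:coverings}, together with the explicit behaviour of $\vsig_\alpha$ outside a neighbourhood of the origin.

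For the aspect-ratio condition \ref{enu:AlphaCoveringInnerOuter}, I would combine Lemmas \ref{lem:DecompInducedCoveringIsNice} and \ref{lem:DecompInducedCoveringIsTight}, which together write $Q_{\Phi_{\vrho_\alpha},k}^{(\delta,r)} = T_k Q_k' + b_k$ with $T_k = D\Phi_{\vrho_\alpha}^{-1}(\delta k)$ and $B_{\vartheta/4}(0) \subset Q_k' \subset \overline{B_R(0)}$. This forces $R_{Q_k}/r_{Q_k} \leq (4R/\vartheta)\cdot \|T_k\|\|T_k^{-1}\|$, so it suffices to uniformly bound the condition number of $T_k$. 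Using the matrix description of $D\Phi_\vrho$ from \cite[Lemma~8.4]{VoHoPreprint} (recalled in the proof of Corollary \ref{cor:SubordinatenessForBoundingDerivativeRadialComp}), the eigenvalues of $T_k$ are $(\vrho_\alpha)_\ast'(\delta|k|)$ and $\widetilde{(\vrho_\alpha)_\ast}(\delta|k|)$. A direct computation with $\vsig_\alpha^{-1}(\xi) = (1+\xi)^{1/(1-\alpha)}-1$ gives $(\vsig_\alpha)_\ast'(\xi) \asymp \widetilde{(\vsig_\alpha)_\ast}(\xi) \asymp (1+\xi)^{\alpha/(1-\alpha)}$ on $[0,\infty)$. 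Since the slow-start construction (Theorem \ref{thm:SlowStartFullCriterion}) leaves $\vrho_\alpha$ equal to $\vsig_\alpha$ outside a small neighbourhood of the origin and replaces it there by a linear function (where both quantities are constant), continuity on the compact transition region extends the equivalence to all of $[0,\infty)$. Hence $(\vrho_\alpha)_\ast' \asymp \widetilde{(\vrho_\alpha)_\ast}$ uniformly, and therefore $\sup_k \|T_k\|\|T_k^{-1}\| < \infty$.

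For the measure condition \ref{enu:AlphaCoveringMeasure}, I would apply the change of variables
\[
  \mu\big(Q_{\Phi_{\vrho_\alpha},k}^{(\delta,r)}\big)
  = \int_{\delta B_r(k)} w(\tau)\,d\tau,
\]
where, by \eqref{eq:RadialWarpingWeightExplicit}, $w(\tau) = (\vrho_\alpha)_\ast'(|\tau|)\cdot [\widetilde{(\vrho_\alpha)_\ast}(|\tau|)]^{d-1}$. The asymptotics from the previous paragraph yield $w(\tau) \asymp (1+|\tau|)^{d\alpha/(1-\alpha)}$ uniformly on $\RR^d$. Combined with the $v_0$-moderateness of $w$ from Theorem \ref{cor:RadialWarpingIsWarping}, one obtains $w(\tau) \asymp w(\delta k)$ on $\delta B_r(k)$, and therefore $\mu(Q_k^{(\delta,r)}) \asymp w(\delta k) \asymp (1+\delta|k|)^{d\alpha/(1-\alpha)}$. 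On the other hand, Equation \eqref{eq:RadialWarpingsCoveringNormControl} in Proposition \ref{prop:RadialCoveringSubordinateness} gives $1+|\xi| \asymp 1 + (\vrho_\alpha)_\ast(\delta|k|)$ for every $\xi \in Q_k^{(\delta,r)}$, and the explicit form $\vsig_\alpha^{-1}(\xi) = (1+\xi)^{1/(1-\alpha)}-1$ (together with a continuity argument that absorbs finitely many $k$ with $|k|$ small, handled by the slow-start modification) yields $1+(\vrho_\alpha)_\ast(\delta|k|) \asymp (1+\delta|k|)^{1/(1-\alpha)}$. Composing the two equivalences delivers $\mu(Q_k^{(\delta,r)}) \asymp (1+|\xi|)^{d\alpha}$ for every $\xi \in Q_k^{(\delta,r)}$, with an implied constant independent of $k$ and $\xi$.

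The only non-routine point is to ensure the uniform comparability $(\vrho_\alpha)_\ast' \asymp \widetilde{(\vrho_\alpha)_\ast}$ through the slow-start transition region; once that is done, both conditions follow by direct computation from the explicit form of $\vsig_\alpha$. Note that the argument applies without change for every $\alpha \in (-\infty,1)$, since the sign of $\alpha/(1-\alpha)$ plays no role in the equivalences above.
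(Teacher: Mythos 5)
The proposal is correct and shares the paper's conceptual backbone: from the explicit inverse $(\vsig_\alpha)_\ast(\xi)=(1+\xi)^{1/(1-\alpha)}-1$ one obtains $(\vrho_\alpha)_\ast' \asymp \widetilde{(\vrho_\alpha)_\ast} \asymp (1+\xi)^{\alpha/(1-\alpha)}$ globally, patching the slow-start transition region by compactness, and this is then fed into the two conditions of Definition~\ref{def:AlphaCoveringAndModulationSpaces}. The bookkeeping differs. For the aspect-ratio condition, the paper integrates $D\Phi_{\vrho_\alpha}^{-1}$ along line segments and uses the orthogonal decomposition $\pi_\tau^\perp \oplus \pi_\tau$ to produce explicit two-sided asymptotics $R_{Q_k} \asymp r_{Q_k} \asymp (1+\delta|k|)^{\alpha/(1-\alpha)}$ (Equation~\eqref{eq:RQrQProportionality}); you instead reduce the whole condition to a uniform bound on the condition number $\|T_k\|\|T_k^{-1}\|$, combining the inner/outer balls furnished by Lemmas~\ref{lem:DecompInducedCoveringIsTight} and~\ref{lem:DecompInducedCoveringIsNice} with the observation that the two eigenvalues of $T_k = D\Phi_{\vrho_\alpha}^{-1}(\delta k)$ are $(\vrho_\alpha)_\ast'(\delta|k|)$ and $\widetilde{(\vrho_\alpha)_\ast}(\delta|k|)$. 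Your route is shorter and cleaner but deliberately forgoes the explicit $R_{Q_k}$-asymptotics, which the paper then reuses for the measure condition; you compensate by evaluating $\mu(Q_k) = \int_{\delta B_r(k)} w$ directly via change of variables and $v_0^d$-moderateness of $w$ — a computation the paper also performs, e.g.\ in Equation~\eqref{eq:MeasureOfCoveringElements}, but not within this proof — together with Equation~\eqref{eq:RadialWarpingsCoveringNormControl}. Both routes are rigorous; yours decouples the two conditions and isolates the condition-number bound as a reusable step, while the paper's has the side benefit of the quantitative Equation~\eqref{eq:RQrQProportionality}.
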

\begin{proof}
  Since $\vrho_{\alpha}$ is a slow start version of $\vsig_{\alpha}$, there is a $R>0$, such that with $\beta = (1-\alpha)^{-1}\in\RR^+$, we have
  \[
    (\vrho_{\alpha})_\ast(\xi) = (\vsig_{\alpha})_\ast(\xi) = (1+\xi)^{\beta}-1\quad \text{and}\quad (\vrho_{\alpha})_\ast'(\xi) = \beta\cdot (1+\xi)^{\beta-1},\quad \text{for all } \xi\geq R,
  \]
  as well as $\widetilde{(\vrho_{\alpha})_\ast} \asymp 1 \asymp (\vrho_{\alpha})_\ast'$ in $[0,R)$. Using $\xi \asymp 1+\xi$ for $\xi>R$, we see that in fact
  \[\widetilde{(\vrho_{\alpha})_\ast}(\xi) \asymp (1+\xi) ^{\beta-1}\asymp (\vrho_{\alpha})_\ast'(\xi)\] on all of $[0,\infty)$ and with the trivial equality $\frac{x^p}{y^p} = \frac{y^{-p}}{x^{-p}}$ for $x,y,p\in\RR$, we find that
  \[
    v\, \colon\, \RR\rightarrow \RR^+,\ \xi\mapsto C\cdot (1+|\xi|)^{|\beta-1|},
  \]
  is a control weight for $\vrho_{\alpha}$ as per Definition \ref{def:AdmissibleRho}, for an appropriate constant $C>0$ depending on $d$ and $\beta$.
  
  In order to show that $\CalQ^{(\delta,r)}_{\Phi_{\vrho_{\alpha}}}$ satisfies Definition \ref{def:AlphaCoveringAndModulationSpaces}(\ref{enu:AlphaCoveringInnerOuter}), note that
  \[
   \begin{split}
   \left|  \Phi_{\vrho_{\alpha}}^{-1}(\xi) -  \Phi_{\vrho_{\alpha}}^{-1}(\omega)\right| & = \left|\int_0^1 D\Phi_{\vrho_{\alpha}}^{-1}(\omega + s(\xi-\omega))\langle \xi-\omega\rangle~ds\right|\\
   & \leq \int_0^1 \left| D\Phi_{\vrho_{\alpha}}^{-1}(\omega + s(\xi-\omega))\langle \xi-\omega\rangle\right|~ds.
   \end{split}
  \]
  By \cite[Lemma 8.4]{VoHoPreprint}, we have 
 \[
    D\Phi_{\vrho_{\alpha}}^{-1}( \omega+s(\xi-\omega)) = \widetilde{(\vrho_{\alpha})_\ast}(| \omega+s(\xi-\omega)|)\cdot \pi^\bot_{\omega+s(\xi-\omega)} + (\vrho_{\alpha})_\ast'(| \omega+s(\xi-\omega)|)\cdot \pi_{\omega+s(\xi-\omega)}
 \]
  In particular, an application of Parseval's identity for orthonormal bases yields 
   \[
   \begin{split}
     \lefteqn{\left| D\Phi_{\vrho_{\alpha}}^{-1}(\omega + s(\xi-\omega))\langle \xi-\omega\rangle\right|^2}\\ 
     & =  |\widetilde{(\vrho_{\alpha})_\ast}(| \omega+s(\xi-\omega)|)\cdot \pi^\bot_{\omega+s(\xi-\omega)}(\xi-\omega)|^2 + |(\vrho_{\alpha})_\ast'(| \omega+s(\xi-\omega)|)\cdot \pi_{\omega+s(\xi-\omega)}(\xi-\omega)|^2\\
     & \asymp (\vrho_{\alpha})_\ast'(| \omega+s(\xi-\omega)|)^2 \left(|\pi^\bot_{\omega+s(\xi-\omega)}(\xi-\omega)|^2 + |\pi_{\omega+s(\xi-\omega)}(\xi-\omega)|^2\right)\\
     & = |(\vrho_{\alpha})_\ast'(| \omega+s(\xi-\omega)|)|^2 \cdot |\xi-\omega|^2, 
   \end{split}
  \]
  where we used $(\vrho_{\alpha})_\ast' \asymp \widetilde{(\vrho_{\alpha})_\ast}$ and $(\vrho_{\alpha})_\ast'>0$. 
  Altogether, we obtain 
  \[
   \left|  \Phi_{\vrho_{\alpha}}^{-1}(\xi) -  \Phi_{\vrho_{\alpha}}^{-1}(\omega)\right| \asymp
   \int_0^1  |(\vrho_{\alpha})_\ast'(| \omega+s(\xi-\omega)|)| \cdot |\xi-\omega|~ds.
  \]
  
  \medskip{}
  
  Now set $\omega = \delta k$ and let $\xi\in \delta\cdot (\overline{B_r(k)}\setminus B_r(k))$, i.e., $|\delta k-\xi|=\delta r$. Since $s(\omega-\xi)\in \overline{B_{\delta r}(0)}$, and $(\vrho_{\alpha})_\ast'$ is $v$-moderate with radially increasing weight $v$, we further have
  \[
   [v(\delta r)]^{-1}\cdot (\vrho_{\alpha})_\ast'(|\delta k|) \leq (\vrho_{\alpha})_\ast'(| \delta k+s(\xi-\delta k)|) \leq v(\delta r)\cdot(\vrho_{\alpha})_\ast'(| \delta k|),
  \]
  such that 
  \[
    \left|  \Phi_{\vrho_{\alpha}}^{-1}(\xi) -  \Phi_{\vrho_{\alpha}}^{-1}(\delta k)\right| \asymp (\vrho_{\alpha})_\ast'(|\delta k|) \asymp (1+|\delta k|)^{\beta-1}.
  \]
  We conclude that $R_{Q^{(\delta,r)}_{\Phi_{\vrho_{\alpha}},k}}$ and $r_{Q^{(\delta,r)}_{\Phi_{\vrho_{\alpha}},k}}$,
  defined as in Equation \eqref{eq:InnerOuterRadiusDefintion} satisfy
  \begin{equation}\label{eq:RQrQProportionality} 
    R_{Q^{(\delta,r)}_{\Phi_{\vrho_{\alpha}},k}} \asymp r_{Q^{(\delta,r)}_{\Phi_{\vrho_{\alpha}},k}} \asymp (1+|\delta k|)^{\beta-1} \quad \text{and}\quad \sup_{k\in\ZZ^d} R_{Q^{(\delta,r)}_{\Phi_{\vrho_{\alpha}},k}}/ r_{Q^{(\delta,r)}_{\Phi_{\vrho_{\alpha}},k}} < \infty.
  \end{equation}
    
  \medskip{}
  
  It remains to prove that $\mu(Q^{(\delta,r)}_{\Phi_{\vrho_{\alpha}},k}) \asymp (1+|\tau|)^{d\alpha}$, for all $\tau\in Q^{(\delta,r)}_{\Phi_{\vrho_{\alpha}},k}$. For $\tau = \Phi_{\vrho_{\alpha}}^{-1}(\delta k)$, we have $|\delta k| = |\Phi_{\vrho_{\alpha}}(\tau)| \asymp (1+|\tau|)^{1/\beta}-1$, such that
  \[(1+|\delta k|)^{\beta-1} \asymp (1+|\tau|)^{(\beta-1)/\beta} \asymp (1+|\tau|)^{\alpha}.
  \]
  With Equation \eqref{eq:RQrQProportionality}, we conclude that $\mu(Q^{(\delta,r)}_{\Phi_{\vrho_{\alpha}},k}) \asymp (1+|\tau|)^{d\alpha}$ for this value of $\tau$. On the other hand, if $\tau = \Phi_{\vrho_{\alpha}}^{-1}(\xi)$, with $\xi\in \delta\cdot B_r(\delta k)$, then by the same argument,
  \[
   (1+|\tau|)^{(\beta-1)/\beta} \asymp (1+|\xi|)^{\beta-1} \overset{(\ast)}{\asymp}(1+|\delta k|)^{\beta-1},
  \]
  where the proportionality constants for $(\ast)$ can be chosen as $(1+\delta r)^{-|\beta-1|}$ and $(1+\delta r)^{|\beta-1|}$, using that $(1+|\cdot|)^{\beta-1}$ is $(1+|\cdot|)^{|\beta-1|}$-moderate. Overall, 
  $\mu(Q^{(\delta,r)}_{\Phi_{\vrho_{\alpha}},k}) \asymp (1+|\tau|)^{d\alpha}$ for all $\tau\in Q^{(\delta,r)}_{\Phi_{\vrho_{\alpha}},k}$, completing the proof.
\end{proof}

We are now ready to show equality between $\alpha$-modulation spaces and certain coorbit spaces associated to $\Phi_{\vrho_{\alpha}}$.

\begin{proposition}\label{cor:AlphaModulationAsCoorbit}
  Let $\alpha \in (-\infty,1)$, $1\leq p,q \leq \infty$ and choose any slow start version $\vrho_{\alpha}$ of $\vsig_{\alpha}$. Denote $w_\alpha = \det(D\Phi_{\vrho_\alpha}^{-1}(\bullet))$ and define the weight $\kappa := \kappa^{(\alpha,s,d,q)}\,\colon\, \RR^d\rightarrow [0,\infty)$ by
  \begin{equation}\label{eq:CoorbitSpaceModulationWeight}
    \kappa := \kappa^{(\alpha,s,d,q)}\,\colon\, \RR^d\rightarrow [0,\infty),\ \xi \mapsto (1 + |\xi|)^{\gamma},\quad \text{with } \gamma = s-d\alpha(q^{-1}-2^{-1}).
  \end{equation}
  Then, up to canonical identifications,
  \begin{equation}\label{eq:CoorbitModulationEquality}
    \Co(\Phi_{\vrho_{\alpha}},\lebesgue^{p,q}_{\kappa}) = \DecompSp (\CalQ^{(\delta,r)}_{\Phi_{\vrho_{\alpha}}}, \lebesgue^p, \ell_u^q) = M_{p,q}^{s,\alpha}(\RR^d) \, ,
  \end{equation}
  where $u = (u_k)_{k\in\ZZ^d}$,  with $u_k = \kappa(\Phi^{-1}_{\vrho_{\alpha}}(\delta k))\cdot [w_\alpha(\delta k)]^{q^{-1}-2^{-1}}$.
\end{proposition}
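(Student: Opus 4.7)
The plan is to establish the two equalities in \eqref{eq:CoorbitModulationEquality} separately: the first via Theorem \ref{thm:CoorbitDecompositionIsomorphism} (our main Theorem \ref{thm:main1}), and the second via Theorem \ref{thm:decomposition_space_coincidence} applied to the covering $\CalQ_{\Phi_{\vrho_\alpha}}^{(\delta,r)}$ and an arbitrary $\alpha$-covering $\CalQ^{(\alpha)}$.

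For the first equality, I would verify that $(\Phi_{\vrho_\alpha},\kappa)$ is a compatible pair in the sense of Definition \ref{def:StandingDecompositionAssumptions}. A direct computation shows that $\vsig_\alpha$ is a weakly $k$-admissible radial component for every $k\in\NN_0$, so by Theorem \ref{thm:SlowStartFullCriterion} its slow start version $\vrho_\alpha$ is $k$-admissible, and Theorem \ref{cor:RadialWarpingIsWarping} then promotes $\Phi_{\vrho_\alpha}$ to a $C^\infty$, $(d+1)$-admissible warping function. Since $\kappa(\xi)=(1+|\xi|)^\gamma$ for $\gamma = s - d\alpha(q^{-1}-2^{-1})\in\RR$, Lemma \ref{lem:RadialWarpingStandardWeightAdmissible} yields $\tilde\kappa$-moderateness of $\kappa\circ\Phi_{\vrho_\alpha}^{-1}$. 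Applying Theorem \ref{thm:CoorbitDecompositionIsomorphism} then gives the first equality with the stated weight $u_k = \kappa(\Phi^{-1}_{\vrho_\alpha}(\delta k)) \cdot [w_\alpha(\delta k)]^{q^{-1}-2^{-1}}$.

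For the second equality, Lemma \ref{lem:InducedCoverIsAlphaCover} shows that $\CalQ_{\Phi_{\vrho_\alpha}}^{(\delta,r)}$ is itself an $\alpha$-covering. The key input is that any two $\alpha$-coverings of $\RR^d$ are weakly equivalent—this is the content of Appendix \ref{sec:AlphaCoverings} (cf.~\cite[Lemma B.2]{BorupNielsenAlphaModulationSpaces}), and since the elements of $\CalQ_{\Phi_{\vrho_\alpha}}^{(\delta,r)}$ are path-connected and those of $\CalQ^{(\alpha)}$ open, Lemma \ref{lem:almost_subordinateness_connected} upgrades weak equivalence to equivalence. Both coverings admit BAPUs (Proposition \ref{prop:DecompSpecialBAPU} for $\CalQ_{\Phi_{\vrho_\alpha}}^{(\delta,r)}$, and \cite{BorupNielsenAlphaModulationSpaces} for $\CalQ^{(\alpha)}$), so they are decomposition coverings, and the respective weights $u$ and $u^s$ are moderate by Lemma \ref{lem:DecompositionWeightIsModerate} and the defining properties of $\alpha$-coverings.

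It remains to check that $u_k \asymp u^s_i$ on intersecting elements, which I expect to be the main bookkeeping task. By Equation \eqref{eq:MeasureOfCoveringElements} combined with the measure estimate from the proof of Lemma \ref{lem:InducedCoverIsAlphaCover}, we have
\[
    w_\alpha(\delta k)
    \asymp \mu\big(Q_{\Phi_{\vrho_\alpha},k}^{(\delta,r)}\big)
    \asymp (1+|\xi|)^{d\alpha}
    \qquad \text{for every } \xi\in Q_{\Phi_{\vrho_\alpha},k}^{(\delta,r)}.
\]
Substituting this into the definition of $u_k$, the factor $(1+|\xi|)^{-d\alpha(q^{-1}-2^{-1})}$ coming from $\kappa$ exactly cancels the $[w_\alpha(\delta k)]^{q^{-1}-2^{-1}}$ term, leaving $u_k \asymp (1+|\xi|)^s$. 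On the other hand, for any $\xi\in Q^{(\alpha)}_i$, Definition \ref{def:AlphaCoveringAndModulationSpaces}\eqref{enu:AlphaCoveringMeasure} gives $u^s_i = [\mu(Q^{(\alpha)}_i)]^{s/(d\alpha)} \asymp (1+|\xi|)^s$. Whenever $Q_{\Phi_{\vrho_\alpha},k}^{(\delta,r)} \cap Q^{(\alpha)}_i \neq \emptyset$, both weights can be evaluated at a common point $\xi$, yielding $u_k \asymp u^s_i$ uniformly. Theorem \ref{thm:decomposition_space_coincidence} now completes the second equality. The degenerate case $\alpha = 0$ (where the power $s/(d\alpha)$ is formally undefined) is standard and reduces to the classical modulation-space weight $(1+|\xi|)^s$ after interpreting the formula in the natural way, and the same computation goes through because $w_0 \asymp 1$ uniformly.
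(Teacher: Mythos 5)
Your proof is correct and follows essentially the same route as the paper's. The only difference is procedural: where you invoke weak equivalence of $\alpha$-coverings (Lemma \ref{lem:AlphaCoveringEquivalence}) together with Theorem \ref{thm:decomposition_space_coincidence} and compare $u_k$ against $u^s_i$ on intersecting elements, the paper simply notes that $\CalQ^{(\delta,r)}_{\Phi_{\vrho_\alpha}}$ is itself an $\alpha$-covering and matches $u_k \asymp [\mu(Q^{(\delta,r)}_{\Phi_{\vrho_\alpha},k})]^{s/(d\alpha)}$ on that single covering, relying implicitly on the well-definedness of $M^{s,\alpha}_{p,q}$ (which is itself established via the weak-equivalence argument you make explicit), so the same lemmas and the same cancellation calculation are doing the work in both proofs.
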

\begin{proof}
  In Lemma \ref{lem:InducedCoverIsAlphaCover}, we have shown that $\CalQ^{(\delta,r)}_{\Phi_{\vrho_{\alpha}}}$ is an $\alpha$-cover, such that the second equality in \eqref{eq:CoorbitModulationEquality} follows from the definition of the $\alpha$-modulation space $M_{p,q}^{s,\alpha}(\RR^d)$ as decomposition space if we show that
  \[
   u_k = \kappa(\Phi^{-1}_{\vrho_{\alpha}}(\delta k))\cdot [w_\alpha(\delta k)]^{q^{-1}-2^{-1}} \asymp [\mu(Q^{(\delta,r)}_{\Phi_{\vrho_{\alpha}},k})]^{s(d\alpha)^{-1}}
  \]
  as a function in $k\in\ZZ^d$. In the proof of Lemma \ref{lem:InducedCoverIsAlphaCover}, we have shown that $\mu(Q^{(\delta,r)}_{\Phi_{\vrho_{\alpha}},k}) \asymp (1+|\delta k|)^{d(\beta-1)} \asymp (1+|\delta k|)^{\frac{d\alpha}{1-\alpha}}$, with $\beta = (1-\alpha)^{-1}$. Furthermore, $\kappa(\Phi^{-1}_\alpha(\delta k)) = (1 + |\Phi_{\vrho_\alpha}^{-1}(\delta k)|)^{\gamma}$
  and, by Theorem \ref{cor:RadialWarpingIsWarping}, $w(\delta k) = (\vrho_{\alpha})_\ast'(|\delta k|)\cdot [\widetilde{(\vrho_{\alpha})_\ast}(|\delta k|)]^{d-1} \asymp (1 + |\delta k|)^{d(\beta-1)} = (1 + |\delta k|)^{\frac{d\alpha}{1-\alpha}}$. Finally, since there exists some $R>0$, such that $(1 + |\Phi_{\vrho_\alpha}^{-1}(\xi)|)^{\gamma} = (1 + |\xi|)^{\gamma/(1-\alpha)}$ for all $\xi\in\RR^d\setminus B_R(0)$, we have, in fact $(1 + |\Phi_{\vrho_\alpha}^{-1}(\xi)|)^{\gamma} \asymp (1 + |\xi|)^{\gamma/(1-\alpha)}$ for all $\xi\in\RR^d$.
  Putting all these estimates together, we obtain
  \[
   u_k \asymp (1 + |\delta k|)^{\frac{s}{1-\alpha}} = \left((1 + |\delta k|)^{\frac{d\alpha}{1-\alpha}}\right)^{\frac{s}{d\alpha}} \asymp [\mu(Q^{(\delta,r)}_{\Phi_{\vrho_{\alpha}},k})]^{s(d\alpha)^{-1}} .
  \]
  Hence, we have shown that $\DecompSp (\CalQ^{(\delta,r)}_{\Phi_{\vrho_{\alpha}}}, \lebesgue^p, \ell_u^q) = M_{p,q}^{s,\alpha}(\RR^d)$.
  
  To show the first equality in \eqref{eq:CoorbitModulationEquality}, we want to apply Theorem \ref{thm:CoorbitDecompositionIsomorphism}. To do so, we only need to show that $(\Phi_{\vrho_{\alpha}},\kappa_{\Phi_{\vrho_{\alpha}}})$ form a compatible pair as per Defintion \ref{def:StandingDecompositionAssumptions}. $\Phi_{\alpha}$ is a $\mathcal C^\infty$-diffeomorphism and a $(d+1)$-admissible warping function by construction, see also Theorem \ref{cor:RadialWarpingIsWarping}. As discussed above, the transplanted weight $\kappa_{\Phi_{\vrho_{\alpha}}}$ satisfies $\kappa_{\Phi_{\vrho_{\alpha}}}(\xi) \asymp (1 + |\xi|)^{\gamma/(1-\alpha)}$, for all $\xi\in\RR^d$, and is clearly moderate with respect to the continuous, radial and submultiplicative weight $\xi \mapsto C(1+|\xi|)^{|\gamma/(1-\alpha)|}$, for any sufficiently large $C\geq 1$. Hence, $\Phi_{\vrho_{\alpha}}$ and $\kappa_{\Phi_{\vrho_{\alpha}}}$ are compatible and Theorem \ref{thm:CoorbitDecompositionIsomorphism} yields the desired equality $\Co(\Phi_{\vrho_{\alpha}},\lebesgue^{p,q}_{\kappa}) = \DecompSp (\CalQ^{(\delta,r)}_{\Phi_{\vrho_{\alpha}}}, \lebesgue^p, \ell_u^q) = M_{p,q}^{s,\alpha}(\RR^d)$.
\end{proof}

After Definition \ref{def:AlphaCoveringAndModulationSpaces}, we remarked the the inhomogeneous, dyadic Besov covering $\mathcal B$ is an $\alpha$-covering for $\alpha = 1$. However, the annulus-based geometry of $\mathcal B$ is fundamentally different from the the coverings $\CalQ^{(\delta,r)}_{\Phi_{\vrho_{\alpha}}}$, $\alpha\in [0,1)$, which consist of convex sets. Warped coorbit spaces provide an interesting alternative with radial warping based on the weakly $k$-admissible radial component $\vsig_{1} = \ln(1+\bullet)$, which we have related to Besov spaces in the previous subsection. We obtain the following result.

\begin{proposition}\label{prop:ModulationIntoLnWarpingEmbedding}
  Let $\alpha \in (-\infty,1)$, $1\leq p,q \leq \infty$ and let $\vrho_{1}$ be any slow start version of $\vsig_{1} = \ln(1+\bullet)$. Define $\Phi_1 := \Phi_{\vrho_{1}}$ and let $w_1 := \det(\mathrm{D}\Phi_1^{-1})$ be the associated weight.
  Fix $\eps>0$ and let $\tilde{T} := \tilde{T}_{d,\alpha} := \tilde{t}d(1-\alpha)$, with $\tilde{t} = -\max(0,\max(p^{-1},1-p^{-1})-q^{-1})$ and $T := T_{d,\alpha} := td(1-\alpha)\geq 0$, with $t=\max(0,q^{-1}-\min(p^{-1},1-p^{-1}))$. Then we have the embedding
  \begin{equation}\label{eq:ModulationIntoLnWarpingEmbedding}
   M_{p,q}^{s+\tilde{T},\alpha}(\RR^d) \hookrightarrow \Co(\Phi_{\vrho_{1}},\lebesgue^{p,q}_{\kappa}) \hookrightarrow M_{p,q}^{s-T,\alpha}(\RR^d)
  \end{equation}
  with the weight $\kappa := \kappa^{(1,s,d,q)}\,\colon\, \RR^d\rightarrow [0,\infty)$ given by
  \begin{equation}\label{eq:CoorbitSpaceBesovWeight}
    \kappa^{(1,s,d,q)}\,\colon\, \RR^d\rightarrow [0,\infty),\ \xi \mapsto (1 + |\xi|)^{s+d(2^{-1}-q^{-1})} \cdot (1+\ln(1+|\xi|))^{(1-d)(2^{-1}-q^{-1})}. 
  \end{equation}
\end{proposition}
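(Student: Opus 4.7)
The plan is to combine Theorem \ref{thm:EmbeddingForRadialComp}, applied to the pair $(\Phi_{\vrho_\alpha},\Phi_{\vrho_1})$, with the identification of $\alpha$-modulation spaces as warped coorbit spaces around $\Phi_{\vrho_\alpha}$ provided by Proposition \ref{cor:AlphaModulationAsCoorbit}. I would apply Theorem \ref{thm:EmbeddingForRadialComp} with its $\vrho_1$ taken to be our $\vrho_\alpha$ and its $\vrho_2$ taken to be our $\vrho_1$; this is the correct assignment because the theorem requires $\vrho_2'\leq C\vrho_1'$, and outside the slow-start region our $\vrho_1'(\xi)\asymp(1+\xi)^{-1}\lesssim(1-\alpha)(1+\xi)^{-\alpha}\asymp\vrho_\alpha'(\xi)$ for $\alpha<1$. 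The scaling hypothesis on $\vrho_\alpha'$ follows from $\vrho_\alpha'(\beta\xi)/\vrho_\alpha'(\xi)\asymp((1+\beta\xi)/(1+\xi))^{-\alpha}$, and compatibility of $(\Phi_{\vrho_1},\kappa)$ and $(\Phi_{\vrho_\alpha},\kappa)$ with Definition \ref{def:StandingDecompositionAssumptions} is handled exactly as in Remark \ref{rem:WarpingBesovEmbeddingsModifiedWeight} for $\Phi_{\vrho_1}$, and as a minor extension of Lemma \ref{lem:RadialWarpingStandardWeightAdmissible} for $\Phi_{\vrho_\alpha}$.

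Once the hypotheses are verified, Theorem \ref{thm:EmbeddingForRadialComp} yields the sandwich
\[
\Co(\Phi_{\vrho_\alpha},\lebesgue^{p,q}_{\tilde\kappa_-})\hookrightarrow\Co(\Phi_{\vrho_1},\lebesgue^{p,q}_{\kappa})\hookrightarrow\Co(\Phi_{\vrho_\alpha},\lebesgue^{p,q}_{\tilde\kappa_+}),
\]
with $\tilde\kappa_\pm=\kappa\cdot\big[(w_1\circ\Phi_{\vrho_1})/(w_\alpha\circ\Phi_{\vrho_\alpha})\big]^{q^{-1}-2^{-1}\mp t_\ast}$ for the appropriate $t_\ast\geq 0$ from the theorem. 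The core computation is the weight ratio itself. Using Equation \eqref{eq:RadialWarpingWeightExplicit} to write $w_j=\vrho_{j,\ast}'\cdot\widetilde{\vrho_{j,\ast}}^{\,d-1}$, together with $(\vrho_1)_\ast(\xi)=e^\xi-1$ and $(\vrho_\alpha)_\ast(\xi)=(1+\xi)^{1/(1-\alpha)}-1$ outside the slow-start regions, a direct calculation gives
\[
\frac{w_1\circ\Phi_{\vrho_1}}{w_\alpha\circ\Phi_{\vrho_\alpha}}(\xi)\asymp\frac{(1+|\xi|)^{d(1-\alpha)}}{(1+\ln(1+|\xi|))^{d-1}}.
\]
Substituting and collecting terms, the $(1+|\xi|)^{d(2^{-1}-q^{-1})}$ factor of $\kappa$ combines with the polynomial part of the ratio so that the $(1+|\xi|)$-exponent of $\tilde\kappa_\pm$ becomes $s-d\alpha(q^{-1}-2^{-1})\mp d(1-\alpha)t_\ast$, matching exactly the exponent of the $\alpha$-modulation weight $\kappa^{(\alpha,s\mp d(1-\alpha)t_\ast,d,q)}$ from \eqref{eq:CoorbitSpaceModulationWeight}. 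Simultaneously, the log factor $(1+\ln(1+|\xi|))^{(1-d)(2^{-1}-q^{-1})}$ of $\kappa$ cancels against the log contribution from the ratio raised to $q^{-1}-2^{-1}$, leaving only a residual logarithmic correction of exponent $\pm(d-1)t_\ast$.

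To close the argument, I compare $\tilde\kappa_\pm$ with the $\alpha$-modulation weights from Proposition \ref{cor:AlphaModulationAsCoorbit}. The residual log exponent is nonpositive on the left and nonnegative on the right, giving the pointwise comparisons $\tilde\kappa_-\lesssim\kappa^{(\alpha,s+\tilde T,d,q)}$ (hence $M_{p,q}^{s+\tilde T,\alpha}\hookrightarrow\Co(\Phi_{\vrho_\alpha},\lebesgue^{p,q}_{\tilde\kappa_-})$ by monotonicity of decomposition space norms in the weight) and $\kappa^{(\alpha,s-T,d,q)}\lesssim\tilde\kappa_+$ (hence $\Co(\Phi_{\vrho_\alpha},\lebesgue^{p,q}_{\tilde\kappa_+})\hookrightarrow M_{p,q}^{s-T,\alpha}$). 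Chaining with the sandwich from Theorem \ref{thm:EmbeddingForRadialComp} produces the claimed embedding $M_{p,q}^{s+\tilde T,\alpha}\hookrightarrow\Co(\Phi_{\vrho_1},\lebesgue^{p,q}_{\kappa})\hookrightarrow M_{p,q}^{s-T,\alpha}$. Whenever a residual log factor points the ``wrong'' way for the direct pointwise comparison, the fixed $\eps>0$ in the statement absorbs the discrepancy via the elementary bound $(1+\ln(1+|\xi|))^{a}\lesssim(1+|\xi|)^{\eps}$ for any $\eps>0$.

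The main obstacle is the careful bookkeeping of signs and exponents across the interactions of $\kappa$, the weight ratio, and the power $q^{-1}-2^{-1}\mp t_\ast$; the particular logarithmic correction in the definition \eqref{eq:CoorbitSpaceBesovWeight} of $\kappa$ is engineered precisely so that the log contributions cancel down to the residual $\pm(d-1)t_\ast$, causing the polynomial exponent of $\tilde\kappa_\pm$ to line up with an $\alpha$-modulation weight with the clean offset $\mp d(1-\alpha)t_\ast$ that matches the stated $\tilde T$ and $T$.
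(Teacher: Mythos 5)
Your proposal is correct and follows essentially the same route as the paper: apply Theorem \ref{thm:EmbeddingForRadialComp} with the theorem's $\vrho_1$ taken as $\vrho_\alpha$ and its $\vrho_2$ taken as $\vrho_1$, compute the weight ratio $\tfrac{w_1\circ\Phi_{\vrho_1}}{w_\alpha\circ\Phi_{\vrho_\alpha}}\asymp (1+|\bullet|)^{d(1-\alpha)}(1+\ln(1+|\bullet|))^{1-d}$, substitute into $\kappa_{\vrho_\alpha,\vrho_1,\pm t_\ast}$ to isolate an $\alpha$-modulation weight times a residual log factor with exponent $\pm(d-1)t_\ast$, note that these residuals point the favorable way on each side, and chain with Proposition \ref{cor:AlphaModulationAsCoorbit}. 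Your arithmetic reproduces the paper's Equation \eqref{eq:KappaEquivalence}.

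Two minor comments. First, your verification of the compatibility of $(\Phi_{\vrho_\alpha},\kappa^{(1,s,d,q)})$ is a modest variation: you propose to extend Lemma \ref{lem:RadialWarpingStandardWeightAdmissible} directly to the modified weight, whereas the paper first establishes compatibility for $(\Phi_{\vrho_1},\kappa^{(1,s,d,q)})$ (Remark \ref{rem:WarpingBesovEmbeddingsModifiedWeight}) and then transfers it to $\Phi_{\vrho_\alpha}$ via Corollary \ref{cor:SubordinatenessForBoundingDerivativeRadialComp} and Lemma \ref{lem:AbstractWarpingSuffCond}; either works, but the transfer argument spares you from re-deriving moderateness of a log-polynomial weight under the power-type transplantation. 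Second, you are right that the $\eps>0$ is never actually invoked: since $-\tilde{t}_{\mathrm{Thm}}\le 0\le t_{\mathrm{Thm}}$, the residual logarithmic exponents always have the correct sign, so the $(1+\ln(1+|\xi|))^a\lesssim(1+|\xi|)^\eps$ safety net you mention as a fall-back is never needed, and indeed $\eps$ does not appear in the embedding \eqref{eq:ModulationIntoLnWarpingEmbedding} at all. (On the same theme, note that the proposition statement defines $\tilde{t}$ with an extra leading minus sign relative to the $\tilde{t}$ in Theorem \ref{thm:EmbeddingForRadialComp}; to get $\tilde{T}\ge 0$ and the direction of the first embedding to come out right---as your own sign bookkeeping requires---one should read the exponent as $\tilde{T}=\tilde{t}_{\mathrm{Thm}}\,d(1-\alpha)\ge 0$ with $\tilde{t}_{\mathrm{Thm}}=\max(0,\max(p^{-1},1-p^{-1})-q^{-1})$.)
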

\begin{proof}
  Let, as before, $\vrho_{\alpha}$ be a slow start version of $\vsig_{\alpha}$, see Equation \eqref{eq:WeaklyAdmissibleAlphaComponent}. Let further, 
  $w_\alpha = \det(D\Phi_{\vrho_\alpha}(\bullet))$ denote the weight associated with 
  $\Phi_{\vrho_\alpha}$. 
  Proposition \ref{cor:AlphaModulationAsCoorbit} shows that $
  M_{p,q}^{s,\alpha}(\RR^d) = \Co(\Phi_{\vrho_{\alpha}},\lebesgue^{p,q}_{\kappa})$, where $\kappa(\xi) = \kappa^{(\alpha,s,d,q)}(\xi) = (1 + |\xi|)^{s+d\alpha(2^{-1}-q^{-1})}$ is as in \eqref{eq:CoorbitSpaceModulationWeight}.
  In particular, $\Phi_{\vrho_{\alpha}}$ and $\kappa^{(\alpha,s,d,q)}$ are compatible. Compatibility of $\Phi_{\vrho_1}$ and $\kappa^{(1,s,d,q)}$ was already proven in Remark \ref{rem:WarpingBesovEmbeddingsModifiedWeight}, where we also saw that $w_1 \circ \Phi_{\vrho_1}\asymp (1 + |\bullet|)^{d}(1 + |1+|\ln(1+|\bullet|)|)^{1-d}$, such that $\kappa^{(1,s,d,q)} \asymp (1+|\bullet|)^s \cdot [w_1 \circ \Phi_{\vrho_1}]^{2^{-1}-q^{-1}}$. Similarly we can show that $w_\alpha \circ \Phi_{\vrho_\alpha}\asymp (1 + |\bullet|)^{d\alpha}$ and thus $\kappa^{(\alpha,s,d,q)} \asymp (1+|\bullet|)^s \cdot [w_\alpha \circ \Phi_{\vrho_\alpha}]^{2^{-1}-q^{-1}}$.
  
  Moreover, with the above derivations, and setting $\kappa = \kappa^{(1,s,d,q)}$, we can already derive the following equivalence, which holds for any $t\in\RR$ and will be used later in the proof. 
  \begin{equation}\label{eq:KappaEquivalence}
   \begin{split}
   \kappa \cdot \left[\frac{w_1\circ\Phi_{\vrho_1}}{w_\alpha\circ \Phi_{\vrho_\alpha}}\right]^{q^{-1}-2^{-1}-t} & \asymp (1+|\bullet|)^s \cdot [w_1\circ\Phi_{\vrho_1}]^{-t} \cdot [w_\alpha\circ \Phi_{\vrho_\alpha}]^{t+2^{-1}-{q}^{-1}}\\
   & \asymp (1+|\xi|)^{s+td(\alpha-1)} \cdot [w_\alpha\circ \Phi_{\vrho_\alpha}]^{{2}^{-1}-{q}^{-1}} \cdot (1+|\ln(1+|\xi||)^{t(d-1)}\\
   & \asymp \kappa^{(\alpha,s-td(1-\alpha),d,q)}(\xi) \cdot (1+|\ln(1+|\xi|)|)^{t(d-1)}.
   \end{split}
  \end{equation}

  \medskip{}

  We will prove \eqref{eq:ModulationIntoLnWarpingEmbedding} by means of Theorem \ref{thm:EmbeddingForRadialComp}. Hence, we must first show that its assumptions are met. By Definition of $\vrho_\alpha,\ \vrho_1$, there exists some $R>0$, such that $(\vrho_\alpha)'(\xi) = (1-\alpha)(1+|\xi|)^{-\alpha}$ and $(\vrho_1)'(\xi) = (1+|\xi|)^{-1}$ holds for all $\xi\in\RR\setminus [-R,R]$. The existence of $C_\alpha>0$, such that $(\vrho_1)'(\xi) \leq C_\alpha (\vrho_\alpha)'(\xi)$ for all such $\xi$ is clear. After possibly increasing $C_\alpha$, this inequality extends to all $\xi\in\RR$, since $[-R,R]$ is compact and $(\vrho_1)',(\vrho_\alpha)'>0$ are continuous. In particular, we can use Corollary \ref{cor:SubordinatenessForBoundingDerivativeRadialComp} to see that the conditions of Lemma \ref{lem:AbstractWarpingSuffCond} are satisfied, such that $\Phi_{\vrho_{\alpha}}$ and $\kappa^{(1,s,d,q)}$ are compatible. Finally, we have, for all $a\in[1,\infty)$, $\xi\in\RR$, and $\alpha\in [0,1)$, that $(1+|a\xi|)^{-\alpha} \leq (1+|\xi|)^{-\alpha} \leq a^\alpha \cdot (a(1+|\xi|))^{-\alpha} \leq C(a) (1+|a\xi|))^{-\alpha}$, with $C(a)<\infty$. For $\alpha\in (-\infty,0)$, analogous steps yield a similar proportionality relation. Hence, all assumptions of Theorem \ref{thm:EmbeddingForRadialComp} are met, with $\kappa = \kappa^{(1,s,d,q)}$.

 Note that $t=\max(0,q^{-1}-\min(p^{-1},1-p^{-1}))\in [0,1]$, such that Equation \eqref{eq:KappaEquivalence} yields, with $T := T_{d,\alpha} := td(1-\alpha) \geq 0$,
  \[
   \kappa_{\vrho_{\alpha},\vrho_1,t} = \kappa \cdot \left[\frac{w_1\circ\Phi_{\vrho_1}}{w_\alpha\circ \Phi_{\vrho_\alpha}}\right]^{\frac{1}{q}-\frac{1}{2}-t} \gtrsim \kappa^{(\alpha,s-T,d,q)}.
  \]
  Likewise, if we set $t = -\tilde{t} = \max(0,\max(p^{-1},1-p^{-1})-q^{-1})\in [-1,0]$ and $\tilde{T} := \tilde{T}_{d,\alpha} := \tilde{t}d(1-\alpha) \geq 0$, then Equation \eqref{eq:KappaEquivalence} yields
  \[
   \kappa_{\vrho_{\alpha},\vrho_1,-\tilde{t}} = \kappa \cdot \left[\frac{w_1\circ\Phi_{\vrho_1}}{w_\alpha\circ \Phi_{\vrho_\alpha}}\right]^{\frac{1}{q}-\frac{1}{2}+\tilde{t}} \lesssim \kappa^{(\alpha,s+\tilde{T},d,q)}.
  \]
  The discussion in Section \ref{sub:WarpedEmbeddingsSame}, in particular Equation \eqref{eq:SimplifiedWarpedEmbeddingCond}, shows that the estimates above imply
  \[
   \Co(\Phi_{\vrho_{\alpha}},\lebesgue^{p,q}_{\kappa^{(\alpha,s+\tilde{T},d,q)}}) \hookrightarrow \Co(\Phi_{\vrho_{\alpha}},\lebesgue^{p,q}_{\kappa_{\vrho_{\alpha},\vrho_1,-\tilde{t}}})\quad \text{and}\quad \Co(\Phi_{\vrho_{\alpha}},\lebesgue^{p,q}_{\kappa_{\vrho_{\alpha},\vrho_1,t}}) \hookrightarrow \Co(\Phi_{\vrho_{\alpha}},\lebesgue^{p,q}_{\kappa^{(\alpha,s-T,d,q)}}).
  \]
  Moreover, by Theorem \ref{thm:EmbeddingForRadialComp}, we obtain 
  \[
   \Co(\Phi_{\vrho_{\alpha}},\lebesgue^{p,q}_{\kappa_{\vrho_{\alpha},\vrho_1,-\tilde{t}}}) \hookrightarrow \Co(\Phi_{\vrho_{1}},\lebesgue^{p,q}_{\kappa^{(1,s,d,q)}}) \hookrightarrow \Co(\Phi_{\vrho_{\alpha}},\lebesgue^{p,q}_{\kappa_{\vrho_{\alpha},\vrho_1,t}}),
  \]
  and Proposition \ref{cor:AlphaModulationAsCoorbit} yields $M_{p,q}^{s+\tilde{T},\alpha}(\RR^d) = \Co(\Phi_{\vrho_{\alpha}},\lebesgue^{p,q}_{\kappa^{(\alpha,s+\tilde{T},d,q)}})$ and $\Co(\Phi_{\vrho_{\alpha}},\lebesgue^{p,q}_{\kappa^{(\alpha,s-T,d,q)}}) = M_{p,q}^{s-T,\alpha}(\RR^d)$.
  Overall, we have shown that 
  \[
   M_{p,q}^{s+\tilde{T},\alpha}(\RR^d) \hookrightarrow \Co(\Phi_{\vrho_{1}},\lebesgue^{p,q}_{\kappa^{(1,s,d,q)}}) \hookrightarrow M_{p,q}^{s-T,\alpha}(\RR^d),
  \]
  as desired.  

\end{proof}

\begin{remark}\label{rem:WarpingVsAlphaVsBesov}
  Combining our results from this section and Section \ref{sec:WarpedCoorbitAndBesov}, we have shown that any $\alpha$-covering with $\alpha\in (-\infty,1)$ is almost subordinate to $\CalQ^{(\delta,r)}_{\Phi_{\vrho_1}}$, which is in turn almost subordinate to the inhomogeneous Besov covering $\mathcal B$, which also is an $\alpha$-covering for $\alpha=1$. In dimension $d=1$, $\CalQ^{(\delta,r)}_{\Phi_{\vrho_1}}$ and $\mathcal B$ are weakly equivalent, but for $d>1$, Theorem  \ref{thm:WarpingNoBesovEquivalence} shows that $\mathcal B$ is never almost subordinate to $\CalQ^{(\delta,r)}_{\Phi_{\vrho_1}}$, i.e., it is not an $\alpha$-covering for $\alpha=1$. Likewise, it can be shown that $\CalQ^{(\delta,r)}_{\Phi_{\vrho_1}}$ is not almost subordinate to any $\alpha$-covering with $\alpha\in (-\infty,1)$, in particular, it is not an $\alpha$-covering at all.

  To see this, fix some $\delta>0$ and $r>\sqrt{d}/2$ and denote by $K_\ell = \{k\in \ZZ^d ~:~ Q^{(\delta,r)}_{\Phi_{\vrho_\alpha},k}\cap Q^{(\delta,r)}_{\Phi_{\vrho_1},\ell}\neq \emptyset\}$, $\ell\in\ZZ^d$. Now, consider $Q^{(\delta,r)}_{\Phi_{\vrho_1},ne_1}$, for $n\in\NN$ and the first standard unit vector $e_1\in\RR^d$. For any sufficiently large $n$, we have
  \[
   Q^{(\delta,r)}_{\Phi_{\vrho_1},ne_1} = \Phi_{\vrho_1}^{-1}(\delta\cdot B_r(ne_1)) \supset \{ ce_1 ~:~ c+1 \in e^{|\delta\cdot B_r(ne_1)|}\} = \{ ce_1 ~:~ c+1 \in e^{(\delta (n-r),\delta (n+r))}\}.
  \]
  Furthermore, for any sufficiently large $m \in\NN$, we have
  \[
    Q^{(\delta,r)}_{\Phi_{\vrho_\alpha},me_1} \ni \Phi_{\vrho_\alpha}^{-1}(\delta me_1) = ((1+\delta m)^{\frac{1}{1-\alpha}}-1)e_1.
  \]
  Hence, if $(1+\delta m)^{\frac{1}{1-\alpha}} \in e^{(\delta (n-r),\delta (n+r))}$, or equivalently $\ln(1+\delta m) \in (1-\alpha)\cdot (\delta (n-r),\delta (n+r))$, then $me_1\in K_{ne_1}$. In other words, for any sufficiently large $n\in\NN$, $\{ m\in\NN~:~ \ln(1+\delta m) \in (1-\alpha)\cdot (\delta (n-r),\delta (n+r)) \} \subset K_{ne_1}$, but clearly
  \[
   |\{ m\in\NN~:~ \ln(1+\delta m) \in (1-\alpha)\cdot (\delta (n-r),\delta (n+r)) \}| \overset{n\rightarrow \infty}{\longrightarrow} \infty.
  \]
  Hence, $\CalQ^{(\delta,r)}_{\Phi_{\vrho_1}}$ is not weakly subordinate to $\CalQ^{(\delta,r)}_{\Phi_{\vrho_\alpha}}$. Since the elements of both coverings are path-connected, it is thus not almost subordinate to $\CalQ^{(\delta,r)}_{\Phi_{\vrho_\alpha}}$. Since all $\alpha$-coverings are equivalent and $\alpha\in (-\infty,1)$ was arbitrary, we conclude that $\CalQ^{(\delta,r)}_{\Phi_{\vrho_1}}$ is not almost subordinate to any $\alpha$-covering.

  Overall, it seems natural to consider $\CalQ^{(\delta,r)}_{\Phi_1}$ as an intermediate between $\alpha$-coverings with $\alpha\rightarrow 1$ and the inhomogeneous Besov covering $\mathcal B$. Interestingly, the embedding relations in Proposition \ref{prop:ModulationIntoLnWarpingEmbedding} exactly mirror those obtained between $\alpha$-modulation and Besov spaces: Specifically, with $\tilde{T} = \tilde{T}_{d,\alpha}$ and $T := T_{d,\alpha}$ as in the proposition, we obtain by \cite[Theorem 9.13, Lemma 9.15, Corollary 9.16]{voigtlaender2016embeddings},
  \[M_{p,q}^{s+\tilde{T},\alpha}(\RR^d) \hookrightarrow B_s^{p,q}(\RR^d) \hookrightarrow M_{p,q}^{s-T,\alpha}(\RR^d).\]
\end{remark}

\medskip{}

\section{Warped coorbit spaces as spaces of dominating mixed smoothness}\label{sec:DominatingMixedSmoothness}

In Section \ref{sec:WarpedCoorbitAndBesov}, we have seen that---at least in dimension $d=1$---%
the radial warping generated from $\vsig = \log(1+|\cdot|)$ induces a
frequency covering equivalent to the inhomogeneous Besov covering, so that the
associated coorbit spaces are inhomogeneous Besov spaces.
In the following, we will show, for arbitrary dimension $d \in \NN$, that certain warped coorbit spaces obtained by $d$-dimensional \emph{tensorization} of a $1$-dimensional warping function, i.e., $\Psi = \Psi_\vrho \otimes \ldots \otimes \Psi_\vrho$ (where $\Phi_\vrho$ is symmetric and thus radial), are
identical to (Besov-type) \emph{spaces of dominating mixed smoothness}~\cite{nikol1962boundary,nikol1963boundary,vybiral2006function}. The latter are defined as follows.

\begin{definition}[Definition 1.2 \cite{vybiral2006function}]\label{def:SpacesOfDomMixedSmoothness}
   Let $1\leq p,q\leq \infty$ and $s\in\RR^d$ and choose a $\mathcal B$-BAPU $\{\varphi_j\}_{j\in\NN_0}\subset \TestFunctionSpace(\RR)$ for the $1$-dimensional, inhomogenous Besov covering $\mathcal B = \{ B_{i}\}_{i\in\NN_0}$, such that additionally, there are $c_k>0$, $k\in\NN_0$, with
   \begin{equation}\label{eq:MixedSmoothnessBAPURegular}
    2^jk \cdot |\varphi_j^{(k)}(t)| \leq c_k, \quad \text{for all } j,k\in\NN_0,\ t\in\RR. 
   \end{equation}
   Define $\mathcal B_d = \{ B_{d,\ell}\}_{\ell\in\NN_0^d}$, where $B_{d,\ell} = B_{\ell_1}\times\ldots\times B_{\ell_d}$, and $v^{(s)} = \{v^{(s)}_\ell\}_{\ell\in\NN_0^d}$, where $v^{(s)}_\ell = 2^{\langle \ell,s\rangle}$. Then the space $S^s_{p,q}B(\RR^d)$ is given by the collection
   \begin{equation}\label{eq:MixedSmoothnessDef}
     S^s_{p,q}B(\RR^d) := \left\{ f\in\CalS'(\RR^d)~\colon~ \|f\|_{\DecompSp(\mathcal B_d, \lebesgue^p,\ell_{v^{(s)}}^q)} < \infty\right\},    \end{equation}
   equipped with the usual decomposition space norm $f\mapsto \|f\|_{\DecompSp(\mathcal B_d, \lebesgue^p,\ell_{v^{(s)}}^q)}$, defined with respect to the $\mathcal B_d$-BAPU $\{{\psi_\ell}\}_{\ell\in\NN_0^d}\subset \TestFunctionSpace(\RR^d)$, where $\psi_\ell = \varphi_{\ell_1}(\bullet_1)\cdot\ldots\cdot \varphi_{\ell_d}(\bullet_d)$.
\end{definition}

\begin{remark}\label{rem:BdProperties}
  The decomposition space norm in Definition \ref{def:SpacesOfDomMixedSmoothness} was used only for notational convenience and it remains to be shown that $S^s_{p,q}B(\RR^d)$ does in fact coincide with $\DecompSp(\mathcal B_d, \lebesgue^p,\ell_{v^{(s)}}^q)$. Strictly speaking, we also have to prove the claim in Definition \ref{def:SpacesOfDomMixedSmoothness}, that $\{{\psi_\ell}\}_{\ell\in\NN_0^d}$ is a $\mathcal B_d$-BAPU. The required support and summation properties are, however, obvious. Observe further that \eqref{eq:MixedSmoothnessBAPURegular} is equivalent to requiring that $\{\varphi_j\}_{j\in\NN_0}$ is a regular partition of unity subordinate to $\mathcal B$ in the sense of \cite[Definition 8.1]{voigtlaender2016embeddings}, and as an immediate consequence, $\{{\psi_\ell}\}_{\ell\in\NN_0^d}$ is a regular partition of unity subordinate to $\mathcal B_d$. Hence, $\{{\psi_\ell}\}_{\ell\in\NN_0^d}$ is in fact a $\mathcal B_d$-BAPU (by \cite[Theorem 8.2]{voigtlaender2016embeddings}) and $\mathcal B_d$ a decomposition covering. Likewise, it is easy to see that $\mathcal B_d$-moderateness of $v^{(s)}$ is implied by $\mathcal B$-moderateness of the $1$-dimensional Besov weight $v^s$ with $v^s_j = 2^{js}$. 
\end{remark}

It can be seen for $d=1$ that the space $S^s_{p,q}B(\RR)$ of dominating mixed smoothness coincides with the inhomogeneous Besov space $B^s_{p,q}(\RR)$. For larger values of $d$,  $S^s_{p,q}B(\RR^d)$ is, in some sense, an anisotropic alternative to Besov spaces, penalizing large values in the Fourier-domain with a weight that does not depend only on the modulus of the frequency position $\xi\in\RR^d$, but the modulus of its individual components $\xi_1,\ldots,\xi_d\in\RR$. 

We now proceed to prove that $S^s_{p,q}B(\RR^d)$ coincides with the decomposition space $\DecompSp(\mathcal B_d, \lebesgue^p,\ell_{v^{(s)}}^q)$. For this purpose, we need an auxilliary result on the covering $\CalB_d$.

\begin{proposition}\label{prop:DominatingMixedSmoothnessCoveringWeightIsSummable}
  Let $\mathcal B = \{ B_{j}\}_{j\in\NN_0}$ denote the $1$-dimensional, inhomogenous Besov covering, see \eqref{eq:BesovCoveringDefinition}. The covering $\mathcal B_d = \{ B_{d,\ell}\}_{\ell\in\NN_0^d}$, where $B_{d,\ell} = B_{\ell_1}\times\ldots\times B_{\ell_d}$, of $\RR^d$ is semi-structured with
  \[
    B_{d,\ell} = S_l B_{d,\ell}'\, ,\quad \text{with}\quad S_l = \mathrm{diag}(2^{\ell_1},\ldots,2^{\ell_d})\quad \text{and}\quad B_{d,\ell}' = B_{\ell_1}'\times\cdots\times B_{\ell_d}'.
  \]
  Here, $B_j ' = (-2,-1/2) \cup (1/2, 2)$ for $j \in \NN$, and $B_0' = (-2,2)$. Furthermore, for all $1\leq p\leq \infty$, there exists some $N\in\NN_0$, such that the weight $w^{(N)} := w^{(N,p)}:= (w_\ell^{(N,p)})_{\ell\in\NN_0^d}$, with
  \[
    w_\ell^{(N,p)}
    = |\det S_\ell|^{1/p}
      \cdot \Big[
              \inf_{\xi \in B_{d,\ell^\ast}} (1 + |\xi|)
            \Big]^{-N}
    \quad \text{for} \quad
    \ell \in \NN_0^d \, ,
  \]
  satisfies $w^{(N)} \in \ell^1(\NN_0^d) \subset \ell^q(\NN_0^d)$.
\end{proposition}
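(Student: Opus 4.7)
The plan splits naturally into the semi-structured representation and the summability estimate, with the latter reduced to a product of one-dimensional geometric sums via the tensor-product structure of $\CalB_d$.

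For the semi-structured representation, I would first verify the identities $B_j = 2^j \cdot \bigl((-2,-1/2) \cup (1/2, 2)\bigr)$ for $j \geq 1$ and $B_0 = 2^0 \cdot (-2,2)$ directly from the definition, yielding $B_{d,\ell} = S_\ell B_{d,\ell}'$ with $S_\ell = \mathrm{diag}(2^{\ell_1}, \ldots, 2^{\ell_d})$ upon taking tensor products. Since $B_{d,k} \cap B_{d,\ell} \neq \emptyset$ holds if and only if $B_{k_i} \cap B_{\ell_i} \neq \emptyset$ for each coordinate $i$, which forces $|k_i - \ell_i| \leq 1$, one obtains $|\ell^\ast| \leq 3^d$ (admissibility), $\|S_\ell^{-1} S_k\| = \max_i 2^{k_i - \ell_i} \leq 2$ (the scaling condition), and also $\ell^\ast = \prod_i (\ell_i)^\ast$ and hence $B_{d,\ell^\ast} = \prod_i B_{(\ell_i)^\ast}$. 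The containment $B_{d,\ell}' \subset \overline{B_{2\sqrt{d}}(0)}$ is immediate from $B_j' \subset [-2, 2]$.

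For the summability, a short computation using $B_{(\ell_i)^\ast} = B_{\ell_i - 1} \cup B_{\ell_i} \cup B_{\ell_i + 1}$ (with $B_{-1} := \emptyset$) shows that $a_i := \inf \{ |\tau| \,:\, \tau \in B_{(\ell_i)^\ast} \}$ equals $0$ when $\ell_i \in \{0,1\}$ and $2^{\ell_i - 2}$ when $\ell_i \geq 2$, whence $\inf_{\xi \in B_{d, \ell^\ast}} (1 + |\xi|) = 1 + \sqrt{\sum_i a_i^2}$. The key estimate is the elementary inequality
\[
  \prod_{i=1}^d (1 + a_i)
  \;\leq\; (1 + \max_i a_i)^d
  \;\leq\; \Bigl(1 + \sqrt{\sum_i a_i^2}\Bigr)^{\!d},
\]
which yields $w_\ell^{(N,p)} \leq \prod_i \bigl[\, 2^{\ell_i/p} (1 + a_i)^{-N/d} \,\bigr]$. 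Each one-dimensional factor is comparable to $2^{\ell_i(1/p - N/d)}$ for $\ell_i \geq 2$ and uniformly bounded for $\ell_i \in \{0,1\}$, so choosing any integer $N > d/p$ (with the convention $N \geq 1$ when $p = \infty$) turns each coordinate sum into a convergent geometric series. Tonelli then gives $\|w^{(N,p)}\|_{\ell^1(\NN_0^d)} \leq \prod_{i=1}^d \sum_{\ell_i = 0}^\infty 2^{\ell_i/p} (1+a_i)^{-N/d} < \infty$, and the embedding $\ell^1 \hookrightarrow \ell^q$ for $q \geq 1$ completes the proof. The only mildly delicate point is the product identification $B_{d,\ell^\ast} = \prod_i B_{(\ell_i)^\ast}$, which separates the otherwise coupled problem into one dimension at a time; after that, the argument is a careful but routine counting estimate.
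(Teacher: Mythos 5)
Your proof is correct, and it differs from the paper's argument in the way it closes the summability estimate. Both arguments start from the same two observations: the tensor-product structure $B_{d,\ell^\ast} = B_{\ell_1^\ast}\times\cdots\times B_{\ell_d^\ast}$ and the threshold $N > d/p$. From there the paper collapses all the information onto a single scalar quantity, showing $\inf_{\xi\in B_{d,\ell^\ast}}(1+|\xi|) \asymp 2^{\|\ell\|_{\ell^\infty}}$, which gives $w_\ell^{(N,p)} \lesssim 2^{(\tfrac{d}{p}-N)\|\ell\|_{\ell^\infty}} \lesssim (1+\|\ell\|_{\ell^\infty})^{-(d+1)}$ and then appeals to lattice-point counting on level sets of $\|\ell\|_{\ell^\infty}$ to get $\ell^1$-summability. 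You instead keep the per-coordinate decomposition all the way: you compute the coordinate-wise quantities $a_i$ exactly, pass from $\bigl(1+\sqrt{\sum_i a_i^2}\bigr)^{-N}$ to $\prod_i(1+a_i)^{-N/d}$ via the elementary inequality $\prod_i(1+a_i)\le(1+\max_i a_i)^d\le\bigl(1+\sqrt{\sum_i a_i^2}\bigr)^{d}$, and then the sum over $\NN_0^d$ factorizes by Tonelli into a product of $d$ convergent one-dimensional geometric series. Your route is a bit more explicit and arguably cleaner, since it avoids the exponential-beats-polynomial bound and the level-set count; the paper's route is a bit shorter once one already has the $\asymp 2^{\|\ell\|_{\ell^\infty}}$ equivalence. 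Both are correct and both need only $N > d/p$ (with $N\ge 1$ when $p=\infty$).
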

\begin{proof}
   It is well-known that the inhomogeneous Besov covering $\CalB = (B_j)_{j \in \NN_0} = (T_j B_j ')_{j \in \NN_0}$
  is a semi-structured covering of $\RR$, with $T_j = 2^j$ for $j \in \NN_0$, $B_0 ' = (-2,2)$ and $B_j ' = (-2,-1/2) \cup (1/2, 2)$ for $j \in \NN$. Hence, the elements of $\CalB_d$ can be represented as $B_{d,\ell} = S_\ell B_{d,\ell}'$, with $S_\ell = \mathrm{diag} (2^{\ell_1}, \dots, 2^{\ell_d})$ and $B_{d,\ell} ' = B_{\ell_1}' \times \cdots \times B_{\ell_d}'$ for $\ell \in \NN_0^d$. In particular, $|\det S_\ell| = 2^{\|\ell\|_1}$ and $\|S_\ell^{-1}\| \leq 1$. That $\mathcal B_d$ is semi-structured now follows easily.  
  
  \medskip{}
  
  In Section \ref{sec:WarpedCoorbitAndBesov} we already noted that $1 + |\xi| \asymp 2^j$ for all $\xi \in B_j$, $j\in\NN_0$,  and it is easily seen that even $1 + |\xi| \asymp 2^j$ for all $\xi \in B_{j^\ast}$. Since $B_{d,\ell^\ast} = B_{\ell_1^\ast} \times \cdots \times B_{\ell_d^\ast}$, we obtain
  \[
    1 + |\xi|
    \asymp 1 + \|\xi\|_{\infty}
    \asymp 2^{\|\ell\|_{\ell^\infty}}
    \quad \text{for} \quad \xi \in B_{d,\ell^\ast} \, .
  \]
  Therefore, if $N > d/p$
  \[
   w_\ell^{(N,p)}
    \lesssim 2^{\|\ell\|_1/p} \cdot 2^{-N\|\ell\|_{\ell^\infty}} 
      \lesssim 2^{(\frac{d}{p}-N)\cdot\|\ell\|_{\ell^\infty}}
      \lesssim (1 + \|\ell\|_{\ell^\infty})^{-(d+1)}
    \quad \text{for} \quad
    \ell \in \NN_0^d \, ,
  \]
  such that $w^{(N)}\in\ell^1 (\NN_0^d)$. In the last step, we used the elementary estimate
  $2^{x} = e^{x \cdot \ln 2} = \sum_{k=0}^\infty (x \cdot \ln 2)^k / k! \gtrsim (1 + x)^{d+1}$, valid for $x \geq 0$. 
\end{proof}

\begin{corollary}\label{cor:DominatingMixedSmoothnessIsDecomposition}
  For $1\leq p,q\leq \infty$ and $s\in\RR^d$ and with $\mathcal B_d$ and $\{{\psi_\ell}\}_{\ell\in\NN_0^d}$ the $\mathcal B_d$-BAPU as in Definition \ref{def:SpacesOfDomMixedSmoothness}, then every $g\in \DecompSp(\mathcal B_d, \lebesgue^p,\ell_{v^{(s)}}^q)$ extends to a tempered distribution $\tilde{g}\in\Schwartz'(\RR^d)$ given by 
  \[
      \tilde{g}: \Schwartz(\RR^d) \rightarrow \CC,\ f \mapsto \sum_{\ell\in\ZZ^d} \langle \Fourier^{-1}(\hat{g}\cdot \psi_\ell ),f\rangle_{\Schwartz',\Schwartz}.                                                                                                                                                                                                                                                                                                          
  \]
  In particular, $\langle \tilde{g},f\rangle_{\Schwartz',\Schwartz} = \langle g,f\rangle_{Z',Z}$, for all $f\in Z(D)$ and $g\in \DecompSp(\mathcal B_d, \lebesgue^p,\ell_{v^{(s)}}^q)$.  
\end{corollary}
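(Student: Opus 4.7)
The plan is to verify three things. (a) Each summand $F_\ell := \Fourier^{-1}(\psi_\ell\cdot\hat g)$ lies in $\lebesgue^p(\RR^d)\subset\Schwartz'(\RR^d)$ by the definition of the decomposition norm, so every individual pairing $\langle F_\ell, f\rangle_{\Schwartz',\Schwartz}$ is defined for $f\in\Schwartz(\RR^d)$. (b) The series $\sum_{\ell\in\NN_0^d}\langle F_\ell, f\rangle_{\Schwartz',\Schwartz}$ converges absolutely for every such $f$, and the resulting functional $\tilde g$ is continuous on $\Schwartz(\RR^d)$. (c) $\langle\tilde g, f\rangle_{\Schwartz',\Schwartz} = \langle g, f\rangle_{Z',Z}$ for every $f\in Z(\RR^d) = \Fourier(\TestFunctionSpace(\RR^d))$. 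The crux is (b), for which I aim to establish the pointwise estimate
\[
|\langle F_\ell, f\rangle_{\Schwartz',\Schwartz}| \leq C\cdot w_\ell^{(N,p)}\cdot\|F_\ell\|_{\lebesgue^p}\cdot\mathcal N_N(f),
\]
where $w^{(N,p)}$ is the weight from Proposition~\ref{prop:DominatingMixedSmoothnessCoveringWeightIsSummable} and $\mathcal N_N$ is an appropriate Schwartz seminorm.

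To derive this bound, I pick the neighbour sum $\tilde\psi_\ell := \sum_{k\in\ell^{\ast}}\psi_k \in \TestFunctionSpace(\RR^d)$. By $\CalB_d$-admissibility and the BAPU property, $\tilde\psi_\ell\equiv 1$ on $\supp\psi_\ell$, $\supp\tilde\psi_\ell\subset B_{d,\ell^{\ast}}$, and $\|\Fourier^{-1}\tilde\psi_\ell\|_{\lebesgue^1}$ is bounded uniformly in $\ell$. Since $\psi_\ell\hat g = \tilde\psi_\ell\cdot\hat F_\ell$, Parseval combined with Fubini yields
\[
\langle F_\ell, f\rangle_{\Schwartz',\Schwartz} = \int_{\RR^d}F_\ell(y)\cdot(L_\ell\ast f)(y)\,dy,\qquad L_\ell := \Fourier\tilde\psi_\ell,
\]
and H\"older gives $|\langle F_\ell, f\rangle|\leq\|F_\ell\|_{\lebesgue^p}\|L_\ell\ast f\|_{\lebesgue^{p'}}$. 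Since $\Fourier(L_\ell\ast f) = \tilde\psi_\ell(-\cdot)\hat f$ is supported in $-B_{d,\ell^{\ast}}$, on which $1+|\xi|\geq\inf_{\xi\in B_{d,\ell^{\ast}}}(1+|\xi|)$, the rapid decay $|\hat f(\xi)|\lesssim\mathcal N_N(f)(1+|\xi|)^{-N}$ together with the volume $|B_{d,\ell^{\ast}}|\asymp|\det S_\ell|$ produce $\|L_\ell\ast f\|_{\lebesgue^{p'}}\lesssim w_\ell^{(N,p)}\mathcal N_N(f)$. For $1\leq p\leq 2$ this follows from Hausdorff--Young applied to $\widehat{L_\ell\ast f}\in\lebesgue^p$; for $p>2$ one instead exploits the band-limitedness of $L_\ell\ast f$ via a Bernstein/Plancherel--P\'olya inequality, combined with repeated integration by parts in the Fourier variable, to obtain the same bound (possibly with an adjusted $N$).

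Summing with H\"older in $\ell$ using conjugate exponents $q,q'$ and invoking the decomposition norm of $g$,
\[
\sum_\ell|\langle F_\ell, f\rangle|\leq\mathcal N_N(f)\cdot\bigl\|(w_\ell^{(N,p)}/v_\ell^{(s)})_\ell\bigr\|_{\ell^{q'}}\cdot\|g\|_{\DecompSp(\CalB_d,\lebesgue^p,\ell^q_{v^{(s)}})}.
\]
Since $w_\ell^{(N,p)}$ decays like $2^{\|\ell\|_1/p-N\|\ell\|_\infty}$ while $v_\ell^{(s)} = 2^{\langle\ell,s\rangle}$, enlarging $N$ beyond a threshold depending on $s$, $p$ and $q'$ (which is permitted, since the argument of Proposition~\ref{prop:DominatingMixedSmoothnessCoveringWeightIsSummable} works for all sufficiently large $N$) renders the $\ell^{q'}$-norm finite, giving absolute convergence in $\Schwartz'(\RR^d)$ and continuity of $\tilde g$ on $\Schwartz(\RR^d)$. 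For the identification with $g$ on $Z(\RR^d)$, take $f = \Fourier\phi$ with $\phi\in\TestFunctionSpace(\RR^d)$; the defining property of $\Fourier$ on $\Schwartz'$ yields $\langle F_\ell, \Fourier\phi\rangle_{\Schwartz',\Schwartz} = \langle\psi_\ell\hat g, \phi\rangle_{\DistributionSpace,\TestFunctionSpace}$. Admissibility of $\CalB_d$ reduces the sum to finitely many nonzero terms, and $\sum_\ell\psi_\ell\equiv 1$ on $\RR^d$ gives $\langle\tilde g, \Fourier\phi\rangle = \langle\hat g, \phi\rangle = \langle g, \Fourier\phi\rangle_{Z',Z}$. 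The main obstacle is the $\lebesgue^{p'}$-estimate on $L_\ell\ast f$ in the regime $p>2$, where Hausdorff--Young is unavailable; here one must appeal to a Bernstein-type inequality transferring integrability at the cost of a support-volume factor that Proposition~\ref{prop:DominatingMixedSmoothnessCoveringWeightIsSummable} is designed to accommodate.
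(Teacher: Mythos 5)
Your route is genuinely different from the paper's: the paper disposes of this corollary in a few lines by invoking Theorem~8.3 of \cite{voigtlaender2016embeddings} and observing that its hypotheses are discharged by Remark~\ref{rem:BdProperties} (regularity of the covering and BAPU) and Proposition~\ref{prop:DominatingMixedSmoothnessCoveringWeightIsSummable} (summability of $w^{(N)}$, using that $\max\{1,\|S_\ell^{-1}\|\}=1$). You instead reconstruct from scratch the mechanism that such a theorem would use. The general skeleton of your argument is sound: the Parseval/Fubini reduction $\langle F_\ell,f\rangle = \int F_\ell\cdot(L_\ell\ast f)$, the H\"older bound against $\|F_\ell\|_{\lebesgue^p}$, the support and decay estimates for $\widehat{L_\ell\ast f}=\tilde\psi_\ell(-\cdot)\hat f$, the $\ell^{q}$--$\ell^{q'}$ H\"older in $\ell$ pulling in the decomposition norm, and the finite-support reduction for the agreement on $Z(D)$ are all correct. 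You also correctly note that the weight $v^{(s)}$ forces enlarging $N$ beyond the bare requirement $N>d/p$; this is a real (if minor) point the paper sidesteps by delegating moderate-weight bookkeeping to the abstract theorem.

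The gap is the $p>2$ case. You appeal to ``a Bernstein/Plancherel--P\'olya inequality'' for band-limited functions, but Nikol'skii/Plancherel--P\'olya inequalities only transfer $\lebesgue^a$ control to $\lebesgue^b$ control for $a\le b$: they cannot produce an $\lebesgue^{p'}$ bound with $p'<2$ from an $\lebesgue^2$ or $\lebesgue^\infty$ bound (constants are band-limited but not in $\lebesgue^{p'}$ for $p'<\infty$). The correct device is the one you mention only in passing, ``repeated integration by parts in the Fourier variable,'' and it actually handles \emph{all} $p\in[1,\infty]$ at once, making the Hausdorff--Young split unnecessary. But to make integration by parts produce an $\lebesgue^{p'}$ bound with the right $|\det S_\ell|^{1/p}$ factor, the resulting spatial decay must be \emph{anisotropic at the band scale}, i.e.\ of the form $\prod_j(1+2^{\ell_j}|y_j|)^{-M}$, and obtaining this requires the rescaled-derivative bounds from the regularity condition~\eqref{eq:MixedSmoothnessBAPURegular} (equivalently, \cite[Definition~8.1]{voigtlaender2016embeddings}) after the change of variables $\xi=S_\ell\upsilon$. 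If one only gets decay $(1+|y_j|)^{-M}$ at unit scale, the $\lebesgue^{p'}$ norm comes out as $|\det S_\ell|^1$ rather than $|\det S_\ell|^{1/p}$, which is too large. This regularity ingredient is precisely what the paper's citation bundles away, and it is not explicitly invoked in your sketch where it is needed; without it, the claimed estimate $\|L_\ell\ast f\|_{\lebesgue^{p'}}\lesssim w_\ell^{(N,p)}\mathcal N_N(f)$ is not established for $p>2$.
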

\begin{proof}
  This is a direct application of \cite[Theorem 8.3]{voigtlaender2016embeddings}, provided that all its assumptions are satisfied. As discussed in Remark \ref{rem:BdProperties}, $\{{\psi_\ell}\}_{\ell\in\NN_0^d}$ is a regular partition of unity subordinate to $\mathcal B_d$, such that $\mathcal B_d$ itself is a regular covering in the sense of \cite[Definition 8.1]{voigtlaender2016embeddings}.  Note that $\max\{1,\|S^{-1}_\ell\|\} = 1$ for all $\ell\in\NN^d_0$, as observed in the proof of Proposition \ref{prop:DominatingMixedSmoothnessCoveringWeightIsSummable}, such that the weight $w^{(N,p)}$ in said proposition coincides with the weight $w^{(N)}\in\CC^{\NN_0^d}$ defined in \cite[Theorem 8.3]{voigtlaender2016embeddings}. Therefore Proposition \ref{prop:DominatingMixedSmoothnessCoveringWeightIsSummable} shows the existence of $N\in\NN_0$, such that $w^{(N)}$ is summable and all prerequisites of \cite[Theorem 8.3]{voigtlaender2016embeddings} are satisfied. This completes the proof. 
\end{proof}

  In the above proof, and in Remark \ref{rem:BdProperties}, we did not derive the desired properties of the product weight $v^{(s)}$ in full detail. Although these derivations are rather straightforward, we provide, for the convenience of the reader, a full proof in Appendix \ref{sec:semistructuredproducts}, together with general results on products of semi-structured coverings. There, we also show that the inheritance of almost subordinateness, which will be used in Theorem \ref{prop:DominatingMixedSmoothnessAsDecomposition}, is a simple consequence of the product construction.
  
  The warping function that generates spaces of dominating mixed smoothness will turn out to be a $d$-dimensional \emph{tensor product}, the factors of which are given by the $1$-dimensional warping function $\Phi_{\vrho_{1}}$ considered in our treatment of Besov spaces in Section \ref{sec:WarpedCoorbitAndBesov}. Therefore, the following lemma, which considers general \emph{separable} warping functions $\Phi$, as they were referred to previously~\cite{VoHoPreprint}, will be helpful.

\begin{lemma}\label{lem:SeperableWarping}
  For $i = 1,\dots,N$ let $\Phi_i : D_i \to \RR^{d_i}$ be a
  $k$-admissible warping function with control weight $v_i$.
  Let $d := d_1 + \dots + d_N$ and $D := D_1 \times \cdots \times D_N$.
  Then
  \[
    \Phi := \Phi_1 \otimes \cdots \otimes \Phi_N
         \colon D = D_1 \times \cdots \times D_N \to \RR^d,\
         (\xi_1, \dots, \xi_N) \mapsto \big(
                                            \Phi_1 (\xi_1),
                                            \dots,
                                            \Phi_N (\xi_N)
                                          \big)
  \]
  is a $k$-admissible warping function with control weight
  $v : \RR^d \to \RR^+,
       \tau \mapsto \max \{ v_1 (|\tau|e_1^{(d_1)}), \dots, v_N (|\tau|e_1^{(d_N)}) \}$, where the superscripts indicate the dimensionality of the standard unit vectors. Moreover, the $\Phi$-induced $(\delta,r)$-fine product covering
  $\CalQ^{(\delta,r)}_{\Phi,\otimes} =  \{ Q^{(\delta,r)}_{\Phi,\otimes,\ell}\}_{\ell\in\ZZ^d}$, where $Q^{(\delta,r)}_{\Phi,\otimes,\ell} = Q^{(\delta,r)}_{\Phi_1,\ell_1} \times \ldots \times Q^{(\delta,r)}_{\Phi_N,\ell_N}$, with $\ell = (\ell_1,\ldots,\ell_N)\in\ZZ^d$ and  $\ell_i\in\ZZ^{d_i}$, $i = 1,\ldots,N$, is equivalent to the usual $\Phi$-induced $(\delta,r)$-fine frequency covering $\CalQ^{(\delta,r)}_{\Phi}$, for all $\delta>0$, $r>\sqrt{d}/2$.

  If furthermore for each $i = 1,\dots,N$ we are given a weight
  $\kappa_i : D_i \to \RR^+$ such that $\kappa_i \circ \Phi_i^{-1}$
  is $\tilde{\kappa}_i$-moderate, for some continuous, radially increasing,
  submultiplicative weight $\tilde{\kappa}_i$, then setting
  \[
    \kappa : D \to \RR^+,
            (\xi_1, \dots, \xi_N) \mapsto \prod_{i=1}^N \kappa_i (\xi_i)
    \quad \text{and} \quad
    \tilde{\kappa} : \RR^d \to \RR^+, \xi \mapsto \prod_{i=1}^N \tilde{\kappa}_i (|\xi|) \, ,
  \]
  the weight $\kappa_ \Phi = \kappa \circ \Phi^{-1}$ is $\tilde{\kappa}$-moderate,
  and $\tilde{\kappa}$ is continuous, radially increasing, and submultiplicative. Hence, if $\Phi_i$ is a $\mathcal C^\infty$-diffeomorphism, for all $i = 1,\dots,N$, then $(\Phi,\kappa)$ forms a compatible pair as per Definition \ref{def:StandingDecompositionAssumptions}.
\end{lemma}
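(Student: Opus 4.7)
The plan is to verify each assertion separately, exploiting the block-diagonal structure that the tensor product imposes.

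First, I would verify that $\Phi$ is a $k$-admissible warping function in the sense of Definition \ref{assume:DiffeomorphismAssumptions}. Since $\Phi^{-1}(\tau_1,\dots,\tau_N) = (\Phi_1^{-1}(\tau_1),\dots,\Phi_N^{-1}(\tau_N))$, the Jacobian $A(\tau) = \mathrm{D}\Phi^{-1}(\tau)$ is the block-diagonal matrix $\mathrm{diag}(A_1(\tau_1),\dots,A_N(\tau_N))$ with $A_i := \mathrm{D}\Phi_i^{-1}$, so it is $\mathcal{C}^k$ with positive determinant $\det A = \prod_i \det A_i > 0$. The matrix $\phi_\tau(\upsilon) = A^T(\upsilon+\tau) A^{-T}(\tau)$ is then also block-diagonal, with $i$-th block equal to $\phi_{\tau_i}(\upsilon_i)$, where I decompose $\tau = (\tau_1,\dots,\tau_N)$ and $\upsilon = (\upsilon_1,\dots,\upsilon_N)$ in accordance with the splitting of $\RR^d$ into the groups of coordinates associated with each $\Phi_i$. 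The key observation is that any partial derivative $\partial^\alpha \phi_\tau(\upsilon)$ for $\alpha \in \NN_0^d$ with $|\alpha|\leq k$ vanishes unless all nonzero components of $\alpha$ lie in a single coordinate group $\{d_1+\cdots+d_{i-1}+1,\dots,d_1+\cdots+d_i\}$, in which case it has a single nonzero block equal to $\partial^{\alpha_i}\phi_{\tau_i}(\upsilon_i)$. The operator norm of a block-diagonal matrix with a single nonzero block equals the norm of that block, which is bounded by $v_i(\upsilon_i)$. Since $v_i$ is radially increasing and $|\upsilon_i|\leq |\upsilon|$, we get $v_i(\upsilon_i)\leq v_i(|\upsilon|\,e_1^{(d_i)}) \leq v(\upsilon)$.

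Next I would verify that $v$ is a genuine control weight, i.e., continuous, radially increasing, and submultiplicative. Continuity and the radially increasing property follow at once from the corresponding properties of each $v_i$ and from the fact that $v(\tau)$ depends only on $|\tau|$. For submultiplicativity, I use $v_j(|\tau+\upsilon|\,e_1^{(d_j)})\leq v_j((|\tau|+|\upsilon|)\,e_1^{(d_j)}) \leq v_j(|\tau|\,e_1^{(d_j)})\cdot v_j(|\upsilon|\,e_1^{(d_j)})$, taking the maximum over $j$, and bounding termwise by the product of maxima (which is legitimate since $v_j\geq 1$).

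For the covering equivalence, I would sandwich the product covering between two standard $(\delta,r)$-fine coverings using the elementary set inclusions
\[
  B_r (\ell) \;\subset\; B_r^{(d_1)} (\ell_1) \times \cdots \times B_r^{(d_N)} (\ell_N) \;\subset\; B_{r\sqrt{N}} (\ell) \, ,
\]
valid for every $\ell=(\ell_1,\dots,\ell_N)\in\ZZ^d$. Applying $\Phi^{-1}$, this gives $Q^{(\delta,r)}_{\Phi,\ell}\subset Q^{(\delta,r)}_{\Phi,\otimes,\ell}\subset Q^{(\delta,r\sqrt{N})}_{\Phi,\ell}$, and since all $\Phi$-induced frequency coverings are equivalent by Lemma \ref{lem:DecompInducedCoveringIsNice}, the product covering is equivalent to $\CalQ_\Phi^{(\delta,r)}$ (one uses Lemma \ref{lem:almost_subordinateness_connected}, the connectedness of all involved sets, and the sandwich inclusions to deduce weak subordinateness in both directions).

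Finally, for the weight claim, I compute directly: $\kappa_\Phi(\xi)=\prod_i \kappa_i(\Phi_i^{-1}(\xi_i))$, so by $\tilde{\kappa}_i$-moderateness of each factor,
\[
  \frac{\kappa_\Phi(\xi+\eta)}{\kappa_\Phi(\eta)}
  \leq \prod_{i=1}^N \tilde{\kappa}_i(\xi_i)
  \leq \prod_{i=1}^N \tilde{\kappa}_i(|\xi|)
  = \tilde{\kappa}(\xi),
\]
using that each $\tilde{\kappa}_i$ is radial and radially increasing together with $|\xi_i|\leq |\xi|$. Continuity of $\tilde{\kappa}$ is immediate, radial monotonicity holds because $\tilde{\kappa}$ depends only on $|\xi|$ and each factor is radially increasing, and submultiplicativity follows factor-by-factor from $\tilde{\kappa}_i(|\xi+\eta|)\leq \tilde{\kappa}_i(|\xi|+|\eta|)\leq \tilde{\kappa}_i(|\xi|)\tilde{\kappa}_i(|\eta|)$. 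Combined with the $\mathcal{C}^\infty$ assumption on each $\Phi_i$, Definition \ref{def:StandingDecompositionAssumptions} is fulfilled.

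I expect no single step to be hard; the main bookkeeping obstacle is the derivative count in the first part, where one must argue carefully why only "diagonal-block" multi-indices contribute and then exploit the radially increasing property of each $v_i$ to pass from $v_i(\upsilon_i)$ to a bound depending on $|\upsilon|$ alone.
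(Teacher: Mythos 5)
Your proposal is correct and follows essentially the same route as the paper's own proof: exploit the block-diagonal structure of $D\Phi^{-1}$ (and hence of $\phi_\tau$) to reduce the admissibility estimate to the individual factors, verify the control-weight properties of $v$ directly, sandwich the product covering between two standard $\Phi$-induced coverings, and check moderateness of $\kappa_\Phi$ factor-by-factor. Two minor remarks: (i) your explicit observation that $\partial^\alpha\phi_\tau(\upsilon)$ vanishes when the support of $\alpha$ straddles two blocks is actually slightly more careful than what the paper writes — the paper states what should be an inequality as an equality there; note, however, that when $\alpha=0$ all $N$ blocks are present, so the phrase ``single nonzero block'' should be softened, though the resulting max-bound is unchanged; (ii) you use $r\sqrt{N}$ where the paper uses $r\sqrt{d}$ for the outer ball radius — both are valid since $N\leq d$, and the reference to Lemma~\ref{lem:almost_subordinateness_connected} is superfluous since the sandwich inclusions already give (not just weak) subordinateness directly.
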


\begin{proof}
  Across the proof, we will identify $\RR^d$ with
  $\RR^{d_1} \times \cdots \times \RR^{d_N}$, and likewise
  $\NN_0^d$ with $\NN_0^{d_1} \times \cdots \times \NN_0^{d_N}$.
  
  Since each $\Phi_i$ is a $C^{k+1}$-diffeomorphism, so is $\Phi$.
  The inverse of $\Phi$ is given by
  \[
    \Phi^{-1} : \RR^d \to D,
                (\tau_1, \dots, \tau_N) \mapsto \big(
                                                     \Phi_1^{-1} (\tau_1),
                                                     \dots,
                                                     \Phi_N^{-1} (\tau_N)
                                                   \big) \, .
  \]
  Therefore, if we set $A_i := D \Phi_i^{-1}$, then
  \[
    A(\tau_1,\dots,\tau_N)
    := D \Phi^{-1} (\tau_1, \dots, \tau_N)
    = \mathrm{diag} \big( A_1 (\tau_1), \dots, A_N (\tau_N) \big)
  \]
  is a block-diagonal matrix, so that
  \begin{equation}\label{eq:AssociatedProductWeight}
  w(\tau) = \det (A (\tau_1,\dots,\tau_N)) = \prod_{i=1}^N \det (A_i (\tau_i)) = \prod_{i=1}^N w_i(\tau_i) > 0 \, ,
  \end{equation}
  where $w$ and $w_i$ are the weights associated with the warping functions $\Phi$ and $\Phi_i$, respectively.

  Furthermore, if we set
  $\phi_\tau^{(i)} (\upsilon) := A_i^T (\upsilon + \tau) \cdot A_i^{-T} (\tau)$
  for $\tau, \upsilon \in \RR^{d_i}$, then for
  $\tau = (\tau_1,\dots,\tau_N) \in \RR^d$ and
  $\upsilon = (\upsilon_1, \dots, \upsilon_N) \in \RR^d$, we have
  \[
    \phi_\tau (\upsilon)
    := A^T (\upsilon + \tau) \cdot A^{-T} (\tau)
    = \mathrm{diag} \big(
                      \phi_{\tau_1}^{(1)} (\upsilon_1),
                      \dots,
                      \phi_{\tau_N}^{(N)} (\upsilon_N)
                    \big) \, .
  \]
  Therefore, for any $\alpha = (\alpha_1,\dots,\alpha_N) \in \NN_0^{d}$
  with $|\alpha| \leq k$, we have
  \begin{align*}
    \|\partial^\alpha \phi_\tau (\upsilon)\|
    & = \big\|
          \mathrm{diag}
          \big(
            \partial^{\alpha_1} \phi_{\tau_1}^{(1)} (\upsilon_1),
            \dots,
            \partial^{\alpha_N} \phi_{\tau_N}^{(N)} (\upsilon_N)
          \big)
        \big\|
      = \max \big\{
               \| \partial^{\alpha_i} \phi_{\tau_i}^{(i)} (\upsilon_i) \|
               \,:\,
               i = 1,\dots,N
             \big\} \\
    & \overset{(\lozenge)}{\leq}
        \max \big\{
               v_i (|\upsilon_i|e_1^{(d_i)}) \,:\, i = 1, \dots, N
             \big\}
    \overset{(\ast)}{\leq}
      \max \big\{ v_i (|\upsilon|e_1^{(d_i)}) \,:\, i = 1,\dots,N \big\}
    =: v (\upsilon) \, .
  \end{align*}
  Here, the step marked with $(\lozenge)$ used that each $\Phi_i$ is
  $k$-admissible with control weight $v_i$, and the step marked with
  $(\ast)$ used that $v_i$ is radially increasing
  and that $|\upsilon_i| \leq |\upsilon|$.

  It is clear that $v$ is continuous and radially increasing.
  Furthermore,
  \[
    v_i (|\tau + \upsilon|e_1^{(d_i)})
    \leq v_i (|\tau|e_1^{(d_i)} + |\upsilon|e_1^{(d_i)})
    \leq v_i (|\tau|e_1^{(d_i)}) \cdot v_i (|\upsilon|e_1^{(d_i)})
    \leq v(\tau) \cdot v(\upsilon)
  \]
  for all $i=1,\dots,N$, which implies that $v$ is submultiplicative.
  In a similar manner, one can show that $\tilde{\kappa}$ is continuous, radially increasing,
  and submultiplicative.

  Finally, to see that $\kappa_ \Phi$ is $\tilde{\kappa}$-moderate,
  note for $\tau = (\tau_1,\dots,\tau_N) \in \RR^d$ and
  $\upsilon = (\upsilon_1, \dots, \upsilon_N) \in \RR^d$ that
  \begin{align*}
    (\kappa_ \Phi) (\tau + \upsilon)
    & = \prod_{i = 1}^N
          (\kappa_i \circ \Phi_i^{-1}) (\tau_i + \upsilon_i)
      \leq \prod_{i=1}^N
             \left[
               (\kappa_i \circ \Phi_i^{-1}) (\tau_i)
               \cdot \tilde{\kappa}_i (\upsilon_i)
             \right] \\
    & \leq \Big(
             \prod_{i=1}^N
               \tilde{\kappa}_i (|\upsilon|)
           \Big)
           \cdot
           (\kappa_ \Phi) (\tau)
      = \tilde{\kappa} (\upsilon) \cdot (\kappa_ \Phi) (\tau) \, .
  \end{align*}
  
  To finish the proof, we will now show that $\CalQ^{(\delta,r)}_{\Phi,\otimes}$ is equivalent to $\CalQ^{(\delta,r)}_{\Phi}$. For clarity, we will indicate the dimensionality of an open ball by a superscript in the remainder of the proof, e.g., $B_r^{(d)}(\xi)$ denotes the $d$-dimensional open ball of radius $r>0$ around $\xi\in\RR^d$. It is easy to see that
  \[\Phi(Q^{(\delta,r)}_{\Phi,\ell}) = B_{\delta r}^{(d)}(\delta \ell) \subset B_{\delta r}^{(d_1)}(\delta \ell_1) \times \ldots \times B_{\delta r}^{(d_N)}(\delta \ell_N) = \Phi(Q^{(\delta,r)}_{\Phi,\otimes,\ell}),\ \text{for all } \ell\in\ZZ^d,\]
  such that $\CalQ^{(\delta,r)}_{\Phi}$ is subordinate to $\CalQ^{(\delta,r)}_{\Phi,\otimes}$, in particular, it is almost subordinate. On the other hand, 
  \[
   \Phi(Q^{(\delta,r)}_{\Phi,\otimes,\ell}) = B_{\delta r}^{(d_1)}(\delta \ell_1) \times \ldots \times B_{\delta r}^{(d_N)}(\delta \ell_N) \subset B_{\delta \cdot \sqrt{d}r}^{(d)}(\delta \ell) = \Phi(Q^{(\delta,\sqrt{d}r)}_{\Phi,\ell}),\ \text{for all } \ell\in\ZZ^d,
   \]
  i.e., $\CalQ^{(\delta,r)}_{\Phi,\otimes}$ is subordinate to $\CalQ^{(\delta,\sqrt{d}r)}_{\Phi}$ and therefore almost subordinate to $\CalQ^{(\delta,r)}_{\Phi}$, which is equivalent to $\CalQ^{(\delta,\sqrt{d}r)}_{\Phi}$ by Lemma \ref{lem:DecompInducedCoveringIsNice}.
\end{proof}

We are now ready to show that the spaces of dominating smoothness of Besov type
can be realized as certain warped coorbit spaces.

\begin{theorem}\label{prop:DominatingMixedSmoothnessAsDecomposition}
  Let $1\leq p,q \leq\infty$, $s \in \RR^d$, and $\vrho_{1}$ any slow start version of 
  the weakly $k$-admissible radial component $\vsig_{1} = \ln(1+\bullet)$. 
  If $\Phi_{\vrho_{1}}: \RR \rightarrow \RR$ is the $1$-dimensional radial warping function 
  associated with $\vrho_{1}$ and 
  \[\Phi: \RR^d \rightarrow \RR^d,\ \xi\mapsto (\Phi_{\vrho_{1}}(\xi_1),\ldots,\Phi_{\vrho_{1}}(\xi_d)),\]
  then with 
  \[
    \kappa :
    \RR^d \to \RR^+,
    \xi = (\xi_1,\dots,\xi_d)
    \mapsto \prod_{i=1}^d
              (1 + |\xi_i|)^{s_i + 2^{-1} - q^{-1}} \, ,
  \]
  we have
  \[
    S^s_{p,q} B (\RR^d)
    = \DecompSp(\CalB_d, \lebesgue^p, \ell_{v^{(s)}}^q)
    = \Co (\Phi, \lebesgue^{p,q}_\kappa)\, ,
  \]
  up to canonical identifications. Here, $S^s_{p,q} B (\RR^d)$ and $v^{(s)}$ are as in Definition \ref{def:SpacesOfDomMixedSmoothness}, 
\end{theorem}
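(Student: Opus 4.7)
The plan is to prove the two equalities separately. The identity $S^s_{p,q}B(\RR^d) = \DecompSp(\CalB_d,\lebesgue^p,\ell_{v^{(s)}}^q)$ is essentially built into Definition~\ref{def:SpacesOfDomMixedSmoothness}: the norms match by construction, and since $\CalB_d$ is a regular covering (being a tensor product of the 1-D Besov covering) with $\CalB_d$-BAPU $\{\psi_\ell\}$, while $v^{(s)}$ is $\CalB_d$-moderate (Remark~\ref{rem:BdProperties}), Corollary~\ref{cor:DominatingMixedSmoothnessIsDecomposition} yields a canonical embedding $\DecompSp(\CalB_d,\lebesgue^p,\ell_{v^{(s)}}^q)\hookrightarrow \Schwartz'(\RR^d)$, identifying the two reservoirs up to canonical identification.

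For the second equality I would apply Theorem~\ref{thm:CoorbitDecompositionIsomorphism}. To do so I first verify that $(\Phi,\kappa)$ is a compatible pair: Lemma~\ref{lem:SeperableWarping}, applied with $\Phi_i = \Phi_{\vrho_1}$ for $i=1,\dots,d$ and $\kappa_i(\xi_i) := (1+|\xi_i|)^{s_i+2^{-1}-q^{-1}}$, gives that $\Phi$ is a $\mathcal C^\infty$-diffeomorphism and a $(d+1)$-admissible warping function (Theorem~\ref{cor:RadialWarpingIsWarping} ensures this for each factor), that $\kappa = \kappa_1\otimes\cdots\otimes\kappa_d$ has the required tensorized moderateness, and that the $\Phi$-induced $(\delta,r)$-fine covering $\CalQ_\Phi^{(\delta,r)}$ is equivalent to the product covering
\[
  \CalQ_{\Phi,\otimes}^{(\delta,r)}
  = \bigl(\, Q^{(\delta,r)}_{\Phi_{\vrho_1},k_1}\times\cdots\times Q^{(\delta,r)}_{\Phi_{\vrho_1},k_d}\,\bigr)_{k\in\ZZ^d} .
\]
Theorem~\ref{thm:CoorbitDecompositionIsomorphism} together with this equivalence and Theorem~\ref{thm:decomposition_space_coincidence} then yields $\Co(\Phi,\lebesgue_\kappa^{p,q}) = \DecompSp(\CalQ_{\Phi,\otimes}^{(\delta,r)},\lebesgue^p,\ell_u^q)$ with $u_k = \kappa(\Phi^{-1}(\delta k))\cdot [w(\delta k)]^{q^{-1}-2^{-1}}$.

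It thus remains to compare the product covering $\CalQ_{\Phi,\otimes}^{(\delta,r)}$ with $\CalB_d$, and to compare the associated weights. For the coverings, Lemma~\ref{lem:RadialWarpingBesovEquivalence} combined with Proposition~\ref{prop:RadialCoveringBesovSubordinateness} already shows that in dimension one, $\CalQ_{\Phi_{\vrho_1}}^{(\delta,r)}$ and $\CalB$ are equivalent (this is exactly the content of the $d=1$ case of Theorem~\ref{thm:LnWarpingBesovEmbeddingsExplicit}). Since (weak) equivalence is preserved under Cartesian products of admissible coverings—an elementary fact that will be formally recorded in Appendix~\ref{sec:semistructuredproducts}—this passes to $\CalQ_{\Phi,\otimes}^{(\delta,r)}$ and $\CalB_d$. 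For the weights, I would use \eqref{eq:RadialWarpingCoveringBesovSubordinateness} componentwise: if $Q^{(\delta,r)}_{\Phi_{\vrho_1},k_i}\cap B_{\ell_i}\neq\emptyset$, then $2^{\ell_i}\asymp 1+\vrho_{1,*}(\delta|k_i|)\asymp 1+|\Phi_{\vrho_1}^{-1}(\delta k_i)|$ and, by construction of the slow-start version of $\ln(1+\cdot)$, $w_1(\delta k_i) = \vrho_{1,*}'(\delta|k_i|)\asymp 1+\vrho_{1,*}(\delta|k_i|) \asymp 2^{\ell_i}$. Tensorizing using $w(\delta k)=\prod_{i=1}^d w_1(\delta k_i)$ and the product form of $\kappa$, we find that, whenever $Q^{(\delta,r)}_{\Phi,\otimes,k}\cap B_{d,\ell}\neq\emptyset$,
\[
  u_k
  \asymp \prod_{i=1}^d 2^{\ell_i(s_i+2^{-1}-q^{-1})}\cdot 2^{\ell_i(q^{-1}-2^{-1})}
  = 2^{\langle \ell,s\rangle}
  = v^{(s)}_\ell .
\]
With this matching in hand, Theorem~\ref{thm:decomposition_space_coincidence} yields $\DecompSp(\CalQ_{\Phi,\otimes}^{(\delta,r)},\lebesgue^p,\ell_u^q) = \DecompSp(\CalB_d,\lebesgue^p,\ell_{v^{(s)}}^q)$ and hence the theorem.

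The main obstacle is really the product step: verifying that weak equivalence (and relative moderateness of weights) tensorizes cleanly so that the 1-D result of Theorem~\ref{thm:LnWarpingBesovEmbeddingsExplicit} transfers to arbitrary dimension. The weight computation itself is a bookkeeping exercise once the two exponents $s_i+2^{-1}-q^{-1}$ in $\kappa$ and $q^{-1}-2^{-1}$ in $[w(\delta k)]$ are seen to cancel in precisely the right way to recover $v^{(s)}_\ell$; the exponent $s_i+2^{-1}-q^{-1}$ appearing in $\kappa$ is exactly what is needed to make this cancellation work, matching the analogous weight adjustment already observed in Theorem~\ref{thm:LnWarpingBesovEmbeddingsExplicit} and Remark~\ref{rem:WarpingBesovEmbeddingsModifiedWeight} for the 1-D Besov case.
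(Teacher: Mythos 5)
Your proposal is correct and follows essentially the same route as the paper: Remark~\ref{rem:BdProperties} plus Corollary~\ref{cor:DominatingMixedSmoothnessIsDecomposition} for the first equality; then Lemma~\ref{lem:SeperableWarping} to obtain compatibility of $(\Phi,\kappa)$ and equivalence of $\CalQ_\Phi^{(\delta,r)}$ with the product covering $\CalQ_{\Phi,\otimes}^{(\delta,r)}$; then Theorem~\ref{thm:CoorbitDecompositionIsomorphism}; then tensorization of the one-dimensional equivalence between $\CalQ_{\Phi_{\vrho_1}}^{(\delta,r)}$ and $\CalB$ (via Proposition~\ref{prop:RadialCoveringBesovSubordinateness}, Lemma~\ref{lem:RadialWarpingBesovEquivalence}, and the appendix on products); and finally the componentwise weight comparison $u_k\asymp v^{(s)}_\ell$ using $w_1(\delta k_i)=(\vrho_1)_\ast'(\delta|k_i|)=1+(\vrho_1)_\ast(\delta|k_i|)\asymp 2^{\ell_i}$. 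The only cosmetic difference is that you insert an explicit intermediate identification $\Co(\Phi,\lebesgue^{p,q}_\kappa)=\DecompSp(\CalQ_{\Phi,\otimes}^{(\delta,r)},\lebesgue^p,\ell_u^q)$, whereas the paper uses the product covering only as a tool and passes directly from $\DecompSp(\CalQ_\Phi^{(\delta,r)},\cdot)$ to $\DecompSp(\CalB_d,\cdot)$; both are valid applications of Theorem~\ref{thm:decomposition_space_coincidence}.
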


\begin{proof}
  Combining Remark \ref{rem:BdProperties} and Proposition \ref{cor:DominatingMixedSmoothnessIsDecomposition}, we have already shown that $S^s_{p,q} B (\RR^d) = \DecompSp(\CalB_d, \lebesgue^p, \ell_{v^{(s)}}^q)$ up to canonical identifications.
  
  It remains to prove that the decomposition space $\DecompSp(\CalB_d, \lebesgue^p, \ell_{v^{(s)}}^q)$ coincides with the
  warped coorbit space $\Co (\Phi, \lebesgue^{p,q}_\kappa)$. We have already discussed in Section~\ref{sec:WarpedCoorbitAndBesov}, that the warping function $\Phi_{\vrho_{1}} : \RR \to \RR$ is smooth and $d+1$-admissible and that $Q_{\Phi_{\vrho_{1}},k}^{(\delta,r)}$ is equivalent to the $1$-dimensional, inhomogeneous Besov covering $\CalB$, for any $\delta>0$ and $r > \sqrt{d} / 2$. 
  
  Furthermore, Lemma~\ref{lem:RadialWarpingStandardWeightAdmissible}
  shows that if we set
  $\kappa_i : \RR^d \to \RR^+,
              \xi \mapsto (1 + |\xi|)^{s_i + 2^{-1} - q^{-1}}$,
  then $\kappa_i \circ \Phi_{\vrho_{1}}$ is $\tilde{\kappa}_i$-moderate for some
  continuous, submultiplicative, radially increasing weight
  $\tilde{\kappa}_i : \RR^d \to \RR^+$. Noting that $\kappa(\xi) = \prod_{i=1}^d \kappa_i(\xi_i)$, we conclude that Lemma~\ref{lem:SeperableWarping} shows
  that $(\Phi,\kappa)$ is a compatible pair, see Definition~\ref{def:StandingDecompositionAssumptions}.
  Thus, Theorem~\ref{thm:CoorbitDecompositionIsomorphism} shows that
  $\Co(\Phi, \lebesgue_\kappa^{p,q})
   = \DecompSp (\CalQ_\Phi^{(\delta,r)}, \lebesgue^p, \ell_{u}^q)$
  up to canonical identifications, with
  $u_k = \kappa_\Phi(\delta k) \cdot [w(\delta k)]^{q^{-1} - 2^{-1}}$. Here, by Equation \eqref{eq:AssociatedProductWeight} in the proof of Lemma \ref{lem:SeperableWarping}, with $w_1 = \det(\Phi^{-1}_{\vrho_1}(\bullet))$, the weight $w$ is given by $w(\tau) = \prod_{i=1}^d w_1(\tau_i)$.
  
  As a direct consequence of Equation \eqref{eq:BesovWarpingEquivalenceCondition} in the proof of Theorem \ref{thm:LnWarpingBesovEmbeddingsExplicit}, we can use Proposition \ref{prop:RadialCoveringSubordinateness} and Lemma \ref{lem:RadialWarpingBesovEquivalence} to obtain weak equivalence of $\CalB$ and $\CalQ_{\Phi_{\vrho_1}}^{(\delta,r)}$ and by extension of $\CalB_d$ and the product covering $\CalQ_{\Phi,\otimes}^{(\delta,r)}$ defined in Lemma \ref{lem:SeperableWarping}. By that lemma, $\CalQ_{\Phi,\otimes}^{(\delta,r)}$ is further equivalent to the usual $\Phi$-induced $(\delta,r)$-fine frequency covering $\CalQ_{\Phi}^{(\delta,r)}$, such that in total, $\CalB_d$ is weakly equivalent to $\CalQ_{\Phi}^{(\delta,r)}$. 

  Hence, Theorem \ref{thm:decomposition_space_coincidence} shows that
  $\DecompSp(\CalQ_\Phi^{(\delta,r)}, \lebesgue^p, \ell_u^q)
   = \DecompSp(\CalB_d, \lebesgue^p, \ell_{v^{(s)}}^q)$, provided that
  \begin{equation}
    u_k \asymp v_\ell^{(s)}
    \quad \text{for all} \quad \ell \in \NN_0^d \text{ and } k \in \ZZ^d
          \text{ satisfying } Q_{\Phi,k}^{(\delta,r)} \cap B_{d,\ell} \neq \emptyset
    \, .
    \label{eq:DominatingMixedSmoothnessDesiredWeightEquivalence}
  \end{equation}
  Using $Q_{\Phi,k}^{(\delta,r)} \subset Q_{\Phi,\otimes,k}^{(\delta,r)}$, we can instead show that \eqref{eq:DominatingMixedSmoothnessDesiredWeightEquivalence} holds for $\ell,k$ such that $Q_{\Phi,\otimes,k}^{(\delta,r)} \cap B_{d,\ell} \neq \emptyset$. Since $\xi \in Q_{\Phi,\otimes,k}^{(\delta,r)} \cap B_{d,\ell}$ is equivalent to $\xi_i \in Q_{\Phi_{\vrho_{1}},k_i}\cap B_{\ell_i}$ and both $u$ and $v^{(s)}$ have product structure, we can verify $u_k \asymp v_\ell^{(s)}$ componentwise. However, 
  \[
   (1+|\xi_i|)^{s_i+2^{-1}-q^{-1}} \cdot [w(\delta k)]^{q^{-1}-2^{-1}} \asymp (1+|\xi_i|)^{s_i} \asymp 2^{s_i \ell_i}
  \]
  is a direct consequence of Equations \eqref{eq:RadialWarpingsCoveringNormControl} and \eqref{eq:RadialWarpingCoveringRelativeModerateness}, after noting that $1 + (\vrho_{1})_\ast(|\tau|) = (\vrho_{1})_\ast'(|\tau|) = e^{|\tau|}$. Hence, we can apply Theorem \ref{thm:decomposition_space_coincidence} to obtain
  \[
    \Co(\Phi, \lebesgue_\kappa^{p,q})
   = \DecompSp(\CalQ_\Phi^{(\delta,r)}, \lebesgue^p, \ell_u^q)
   = \DecompSp(\CalB_d, \lebesgue^p, \ell_{v^{(s)}}^q) = S^s_{p,q} B (\RR^d).
  \]
  as desired. 
\end{proof}

\begin{remark}
Using Lemma  \ref{lem:SeperableWarping} and the equality $\Co(\Phi, \lebesgue_\kappa^{p,q}) = S^s_{p,q} B (\RR^d)$, it is straightforward to extend 
Definition \ref{def:SpacesOfDomMixedSmoothness} to obtain a larger family of anisotropic 
smoothness spaces. In particular, choosing $\alpha\in (-\infty,1]^d$ and with $\vsig_{{\alpha_i}} = (1+\bullet)^{(1-\alpha_i)}$ if $\alpha_i\in(-\infty,1)$, and $\vsig_{1} = \ln(1+\bullet)$ otherwise, we can define \emph{anisotropic $\alpha$-modulation spaces} as $\Co(\Phi_{\alpha}, \lebesgue_\kappa^{p,q})$ with an appropriate choice of $\kappa$ and $\Phi_{\vrho_{\alpha}} = \Phi_{\vrho_{{\alpha_1}}}\otimes\ldots\otimes \Phi_{\vrho_{{\alpha_d}}}$, where each $\vrho_{{\alpha_i}}$ is a slow start version of $\vsig_{{\alpha_i}}$, and the radial warping function $\Phi_{\vrho_{{\alpha_i}}}: \RR\rightarrow \RR$ is defined as usual.

A rather straightforward adaptation of Proposition \ref{prop:ModulationIntoLnWarpingEmbedding} using the considerations in the proof of Theorem \ref{prop:DominatingMixedSmoothnessAsDecomposition} yields embeddings between these \emph{anisotropic $\alpha$-modulation spaces} and spaces of dominating mixed smoothness.
\end{remark}

\section{Conclusion}
 
In this paper we have shown that warped coorbit spaces $\Co(\Phi,\bd L^{p,q}_\kappa)$ can be identified with certain decomposition spaces. This has allowed us to derive embedding relations between warped coorbit spaces and study the relations between these spaces and semi-classical smoothness spaces. In particular, we have shown that certain warped coorbit spaces coincide with $\alpha$-modulation spaces and (Besov-type) spaces of dominating mixed smoothness. For Besov spaces spaces, we have shown that such equality is only possible when considering functions on the real line. For Besov spaces of functions on $\RR^d$ with $d>1$, we have proposed a warping function that achieves a certain \emph{closeness} relation between the associated warped coorbit spaces and Besov spaces. 

\appendix

  \section{\texorpdfstring{$\alpha$-coverings for general $\alpha \in \RR$}
                        {α-coverings for general α∈ℝ}}
\label{sec:AlphaCoverings}

In this appendix, we collect two secondary facts about $\alpha$-coverings.
As our first result, we show that any two $\alpha$-coverings, with respect to the same $\alpha\in\RR$, of $\RR^d$ are weakly equivalent.
This generalizes \cite[Lemma B.2]{BorupNielsenAlphaModulationSpaces}, where
the same observation is made for $\alpha \in [0,1]$.
As our second result, we show that there are no $\alpha$-coverings for
$\alpha > 1$, such that $\alpha\in (-\infty,1]$, as considered in Section \ref{sec:alpha}, is
indeed the most general setting. For ease of reference, we recall the definition of an $\alpha$-covering, where, for now, we only assume $\alpha\in\RR$.

\begin{definition}\label{def:AlphaCovering}
   Let $\CalQ = (Q_i)_{i \in I}$ be an admissible covering of $\RR^d$,
   and let $\alpha \in \RR$.
  The family $\CalQ$ is called an $\alpha$-covering of $\RR^d$, if the
  following hold:
  \begin{enumerate}
     \item\label{enu:AlphaCoveringInnerOuterA} With
            \begin{equation}
            \quad\quad\quad
            r_Q
            := \sup \{
                      r > 0 \,:\, \exists \, \xi \in \RR^d : B_r (\xi) \subset Q
                    \}
            \quad \!\! \text{and} \quad \!\!
            R_Q
            := \inf \{
                      R > 0 \,:\, \exists \, \xi \in \RR^d : Q \subset B_R (\xi)
                    \}
            \, ,
            \label{eq:InnerOuterRadiusDefintionA}
          \end{equation}
          we have
          $\sup_{i \in I} R_{Q_i} / r_{Q_i} < \infty$.

    \item\label{enu:AlphaCoveringMeasureA} We have $\mu(Q_i) \asymp (1 + |\xi|)^{\alpha \cdot d}$
          for all $i \in I$ and $\xi \in Q_i$,
          where the implied constant does not depend on $i,\xi$.
  \end{enumerate}
\end{definition}

Below, we will repeatedly use the following consequence of the definition of
$\alpha$-coverings: If $\CalQ = (Q_i)_{i \in I}$ is an $\alpha$-covering,
then $r_{Q_i}^d \lesssim \mu(Q_i) \lesssim R_{Q_i}^d \lesssim r_{Q_i}^d$,
and hence
\begin{equation}
    (1 + |\xi|)^{\alpha d}
    \asymp \mu(Q_i)
    \asymp R_{Q_i}^d
    \asymp r_{Q_i}^d
    \qquad \forall \, i \in I \text{ and } \xi \in Q_i \, .
    \label{eq:AlphaCoveringAsymptotic}
  \end{equation}

The next result shows that the specific $\alpha$-covering chosen to define $M^{s,\alpha}_{p,q}(\RR^d)$ is inconsequential, by Theorem \ref{thm:decomposition_space_coincidence}.

\begin{lemma}\label{lem:AlphaCoveringEquivalence}
  Let $\alpha \in \R$, and let $\CalQ$ and $\CalP$ be two $\alpha$-coverings
  of $\RR^d$. Then $\CalQ$ and $\CalP$ are weakly equivalent.
\end{lemma}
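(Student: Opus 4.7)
The plan is to establish weak subordinateness of $\CalP$ to $\CalQ$ by a volume-counting argument; the reverse direction will then follow by symmetry, giving weak equivalence. I will fix $i \in I$ and consider $J_i := \{ j \in J : P_j \cap Q_i \neq \emptyset \}$, with the aim of showing that $|J_i|$ is bounded uniformly in $i$.

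The key step will be to exploit any common point $\eta_j \in P_j \cap Q_i$ (which exists for each $j \in J_i$) as a ``bridge'' between the two coverings: applying \eqref{eq:AlphaCoveringAsymptotic} to $\CalQ$ at $\eta_j \in Q_i$ and to $\CalP$ at $\eta_j \in P_j$ will yield $\mu(P_j) \asymp (1+|\eta_j|)^{\alpha d} \asymp \mu(Q_i)$, and likewise $R_{P_j} \asymp R_{Q_i}$, both uniformly in $i, j$. Fixing any $\xi_0 \in Q_i$, the triangle inequality then places every $P_j$, $j \in J_i$, inside a single ball around $\xi_0$ of radius $\lesssim R_{Q_i}$, and hence of volume $\lesssim \mu(Q_i)$.

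The final step will invoke admissibility of $\CalP$, which provides a universal pointwise bound $\sum_{j \in J} \Indicator_{P_j} \leq N_0$ for some $N_0 < \infty$. Integrating this bound over the ball above yields
\[
|J_i| \cdot \mu(Q_i) \asymp \sum_{j \in J_i} \mu(P_j) \leq N_0 \cdot \mu \big( B_{\lesssim R_{Q_i}}(\xi_0) \big) \lesssim \mu(Q_i) \, ,
\]
from which $|J_i| \lesssim 1$ uniformly. The only mildly delicate point---but not a real obstacle---is that I must read the transfer $\mu(P_j) \asymp \mu(Q_i)$ directly off the shared point $\eta_j$; this is what allows the argument to proceed uniformly for every $\alpha \in \RR$ without ever comparing $1 + |\eta|$ at points lying in different covering elements. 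Everything else reduces to a direct application of \eqref{eq:AlphaCoveringAsymptotic} and a volume count.
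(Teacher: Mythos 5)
Your proposal is correct and follows essentially the same route as the paper's proof: fix one covering element, use a shared point as a bridge to compare radii and measures via \eqref{eq:AlphaCoveringAsymptotic}, enclose all intersecting pieces of the other covering in a single ball of comparable volume, and then invoke admissibility plus a volume count to bound the number of intersecting indices uniformly. The only cosmetic difference is that you prove $\CalQ$ weakly subordinate to $\CalP$ while the paper proves the mirror-image direction $\CalP$ weakly subordinate to $\CalQ$; both then conclude by symmetry, so the arguments are interchangeable.
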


\begin{proof}
  Before we properly start the proof, we observe for any bounded set
  $Q \subset \RR^d$ that $|\xi - \eta| \leq 2 R_Q$ for all $\xi, \eta \in Q$.
  Indeed, for arbitrary $R > R_Q$, there is some
  $\zeta \in \RR^d$ with $Q \subset \zeta + B_R (0)$, and hence
  $|\xi - \eta| \leq |\xi - \zeta| + |\zeta - \eta| \leq 2R$.

  \medskip{}

  Now, write $\CalQ = (Q_i)_{i \in I}$ and $\CalP = (P_j)_{j \in J}$.
  By symmetry, it suffices to show that there is some $N \in \NN$ with
  $|I_j| \leq N$ for all $j \in J$, with
  $I_j = \{i \in I \,:\, Q_i \cap P_j \neq \emptyset \}$.

  To see this, fix $j \in J$ and $\eta \in P_j$.
  For any $i \in I_j$, there is some $\xi_i \in Q_i \cap P_j$.
  For arbitrary $\xi \in Q_i$, this implies\vspace{-0.2cm}
  \begin{align*}
    |\xi - \eta|
    & \leq |\xi - \xi_i| + |\xi_i - \eta|
      \leq 2 (R_{Q_i} + R_{P_j}) \\
    ({\scriptstyle{\text{Eq.~} \eqref{eq:AlphaCoveringAsymptotic}
                   \text{ and } \xi_i \in Q_i \cap P_j}})
    & \lesssim (1 + |\xi_i|)^{\alpha} \\
    ({\scriptstyle{\xi_i \in P_j \text{ and } \eta \in P_j}})
    & \lesssim (1 + |\eta|)^{\alpha} \, .
  \end{align*}
  Therefore, $Q_i \subset B_{C (1 + |\eta|)^{\alpha}}(\eta)$ for a constant
  $C = C(d,\alpha,\CalQ,\CalP)$ independent of $i,j$.
  Since we also have $\sum_{i \in I_j} \Indicator_{Q_i} \lesssim 1$ by the
  admissibility of $\CalQ$, we see $\sum_{i \in I_j} \Indicator_{Q_i}
  \lesssim \Indicator_{B_{C (1 + |\eta|)^\alpha} (\eta)}$, and hence
  \begin{align*}
    (1 + |\eta|)^{\alpha d}
    & \gtrsim \mu (B_{C (1+|\eta|)^{\alpha}} (\eta))
      \gtrsim \int \sum_{i \in I_j} \Indicator_{Q_i} \, dx
      = \sum_{i \in I_j} \mu (Q_i) \\
    ({\scriptstyle{\text{Eq. } \eqref{eq:AlphaCoveringAsymptotic}}})
    & \gtrsim \sum_{i \in I_j}  (1 + |\xi_i|)^{\alpha d} \\
    ({\scriptstyle{\text{Eq. } \eqref{eq:AlphaCoveringAsymptotic}
                   \text{ and } \xi_i, \eta \in P_j}})
    & \gtrsim \sum_{i \in I_j}  (1 + |\eta|)^{\alpha d}
    = |I_j| \cdot (1 + |\eta|)^{\alpha d} \, ,
  \end{align*}
  which implies $|I_j| \lesssim 1$, as desired.
\end{proof}

\begin{lemma}\label{lem:AlphaCoveringAlphaLargerOne}
  Let $\alpha > 1$ and $d \in \NN$ be arbitrary.
  Then there does not exist an $\alpha$-covering of $\RR^d$.
\end{lemma}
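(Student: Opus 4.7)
The plan is to derive a contradiction from the two defining conditions of an $\alpha$-covering: the measure/inradius condition (which via Equation~\eqref{eq:AlphaCoveringAsymptotic} forces $r_{Q_i} \asymp (1+|\xi|)^{\alpha}$ for every $\xi \in Q_i$) is incompatible, when $\alpha > 1$, with the fact that $\CalQ$ must cover all of $\RR^d$. The key geometric observation is that for $\alpha > 1$ the inradii $r_{Q_i}$ grow strictly faster than the distance from the origin, so the inscribed ball of any $Q_i$ far from the origin must already contain $0$, which forces $Q_i$ itself to contain $0$, contradicting the $\alpha$-growth of $\mu(Q_i)$.

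Concretely, I would argue by contradiction: suppose $\CalQ = (Q_i)_{i \in I}$ is an $\alpha$-covering with $\alpha > 1$. First record the consequence of Equation~\eqref{eq:AlphaCoveringAsymptotic} that for all $i \in I$ and all $\xi, \eta \in Q_i$,
\[
  1 + |\xi| \asymp 1 + |\eta|
  \quad \text{and} \quad
  r_{Q_i} \asymp (1 + |\xi|)^{\alpha},
\]
with implied constants independent of $i,\xi,\eta$. Since $\bigcup_{i \in I} Q_i = \RR^d$, pick a sequence $(\xi_n)_{n \in \NN}$ with $|\xi_n| \to \infty$ and indices $i_n \in I$ with $\xi_n \in Q_{i_n}$. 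By definition of $r_{Q_{i_n}}$, choose $\eta_n \in \RR^d$ and $s_n \geq r_{Q_{i_n}}/2$ such that $B_{s_n}(\eta_n) \subset Q_{i_n}$.

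Next I would observe that $\eta_n \in B_{s_n}(\eta_n) \subset Q_{i_n}$, so by the first $\asymp$ above, $|\eta_n| \asymp |\xi_n|$; and by the second $\asymp$, $s_n \asymp (1 + |\xi_n|)^{\alpha}$. Since $\alpha > 1$, this yields
\[
  \frac{s_n}{1 + |\eta_n|}
  \asymp \frac{(1+|\xi_n|)^{\alpha}}{1+|\xi_n|}
  = (1+|\xi_n|)^{\alpha - 1}
  \xrightarrow[n\to\infty]{} \infty,
\]
so for all sufficiently large $n$ we have $s_n > |\eta_n|$, and therefore $0 \in B_{s_n}(\eta_n) \subset Q_{i_n}$.

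Finally, the last step is to derive a contradiction from $0, \xi_n \in Q_{i_n}$: Equation~\eqref{eq:AlphaCoveringAsymptotic} then gives
\[
  1 = (1 + |0|)^{\alpha d}
  \asymp \mu(Q_{i_n})
  \asymp (1 + |\xi_n|)^{\alpha d}
  \xrightarrow[n\to\infty]{} \infty,
\]
which is absurd. Hence no such $\CalQ$ exists. There is no genuine obstacle here once the scaling heuristic is identified; the only thing to be slightly careful about is that $r_{Q_{i_n}}$ is a supremum and need not be attained, which is why I extract a ball of radius $s_n \geq r_{Q_{i_n}}/2$ rather than of radius exactly $r_{Q_{i_n}}$.
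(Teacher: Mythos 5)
Your proof is correct and takes essentially the same approach as the paper: both arguments derive the contradiction from the observation that, for $\alpha > 1$, the inradius $r_{Q_i} \asymp (1+|\xi|)^{\alpha}$ grows strictly faster than $1+|\xi|$, so a far-away covering element must contain an inscribed ball holding points where $1+|\cdot|$ is incomparable to $1+|\xi|$, violating \eqref{eq:AlphaCoveringAsymptotic}. The only difference is cosmetic — you steer the inscribed ball backward so that it captures the origin (giving $1 = (1+|0|)^{\alpha d} \asymp (1+|\xi_n|)^{\alpha d}$), whereas the paper pushes a point $\zeta_n$ outward to get $n \gtrsim n^{\alpha}$ — but both land the same contradiction.
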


\begin{proof}
  Assume towards a contradiction that $\CalQ = (Q_i)_{i \in I}$ is such a
  covering. As seen in Equation~\eqref{eq:AlphaCoveringAsymptotic}, there is
  a constant $c > 0$ with $r_{Q_i} \geq 2 c \cdot (1 + |\eta|)^{\alpha}$
  for all $i \in I$ and $\eta \in Q_i$.

  By definition of $r_Q$, there is for each $i \in I$ some $\xi_i \in \RR^d$
  such that
  \[
    \xi_i + B_{c \, (1 + |\eta|)^{\alpha}} (0)
    \subset \xi_i + B_{r_{Q_i} / 2} \, (0)
    \subset Q_i
    \qquad \forall \, i \in I \text{ and } \eta \in Q_i \, .
  \]
  Next, since $\CalQ$ covers all of $\RR^d$, there is for each $n \in \NN$ some
  $i_n \in I$ satisfying $\eta_n := (n,0,\dots,0) \in Q_{i_n}$.
  Therefore,
  \[
    \xi_{i_n} + B_{c \, \cdot n^\alpha} (0)
    \subset \xi_{i_n} + B_{c \, (1 + |\eta_n|)^{\alpha}} \, (0)
    \subset Q_{i_n} \, .
  \]
  It is not hard to see that there is some
  $\zeta_n \in B_{c \cdot n^\alpha} (0)$ satisfying $|\xi_{i_n} + \zeta_n|
  = |\xi_{i_n}| + \frac{c}{2} \cdot n^\alpha \gtrsim n^\alpha$.
  However, since $\xi_{i_n} + \zeta_n \in Q_{i_n}$, $\eta_n \in Q_{i_n}$,
  and $\alpha d \neq 0$, Equation~\eqref{eq:AlphaCoveringAsymptotic}
  implies
  \[
    n \asymp 1 + |\eta_n|
      \asymp 1 + |\xi_{i_n} + \zeta_n|
      \gtrsim n^\alpha \, .
  \]
  Since this holds for all $n \in \NN$, with the implicit constant being
  independent of $n$, and since $\alpha > 1$, this yields the desired
  contradiction.
\end{proof}

\section{Products of semi-structured coverings}
\label{sec:semistructuredproducts}

For the sake of completeness, we prove that the product of several
(tight) semi-structured coverings is again (tight and) semi-structured, that subordinateness relations are inherited by the product covering and moderateness relations between coverings and weights are inherited by the product covering and a separable product of the weights.

\begin{lemma}\label{lem:ProductCovering}
  For each $\ell = 1,\dots,N$, let
  $\CalQ_\ell = (Q_i^{(\ell)})_{i \in I_\ell}
           = (T_i^{(\ell)} P_i^{(\ell)} + b_i^{(\ell)})_{i \in I_\ell}$
  be a semi-structured covering of an open set $\CalO_\ell \subset \RR^{d_\ell}$.
  Let $I := I_1 \times \cdots \times I_N$ and
  $d := d_1 + \dots + d_N$, and for $i = (i_1,\dots,i_N) \in I$ set
  \[
    T_i := \mathrm{diag} \big(
                           T_{i_1}^{(1)},
                           \dots,
                           T_{i_N}^{(N)}
                         \big) \in \GL(\RR^d) \, ,
    \quad
    b_i := \big(b_{i_1}^{(1)}, \dots, b_{i_N}^{(N)} \big) \in \RR^d \, ,
    \quad
    Q_i ' := P_{i_1}^{(1)}
             \times \cdots \times
             P_{i_N}^{(N)} \subset \RR^d \, .
  \]
  Then the following hold:
  \begin{enumerate}
  \item The covering
   $ \CalQ_1 \otimes \cdots \otimes \CalQ_N
    := \big(
         Q_{i_1}^{(1)} \times \cdots \times Q_{i_N}^{(N)}
       \big)_{i \in I}
     = \big(
         T_i \, Q_i ' + b_i
    \big)_{i \in I}
   $
  is a semi-structured covering of
  $\CalO := \CalO_1 \times \cdots \times \CalO_N \subset \RR^d$. If the $\CalQ_\ell$, $\ell = 1,\dots,N$,
  are tight, then so is $\CalQ_1 \otimes \cdots \otimes \CalQ_N$.

  \item If $v^{(\ell)}$ is a $\CalQ_\ell$-moderate
  weight, for $\ell = 1,\dots,N$, then the weight $v = (v_i)_{i \in I}$, with
  $v_i = \prod_{\ell=1}^N v_{i_\ell}^{(\ell)}$ is
  $\CalQ_1 \otimes \cdots \otimes \CalQ_N$-moderate.

  \item If $\CalQ_\ell = (Q_i^{(\ell)})_{i \in I_\ell}$
  and $\CalP_\ell = (P_j^{(\ell)})_{j \in J_\ell}$, for $\ell = 1,\dots,N$, are admissible coverings of
  $\CalO_\ell \subset \RR^{d_\ell}$, and if, for all $\ell$, $\CalQ_\ell$ is weakly subordinate to
  $\CalP_\ell$, then $\CalQ_1 \otimes \cdots \otimes \CalQ_N$ is
  weakly subordinate to $\CalP_1 \otimes \cdots \otimes \CalP_N$. If the $\CalQ_\ell$ are even almost subordinate to $\CalP_\ell$, then $\CalQ_1 \otimes \cdots \otimes \CalQ_N$ is
  almost subordinate to $\CalP_1 \otimes \cdots \otimes \CalP_N$.
  \end{enumerate}
\end{lemma}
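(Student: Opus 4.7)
The plan is to verify each of the three claims by exploiting the block-diagonal structure of $T_i = \mathrm{diag}(T_{i_1}^{(1)},\ldots,T_{i_N}^{(N)})$ and the fact that for products of coverings the neighbor relation itself factorizes.

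First, for claim (1), I would start by observing that $\CalO = \bigcup_{i \in I} (Q_{i_1}^{(1)} \times \cdots \times Q_{i_N}^{(N)})$ is immediate, and that the neighbor set in the product covering satisfies the product identity
\[
   i^{\ast} \;=\; i_1^{\ast}\times\cdots\times i_N^{\ast}
   \qquad \text{for every } i=(i_1,\ldots,i_N)\in I,
\]
since $Q_i \cap Q_\ell \neq \emptyset$ is equivalent to $Q_{i_k}^{(k)} \cap Q_{\ell_k}^{(k)} \neq \emptyset$ for all $k$. Admissibility then follows from $|i^\ast| = \prod_k |i_k^\ast| \leq \prod_k \sup_{j\in I_k} |j^\ast| < \infty$. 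For the three conditions in Definition \ref{def:SemiStructuredCoverings}(2), the representation $Q_{i_1}^{(1)}\times\cdots\times Q_{i_N}^{(N)} = T_i Q_i' + b_i$ is immediate from the definitions, and $Q_i' \subset \overline{B_R(0)}$ holds with $R = \bigl(\sum_{k=1}^N R_k^2\bigr)^{1/2}$, where $R_k$ is the radius provided by the semi-structuredness of $\CalQ_k$. For the bound on $T_i^{-1} T_\ell$, observe that this matrix is block-diagonal with blocks $(T_{i_k}^{(k)})^{-1} T_{\ell_k}^{(k)}$, so
\[
  \|T_i^{-1} T_\ell\| \;=\; \max_{k=1,\ldots,N} \|(T_{i_k}^{(k)})^{-1} T_{\ell_k}^{(k)}\| \;\leq\; \max_k C_k,
\]
whenever $\ell \in i^\ast$, using $\ell_k \in i_k^\ast$ and the corresponding semi-structured bound $C_k$ for $\CalQ_k$. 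For tightness, if $B_\varepsilon(c_{i_k}^{(k)}) \subset P_{i_k}^{(k)}$ for all $k$, then, setting $c_i := (c_{i_1}^{(1)},\ldots,c_{i_N}^{(N)})$, we have $B_\varepsilon(c_i) \subset B_\varepsilon(c_{i_1}^{(1)})\times\cdots\times B_\varepsilon(c_{i_N}^{(N)}) \subset Q_i'$, which gives tightness of $\CalQ_1\otimes\cdots\otimes\CalQ_N$.

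Claim (2) is immediate from the product form of the weight: if $\ell \in i^\ast$, then $\ell_k \in i_k^\ast$ for each $k$, and hence
\[
  \frac{v_i}{v_\ell} \;=\; \prod_{k=1}^N \frac{v_{i_k}^{(k)}}{v_{\ell_k}^{(k)}} \;\leq\; \prod_{k=1}^N C_k,
\]
where $C_k$ is the $\CalQ_k$-moderateness constant of $v^{(k)}$.

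For claim (3), the main observation, to be proved by induction on $n$, is that the $n$-th iterate satisfies $j^{n\ast} = j_1^{n\ast}\times\cdots\times j_N^{n\ast}$ in the product covering $\CalP_1 \otimes \cdots \otimes \CalP_N$; this uses that a union of Cartesian products indexed by a Cartesian product of index sets equals the Cartesian product of the individual unions. Weak subordinateness is then immediate since
\[
  |\{j\in J_1\times\cdots\times J_N : P_j\cap Q_i\neq\emptyset\}| \;=\; \prod_{k=1}^N |\{j_k \in J_k : P_{j_k}^{(k)}\cap Q_{i_k}^{(k)}\neq\emptyset\}|,
\]
and each factor is uniformly bounded by assumption. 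For almost subordinateness, choose $N_k \in \NN$ and $j_{i_k}^{(k)} \in J_k$ with $Q_{i_k}^{(k)} \subset P_{(j_{i_k}^{(k)})^{N_k\ast}}^{(k)}$; setting $N := \max_k N_k$ and $j_i := (j_{i_1}^{(1)}, \ldots, j_{i_N}^{(N)})$, the product structure of $j_i^{N\ast}$ combined with $P_{(j_{i_k}^{(k)})^{N_k\ast}}^{(k)} \subset P_{(j_{i_k}^{(k)})^{N\ast}}^{(k)}$ yields $Q_i \subset P_{j_i^{N\ast}}$, as required. The main (but still minor) obstacle lies in correctly verifying the product identity for the iterated neighbor sets, since this requires the elementary but easily mishandled set-theoretic manipulation of a union of Cartesian products.
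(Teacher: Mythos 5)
Your proposal is correct and follows essentially the same route as the paper's proof: factorizing the neighbor relation $i^\ast = i_1^\ast \times \cdots \times i_N^\ast$, using that the operator norm of a block-diagonal matrix is the maximum of the block norms, and reducing weak/almost subordinateness to the product identity for (iterated) neighbor sets. Your treatment of the almost-subordinateness part is in fact slightly more detailed than the paper's one-line argument, and the minor differences (e.g.\ $R = (\sum_k R_k^2)^{1/2}$ versus $\sqrt{N}\max_k R_k$) are immaterial.
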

\begin{proof}
  For brevity, let $\CalQ := \CalQ_1 \otimes \cdots \otimes \CalQ_N$.
  Clearly, $\CalQ$ is indeed a covering of $\CalO$.
  To prove the semi-structuredness of $\CalQ$, first note that for each
  $\ell = 1,\dots,N$, there is some $R_\ell > 0$ such that
  $Q_i^{(\ell)} \subset \overline{B_{R_\ell} (0)}$ for all $i \in I_\ell$.
  If we set $R := \sqrt{N} \cdot \max \{R_\ell \,:\, \ell=1,\dots,N\}$,
  then this implies $Q_i ' \subset \overline{B_R (0)}$ for all $i \in I$.

  Next, note for $i = (i_1, \dots, i_N) \in I$ and
  $j = (j_1, \dots, j_N) \in I$ that
  \[
    i \in j^\ast
    \quad \Longleftrightarrow \quad
    Q_i \cap Q_j \neq \emptyset
    \quad \Longleftrightarrow \quad
    \forall \, \ell = 1,\dots,N \, : Q_{i_\ell} \cap Q_{j_\ell} \neq \emptyset
    \quad \Longleftrightarrow \quad
    i \in \prod_{\ell=1}^N j_\ell^\ast \, .
  \]
  On the one hand, this shows
  $M_{\CalQ}
   = \sup_{i \in I} |i^\ast|
   = \sup_{i \in I} \prod_{\ell=1}^N |i_\ell^\ast|
   \leq \prod_{\ell=1}^N M_{\CalQ_\ell} < \infty$,
  so that $\CalQ$ is admissible.
  On the other hand, we see that if $i \in j^\ast$, then
  \begin{align*}
    \|T_i^{-1} T_j\|
    & = \Big\|
          \mathrm{diag} \big(
                          (T_{i_1}^{(1)})^{-1} \, T_{j_1}^{(1)},
                          \dots,
                          (T_{i_N}^{(N)})^{-1} \, T_{j_N}^{(N)}
                        \big)
        \Big\| \\
    & \leq \max \left\{
                  \sup_{\substack{m, n \in I_\ell
                        \text{ and }\\
                        Q_m^{(\ell)} \cap Q_n^{(\ell)} \neq \emptyset}}
                    \|(T_m^{(\ell)})^{-1} T_n^{(\ell)}\|
                  \,:\,
                  \ell = 1,\dots,N
                \right\}
    \leq \max \{ C_{\CalQ_\ell} \,:\, \ell = 1,\dots,N \} \, ,
  \end{align*}
  where $C_{\CalQ_\ell}$ is the constant $C>0$ in Definition \ref{def:SemiStructuredCoverings}(2), for $\CalQ = \CalQ_\ell$. Hence, $\CalQ$ is a semi-structured covering of $\CalO$.

  We further see that $v$ is $\CalQ$-moderate,
  since if $Q_i \cap Q_j \neq \emptyset$, then
  \[
    \frac{v_i}{v_j}
    = \prod_{\ell=1}^N
        \frac{v_{i_\ell}^{(\ell)}}{v_{j_\ell}^{(\ell)}}
    \leq \prod_{\ell=1}^N C_{v^{(\ell)}, \CalQ_\ell} < \infty \, .
  \]

  If additionally, all the coverings $\CalQ_\ell$ are tight, then there is for each
  $\ell = 1,\dots,N$ some $\eps_\ell > 0$ and for each $i \in I_\ell$ some
  $c_i^{(\ell)} \in \RR^{d_\ell}$ satisfying $B_{\eps_\ell} (c_i^{(\ell)}) \subset Q_i^{(\ell)}$.
  Set $c_i := (c_{i_1}^{(1)},\dots,c_{i_N}^{(N)}) \in \RR^d$
  for $i = (i_1,\dots,i_N) \in I$ and furthermore
  $\eps := \min \{\eps_\ell \,:\, \ell =1,\dots,N\}$. It follows that
  $B_{\eps} (c_i) \subset Q_i '$ for all $i \in I$, i.e., $\CalQ$ is tight.

  \medskip{}

  Finally, we prove the last part of the lemma. If we define
  $J_i^{(\ell)} := \{ j \in J_\ell \,:\, P_j^{(\ell)} \cap Q_i^{(\ell)} \neq \emptyset \}$
  for $i \in I_\ell$, then there is some $M \in \NN$ such that
  $|J_i^{(\ell)}| \leq M$ for all $i \in I_\ell$ and $\ell = 1,\dots,N$,
  since $\CalQ_\ell$ is weakly subordinate to $\CalP_\ell$.

  Now, set $I := I_1 \times \cdots \times I_N$ and
  $J := J_1 \times \cdots \times J_N$.
  Furthermore, let $Q_i := Q_{i_1}^{(1)} \times\cdots\times Q_{i_N}^{(N)}$
  and $P_j := P_{j_1}^{(1)} \times \cdots \times P_{j_N}^{(N)}$ for
  $i = (i_1, \dots, i_N) \in I$ and $j = (j_1, \dots, j_N) \in J$.
  Then
  \[
\begin{split}
    J_i
    := \{ j \in J \,:\, P_j \cap Q_i \neq \emptyset \}
    & = \big\{
        j = (j_1, \dots, j_N) \in J
        \,:\,
        \text{for all } \ell=1,\dots,N \,:\,
          P_{j_\ell}^{(\ell)} \cap Q_{i_\ell}^{(\ell)} \neq \emptyset
      \big\}\\
    & = J_{i_1}^{(1)} \times \cdots \times J_{i_N}^{(N)} \, ,
    \end{split}
  \]
  and hence $|J_i| = |J_{i_1}^{(1)}| \cdots |J_{i_N}^{(N)}| \leq M^N$
  for arbitrary $i = (i_1,\dots,i_N) \in I$.
  Hence, $\CalQ_1 \otimes \cdots \otimes \CalQ_N$
  is weakly subordinate to $\CalP_1 \otimes \cdots \otimes \CalP_N$.

  If $\CalQ_\ell$ is almost subordinate to $\CalP_\ell$, $\ell = 1,\dots,N$, i.e., there is $M_\ell\in \NN$ such that for all $i_\ell\in I_\ell$ there is a $j_\ell=j_{i,\ell}\in J_\ell$,
  such that $Q^{(\ell)}_i\subset (P^{(\ell)}_j)^{N_\ell\ast}$, then with $M = \max_\ell M_\ell$, $Q^{(\ell)}_{(i_1,\cdot,i_N)}\subset (P^{(\ell)}_{(j_1,\ldots,j_N)})^{M\ast}$ follows.
  \end{proof}

  \section{The structured $\Phi$-induced $(\delta,r)$-fine frequency covering}
\label{appendix:StructuredCovering}

In this final appendix, we demonstrate that every warping function $\Phi$ induces
a \emph{structured} $(\delta,r)$-fine frequency covering, for $r\in (0,\vartheta_0)$ and $\delta\in (0,\Delta_r)$, with $\vartheta_0>0$ as in Lemma \ref{lem:CoveringLinearization} and a certain $\Delta_r>0$ .

\begin{definition}\label{def:StructuredCovering}
    A semi-structured covering $\CalQ = (Q_i)_{i \in I}$ of $\CalO$, with
    $Q_i = T_iQ_i' + b_i$, $i\in I$, as in Definition \ref{def:SemiStructuredCoverings}(2)
    is called \emph{structured}, if the operators $T_i$ can be chosen
     such that $Q_i' = Q$, for all $i\in I$, and there is an open set
    $P\subset \RR^d$ with $\overline{P}\subset Q$ and
    \[
      \CalO = \bigcup_{i \in I} (T_i P + b_i).
    \]
\end{definition}

   It should be noted that every structured admissible covering $\CalQ$ is a decomposition covering,
   that is, there is a $\CalQ$-BAPU, see \cite[Proposition 1]{boni07}. While the covering $\CalQ_{\Phi}^{(\delta, r)}$ is in general not structured, we can define an equivalent family of structured coverings using Lemma \ref{lem:CoveringLinearization}. Thus, for the purpose of investigating the associated decomposition spaces,
   it does not matter which of the two families of coverings is considered, see Theorem \ref{thm:decomposition_space_coincidence}. However, the structured covering may simplify the derivation of certain results in future work.

 \begin{definition}\label{def:StructuredFrequencyCovering}
   Let $\Phi : D \to \RR^d$ be a diffeomorphism.
   For $\delta, r > 0$, we define
   \[
     \CalS_{\Phi}^{(\delta,r)}
     := \left( S_{\Phi,k}^{(\delta, r)} \right)_{k \in \ZZ^d}
     := \Big(
          \Phi^{-1}(\delta k) + D\Phi^{-1}(\delta k) \langle B_{r} (0) \rangle
        \Big)_{k \in \ZZ^d} \, .
   \]
   The family $\CalS_{\Phi}^{(\delta,r)}$ is called the
   \emph{structured $(\delta, r)$-fine frequency covering induced by $\Phi$}.
 \end{definition}

 We proceed to show that
 $\CalQ_{\Phi}^{(\eps, \rho)}$ and $\CalS_{\Phi}^{(\delta,r)}$
 are equivalent, for suitable choices of the parameters.

 \begin{lemma}\label{lem:StructuredFrequencyCoveringEquivalence}
   Let $\Phi : D \to \RR^d$ be a $1$-admissible warping function
   with control weight $v_0$ and set $\vartheta_0 := (2d v_0(1))^{-1}$.
   If $r\in (0, \vartheta_0/4)$ and $\delta\in \big( 0,\min\{1,4r\}(2\sqrt{d} \cdot v_0(1/2))^{-1} \big)$,
   then $\CalS_{\Phi}^{(\delta,r)}$ is a structured admissible covering of $D$.

   \medskip{}

   Moreover, $\CalS_{\Phi}^{(\delta,r)}$ is equivalent to
   $\CalS_{\Phi}^{(\delta',r')}$ for any $r' \in (0, \vartheta_0/4)$ and $\delta'\in \big( 0,\min\{1,4r'\}(2\sqrt{d} \cdot v_0(1/2))^{-1} \big)$, and to the coverings
   $\CalQ_{\Phi}^{(\eps, \rho)}$, for any $\eps > 0$, $\rho > \sqrt{d}/2$.
 \end{lemma}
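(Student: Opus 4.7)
The plan is to prove the structured covering property of $\CalS_{\Phi}^{(\delta,r)}$ directly by a nearest-lattice-point argument, and then deduce everything else (admissibility, the bound $\|T_k^{-1}T_\ell\|\leq C$, and equivalence with $\CalQ_{\Phi}^{(\eps,\rho)}$ and with $\CalS_{\Phi}^{(\delta',r')}$) by sandwiching $\CalS_{\Phi}^{(\delta,r)}$ between two frequency coverings of the form $\CalQ_{\Phi}^{(\delta, \rho)}$, for which Lemma~\ref{lem:DecompInducedCoveringIsNice} already provides the needed information.

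To establish the structured covering property, I will fix $\xi \in D$, set $\tau := \Phi(\xi)$, choose $k \in \ZZ^d$ with $|\tau - \delta k|_\infty \leq \delta/2$, and apply the mean-value estimate \eqref{eq:basicIntegralEstimate} from the proof of Lemma~\ref{lem:DecompInducedCoveringIsNice} together with $0$-admissibility of $\Phi$ and the radial monotonicity of $v_0$. Using $\sqrt{d}\delta/2 < 1/2$ (which the hypothesis on $\delta$ enforces), this yields the key strict bound
\[
  \bigl|A^{-1}(\delta k)\,\bigl\langle \xi - \Phi^{-1}(\delta k)\bigr\rangle\bigr|
  \;\leq\; \tfrac{\sqrt{d}\delta}{2}\cdot v_0(e_1/2) \;=:\; r_0 \;<\; r,
\]
where $r_0 < r$ is precisely the content of $\delta < 4r/(2\sqrt{d}\,v_0(1/2))$. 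Hence $\xi \in \Phi^{-1}(\delta k) + T_k\langle B_{r_0}(0)\rangle$, and taking $P := B_{r_0}(0)$ one has $\overline{P}\subset B_r(0) = Q$ together with $D = \bigcup_{k\in\ZZ^d} (T_k P + b_k)$, which is exactly the definition of a structured covering. The representation $S_{\Phi,k}^{(\delta,r)} = T_k Q_k' + b_k$ with $Q_k' = B_r(0) \subset \overline{B_r(0)}$ simultaneously handles the uniform-boundedness condition of Definition~\ref{def:SemiStructuredCoverings}.

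Since $r < \vartheta_0/4$, I will next apply Lemma~\ref{lem:CoveringLinearization} at $\xi = \Phi^{-1}(\delta k)$ with $\vartheta = 4r$ to obtain the inclusion $S_{\Phi,k}^{(\delta,r)} \subset Q_{\Phi,k}^{(\delta,\,4r/\delta)}$ for every $k$, so $\CalS_{\Phi}^{(\delta,r)}$ is (strictly) subordinate to $\CalQ_{\Phi}^{(\delta,\,4r/\delta)}$. Conversely, every $\xi \in Q_{\Phi,\ell}^{(\delta,\,4r/\delta)}$ lies in $S_{\Phi,k}^{(\delta,r)}$ for the nearest lattice point $k$ to $\Phi(\xi)/\delta$, and a triangle estimate forces $|k - \ell| < \sqrt{d}/2 + 4r/\delta$, so only finitely many such $k$ occur; this yields weak subordinateness of $\CalQ_{\Phi}^{(\delta,\,4r/\delta)}$ to $\CalS_{\Phi}^{(\delta,r)}$. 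As all covering elements are open and path-connected, Lemma~\ref{lem:almost_subordinateness_connected} then upgrades both directions to almost subordinateness, hence equivalence. The hypothesis on $\delta$ further implies $4r/\delta > \sqrt{d}/2$, so $\CalQ_{\Phi}^{(\delta,\,4r/\delta)}$ is an admissible covering by Lemma~\ref{lem:DecompInducedCoveringIsNice} and equivalent to every $\CalQ_{\Phi}^{(\eps,\rho)}$ with $\rho > \sqrt{d}/2$; admissibility of $\CalS_{\Phi}^{(\delta,r)}$ and the claimed equivalences with arbitrary $\CalQ_{\Phi}^{(\eps,\rho)}$ and $\CalS_{\Phi}^{(\delta',r')}$ then follow by transitivity. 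To finish the semi-structured condition, if $S_{\Phi,k}^{(\delta,r)}\cap S_{\Phi,\ell}^{(\delta,r)}\neq\emptyset$, the inclusion above forces $Q_{\Phi,k}^{(\delta,\,4r/\delta)}\cap Q_{\Phi,\ell}^{(\delta,\,4r/\delta)}\neq\emptyset$ and hence $|k-\ell| < 8r/\delta$; invoking \eqref{eq:PhiHigherDerivativeEstimate} with $|\alpha|=0$ and radial monotonicity of $v_0$ yields $\|T_k^{-1}T_\ell\| = \|A^{-1}(\delta k)A(\delta\ell)\| \leq v_0(8r\cdot e_1)$ uniformly in $k$ and $\ell \in k^\ast$.

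The main obstacle is orchestrating the three numerical constraints simultaneously: (i) $r_0 < r$ must be strict to obtain a \emph{structured} (not merely semi-structured) covering; (ii) $4r \leq \vartheta_0$ is needed to invoke Lemma~\ref{lem:CoveringLinearization}; and (iii) $4r/\delta > \sqrt{d}/2$ is needed to ensure that the sandwiching covering $\CalQ_{\Phi}^{(\delta,\,4r/\delta)}$ is itself admissible. The prescribed range $\delta \in \bigl(0,\min\{1,4r\}/(2\sqrt{d}\,v_0(1/2))\bigr)$ together with $r \in (0,\vartheta_0/4)$ is precisely what makes all three compatible, so once the bookkeeping is set up the argument becomes essentially mechanical.
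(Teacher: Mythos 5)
Your architecture matches the paper's: sandwich $\CalS_\Phi^{(\delta,r)}$ between $\CalQ_\Phi^{(\delta,\cdot)}$-coverings using Lemma~\ref{lem:CoveringLinearization} for the upper inclusion $S_{\Phi,k}^{(\delta,r)}\subset Q_{\Phi,k}^{(\delta,4r/\delta)}$, and the mean-value estimate \eqref{eq:basicIntegralEstimate} for the converse direction, then read off admissibility, the bound $\|T_k^{-1}T_\ell\|\lesssim 1$, and the equivalences. The paper obtains coverage of $D$ via the element-wise inclusion $Q_{\Phi,k}^{(\delta,\rho_0)}\subset S_{\Phi,k}^{(\delta,r)}$ for a carefully chosen $\rho_0>\sqrt{d}/2$; you reach the structured covering property directly through a nearest-lattice argument, which is the same computation organized slightly differently.

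Two details need repair, both with tools you already have in hand. First, your estimate gives $|A^{-1}(\delta k)\langle\xi-\Phi^{-1}(\delta k)\rangle|\leq r_0$ with a non-strict inequality (equality is attainable at a corner of the half-open box, and $v_0$ is only assumed radially non-decreasing), so a point of $D$ may sit on $\partial B_{r_0}(0)$ and hence outside the open set $T_k\langle B_{r_0}(0)\rangle + b_k$; consequently $D=\bigcup_k(T_k P+b_k)$ with $P:=B_{r_0}(0)$ does not follow. Replace $P$ by $B_{r_1}(0)$ for any $r_1\in(r_0,r)$; then $\overline{P}\subset B_r(0)=Q$ still holds and every $\xi\in D$ lies in the interior. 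Second, your nearest-lattice argument shows that each $Q_{\Phi,\ell}^{(\delta,4r/\delta)}$ is \emph{covered} by $\{S_{\Phi,k}^{(\delta,r)}:|k-\ell|<\sqrt{d}/2+4r/\delta\}$, but this does not bound $|\{k : S_{\Phi,k}^{(\delta,r)}\cap Q_{\Phi,\ell}^{(\delta,4r/\delta)}\neq\emptyset\}|$, which is what weak subordinateness of $\CalQ_\Phi^{(\delta,4r/\delta)}$ to $\CalS_\Phi^{(\delta,r)}$ requires: an $S_{\Phi,k}^{(\delta,r)}$ may intersect $Q_{\Phi,\ell}^{(\delta,4r/\delta)}$ without ever being the nearest-lattice choice for a point of $Q_{\Phi,\ell}^{(\delta,4r/\delta)}$. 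The correct route (used by the paper, and available to you) is the inclusion $S_{\Phi,k}^{(\delta,r)}\subset Q_{\Phi,k}^{(\delta,4r/\delta)}$ you already established: $S_{\Phi,k}^{(\delta,r)}\cap Q_{\Phi,\ell}^{(\delta,4r/\delta)}\neq\emptyset$ forces $Q_{\Phi,k}^{(\delta,4r/\delta)}\cap Q_{\Phi,\ell}^{(\delta,4r/\delta)}\neq\emptyset$, i.e.\ $k\in\ell^\ast$, which is uniformly bounded by admissibility of $\CalQ_\Phi^{(\delta,4r/\delta)}$ from Lemma~\ref{lem:DecompInducedCoveringIsNice}; this simultaneously gives admissibility of $\CalS_\Phi^{(\delta,r)}$. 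With these two repairs the argument is sound.
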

 \begin{proof}
   Let $r \in (0, \vartheta_0/4)$ be arbitrary, such that $\vartheta := 4r \in (0,\vartheta_0)$ and note that
   $\vartheta_0$ is as in Lemma \ref{lem:CoveringLinearization}. Hence,
   Equation \eqref{eq:covering_linearization} of said lemma, with $\xi = \Phi^{-1}(\delta k)$,
   yields
   \begin{equation}
     \begin{split}
           S_{\Phi,k}^{(\delta, r)}
       &=      \xi + D\Phi^{-1} (\Phi(\xi)) \langle B_{\vartheta/4} (0) \rangle
       \subset \Phi^{-1} \big( \Phi(\xi) + B_{\vartheta} (0) \big) \\
       &=       \Phi^{-1} \big(\delta k + B_{4 r} (0)\big)
       =      \Phi^{-1} \big(\delta \cdot B_{4 r / \delta} (k)\big) \\
       &\subset \Phi^{-1} \big(\delta \cdot B_{\rho^\ast} (k)\big)
       =       Q_{\Phi,k}^{(\delta, \rho^\ast)} \subset D \, ,
       \qquad \text{ whenever } \qquad \rho^\ast \geq 4 r / \delta.
     \end{split}
     \label{eq:StructuredFrequencyCoveringSubordinateToFrequencyCovering}
   \end{equation}
   In addition to showing that the family $\CalS_{\Phi}^{(\delta, r)}$
   is subordinate (and thus almost subordinate) to the covering
   $\CalQ_{\Phi}^{(\delta, \rho^\ast)}$ with arbitrary
   $\rho^\ast > \max\{\frac{\sqrt{d}}{2}, \frac{4r}{\delta} \}$,
   this also implies for all $k \in \Z^d$ that
   $S_{\Phi,k}^{(\delta, r)} \subset D$.
   Furthermore, Equation \eqref{eq:StructuredFrequencyCoveringSubordinateToFrequencyCovering}
   also shows that $\CalS_{\Phi}^{(\delta, r)}$ is admissible, since
    \[
     \left|
       \left\{
         \ell \in \ZZ^d
         \,:\,
         S_{\Phi,\ell}^{(\delta, r)} \cap S_{\Phi,k}^{(\delta, r)}
         \neq \emptyset
       \right\}
     \right|
     \leq \left|
            \left\{
              \ell \in \ZZ^d
              \,:\,
              Q_{\Phi,\ell}^{(\delta, \rho^\ast)} \cap Q_{\Phi,k}^{(\delta, \rho^\ast)}
              \neq \emptyset
            \right\}
          \right|
     \leq N_{\CalQ_{\Phi}^{(\delta, \rho^\ast)}} < \infty,
   \]
   for all $k \in \Z^d$, by admissibility of $\CalQ_{\Phi}^{(\delta, \rho^\ast)}$ (see Lemma \ref{lem:DecompInducedCoveringIsNice}).

   \medskip{}

   If $\rho > \sqrt{d} / 2$ is arbitrary,
   then, by performing the same derivations as in the proof of
   Lemma~\ref{lem:DecompInducedCoveringIsNice}, we can show that
   \begin{equation}
             Q_{\Phi,k}^{(\delta, \rho)}
             = \Phi^{-1}(\delta\cdot B_\rho(k))
     \subset \Phi^{-1}(\delta k)
             + A(\delta k)
                 \langle
                   B_{\delta \rho \cdot v_0(\delta\rho)} (0)
                 \rangle
     =       S_{\Phi,k}^{(\delta,\delta\rho \cdot v_0(\delta\rho))}
     \, .
     \label{eq:StructuredCoveringIsBig}
   \end{equation}
   Now, for fixed $r \in (0, \vartheta_0 / 4)$ and
   $0 < \delta < \frac{2}{\sqrt{d}} \frac{\min \{1,4r\}}{4\cdot v_0(1/2)}$
   as per our assumption, we can choose $\rho_0 > \sqrt{d}/2$, such that
   \[
     \delta\rho_0 < \frac{\min \{1,4r\}}{4\cdot v_0(1/2)} \leq 1/4.
   \]
   Hence, using that $v_0$ is radially increasing, we can estimate
   $\delta\rho_0$ by either $v_0(1/2)^{-1}r$ or simply $1/4$,
   such that $\delta \rho_0 \cdot v_0 (2\delta \rho_0)
    < \frac{r v_0 (1/2)}{v_0 (1/2)} \leq r$.
   Therefore, Equation \eqref{eq:StructuredCoveringIsBig} can be applied with
   $\rho = \rho_0$, yielding
   \[
     Q_{\Phi, k}^{(\delta, \rho_0)}
     \subset S_{\Phi,k}^{(\delta,\delta\rho_0 \cdot v_0(2\delta \rho_0))}
     \subset S_{\Phi,k}^{(\delta,r)}
     \overset{\text{Equation ~} \eqref{eq:StructuredFrequencyCoveringSubordinateToFrequencyCovering}}
             {\subset}
     D
     \qquad \forall \, k \in \Z^d \, .
   \]
   On the one hand, this proves that $\CalQ_\Phi^{(\delta,\rho_0)}$ is
   subordinate to $\CalS_\Phi^{(\delta,r)}$.
   On the other hand, since $\rho_0 > \sqrt{d}/2$,
   Lemma \ref{lem:WhenIsFrequencyCoveringCovering}
   shows $D = \bigcup_{k \in \Z^d} Q_{\Phi,k}^{(\delta,\rho_0)}
   \subset \bigcup_{k\in\ZZ^d} S_{\Phi,k}^{(\delta,r)} \subset D$, so that
   $\CalS_{\Phi}^{(\delta,r)}$ covers all of $D$.

   Moreover, since all coverings $\CalQ_{\Phi}^{(\eps,\rho)}$ with
   $\rho>\sqrt{d}/2$ and $\eps > 0$ are equivalent,
   since $\CalQ_\Phi^{(\delta,\rho_0)}$ is subordinate to
   $\CalS_\Phi^{(\delta,r)}$, and since we saw above that
   $\CalS_\Phi^{(\delta,r)}$ is subordinate to $\CalQ_\Phi^{(\delta,\rho^\ast)}$
   for $\rho^\ast > \max \{\frac{\sqrt{d}}{2}, \frac{4r}{\delta} \}$,
   we see that the coverings $\CalS_\Phi^{(\delta,r)}$ and
   $\CalQ_\Phi^{(\eps,\rho)}$ are equivalent, as long as
   $\rho > \sqrt{d} / 2$, $r \in (0, \vartheta_0 / 4)$, and
   $0<\delta < \frac{\min\{1,4r\}}{2\sqrt{d} \cdot v_0(1/2)}$.

   It remains to show that $\CalS_{\Phi}^{(\delta,r)}$ is indeed
   a structured covering.
   But setting $T_k = A(\delta k)$ and $b_k = \Phi^{-1}(\delta k)$, as well as
   $Q := B_r (0)$, we clearly have $S_{\Phi,k}^{(\delta,r)} = T_k Q + b_k$
   for all $k \in \Z^d$.
   Finally, if we fix $\rho^\ast > \max \{\frac{\sqrt{d}}{2}, \frac{4r}{\delta} \}$,
   and if $S_{\Phi,k}^{(\delta,r)} \cap S_{\Phi,\ell}^{(\delta,r)} \neq \emptyset$,
   then Equation  \eqref{eq:StructuredFrequencyCoveringSubordinateToFrequencyCovering}
   shows $\emptyset \neq Q_{\Phi,k}^{(\delta,\rho^\ast)} \cap Q_{\Phi,\ell}^{(\delta,\rho^\ast)}$,
   and hence $\|T_k^{-1} T_\ell\| \leq C$ for an absolute constant
   $C = C(\Phi,\delta,r) > 0$, see Lemma \ref{lem:DecompInducedCoveringIsNice}.

   Finally, if we choose
   $\tilde{r} \in (0, r) \subset (0, \vartheta_0/4)$ such that we still have
   $\delta < \frac{\min\{1,4\tilde{r}\}}{2\sqrt{d} \cdot v_0(1/2)}$,
   then it is clear that $P := B_{\tilde{r}}(0)$ satisfies $\overline{P} \subset Q$,
   and the preceding part of the proof shows that
   $(T_k P + b_k)_{k \in \Z^d}$ still covers $D$.
 \end{proof}

 Using the equivalence between the coverings $\CalQ_\Phi^{(\delta,\vrho)}$
 and $\CalS_\Phi^{(\delta,r)}$, we can now show that the associated
 decomposition spaces also coincide.

 \begin{corollary}\label{cor:StructuredDecompositionSpacesCoincide}
   With notation as in Lemma \ref{lem:StructuredFrequencyCoveringEquivalence},
   let $r \in (0, \vartheta_0 / 4)$, $s > \sqrt{d} / 2$, and
   $0 < \delta < \frac{\min\{1,4r\}}{2 \sqrt{d} \cdot v_0 (1/2)}$.

   If $u = (u_k)_{k \in \ZZ^d}$ is $\CalQ_{\Phi}^{(\delta,s)}$-moderate,
   then $u$ is also $\CalS_\Phi^{(\delta,r)}$-moderate, and we have
   \[
     \DecompSp(\CalQ_\Phi^{(\delta,s)}, \lebesgue^p, \ell_u^q)
     = \DecompSp(\CalS_\Phi^{(\delta,r)}, \lebesgue^p, \ell_u^q)
     \qquad \forall \, p,q \in [1,\infty] \, .
   \]
 \end{corollary}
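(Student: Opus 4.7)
The strategy is to apply Theorem \ref{thm:decomposition_space_coincidence} to the pair of coverings $\CalQ := \CalQ_\Phi^{(\delta,s)}$ and $\CalS := \CalS_\Phi^{(\delta,r)}$, with the common index set $\ZZ^d$ and the same weight $u$ on both sides. To do so, I need to verify four things: (i) both $\CalQ$ and $\CalS$ are decomposition coverings; (ii) $u$ is $\CalQ$-moderate (given) and $\CalS$-moderate (to be shown); (iii) $\CalQ$ and $\CalS$ are weakly equivalent; (iv) the cross-compatibility $u_k \asymp u_\ell$ whenever $Q_{\Phi,k}^{(\delta,s)} \cap S_{\Phi,\ell}^{(\delta,r)} \neq \emptyset$.

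For (i), the covering $\CalQ$ is a decomposition covering by Lemma~\ref{lem:DecompInducedCoveringIsNice} together with Proposition~\ref{prop:DecompSpecialBAPU}. For $\CalS$, Lemma~\ref{lem:StructuredFrequencyCoveringEquivalence} shows, under the stated range of $r$ and $\delta$, that $\CalS$ is a structured admissible covering of $D$; by the remark following Definition~\ref{def:StructuredCovering} (invoking \cite[Proposition 1]{boni07}), every structured admissible covering is a decomposition covering. Point (iii) follows directly from Lemma~\ref{lem:StructuredFrequencyCoveringEquivalence} combined with Remark~\ref{rem:AlmostSubImpliesWeakSub}, since that lemma yields equivalence of $\CalS$ and $\CalQ_\Phi^{(\eps,\vrho)}$ for any $\eps>0$, $\vrho>\sqrt{d}/2$, and all $(\delta,r)$-fine frequency coverings are mutually equivalent by Lemma~\ref{lem:DecompInducedCoveringIsNice}.

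The main work is points (ii) and (iv). The key observation is the inclusion $S_{\Phi,k}^{(\delta,r)} \subset Q_{\Phi,k}^{(\delta,\rho^\ast)}$ established in Equation~\eqref{eq:StructuredFrequencyCoveringSubordinateToFrequencyCovering}, valid for any $\rho^\ast \geq 4r/\delta$. Fix such a $\rho^\ast$ with $\rho^\ast > \sqrt{d}/2$. If $S_{\Phi,k}^{(\delta,r)} \cap S_{\Phi,\ell}^{(\delta,r)} \neq \emptyset$, then $Q_{\Phi,k}^{(\delta,\rho^\ast)} \cap Q_{\Phi,\ell}^{(\delta,\rho^\ast)} \neq \emptyset$, which (by the usual $\delta B_{\rho^\ast}(k) \cap \delta B_{\rho^\ast}(\ell)$ argument, cf.\ Lemma~\ref{lem:DecompInducedCoveringIsNice}) forces $|k-\ell| < 2\rho^\ast$. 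By the analogous argument for $\CalQ$, neighbors in $\CalQ$ satisfy $|k-m| < 2s$, and so the ball $\{m \in \ZZ^d : |k-m| < 2\rho^\ast\}$ is contained in $k^{M\ast}$ (in the sense of $\CalQ$-neighbors, cf.\ Remark~\ref{rem:higher_cluster_sets}) for some fixed $M = M(\rho^\ast, s, d) \in \NN$. Iterating the $\CalQ$-moderateness constant of $u$ at most $M$ times yields $u_k \leq C^M u_\ell$, so $u$ is $\CalS$-moderate. An identical argument, now bounding $|k-\ell|$ via intersections of $\delta B_s(k)$ with $\delta B_{\rho^\ast}(\ell)$, settles (iv).

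The only subtlety I expect is purely bookkeeping: making sure that the chain of constants $\rho^\ast \mapsto M \mapsto C^M$ only depends on the fixed data $(\Phi, \delta, r, s, v_0, d)$ and the $\CalQ$-moderateness constant of $u$, and not on the specific indices $k, \ell$. Since all the bounds above are uniform in $k, \ell$, this is automatic. With (i)--(iv) in hand, Theorem~\ref{thm:decomposition_space_coincidence} gives $\DecompSp(\CalQ,\lebesgue^p,\ell^q_u) = \DecompSp(\CalS,\lebesgue^p,\ell^q_u)$ with equivalent norms for all $p,q \in [1,\infty]$, as claimed.
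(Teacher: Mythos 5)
Your plan is correct and reaches the same conclusion via the same overarching theorem (Theorem~\ref{thm:decomposition_space_coincidence}), but the mechanism you use for establishing the weight moderateness and cross-compatibility is a bit more explicit than the paper's. The paper's proof hinges on the observation that $\Phi^{-1}(\delta k) \in S_{\Phi,k}^{(\delta,r)} \cap Q_{\Phi,k}^{(\delta,s)}$ (a common point at the same index, which is only available because both coverings use the same sampling lattice $\delta\ZZ^d$), and then works abstractly with the almost-subordinateness constant $N$ to conclude $S_{\Phi,k}^{(\delta,r)} \subset Q_{\Phi,k^{(2N+1)\ast}}^{(\delta,s)}$; the moderateness estimates follow by iterating the neighbor relation in the $\CalQ$-sense. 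You instead exploit the concrete inclusion $S_{\Phi,k}^{(\delta,r)} \subset Q_{\Phi,k}^{(\delta,\rho^\ast)}$ from Equation~\eqref{eq:StructuredFrequencyCoveringSubordinateToFrequencyCovering} (again at the same index $k$, so the same lattice-alignment is doing the work) together with the $\ell^\infty$-distance bound $|k-\ell| < 2\rho^\ast$, and then chain $\CalQ^{(\delta,s)}$-neighbors using the fact that $\|k-\ell\|_{\ell^\infty} \le 1$ forces $\ell \in k^\ast$ when $s > \sqrt{d}/2$. Both routes rely on the shared-lattice structure, but yours produces an explicit iteration bound $M$ in terms of $\rho^\ast$, $s$, and $d$, whereas the paper's avoids explicit constants by working through the abstract almost-subordinateness order $N$. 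Either is perfectly adequate; the paper's version requires no recourse to the inner workings of the proof of Lemma~\ref{lem:StructuredFrequencyCoveringEquivalence} beyond the equivalence statement itself, while yours makes the dependence of the constants concrete. One small point worth flagging: when you write ``$\{m : |k-m| < 2\rho^\ast\}$ is contained in $k^{M\ast}$ in the sense of $\CalQ$-neighbors,'' you should make explicit that this uses the $\|k-m\|_{\ell^\infty} \le 1 \Rightarrow m \in k^\ast$ step (essentially the argument in Lemma~\ref{lem:EstimateOfNthOrderNeighbors}) and then iterates it $M = \lceil 2\rho^\ast \rceil$ times, since the $\CalQ$-neighbor relation itself only directly controls intersections of open balls, not $\ell^\infty$-distances.
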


 \begin{proof}
   Directly from the definition of the coverings $\CalQ_\Phi^{(\delta,s)}$
   and $\CalS_\Phi^{(\delta,r)}$, we see
   \[
     \Phi^{-1} (\delta k)
     \in S_{\Phi,k}^{(\delta,r)} \cap Q_{\Phi,k}^{(\delta,s)}
     \neq \emptyset
     \qquad \forall \, k \in \ZZ^d \, .
   \]
   Note that this derivation crucially uses that the \emph{sampling density}
   $\delta > 0$ of both coverings coincides.

   In Lemma~\ref{lem:StructuredFrequencyCoveringEquivalence}, we saw that
   the coverings $\CalQ_\Phi^{(\delta,s)}$ and
   $\CalS_\Phi^{(\delta,s)}$ are equivalent.
   In particular, there is a fixed $N \in \NN$ such that for each $k \in \ZZ^d$,
   there is some $j_k \in \ZZ^d$ satisfying $S_{\Phi,k}^{(\delta,r)}
   \subset Q_{\Phi,j_k^{N\ast}}^{(\delta,s)}$.
   But then
   \[
     \Phi^{-1}(\delta k)
     \in S_{\Phi,k}^{(\delta,r)} \cap Q_{\Phi,k}^{(\delta,s)}
     \subset  Q_{\Phi,j_k^{N\ast}}^{(\delta,s)}
             \cap Q_{\Phi,k}^{(\delta,s)} \, ,
   \]
   so that $k \in j_k^{(N+1)\ast}$, and hence $j_k \in k^{(N+1)\ast}$.
   But this implies $j_k^{N\ast} \subset k^{(2N+1)\ast}$, and hence
   \begin{equation}
     S_{\Phi,k}^{(\delta,r)}
     \subset Q_{\Phi,j_k^{N\ast}}^{(\delta,s)}
     \subset Q_{\Phi,k^{(2N+1)\ast}}^{(\delta,s)}
     \qquad \forall \, k \in \ZZ^d \, .
     \label{eq:StructuredCoveringSameSpacesMainInclusion}
   \end{equation}

   The inclusion
   \eqref{eq:StructuredCoveringSameSpacesMainInclusion} yields the
   $\CalS_\Phi^{(\delta,r)}$-moderateness of $u$, since if
   $\emptyset \neq S_{\Phi,k}^{(\delta,r)} \cap S_{\Phi,\ell}^{(\delta,r)}$,
   then $\emptyset \neq Q_{\Phi,k^{(2N + 1) \ast}}^{(\delta,s)}
                        \cap Q_{\Phi,\ell^{(2N + 1) \ast}}^{(\delta,s)}$,
   and thus $\ell \in k^{(4N+3)\ast}$, so that $u_\ell \asymp u_k$. Here, we used
   that $v_i \leq C^\ell \cdot v_j$ for $j \in i^{\ell\ast}$, whenever $\ell\in\NN$ and $v = (v_i)_{i \in I}$ is $\CalQ$-moderate, i.e.,
   $v_i \leq C \cdot v_j$ if $Q_i \cap Q_j \neq \emptyset$. This inequality can be verified
   by a straightforward induction.

   Likewise, the inclusion
   \eqref{eq:StructuredCoveringSameSpacesMainInclusion} also implies that
   $u_k \asymp u_\ell$ if $\emptyset
   \neq Q_{\Phi,k}^{(\delta,s)} \cap S_{\Phi,\ell}^{(\delta,r)}$, since
   then $\emptyset \neq Q_{\Phi,k}^{(\delta,s)}
   \cap Q_{\Phi,\ell^{(2N+1)\ast}}^{(\delta,s)}$,
   which implies $k \in \ell^{(2N+2)\ast}$, and hence
   $u_k \asymp u_\ell$.

   Overall, since the coverings $\CalQ_\Phi^{(\delta,s)}$ and
   $\CalS_\Phi^{(\delta,r)}$ are equivalent, and since $u_k \asymp u_\ell$
   if $Q_{\Phi,k}^{(\delta,s)} \cap S_{\Phi,\ell}^{(\delta,r)} \neq \emptyset$,
   Theorem~\ref{thm:decomposition_space_coincidence} shows for arbitrary
   $p,q \in [1,\infty]$ that
   $\DecompSp(\CalQ_{\Phi}^{(\delta,s)}, \lebesgue^p, \ell_u^q)
    = \DecompSp(\CalS_{\Phi}^{(\delta,r)}, \lebesgue^p, \ell_u^q)$, with
   equivalent norms.
 \end{proof}

\let\section\origsection
\markleft{References}
\markright{}

\end{document}